\newcommand\quant{\advance\quantno by1
                      \ifnum\quantno=1\qquad\else\quad\fi\forall }
\newtheorem{defi}{Definition}[chapter]
\newtheorem{teo}[defi]{Theorem}
\newtheorem{prop}[defi]{Proposition}
\newcommand{\quadro}{\hfill\vrule height .9ex width .8ex depth -.1ex}
\newenvironment{proof}{\bf{Proof.}\mdseries}{\quadro \bigskip \\}
\newtheorem{coro}[defi]{Corollary}
\newtheorem{lem}[defi]{Lemma}
\newtheorem{ese}[defi]{Example}
\newtheorem{ossn}[defi]{Remark}
\newenvironment{teo*}{\itshape}{\\}
\newenvironment{intro*}{\footnotesize}{\\}
\newcommand{\NN}{\mathbb{N}}
\newcommand{\ZZ}{\mathbb{Z}}
\newcommand{\RR}{\mathbb{R}}
\newcommand{\CC}{\mathbb{C}}
\newcommand{\FF}{\mathbb{F}}
\newcommand{\HH}{\mathbb{H}}
\newcommand{\al}{\alpha}     
\newcommand{\be}{\beta}
\newcommand{\Ht}{H-type }
\newcommand{\CZ}{Calder\'on--Zygmund }
\newcommand{\DR}{Damek--Ricci }
\newcommand{\MH}{Mihlin--H\"ormander }
\newcommand{\Qka}{Q_{\alpha}^k}
\newcommand{\nka}{n_{\alpha}^k}
\newcommand{\n}{\mathfrak{n}}
\newcommand{\ag}{\mathfrak{a}}
\newcommand{\kg}{\mathfrak{k}}
\newcommand{\p}{\mathfrak{p}}
\newcommand{\g}{\mathfrak{g}}
\newcommand{\vg}{\mathfrak{v}}
\newcommand{\zg}{\mathfrak{z}}
\newcommand{\s}{\mathfrak{s}}
\newcommand{\LB}{\mathcal{L}} %operatore di Laplace-Beltrami
\newcommand{\D}{\Delta} %Laplaciano invariante sinistro
\newcommand{\LQ}{\mathcal{L}_Q} %operatore di Laplace Beltrami traslato
\newcommand{\kD}{k_{M(\Delta)}}
\newcommand{\kQ}{k_{M(\mathcal{L}_Q)}}
\newcommand{\du}{\delta^{1/2}} %funzione modulare alla 1/2
\newcommand{\dum}{\delta^{-1/2}} %funzione modulare alla -1/2
\newcommand{\MD}{M(\Delta)}
\newcommand{\MQ}{M(\mathcal{L} _Q)}
\newcommand{\arch}{\mathrm{arc\,cosh}}  %arcocoseno iperbolico
\newcommand{\Bp}{BMO_p}
\newcommand{\di}{\,{\rm{d}}}
\newcommand{\dir}{\,{\rm{d}}\rho}  %misura destra dirho
\newcommand{\dil}{\,{\rm{d}}\lambda}  %misura sinistra dila
\newcommand\lu[1]{L^1(#1)}
\newcommand\lorentz[3]{L^{#1,#2}(#3)}
\newcommand{\nondivisible}{nondivisible } %insiemi grandi al limite
\newcommand{\rd}{\eta}  %raggio delle palle diadiche in N
\newcommand{\Rnd}{\sigma_{N,\,\beta}}  %raggio degli insiemi amm grandi nondivisibili
\newcommand{\Rndp}{\sigma_{N',N'',\,\beta}}  %raggio degli insiemi amm grandi nondivisibili nel prodotto
\newcommand{\tF}{\tilde {F}}
\newcommand{\tB}{\tilde {B}}
\newcommand{\tR}{\tilde {R}}
\newcommand{\tE}{\tilde {E}}
\newcommand{\tC}{\tilde {C}}
\newcommand{\nep}{{\rm{e}}} %numero di Nepero
\newcommand{\htR}{h_t^{\RR}} %nucleo del calore su R
\newcommand{\chQ}{{\rm{Ch_{Q}}}}
\newcommand{\jl}{j_{\ell}}
\newcommand\lp[1]{L^p(#1)}
\newcommand\ld[1]{L^2(#1)}
\newcommand\opnorm{\vert\!\vert\!\vert}
\begin{document}

%\maketitle

\titlepage
{\center{

\vspace{16cm}
\Huge{Analysis on harmonic extensions\\
of \Ht groups}\\
\vspace{4cm}
\Large{Maria Vallarino}\\
\vspace{0,3cm}
\Large{Advisor:  Prof. Stefano Meda}\\
\vspace{6cm}
\Large{Universit\'a degli Studi di Milano Bicocca}\\
\vspace{0,3cm}
\Large{Scuola di Dottorato di Scienze}\\
\vspace{0,3cm}
\Large{Dottorato in Matematica Pura e Applicata}

}}

\chapter*{Acknowledgements}
I would like to thank my advisor Stefano Meda for the precious help and support which he gave me during these years.

I thank also Giancarlo Mauceri who introduced me to the fascinating world of Harmonic Analysis.

I thank Detlef M\"uller for inviting me to the Mathematisches Seminar of Christian-Albrechts-Universit\"at in Kiel were we had interesting discussions about my thesis.

\tableofcontents
\addcontentsline{toc}{chapter}{Introduction}

\chapter*{Introduction}

It became clear through the years that the fundamental 
real variable ideas behind the theory of maximal operators
of Hardy--Littlewood type and of certain singular integral operators,
which we shall refer to as \emph{classical Calder\'on--Zygmund}
theory, depend only on a few properties of the Euclidean space and the Lebesgue measure.
After a lot of research activity, 
spaces of homogeneous type (in the sense of Coifman and Weiss) emerged
as a reasonably general setting where the main ideas and some of the 
results of the classical theory could be, and indeed were, carried over
(see \cite{CW,S} and the references therein).

Disregarding technicalities, a space of homogeneous type 
is a measured metric space $(M,\mu,\rho)$ in which balls satisfy the so-called 
doubling property, i.e., there exists a constant $C$ such that 
$$
\mu\bigl(B(x,2r)\bigr)
\leq C \, \mu\bigl(B(x,r)\bigr)
\qquad \forall x \in M \quad \forall r \in \RR^+.
$$
This property is key in establishing Vitali type covering lemmata and 
the Calder\'on--Zygmund decomposition of integrable functions,
which are fundamental steps to proving weak type $1$ estimates
for some important operators, including
maximal operators of Hardy--Littlewood type and singular
integral operators.

A natural, but challenging problem, is whether a reasonable Calder\'on--Zygmund
theory, with emphasis on weak type $1$ estimates 
for maximal and singular integral operators, may be developed on 
measured metric spaces without the doubling property.
Recall that in the classical setting, singular integral operators
arise, for instance, as inverse Fourier transforms of Fourier
multipliers satisfying Mihlin--H\"ormander condition.  
Since no Fourier transform is available on a generic measured
metric space, it is part of the problem to envisage an interesting
class of singular integral operators to investigate.  Often, albeit not
always, such operators arise as kernels of spectral multipliers of some Laplacian.

Despite the efforts of
many mathematicians, only a few examples have been thoroughly investigated
and understood so far.  
These include rank one symmetric spaces, where singular integral
operators are related to spectral multipliers of
the Laplace--Beltrami operator \cite{An1, An2, A1, A2, CS, CGHM, CGM1, CGM2, CGM3, CGM4, GM, I1, I2, ST}, and the space  $(\RR^d,\mu,\rho)$,
where $\rho$ denotes the Euclidean distance and $\mu$ a possibly nondoubling
measure of polynomial growth, in which case singular integral operators
are associated to the Cauchy integral \cite{NTV, T1, T2, T3}. 
It is worth noticing that the new ideas and techniques employed in these
situations are quite different from each other.

Recently, Hebisch and Steger \cite{HS} realised that a \CZ theory
of singular integrals may be developed also on some groups of exponential 
growth, i.e., on solvable groups coming from the Iwasawa decomposition
of the groups $SO(d+1,1)$.  As an application, they proved
weak type $1$ estimates for spectral operators of a distinguished
Laplacian $\D$ on $S$ associated to spectral multipliers
satisfying Mihlin--H\"ormander type conditions (see the discussion below).

This thesis is concerned with real analysis 
on a class of noncompact solvable Lie groups known as
\emph{harmonic extensions of groups of Heisenberg type}, and their direct products,
which is much bigger than the class of groups studied by Hebisch and Steger.  

Harmonic extensions of Heisenberg type groups,
which have been introduced by E. Damek and 
F.~Ricci \cite{D1, D2, D3, DR1, DR2} and studied by 
M.~Cowling, A.~H.~Dooley, A.~Kor\'anyi and Ricci \cite{CDKR1,CDKR2},
are semidirect products of an Heisenberg type group, 
briefly an $H$-type group, $N$, 
and the multiplicative group $\RR^+$, which acts on $N$ by dilations with
eigenvalues $1/2$ and (possibly) $1$. 
Specifically, given an $H$-type algebra $\n=\vg\oplus \zg$,
let $N$ denote the connected and simply connected Lie group associated to $\n$. 
Let $S$ denote the one-dimensional extension of $N$ obtained by making 
$A=\mathbb{R}^+$ act on $N$ by homogeneous dilations.  
Let $H$ denote a vector in $\ag$ acting on
$\n$ with eigenvalues $1/2$ and (possibly) $1$; we extend the inner
product on $\n$ to the algebra $\s=\n\oplus\ag$, by requiring $\n$
and $\ag$ to be orthogonal and $H$ to be unitary. 

We denote by $d$ the left invariant distance on $S$ associated to the
Riemannian metric on $S$ which agrees with the inner product on $\s$ at the identity. 
The Riemannian manifold $(S,d)$ is usually referred to as \emph{Damek--Ricci space}.
Damek--Ricci spaces are harmonic manifolds and
include the class of symmetric spaces of noncompact type and real rank one properly. 
Those which are not symmetric provide counterexamples to the Lichnerowicz conjecture \cite{DR1}.
Harmonic analysis on these spaces has been the object of many investigations \cite{ADY, A1, DR2, D, R}.

Note that $S$ is nonunimodular. 
Let  $\lambda$ and $\rho$ denote the left and right Haar measures on $S$ respectively.
Then
$$
\lambda(B_r)= \rho(B_r)\asymp 
\left\{ 
\begin{array}{ll}
r^{n}   & \qquad \forall r\in (0,1)  \\
\nep^{Q\,r} & \qquad \forall r\in [1,\infty), \\
\end{array}
\right.
$$
where $B_r$ denotes the ball with centre the identity and radius $r$, $n$ is the dimension of $S$
and $Q$ is a constant determined by the structure of the algebra $\s$.
Thus $S$ is a group of \emph{exponential growth}:
hence both $\lambda$ and $\rho$ are nondoubling measures, which, in view of
the discussion above, makes $(S,\lambda,d)$ and $(S,\rho,d)$ interesting settings 
where to investigate maximal and singular integral operators.  
We  shall now describe some important operator on $S$ and explain why it is
natural to study some of them with respect
to $\lambda$ and some other with respect to $\rho$. 

Given a vector $E$ in the Lie algebra $\s$ we denote by
$\tilde{E}$ and $\dot{E} $ the right invariant and the left invariant
vector fields which agree with $E$ at the identity.
%defined by
%$$
%\tilde{E}f(x)
%= \frac{d}{d\tau }\Big\lvert_{\tau =0}f\big({\rm{exp}}(\tau\,E)\,x\big)
%\qquad\hbox{and}\qquad
%\dot{E} f(x)
%= \frac{d}{d\tau }
%    \Big\lvert_{\tau =0}f\big(x\,{\rm{exp}}(\tau\,E)\big)
%$$
%on smooth functions of compact support, respectively.
Now, let $E_0,...,E_{n-1}$ denote an orthonormal basis of the algebra $\s$ 
adapted to the (orthogonal) decomposition $\s=\vg\oplus \zg\oplus \RR$. 
The following operators on $S$ have been the object of investigation:
\begin{itemize}
\item[(i)] the Laplace--Beltrami operator $\LB$ associated to the Riemannian metric $d$:
$\LB$ is selfadjoint on $L^2(\lambda)$ and its spectrum is the half line $[Q^2/4,\infty)$;
\item[(ii)] the right invariant operator $\Delta_r=-\sum_{i=0}^{n-1}\tilde{E}_i^{2}$. 
A.~Hulanicki \cite{Hu} proved that the operator $\Delta_r$ defined on 
$C_c^{\infty}(S)$ is essentially selfadjoint on $L^2(\lambda)$ and its spectrum is $[0,\infty)$; 
\item[(iii)] the left invariant operator $\Delta_{\ell}=-\sum_{i=0}^{n-1}\dot{E}_i^{2}$. 
The operator $\Delta_{\ell}$ defined on $C_c^{\infty}(S)$ is essentially selfadjoint on $L^2(\rho)$ and its spectrum is $[0,\infty)$. 
\end{itemize}
It may be worth observing that all these operators are, so to speak, relatives.  Indeed,
let $V$ denote the element 
$$
- \sum_{j=0}^{n-1} E_j^2
$$
of the enveloping algebra of $S$.  We denote by $\pi_{\ell}$ and $\pi_r$
the left and the right regular representations of $S$ on $L^2(\lambda)$,
and consider the operators $\pi_{\ell}(V)$ and $\pi_r(V)$. 
Let $\LQ$ denote the shifted operator $\LB-{Q^2}/{4}$.
It is straightforward to check that $\pi_{\ell}(V) = \D_r$,
and that $\pi_{r}(V) = \LQ$.

Considerable effort has been produced to study the so-called 
$L^p$ functional calculus for either operators $\LB$, $\D_\ell$ and $\D_r$.  
To illustrate the results in the literature concerning these operators, 
we need some more notation and terminology.
Let $(X,\mu)$ be a measure space and $A$ a linear, nonnegative, (possibly
unbounded) selfadjoint operator on $L^2(\mu)$. 
Let $\{E(\lambda)\}$ denote the spectral resolution of the identity 
for which $A = \int_0^{\infty}\lambda\di E(\lambda)$. 
By the spectral theorem, if $M$ is a bounded Borel measurable function on 
$[0,\infty)$, then the operator $M(A)$ defined by
$$
M(A)=\int_0^{\infty}M(\lambda)\di E(\lambda)   
$$
is bounded on $L^2(\mu)$.
An interesting and intensely investigated
problem is to find sufficient conditions on $M$ such 
that the operator $M(A)$ extends
either to bounded operator on $L^p(\mu)$, for some $p$ different from $2$, 
%i.e.
%$$\|M(A)f\|_{L^p(\mu)}\leq C\,\|f\|_{L^p(\mu)}\qquad \forall f\in L^p(\mu)\,,$$
or satisfies a weak type $1$ estimate, i.e.,
$$
\mu\big(\{ x\in X:~|M(A)f(x)|>t\}\big)
\leq C\,\frac{\|f\|_{L^1(\mu)}}{t}\qquad\forall t>0\quad \forall f\in L^1(\mu)\,.
$$
If $M(A)$ extends to a bounded operator on $L^p(\mu)$,
we say that $M$ is an $L^p(\mu)$ \emph{spectral multiplier} of $A$.

We say that a complex valued function $M$ on $[0,\infty)$ satisfies a 
\emph{Mihlin--H\"ormander condition} of order $\alpha$ if
$$
\sup_{\lambda >0} |\lambda^j\,D^j M(\lambda)| \leq C
\qquad \mathrm{for~} j=0,\ldots,\alpha\,.
$$
An operator $A$ is said to admit an $L^p$ \emph{Mihlin--H\"ormander type functional calculus} 
if every function which satisfies a Mihlin--H\"ormander condition of suitable order is 
a $L^p$-multiplier for $A$.

By contrast, we say that an operator $A$ admits an $L^p$ \emph{holomorphic functional calculus}
if every $L^p$-multiplier of $A$ extends to an holomorphic function
on some neighbourhood of its $L^2$ spectrum.

Going back to operators on $S$, we observe preliminarly that
$(\dot{E}_i f^{\lor})^{\lor}=\tilde{E_i}f$
for every smooth function $f$, where $f^{\lor}(x)=f(x^{-1})$. Thus 
$$
(\D_{\ell} f^{\lor})^{\lor}=\D_r f.
$$  
Since $\lor$ is an isometry between $L^p(\rho)$ and $L^p(\lambda)$, 
a function $M$ is a $L^p(\rho)$ multiplier for $\D_{\ell}$ 
if and only if it is a $L^p(\lambda)$ multiplier for $\D_{r}$. 
Thus the multiplier problems for $\D_\ell$ on $L^p(\rho)$  
and of $\D_r$ on $L^p(\lambda)$ are equivalent.  It is a matter of taste
which Laplacian to choose:  Hebisch and Steger worked with $\D_r$,
whereas we shall work with $\D_\ell$, which we shall denote simply by $\D$ in Chapter 1-4.

%Many authors studied the case where the group $S$ is defined as follows. 
%Let $G$ be a connected noncompact semisimple Lie group of finite centre and $G=NAK$ be 
%its Iwasawa decomposition. 
%Let $S$ denote the group $NA$ which can be identified with the symmetric space $G/K$. 
%The group $S$ is endowed with a left invariant Riemannian metric. 
%This is an {\it{exponential growth}} group, i.e. the measure of a ball 
%$B_r$ centred at the identity of radius $r$ grows as
%$$
%\lambda(B_r)\asymp r^{n}\qquad r\to 0\qquad{\rm{and}}\qquad 
%\lambda(B_r)\asymp \nep^{Q\,r}\qquad r\to \infty\,.$$
%In this context J.~L.~Clerc and E.~M.~Stein \cite{CS} proved that 
%the operator $\mathcal L_b$ has a holomorphic functional calculus.

An interesting and perhaps surprising fact is that if $p\neq 2$, then $\LB$ possesses
a $L^p$ holomorphic functional calculus, whereas $\D_\ell$ admits a
$L^p$ functional calculus of \MH type. 
The result for $\LB$ was obtained by F. Astengo \cite{A2}, who extended
to Damek--Ricci spaces classical results of
various authors on symmetric spaces of the noncompact type
\cite{CS, CGHM}. 
Astengo's result has recently been improved by A. Ionescu \cite{I1, I2} in
the case where $S$ is symmetric.

Results concerning $\D_r$, equivalently $\D_\ell$, are much more recent.
On $NA$ groups coming from the Iwasawa decomposition of
a noncompact semisimple Lie group with finite centre 
and arbitrary rank 
Cowling, S.~Giulini, Hulanicki and G.~Mauceri \cite{CGHM} proved  
that the Laplacian $\D_r$ admits a nonholomorphic $L^p(\lambda)$ functional calculus. 
Specifically, they showed that if a function $M$ satisfies a Mihlin--H\"ormander 
condition of suitable order at infinity and 
belongs locally to a suitable Sobolev space, then $M(\D_r)$ is of weak type $1$ and it is 
bounded on $L^p(\lambda)$ for $1<p<\infty$. 
Note that, because of the assumption on $M$ near $0$, 
this is not a true Mihlin--H\"ormander type condition.   
An important consequence of this fact is that the kernel of $M(\D_r)$ is
integrable at infinity, while locally it behaves like a \CZ singular kernel.

F.~Astengo \cite{A2} extended the result of \cite{CGHM} to all Damek--Ricci spaces. 

Subsequently,  Hebisch and Steger \cite{HS} sharpened the results in 
\cite{A2} in the case of solvable groups coming from the Iwasawa decomposition of the groups $SO(d+1,1)$.
Since their work will be key for us, we describe some of their results in some detail.
Their main contribution is that they realised 
that a \CZ theory of singular integrals may be developed also on certain
measured metric spaces of exponential growth.  
A measured metric space $(X,\mu,d)$ is said to be 
a \emph{Calder\'on--Zygmund space}, with \CZ constant $\kappa_0,$ 
provided that there exists a family $\mathcal{R}$ of open subsets of $X$, which we call \CZ sets, satisfying the following property: 
for each $R$ in $\mathcal{R}$
there exists a positive number $r$ such that 
$R$ is contained in a ball $B$ of radius at most $\kappa_0 \, r$ and 
$\mu(R^*)\leq  \kappa_0\,\mu(R)~~$, where$~~R^*$ is a dilated of $R$ defined by $R^*=\{x\in X~:~d(x,R)< r\}$.  Furthermore, 
each integrable function $f$ may be decomposed as 
$g + \sum_{i=1}^\infty b_i,$
where the ``good'' function $g$ is bounded, and each ``bad'' function 
$b_i$ has vanishing integral and
is supported in a set $R_i$ in $\mathcal{R}$.

Suppose now that $\mathcal{T}$ is an operator on a \CZ space $(X,\mu,d)$, 
which is bounded on 
$\ld\mu$ and admits a locally integrable kernel $K$ off the diagonal that satisfies
the following H\"ormander type condition (see \cite{S} for the
Euclidean case)
\begin{equation} \label{f: integral condition}
\sup_R \sup_{y,\,z \in R} \int_{(R^*)^c} |K(x,y) - K(x,z)| \di\mu(x) < \infty,
\end{equation}
where the supremum is taken over all \CZ sets $R$ in $\mathcal R$.
Then $\mathcal{T}$ extends to a bounded operator
on $\lp\mu$, for $1<p<2$, and is of weak type $1$.

Hebisch and Steger \cite{HS} proved that solvable groups coming from the Iwasawa decomposition
of the groups $SO(d+1,1)$ are \CZ spaces.
As an application, they showed that if $M$ satisfies a Mihlin--H\"ormander type condition 
of suitable order, then $M(\D_r)$ satisfies a weak type $1$ estimate and 
is bounded on $L^p(\lambda)$ for all $p$ in $(1,\infty)$.
The kernel of $M(\D_r)$ is, unlike in \cite{CGHM} and \cite{A2},
no longer integrable at infinity, and 
the full strength of the aforementioned result is required to control this singularity.

Recently, the multiplier result of \cite{HS} 
has been re-obtained by D. M\"uller and C. Thiele
\cite{MT} via a different method which hinges on estimates of the wave propagator.
The author, in collaboration with M\"uller, has obtained
similar estimates for all Damek--Ricci spaces, thereby extending the 
results of \cite{MT}.  This result is not included in this thesis, and will
appear elsewhere. 

The main results of this thesis are the following:
\begin{itemize}
\item[(i)]
all Damek--Ricci spaces are \CZ spaces (see Thm \ref{CZd});   
\item[(ii)]
maximal operators of 
Hardy--Littlewood type associated to various families of sets on 
Damek--Ricci spaces satisfy weak type $1$ estimates (see Thm \ref{maxopweak});
\item[(iii)]
an analogue of the multiplier result of Hebisch and Steger for all
Damek--Ricci spaces and to their direct products holds (see Thm \ref{moltiplicatori} and Thm \ref{moltiplicatorip});
\item[(iv)]
we introduce an atomic Hardy space on \CZ spaces, identify
its topological dual with a suitable space of bounded mean oscillation, 
and complement Hebisch and Steger's result concerning
singular integral operators by showing that under condition (\ref{f: integral condition})
above the operator $\mathcal{T}$ extends to a bounded operator from $H^1$ to $L^1$ (see Thm \ref{TeolimH1}).   
\end{itemize}

We shall now briefly discuss the results (i)-(iv).

To prove Theorem \ref{CZd} we need to envisage on each Damek--Ricci
space $S$ a family of sets $\mathcal{R}$, referred to as \emph{family of
\CZ sets}, that satisfy the requirements of Definition \ref{CZdef}.  
Hebisch and Steger \cite{HS} defined an appropriate family of sets, referred to as \emph{family of admissible sets}, on solvable groups coming from the Iwasawa decomposition
of the groups $SO(d+1,1)$: it consists of ``rectangles'' of the form $Q\times I$,
where $Q$ is a dyadic cube of $\RR^d$ and $I$ is an interval in $\RR^+$,
satisfying a certain admissibility condition, which relates their sizes.  

It is tempting to extend this definition of admissible sets
to all Damek--Ricci spaces by strict analogy.  
Unfortunately, this does not work: indeed, roughly speaking, the right measure of a ``small rectangle'' $R$ and its dilated set $R^*$ are not comparable, while they are comparable for a ``big rectangle'' (see Section \ref{CZdecextH} for details). Thus we consider the family $\mathcal{R}$ of
admissible sets consisting of ``rectangles'' of the form $Q\times I$ much as before, but of big size and geodesic balls of small radius.
The proof that this family makes $(S,\rho,d)$ a Calder\'on--Zygmund space 
is quite involved and technically difficult and occupies Section \ref{CZdecextH}.

It is interesting to observe that the family of admissible sets of 
Hebisch and Steger made its appearance in the literature in a paper of S. Giulini
and P. Sj\"ogren \cite{GS}, where they proved a weak type $1$ estimate
for the associated maximal operator of Hardy--Littlewood type on the affine 
group of the real line.
It is therefore natural to investigate whether a similar estimate
holds for the maximal operator
of Hardy--Littlewood type associated to the family $\mathcal{R}$ of admissible
sets we have defined on a generic Damek--Ricci space.

To be specific, for each $x$ in $S$ let $\mathcal{R}(x)$ denote
the subcollection of $\mathcal{R}$ of 
all sets in $\mathcal{R}$ which contain $x$.  
We consider the left invariant Hardy--Littlewood type maximal operator 
$M^{\mathcal R}$, whose action on $f$ in $L^1_{\rm{loc}}(\rho)$ is
$$
M^{\mathcal R}f(x)
= \sup_{R\in\mathcal{R}(x)}\frac{1}{\rho(R)}\int_{R} |f|
\dir\qquad \forall x \in S.
$$ 
It may we worth remarking that the maximal operator $M^{\mathcal R}$  
appears in the \CZ decomposition of integrable functions on $S$. 
We shall prove (see Theorem \ref{maxopweak} below) that $M^{\mathcal R}$  
is of weak type $1$, and that a similar estimate holds for a 
maximal operator associated to a family related to $\mathcal{R}$. 
It is remarkable that the proof of the weak type $1$ inequality is standard, 
but it is based on a nonstandard and perhaps surprising
covering lemma of Vitali type (Lemma \ref{famnice} below).

%Let $\Gamma$ be the affine group of the real line and 
%$\mathcal F$ be the family of sets 
%$F_{r,\,\beta}=(-r^{\beta},r^{\beta})\times (1/r,r)$, 
%for $r>1$ and $\beta>1/2$. 
%We consider the left invariant maximal operator defined by
%$$
%Nf(x)
%= \sup_{r>1}\frac{1}{\rho(x\,F_{r,\,\beta})}
%\int_{x\,F_{r,\,\beta}}|f|\dir\qquad\forall f\in L^1_{\rm{loc}}(\Gamma)\,.
%$$
%Giulini and Sj\"ogren proved that it is of weak type $(1,1)$. 
%Note that this is a centred maximal operator, while $M^{\mathcal F}$ is noncentred.
%
%The natural generalization of these sets to harmonic extensions of $H$-type groups is given by sets $F_{r,\,\beta}=B_N(n,r^{\beta})\times (a_0/r,a_0\,r)$, where $n\in N$, $a_0\in\RR^+$ and  $r\geq \nep$ (we are looking for ``big'' sets). Let $\mathcal R^{\infty}$ be the family of such sets. In this thesis we prove that the associated maximal operator $M^{\mathcal R^{\infty}}$ is of weak type $(1,1)$. 
%

%To do it they defined a suitable family of {\it{admissible sets}} which are involved in the \CZ decomposition: they are products of dyadic sets in $\RR^d$ and intervals in $\RR^+$.

Next we discuss briefly the results in (iii).  
Let $\psi$ be a function in $C^{\infty}_c(\RR^+)$, 
supported in $[1/4,4]$, such that $\sum_{j\in\ZZ}\psi(2^{-j}\lambda)=1$ 
for all $\lambda\in\RR^+$. 
We define $\|M\|_{0,s_0}$ and $\|M\|_{\infty,s_{\infty}}$ thus:
\begin{align*}
\|M\|_{0,s}&=\sup_{t<1}\|M(t\cdot)\,\psi(\cdot)\|_{H^s(\RR)}\,,\\
\|M\|_{\infty,s}&=\sup_{t\geq 1}\|M(t\cdot)\,\psi(\cdot)\|_{H^s(\RR)}\,,
\end{align*}
where $H^s(\RR)$ denotes the $L^2$-Sobolev space of order $s$ on $\RR$. 
We say that a bounded Borel measurable function $M$ defined on $\RR^+$ satisfies a 
{\it{mixed \MH condition of order $(s_0,s_{\infty})$}} if 
$\|M\|_{0,s_0}<\infty$ and $\|M\|_{\infty,s_{\infty}}<\infty$.

Our multiplier theorem for the Laplacian $\D_{\ell}$ is the following.

\vspace{0,3cm}
{\bf{Theorem 1. }}{\it{Suppose that $s_0>3/2$ and $s_{\infty}>\max\left\{{3/2},{n}/{2}\right\}$, where $n$ denotes the 
dimension of $S$.  If $M$ is a bounded Borel measurable function on $\RR^+$ 
that satisfies a mixed \MH condition of order $(s_0,s_{\infty})$, then $M(\D_{\ell})$
is of weak type $1$ and bounded on $L^p(\rho)$, for all $p$ in $ (1,\infty)$.}}
\vspace{0,3cm}

The proof of Theorem 1 follows the same line as that of \cite[Thm 2.4]{HS}.
Since $(S,\rho,d)$ is a Calder\'on--Zygmund space, as we have shown 
in Theorem \ref{CZd}, we may apply the basic result of 
Hebisch and Steger concerning singular integral operators
on Calder\'on--Zygmund spaces.   Thus, we are led
to prove that (\ref{f: integral condition}) holds with the kernel 
of $M(\D_\ell)$ in place of $K$.  
This may be further reduced to showing that 
$$
\|{\nabla h_t}\|_{L^1(\rho)}
\leq C \, t^{-1/2} \qquad\forall  t \in \RR^+,
$$
where $h_t$ denotes the heat kernel associated to $\D_\ell$.
This estimate was obtained by Hebisch and Steger by a descent method,
which reduces the estimate to the case where $n$ is odd, and the
formulae for the inverse spherical transform easier. 
The descent method seems to be unapplicable in our generality.  
Therefore, we have to prove the gradient estimate above
by finding poitwise estimates for the inverse spherical Fourier transform 
of the heat kernel $h_t$, and then estimating its gradient.   
This is hard, at least in the case where the dimension of the centre $\zg$ of the algebra $\n$ 
is odd, and requires fine estimates of some integrals. 

An interesting and possibly very hard question which arises
naturally is the following:  to what extent the theory developed 
on Damek--Ricci spaces may be generalised to $AN$ groups coming
from the Iwasawa decomposition of a noncompact semisimple Lie 
group of arbitrary rank.  As a first attempt to grasp
the problem, we consider the simplest possible
model case, i.e., the product of two Damek--Ricci spaces,
and show that an analogue of Theorem~1 holds.
Computations on the group of upper triangular matrices in 
$SL(3,\RR)$ are at a preliminary stage, and we cannot say 
whetherthey will give the desired extension.

Finally we comment briefly on (iv).  A classical method
to obtain sharp boundedness results for multipliers is
to prove endpoint results which involve the Hardy space $H^1$ or
the space $BMO$ of bounded mean oscillation \cite{CW2,FeS,S}. 
For a spectacular application of this method, see, for instance, \cite{FS},
where sharp estimates for the regularity of solutions to 
the wave equation is obtained. 
It is natural to ask whether it is possible to develop an 
$H^1$--$BMO$ theory on Damek--Ricci spaces.  

Our results in this direction are of preliminary nature. 
For each $q$ in $(1,\infty)$ we define, on a 
Calder\'on--Zygmund space satisfying an additional condition of technical nature,
 an atomic Hardy space
$H^{1,\,q}$.  Atoms are functions supported on
admissible sets, with vanishing integral and satisfying
a certain size condition.
We also define a space of functions of bounded mean oscillation
$BMO_{p}$, $1<p<\infty$, and prove that the topological dual of
$H^{1,\,q}$ is isomorphic to $BMO_{q'}$,
where $q'$ denotes the index conjugate to $q$ (see Theorem \ref{duality} below).
Further, we show that a singular integral operator, whose kernel
$K$ satisfies (\ref{f: integral condition}), extends to a bounded
operator from $H^{1,\,q}$ to $L^1(\mu)$ for all $q$ in $(1,2)$
(see Theorem \ref{TeolimH1} below).
 
An important feature of the classical theory is that
all the spaces $H^{1,q}$, $q$ in $(1,\infty)$, are equivalent. 
Furthermore, they are equivalent to the space $H^{1,\infty}$,
which is defined in terms of $(1,\infty)$-atoms. 
A similar comment applies to $BMO_p$ spaces.
We extend this result to the case where $S$ is associated to solvable groups coming from the Iwasawa decomposition
of the groups $SO(d+1,1)$.  The proof is quite involved
and there seems no easy way to extend it to all Damek--Ricci spaces
(see Lemma \ref{coincidono} below). 
As a consequence of this result, we show that spectral multipliers
for $\D_\ell$ extend to bounded operators from $H^1$ to
$L^1(\rho)$ and from $L^\infty(\rho)$ to $BMO$.

The major drawback of the aforementioned results is that 
it is not clear how to prove that the intermediate space
between $H^1$ and $L^2$ is $L^p$, with $p$ in $(1,2)$,
and thus obtain $L^p$ results by interpolation between
$H^1$--$L^1$ and $L^2$ results.
This seems to be a difficult problem, which will be
the object of further research.

\vspace{1cm}

The thesis consists of four chapters. 

In Chapter 1 we introduce Heisenberg type groups and their harmonic extensions, and we recall the basic spherical analysis on these spaces. Then we introduce products of \DR spaces and recall spherical analysis results on these product spaces.
 
In Chapter 2 we study some left invariant Hardy--Littlewood 
type maximal operators on \DR spaces and on products of \DR spaces, focusing our attention on their weak type $1$ boundedness. 

In Chapter 3 we recall the general \CZ theory of Hebisch and Steger and develop a $H^1$-$BMO$ theory on \CZ spaces, proving a result which concerns the boundedness of sigular integral operators from $H^1$ to $L^1$. Then we show that \DR spaces and products of \DR spaces are \CZ spaces.
 
In Chapter 4 we study spectral multipliers for the left invariant 
Laplacian $\D_{\ell}$ and prove Theorem 1. Then we generalize the multiplier theorem to products of \DR spaces.

\chapter{Notation and preliminary results}
\begin{intro*}
In this chapter we recall the definition of $H$-type algebras: they are two-step nilpotent Lie algebras endowed with a suitable inner product. Then we introduce $H$-type groups, which are Lie groups whose Lie algebra is of $H$-type, and study their properties. In Section \ref{Hext} we define the harmonic extension of an $H$-type group and discuss some geometrical properties of this space. Then in Section \ref{sphericalanalysis} we summarize the basic spherical harmonic analysis on harmonic extensions of $H$-type groups: we recall the notions of spherical Fourier transform, spherical functions and Abel transform.

Finally in Section \ref{products} we introduce direct products of harmonic extensions of $H$-type groups and recall the basic spherical analysis on these product spaces.

For the details see \cite{ADY, A1, AD, CDKR1, CDKR2, D1, D2, D3, DR1, DR2, D, K}.
\end{intro*}
\section{H-type algebras}
Let $\n$ be a Lie algebra equipped  with an inner product $\langle\cdot,\cdot\rangle$ and denote by $|\cdot|$ the associated norm. Let $\vg$ and $\zg$ be complementary orthogonal subspaces of $\n$ such that $[\n,\zg ]=\{0\}$ and $[\n,\n]\subseteq \zg$. In particular $\n$ is two-step nilpotent, unless $\vg$ or $\zg$ is trivial, when $\n$ is abelian. Unless explicitly stated, we assume that $\n$ is nonabelian. 

We define the map $J:\zg\to End(\vg)$ by 
$$\langle J_ZX,Y\rangle\,=\,\langle Z,[X,Y]\rangle\qquad\forall X, Y\in \vg \quad\forall Z\in\zg\,.$$
\begin{defi}
The algebra $\n$ is said to be of Heisenberg type, briefly, an {\rm{$H$-type algebra}}, if the map $J$ satisfies one of the following equivalent conditions:
\begin{itemize}
\item[(i)]$|J_ZX|\,=\,|Z||X|$ for all $X$ in $\vg$ and $Z$ in $\zg\,;$
\item[(ii)]$J_Z^2=-|Z|^2\,Id_{\vg}$ for all $Z$ in $\zg$;
\item[(iii)]$\langle J_ZX,J_{Z'}X'\rangle+\langle J_ZX',J_{Z'}X\rangle=2\,\langle X,X'\rangle\,\langle Z,Z'\rangle $ for all $X,X'$ in $\vg$ and $Z,Z'$ in $\zg$;
\item[(iv)]$J_ZJ_{Z'}+J_{Z'}J_Z=-2\langle Z,Z'\rangle\,Id_{\vg}$ for all $Z,Z'$ in $\zg$.
\end{itemize} 
The connected and simply connected Lie group $N$ associated to $\n$ is called an {\rm{$H$-type group}}.
\end{defi}
We now give examples of $H$-type algebras. 
\begin{ese}
{\rm{Let $\FF$ denote one of the fields $\RR$, $\CC$ or $\HH$ and consider on $\FF$ the inner product $\langle z,z'\rangle={\rm{Re}}(z\,\bar{z'})$ for all $z,z'\in\FF$. We define $\n=\FF^k\times \FF^k\times \FF$ endowed with the inner product
$$\langle(x,y,z),(x',y',z') \rangle=\sum_{i=1}^k\langle x_i,x_i'\rangle+\sum_{i=1}^k\langle y_i,y_i'\rangle+\langle z,z'\rangle\,,$$
and the bracket
$$[(x,y,z),(x',y',z')]=\sum_{i=1}^k(x_i\,y_i'-x_i'\,y_i)\,,$$
for all $(x,y,z),(x',y',z')\in \n\,.$ We have that $\vg=\FF^k\times\FF^k$ and $\zg=\FF$ are two orthogonal subspaces such that $[\n,\zg ]=\{0\}$ and $[\n,\n]\subseteq \zg$. The map $J$ is defined by
$$J_z\big((x,y)\big)=(-z\,\bar{y},z\,\bar{x})\qquad \forall (x,y)\in\FF^k\times\FF^k\quad z\in\FF\,.$$
Thus $|J_z(x,y)|=|z|\,|(x,y)|$ forall $(x,y)$ in $\FF^k\times\FF^k$ and $z$ in $\FF$. Then $\n$ is an $H$-type algebra.}}
\end{ese}
\begin{ese}{\rm{ (see \cite{CDKR1, CDKR2}) Let $\g$ be a noncompact, semisimple Lie algebra of real rank one. Let $B$ be the Killing form defined by
$$B(X,Y)={\rm tr}({\rm ad}X\circ {\rm ad}Y)\qquad\forall X, Y\in\g\,,$$
and let $\theta$ be a Cartan involution of $\g$. We denote by $\kg$ and $\p$ the eigenspaces corresponding to the eigenvalues $+1$ and $-1$, i.e.
$$\kg=\{ X\in\g:~\theta X=X\}\quad \p= \{ X\in\g:~\theta X=-X\}\,.$$
Let $\ag$ be a maximal abelian subalgebra of $\p$. Since $\g$ has rank one, $\ag$ is one-dimensional; let $H$ be a vector which spans $\ag$. For each linear form $\alpha\in \ag ^*$, we define
$$\g_{\alpha}=\{X\in\g:~{\rm ad}H(X)=[H,X]=\alpha(H)X\}\,.$$
If $\g_{\alpha}\neq 0$, $\alpha$ is called a root of $\g$; $\alpha$ is called positive if $\alpha(H)>0$. There are at most two positive roots $\alpha$ and $2\alpha$ and the Lie algebra $\g$ has the following decomposition:
$$\g=\g_{-2\alpha}\oplus \g_{-\alpha}\oplus\g_0\oplus\g_{\alpha}\oplus\g_{2\alpha}\,.$$
Let $\n$ denote the subalgebra $\g_{\alpha}\oplus\g_{2\alpha}$ of $\g$. Since $[\g_{\alpha},\g_{\beta}]\subseteq\g_{\alpha+\beta}$ for any roots $\alpha$ and $\beta$, $\n$ is a nilpotent Lie algebra. The following decomposition of $\g$, which is called Iwasawa decomposition, holds:
$$\g=\n\oplus\ag\oplus\kg\,.$$ 
We define an inner product in $\n$ as
$$\langle X,Y\rangle\,=\,-\frac{1}{m_{\alpha}+4m_{2\alpha}}\,B(X,\theta Y)\qquad\forall X, Y\in \n\,,$$
where $m_{\alpha}$ and $m_{2\alpha}$ denote the dimension of $\g_{\alpha}$ and $\g_{2\alpha}$ respectively. The map $J$ is given by $J_Z X=[Z,\theta X]$ and 
$$|J_ZX|=|Z|\,|X| \qquad\forall Z\in\g _{2\alpha}\quad \forall X\in\g _{\alpha}\,.$$
Then $\n$ is an algebra of Heisenberg type. Moreover one can show that $\n$ satisfies an additional condition, known as ``$J^2$ condition'', which is defined below: for all $X$ in $\vg$ and all orthogonal $Z, Z'$  in $\zg$ there exists $Z''$ in $\zg$ such that $J_ZJ_{Z'}X=J_{Z''}X$.

The classification of $H$-type algebras satisfying the $J^2$ condition is described in \cite{CDKR1}. Cowling, Dooley, Kor\'anyi and Ricci showed that an $H$-type algebra satisfies the $J^2$ condition if and only if it appears in the Iwasawa decomposition of a noncompact semisimple Lie algebra of real rank one.}}
\end{ese}
\bigskip
Let $N$ be an $H$-type group. We identify $N$ with its Lie algebra $\n$ via the exponential map
\begin{align*}
\vg\times\zg &\to N\\
(X,Z)&\mapsto \exp(X+Z)\,.
\end{align*}
The product law in $N$ is
$$(X,Z)(X',Z')=\big(X+X',Z+Z'+({1}/{2})\,[X,X']\big)\qquad\forall X,\,X'\in \vg\quad\forall Z,\,Z'\in\zg\,.$$
The group $N$ is a two-step nilpotent, hence unimodular, group with Haar measure $\di X \di Z$.
We define the following dilations on $N$:
\begin{align*}
\delta_a(X,Z)&=(a^{1/2}\,X,a\,Z)\qquad\forall(X,Z)\in N \quad\forall a\in\RR^+\,.
\end{align*}
Set $Q={(m_{\vg}+2m_{\zg})}/{2}\,$, where $m_{\vg}$ and $m_{\zg}$ denote the dimensions of $\vg$ and $\zg$ respectively. For each measurable subset $E$ of $N$ we have that
$|\delta_a E|=a^Q\,|E|$, for all $a$ in $\RR^+$. The group $N$ is an homogeneous group with homogeneous norm
$$\mathcal N (X,Z)=\left(\frac{|X|^4}{16}+|Z|^2\right)^{1/4}\qquad\forall(X,Z)\in N\,.$$
Note that $\mathcal N\big(\delta_a(X,Z)\big)=a^{1/2}\mathcal N(X,Z)$. We denote by $d_N$ the homogeneous distance on $N$ which is defined by
$$d_N\big((X_0,Z_0),(X,Z) \big)=\mathcal N\big((X_0,Z_0)^{-1}(X,Z)\big)\qquad\forall (X_0,Z_0),\,(X,Z)\in N\,.$$
Given $(X_0,Z_0)$ in $N$ and $r>0$, the homogeneous ball centred at $(X_0,Z_0)$ of radius $r$ is 
$$B_N\big((X_0,Z_0),r\big)=\big\{(X,Z)\in N:~d_N\big((X_0,Z_0),(X,Z)\big)<r\big\}\,.$$
Obviously, $B_N\big((X_0,Z_0),r\big)=\delta_{r^2}B_N\big((X_0,Z_0),1\big)$ and its measure is $r^{2Q}|B_N(0_N,1)|$.

It is useful to emphasize  the relation between the homogeneous norm $\mathcal N$ and the norm $|\cdot|$ induced by the inner product on $N$.  
\begin{prop}\label{normaN}
There exist constants $c_1$ and $c_2$ such that if $(X,Z)\in N$ and $\mathcal N(X,Z)=r$, then
\begin{itemize}
\item [(i)]$|X|\leq c_2\,r~$ and $~|Z|\leq c_2\,r^2$;
\item [(ii)] either $~|X|\geq c_1\,r~$ or $~|Z|\geq c_1\,r^2$.
\end{itemize}
\end{prop}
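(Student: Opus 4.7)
The proposition is really just an unpacking of the definition $\mathcal{N}(X,Z)^4 = |X|^4/16 + |Z|^2$, and the plan is essentially to read off both inequalities from this single identity. First I would rewrite the hypothesis $\mathcal{N}(X,Z) = r$ as the equation
\[
\frac{|X|^4}{16} + |Z|^2 = r^4.
\]
Both summands on the left are nonnegative, and this will be the only ingredient used in both parts.

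For part (i), since each summand is bounded above by $r^4$, I would immediately conclude $|X|^4 \leq 16\, r^4$, giving $|X| \leq 2 r$, and $|Z|^2 \leq r^4$, giving $|Z| \leq r^2$. Taking $c_2 = 2$ works.

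For part (ii), I would argue by the pigeonhole principle on the same identity: since the two nonnegative summands sum to $r^4$, at least one of them must be at least $r^4/2$. If $|X|^4/16 \geq r^4/2$, then $|X| \geq 2^{3/4} r$; if instead $|Z|^2 \geq r^4/2$, then $|Z| \geq r^2/\sqrt{2}$. Thus $c_1 = \min\{2^{3/4}, 1/\sqrt{2}\} = 1/\sqrt{2}$ (or any smaller constant) does the job.

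There is no real obstacle here; the statement is purely about comparing the quartic homogeneous norm $\mathcal{N}$ with the Euclidean pieces $|X|$ and $|Z|$, and the proof reduces to two one-line estimates from the defining identity. The only thing to be mindful of is that the two components scale differently under $\delta_a$ (with weights $1/2$ and $1$ respectively), which is precisely why $|X|$ is compared with $r$ and $|Z|$ with $r^2$ rather than with the same power.
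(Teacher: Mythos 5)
Your proof is correct, but it takes a genuinely different and more elementary route than the paper. The paper proves the proposition by a compactness argument: it writes $(X,Z)=\delta_{r^2}(X_0,Z_0)$ with $\mathcal N(X_0,Z_0)=1$, invokes the fact that closed balls for a homogeneous norm are compact (citing Folland--Stein), deduces that the continuous functions $|X_0|$, $|Z_0|$ are bounded above on the unit sphere and that $\max\{|X_0|,|Z_0|\}$ is bounded below by a positive constant there, and then scales back. Your proof instead reads both bounds directly off the explicit formula $\mathcal N(X,Z)^4 = |X|^4/16 + |Z|^2$: part (i) because each nonnegative summand is at most $r^4$, and part (ii) by pigeonhole, since at least one summand must be at least $r^4/2$. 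The trade-off is instructive: the paper's compactness argument is structural and would work for \emph{any} homogeneous norm on $N$ without ever looking at a formula, whereas your argument is tied to this particular $\mathcal N$ but is shorter, avoids any appeal to compactness, and produces explicit constants ($c_2=2$, $c_1=1/\sqrt 2$), which the paper's proof does not. Both are complete and correct.
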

\begin{proof}
Let $(X,Z)$ be in $N$ such that ${\mathcal{N}}(X,Z)=r$. Obviously $(X,Z)=\delta_{r^2}(X_0,Z_0)=(r\,X_0,r^2\,Z_0)$, for some $(X_0,Z_0)$ such that ${\mathcal N}(X_0,Z_0)=1$. Since in homogeneous groups the closed ball $\overline{B_N\big((0,0),1\big)}$ is compact \cite[Lemma 1.4]{FS}, there exist constants $c_1$ and $c_2$ such that
\begin{align*}
|X_0|&\leq c_2   & &{\rm and} & |Z_0|&\leq c_2;\\
{\rm either}~|X_0|&\geq c_1  & &{\rm or} & |Z_0|&\geq c_1\,.
\end{align*}
%\begin{eqnarray*}
%|X_0|\leq c_2~~&&{\rm and}~~~~~~~~~|Z_0|\leq c_2;\\
%{\rm either}~~ |X_0|\geq c_1~~&&{\rm or}~~~~~~~~~|Z_0|\geq c_1\,.
%\end{eqnarray*}
It follows that the norms of $X$ and $Z$ satisfy properties (i) and (ii):  
\begin{align*}
|X|=r|X_0|&\leq c_2\,r & &{\rm and}  & |Z|&=r^2|Z_0|\leq c_2\,r^2;\\
{\rm either}~ |X|=r|X_0|&\geq c_1\,r & &{\rm or}  & |Z|&=r^2|Z_0|\geq c_1\,r^2\,,
\end{align*}
as required.
%\begin{eqnarray*}
%|X|=r|X_0|\leq c_2r~~&&{\rm and}~~~~~~|Z|=r^2|Z_0|\leq c_2r^2;\\
%{\rm either}~~ |X|=r|X_0|\geq c_1r~~&&{\rm or}~~~~~~|Z|=r^2|Z_0|\geq c_1r^2\,.
%\end{eqnarray*}
\end{proof}
Next we recall an integration formula in polar coordinates on $N$ \cite[Proposition 1.15]{FS}.
\begin{prop}\label{integrationN}
There exists a Radon measure $\sigma$ on $\Sigma =\{(X',Z')\in N:~\mathcal N(X',Z')=1\}$ such that for all $f$ in $ L^1(N)$ 
\begin{align*}
\int_Nf(X,Z)\di X\di Z&=\int_0^{\infty}\int_{\Sigma}f\big(\delta_r(X',Z')\big)\,r^{Q-1}\di\sigma (X',Z')\di r\,.\\
%&=&\int_{\RR++}\int_{\Sigma}f(\delta_a(X',Z'))\,a^{2Q-1}\di\sigma (X',Z')\di a\,.
\end{align*}
%In particular if $f$ is such that $f(X,Z)=f_0(\mathcal N((X,Z)))$ for all $(X,Z%)\in N$ then
%$$\int_Nf(X,Z)dXdZ=C\int_0^{\infty}f_0(r^{1/2})dr\,.CONTROLLA!!!$$
\end{prop}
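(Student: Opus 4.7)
The plan is to follow the standard argument of Folland and Stein (this is essentially Proposition 1.15 of the monograph already cited in the statement), adapted to the fact that here $\mathcal N$ is homogeneous of degree $1/2$ rather than $1$ with respect to the dilations $\delta_r$; this mismatch merely produces the factor $r^{Q-1}$ (instead of $r^{2Q-1}$) and can be absorbed in the normalisation of $\sigma$.

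First I would observe that the map
$$\Phi:(0,\infty)\times\Sigma\to N\setminus\{(0,0)\},\qquad \Phi\bigl(r,(X',Z')\bigr)=\delta_r(X',Z'),$$
is a homeomorphism: given $(X,Z)\neq(0,0)$, the identity $\mathcal N(\delta_t(X,Z))=t^{1/2}\mathcal N(X,Z)$ shows that the unique decomposition is $r=\mathcal N(X,Z)^2$ and $(X',Z')=\delta_{1/r}(X,Z)\in\Sigma$. Fix, once and for all, a non-negative continuous function $\psi$ on $(0,\infty)$, compactly supported in $(0,\infty)$, with $\int_0^\infty\psi(r)\,r^{Q-1}\,dr=1$, and define a positive linear functional on $C(\Sigma)$ by
$$\langle\sigma,g\rangle=\int_N\psi\bigl(\mathcal N(X,Z)^2\bigr)\,g\bigl(\delta_{1/\mathcal N(X,Z)^2}(X,Z)\bigr)\,dX\,dZ.$$
Since $\Sigma$ is compact, the Riesz representation theorem yields a finite positive Radon measure $\sigma$ on $\Sigma$.

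To identify $\sigma$ as the desired measure I would proceed in two steps. First, verify that $\sigma$ does not depend on the choice of $\psi$: given another admissible $\psi_1$, the change of variable $(X,Z)\mapsto\delta_t(X,Z)$, which multiplies the Haar measure by $t^Q$ and $\mathcal N^2$ by $t$, reduces the difference $\langle\sigma_\psi-\sigma_{\psi_1},g\rangle$ to a multiple of $\int_0^\infty(\psi-\psi_1)(r)\,r^{Q-1}\,dr$, which vanishes by the normalisation. Second, verify the polar formula on the dense subclass of functions $f(X,Z)=h\bigl(\mathcal N(X,Z)^2\bigr)\,g\bigl(\delta_{1/\mathcal N(X,Z)^2}(X,Z)\bigr)$, with $h\in C_c((0,\infty))$ and $g\in C(\Sigma)$: applying the same dilation change of variables shows that
$$\int_N f(X,Z)\,dX\,dZ=\Bigl(\int_0^\infty h(r)\,r^{Q-1}\,dr\Bigr)\,\langle\sigma,g\rangle,$$
which coincides with the right-hand side of the statement applied to this $f$.

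The formula then extends to all $f\in L^1(N)$ by a standard density argument: linear combinations of such product functions are dense in $C_c(N\setminus\{(0,0)\})$ by Stone--Weierstrass (they separate points of $(0,\infty)\times\Sigma$ and contain the constants), $C_c(N\setminus\{(0,0)\})$ is dense in $L^1(N)$ since $\{(0,0)\}$ is a Lebesgue-null set, and Fubini--Tonelli together with monotone convergence propagate the identity to all non-negative measurable $f$, and thence to $L^1(N)$. The only slightly delicate point is the independence of $\sigma$ from $\psi$ in the second step; once one uses the homogeneity $|\delta_t E|=t^Q|E|$ and $\mathcal N\circ\delta_t=t^{1/2}\mathcal N$, it reduces to a one-line computation. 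Everything else is routine bookkeeping.
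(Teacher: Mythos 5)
The paper does not actually prove this proposition: it is stated as a recalled fact with the citation to Folland--Stein, Proposition~1.15, and no proof is supplied in the thesis. Your proposal is therefore a reconstruction of the cited argument rather than a comparison with an argument in the text, and as a reconstruction it is essentially correct. You have properly identified the one adaptation that is needed: the paper's dilations satisfy $\mathcal N\circ\delta_r = r^{1/2}\mathcal N$ and $|\delta_r E| = r^Q|E|$, so the natural radial parameter is $r=\mathcal N(X,Z)^2$, the angular part is $\delta_{1/r}(X,Z)$, and the Jacobian weight comes out as $r^{Q-1}$ (as opposed to the $r^{2Q-1}$ one would get with the Folland--Stein dilations $(X,Z)\mapsto(bX,b^2Z)$ and the degree-one norm). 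The definition of $\sigma$ via a fixed bump $\psi$ and Riesz representation, the verification on product functions, and the density extension are all in order.

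The one place where your write-up is more of a gesture than an argument is the step establishing that $\sigma$ is independent of the choice of $\psi$. The dilation change of variables $(X,Z)\mapsto\delta_t(X,Z)$ by itself gives the relation $\langle\sigma_\psi,g\rangle = t^Q\langle\sigma_{\psi(t\cdot)},g\rangle$; it does not immediately ``reduce the difference to a multiple of $\int_0^\infty(\psi-\psi_1)(r)\,r^{Q-1}\,\mathrm{d}r$.'' The Folland--Stein route is to integrate $\langle\sigma_{\psi_1},g\rangle$ against a second admissible $\psi_2(t)\,t^{Q-1}\,\mathrm{d}t$, perform the dilation change of variables, and swap the order of integration by Fubini; this exhibits $\langle\sigma_{\psi_1},g\rangle = \langle\sigma_\Psi,g\rangle$, where $\Psi$ is the multiplicative convolution $\Psi(r)=\int_0^\infty\psi_2(t)\,\psi_1(r/t)\,\mathrm{d}t/t$, and the commutativity of multiplicative convolution (together with the check that $\Psi$ is again admissible) finishes the argument. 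This is a well-known one-paragraph lemma, but it deserves to be spelled out rather than compressed into a phrase, since it is the linchpin of the whole construction.
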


\section{Harmonic extensions of $H$-type groups}\label{harmonicextension}\label{Hext}
Let $\n$ be an $H$-type algebra and let $\ag$ be a $1$-dimensional Lie algebra with an inner product, spanned by the unit vector $H$. We denote by $\s$ the Lie algebra $\n\oplus\ag$ where the Lie bracket is determined by linearity and the requirement that
\begin{align*}
[H,X] &=\frac{1}{2}X\qquad \forall X\in\vg \\
[H,Z] &= Z\qquad\forall Z\in\zg\,.
\end{align*}
We extend the inner products on $\n$ and $\ag$ to $\s$ by requiring that $\n$ and $\ag$ be orthogonal. The algebra $\s$ is a solvable Lie algebra. Let $N$, $A$ and $S$ be the connected and simply connected groups which correspond to $\n$, $\ag$ and $\s$. We say that $S$ is the \emph{harmonic extension of the $H$-type group $N$}. It is also called a \emph{\DR space}. The map
\begin{align*}
\vg\times\zg\times\RR^+ &\to S\\
(X,Z,a)&\mapsto \exp(X+Z)\exp(\log a \,H)
\end{align*}
gives global coordinates on $S$. The product in $S$ is given by the rule
$$(X,Z,a)(X',Z',a')=\big(X+a^{1/2}X',Z+a\,Z'+1/2\,a^{1/2}\,[X,X'],a\,a'\big )$$
for all $(X,Z,a),\,(X',Z',a')\in S$. Let $e=(0,0,1)$ be the identity of the group $S$. We shall denote by $n=m_{\vg}+m_{\zg}+1$ the dimension of $S$. The
group $S$ is nonunimodular: the right and left Haar measures on
$S$ are  given by $\dir(X,Z,a)=a^{-1}\di X\di Z\di a\,$ and
$\dil(X,Z,a)=a^{-(Q+1)}\di X\di Z\di a\,$ respectively. Then the modular function is $\delta(X,Z,a)=a^{-Q}$. We denote by $L^p(\rho)$, $1\leq p<\infty$,  the space of all measurable functions $f$ such that $\int_S|f|^p \dir<\infty$ and by $\lorentz{1}{\infty}{\rho}$
the Lorentz space of all measurable functions $f$ such that
$$\sup_{t>0}\,t\,\rho\big(\{x\in S:~|f(x)|>t\}\big)<\infty\,.$$
We equip $S$ with the left invariant Riemannian metric which agrees with the inner product on $\s$ at the identity $e$. Let $d$ denote the distance induced by this Riemannian structure. It is well known \cite[formula (2.18)]{ADY} that 
\begin{align}\label{distanza}
\cosh ^2\left(\frac{d\big((X,Z,a),e\big)}{2}\right)=\left(\frac{a^{1/2}+a^{-1/2}}{2}+\frac{1}{8}\,a^{-1/2}|X|^2\right)^2+\frac{1}{4}\,a^{-1}|Z|^2\,,
\end{align}
for all $(X,Z,a)\in S$. We denote by $B\big((X_0,Z_0,a_0),r\big)$ the ball in $S$ centred at $(X_0,Z_0,a_0)$ of radius $r$. In particular let $B_r$ denote the ball of centre $e$ and radius $r$. Note that \cite[formula (1.18)]{ADY} there exist positive constants $\gamma_1$, $\gamma_2$ such that for all $r$ in $(0,1)$
\begin{align}\label{misurapalle1}
\gamma_1\,r^n&\leq\rho\big(B_r\big)\leq \gamma_2\,r^n\,,
\end{align}
and for all $r$ in $[1,\infty)$
\begin{align}\label{misurapalle2}
\gamma_1\,\nep^{Qr}&\leq\rho\big(B_r\big)\leq \gamma_2\,\nep^{Qr}\,.
\end{align}
This shows that $S$, equipped with the right Haar measure $\rho$, is a group of exponential growth.\\
F{}rom (\ref{distanza}) we can easily deduce various properties
of balls in $S$.
\begin{prop}\label{palleS}
The following hold:
\begin{itemize}
\item[(i)] there exists a constant $c_3$ such that 
$$B(e,\log r)\subseteq  B_N\big(0_N, c_3\,r\big)\times (1/r,r) \qquad\forall r\in (1,\infty)\,;$$
\item[(ii)] for all constants $b>0, B>1/2$ there exists a constant $c_{b,B}$ such that
$$ B_N\big(0_N, b\,r^{B}\big)\times (1/r,r)\subseteq  B\big(e,c_{b,B}\,\log r\big) \qquad \forall r\in [\nep,+\infty) \,.$$
\end{itemize}
\end{prop}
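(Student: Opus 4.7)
The plan is to read both inclusions directly off the explicit distance formula \eqref{distanza}, bounding $\cosh(d/2)$ from below in (i) and from above in (ii) via the separate contributions of the three coordinates.

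For part (i), I would fix $(X,Z,a)\in B(e,\log r)$ and exploit monotonicity of $\cosh$ on $[0,\infty)$ to deduce $\cosh(d/2)<\cosh((\log r)/2)=(r^{1/2}+r^{-1/2})/2\leq r^{1/2}$, using $r\geq 1$. Dropping nonnegative summands on the right-hand side of \eqref{distanza} yields the three useful lower bounds
$$
\cosh(d/2)\geq\frac{a^{1/2}+a^{-1/2}}{2},\qquad \cosh(d/2)\geq\frac{a^{-1/2}|X|^2}{8},\qquad \cosh(d/2)\geq\frac{a^{-1/2}|Z|}{2}.
$$
The first equals $\cosh(|\log a|/2)$, so by monotonicity $|\log a|<\log r$ and $a\in(1/r,r)$, whence $a^{1/2}\leq r^{1/2}$. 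The other two, combined with $\cosh(d/2)\leq r^{1/2}$, give $|X|^2\leq 8r$ and $|Z|\leq 2r$, so $\mathcal{N}(X,Z)^4=|X|^4/16+|Z|^2\leq 8\,r^2$, i.e.\ $\mathcal{N}(X,Z)\leq 8^{1/4}r^{1/2}\leq c_3\,r$ with $c_3:=8^{1/4}$.

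For part (ii), I would take $(X,Z)\in B_N(0_N,br^B)$ and $a\in(1/r,r)$, and invoke Proposition \ref{normaN}(i) to extract $|X|\leq c_2 b\,r^B$ and $|Z|\leq c_2 b^2\,r^{2B}$. Applying $(u+v)^2\leq 2u^2+2v^2$ to the first summand on the right of \eqref{distanza} and using $a^{-1}\leq r$ produces
$$
\cosh^2(d/2)\leq \frac{(a^{1/2}+a^{-1/2})^2}{2}+\frac{|X|^4}{32\,a}+\frac{|Z|^2}{4\,a}\leq 2r+C_1(b,B)\,r^{4B+1},
$$
which is at most $C_2(b,B)\,r^{4B+1}$ once one uses $B>1/2$ (indeed $B>0$ would suffice) and $r\geq\nep$. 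The elementary inequality $\cosh x\geq\nep^x/2$ then converts this to $d\leq (4B+1)\log r+2\log(2C_2^{1/2})\leq c_{b,B}\log r$ for a suitable $c_{b,B}$, since $\log r\geq 1$.

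I do not foresee any real technical obstacle: once \eqref{distanza} is in play, both inclusions reduce to elementary comparisons. The only mildly subtle point is the mismatch, in part (ii), between the polynomial-in-$r$ bound on $\cosh(d/2)$ and the logarithmic-in-$r$ target for $d$, resolved by the asymptotic $\log\cosh x\sim x$.
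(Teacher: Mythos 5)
Your argument is correct and follows essentially the same route as the paper: both inclusions are read directly off the explicit distance formula (\ref{distanza}). The only noteworthy variation is in part (i), where the paper argues by contradiction via Proposition~\ref{normaN}(ii), whereas you bound $|X|$ and $|Z|$ directly and substitute into the explicit expression $\mathcal N(X,Z)^4=|X|^4/16+|Z|^2$; this is slightly cleaner and in fact yields the sharper estimate $\mathcal N(X,Z)\leq 8^{1/4}r^{1/2}$.
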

\begin{proof}
To prove (i) let $(X,Z,a)$ be in $B(e,\log r)$. By using (\ref{distanza}), we see that
\begin{align*}
\frac{a^{1/2}+a^{-1/2}}{2}&\leq\cosh \left(\frac{d\big((X,Z,a),e\big)}{2}\right)\\
&<\cosh \left(\frac{\log r}{2}\right)\\
&=\frac{r^{1/2}+r^{-1/2}}{2}\,,
\end{align*}
so that $1/r<a<r$, as required. 

Now put $c_3=\max \{{1}/{c_1}, {1}/{\sqrt c_1}\}$, where $c_1$ is the constant which appears in Proposition \ref{normaN}. If $\mathcal N(X,Z)\geq c_3\,r$, then by Proposition \ref{normaN} either $|X|\geq r$ or $|Z|\geq r^2$. In both cases $\cosh ^2\left(\frac{d\big((X,Z,a),e\big)}{2}\right)\geq \cosh ^2\left(\frac{\log r}{2}\right)$, which is a contradiction. This concludes the proof of (i).

To prove (ii), let $(X,Z,a)$ be in $B_N\big(0_N, b\,r^{B}\big)\times (1/r,{r})$. We have that $|X|\leq c_2\,b\,r^{B}$, $|Z|\leq c_2\,b^2\,r^{2B}$ and $a\in(1/r,{r})$, where $c_2$ is the constant which appears in Proposition \ref{normaN}. Applying (\ref{distanza}) we obtain that
\begin{align*}
\cosh ^2\left(\frac{d\big((X,Z,a),e\big)}{2}\right)&< \Big(\frac{r^{1/2}+r^{-1/2}}{2}+\frac{c_2\,b}{8}\,r^{(B-1/2)}  \Big)^2+\frac{c_2\,b^2}{4}\,r^{(2B-1)}\\
&\leq C\,r^{(2B-1)}\,,
\end{align*}
where $C$ depends only on $b$ and $B$. Thus, there exists a constant $c_{b,B}$ such that 
$$d\big((X,Z,a),e\big)<c_{b,B}\,\log r\,,$$
as required.
\end{proof}
We now give the simplest example of \DR spaces, the $ax+b\,$-groups.
\begin{ese}\label{ax+b}
{\bf{The $ax+b\,$-groups}}

{\rm{The group $N=\RR^d$ is an $H$-type group where $\vg=\RR^d$ and $\zg=(0)$. The dilations on $\RR^d$ are given by $\delta_ax=a^{1/2}\,x$, for all $x$ in $\RR^d$, $a$ in $\RR^+$. Note that the homogeneous norm agrees with the usual Euclidean norm up to a multiplicative constant and the homogeneous dimension is $Q=d/2$. The harmonic extension of $\RR^d$ is given by $S=\RR^d\times \RR^+$, where the product rule is
$$(x,a)(x',a')=(x+a^{1/2}x',a\,a')\qquad \forall (x,a),\,(x',a')\in S\,.$$
When $d=1$ the group $S$ is usually called the affine group of the real line or $ax+b\,$-group. We abuse the terminology and call {\it{$ax+b\,$-groups}} all the groups $S$ just described. 

The dimension of $S$ is $n=d+1$, the right and left Haar measures are given by $\dir(x,a)=a^{-1}\di x\di a$ and $\dil(x,a)=a^{-d/2-1}\di x\di a$.

For more details on these groups see \cite{F}.
}}
\end{ese}
\section{Spherical analysis}\label{sphericalanalysis}
In this section we summarize some facts concerning the spherical analysis on \DR spaces. For details we refer the reader to \cite{DR1, DR2, R}.

We may identify $S$ with the open unit ball $\mathcal B$ in $\mathfrak s$
$$\mathcal B=\{(X,Z,t)\in\vg \times \zg \times \RR: ~|X|^2+|Z|^2+t^2<1\}\,,$$
via the bijection (see \cite{CDKR2}) $F:S\to \mathcal B$ defined by 
\begin{align*}
F(X,Z,a)=\frac{1}{\Big(1+a+\frac{1}{4}|X|^2\Big)^2+|Z|^2}\Big(&\Big(1+a+\frac{1}{4}|X|^2-J_Z\Big)X,2Z, \\
&-1+\Big(a+\frac{1}{4}|X|^2\Big)^2+|Z|^2\Big)\,.
\end{align*}
The following integration formula holds.
\begin{teo}\label{intS}
The left Haar measure on $S$ may  be normalized in such a way that for all function $f$ in $C_c^{\infty}(S)$ 
$$\int _S f\dil=\int_0^{\infty}\int_{\partial \mathcal B}f\big(F^{-1}(r\omega)\big)A(r)\di r\di \sigma(\omega)\,,$$
where $\di \sigma$ is the surface measure on $\partial \mathcal B$ and
\begin{equation}\label{misura}
A(r)=2^{m_{\vg}+2m_{\zg}}\sinh ^{m_{\vg}+m_{\zg}}\left(\frac{r}{2}\right)\cosh ^{m_{\zg}}\left(\frac{r}{2}\right)\,.
\end{equation}
\end{teo}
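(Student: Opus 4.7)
The strategy is to express the left Haar measure in geodesic polar coordinates centred at $e$ and then identify the radial density. Since the Riemannian metric on $S$ is left invariant, the Riemannian volume form agrees with $\lambda$ up to a positive constant, so after suitably normalizing $\lambda$ we may identify them and argue at the level of Riemannian volumes.

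First, I would introduce geodesic polar coordinates via the exponential map $\exp_e : T_eS \to S$, identifying $T_eS$ with $\s$ through the inner product at the identity, and identifying the unit sphere of $\s$ with $\partial \mathcal{B}$. Writing a typical tangent vector as $r\omega$ with $r \geq 0$ and $\omega \in \partial\mathcal{B}$, one obtains
$$\int_S f\,\dil = \int_0^{\infty}\int_{\partial\mathcal B} f(\exp_e(r\omega))\,J(r,\omega)\,\di r\,\di\sigma(\omega),$$
where $J(r,\omega)$ is the Jacobian of $\exp_e$, i.e.\ the volume density of the geodesic sphere of radius $r$ in direction $\omega$. A fundamental property of Damek--Ricci spaces (see \cite{DR1, DR2, CDKR2}) is that $(S,d)$ is a \emph{harmonic} Riemannian manifold: the density $J(r,\omega)$ depends only on $r$. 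Writing $J(r,\omega) = A(r)$, and adopting the convention that $F^{-1}(r\omega)$ in the statement abbreviates $\exp_e(r\omega)$ (which is legitimate by the radial-symmetry consequence of harmonicity together with the direct check $F(0,0,\nep^r) = (0,0,\tanh(r/2))$ along the $A$-direction, which identifies geodesic spheres in $S$ around $e$ with Euclidean spheres in $\mathcal{B}$ around $0$), we already arrive at the stated formula up to the identification of $A(r)$.

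Second, to pin down the closed form of $A(r)$, I would compute the Jacobian of $\exp_e$ along a single radial direction, say $v = rH \in \ag$. The curve $t \mapsto (0,0,\nep^t)$ is a unit-speed geodesic by (\ref{distanza}), since plugging $X=Z=0$ into that identity gives $\cosh(r/2) = (\nep^{t/2}+\nep^{-t/2})/2$, i.e.\ $r = |t|$. Formula (\ref{distanza}) then describes the geodesic sphere $\{x : d(x,e) = r\}$ locally near $(0,0,\nep^r)$ as an ellipsoidal hypersurface whose transverse semi-axes scale as $\sinh(r/2)$ in each of the $m_\vg$ directions of $\vg$ and as $\sinh(r/2)\cosh(r/2) = \tfrac{1}{2}\sinh(r)$ in each of the $m_\zg$ directions of $\zg$. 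Combining these semi-axis lengths with the Haar weight $a^{-(Q+1)}\,\di X\,\di Z\,\di a$ evaluated at $a = \nep^r$, and using $Q = (m_\vg + 2m_\zg)/2$, one obtains after an elementary manipulation
$$A(r) = 2^{m_\vg + 2m_\zg}\,\sinh^{m_\vg + m_\zg}(r/2)\,\cosh^{m_\zg}(r/2),$$
as asserted.

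\textbf{Main obstacle.} The computational core of the proof is producing the correct explicit form of $A(r)$, with the precise constant $2^{m_\vg + 2m_\zg}$ and the exponents $m_\vg + m_\zg$ and $m_\zg$. The $\vg$ and $\zg$ directions rescale differently under the $A$-action (by $a^{1/2}$ and $a$ respectively), and the Heisenberg-type bracket $[\vg,\vg] \subset \zg$ couples them; translating (\ref{distanza}) into precise semi-axis lengths of geodesic spheres, and from there into the Jacobian of $\exp_e$, requires careful bookkeeping. An alternative route is to derive $A(r)$ from the radial part of the Laplace--Beltrami operator on $S$, but a direct calculation still seems necessary to fix the overall normalizing constant.
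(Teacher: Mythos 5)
The paper does not prove Theorem \ref{intS}; it is recalled from the literature (see the chapter's opening sentence and the references \cite{ADY, DR1, DR2}), so there is no ``paper's proof'' to compare against. Your overall strategy is the right one and is essentially what appears in those references: pass to geodesic polar coordinates, invoke harmonicity of $(S,d)$ to conclude the density depends on $r$ alone, and then compute that density along one convenient direction. Your preliminary checks are also correct: $t\mapsto(0,0,\nep^t)$ is unit-speed by (\ref{distanza}), and $F(0,0,\nep^r)=(0,0,\tanh(r/2))$.

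The gap is in the computational step, as you yourself anticipate, but it is more than just ``careful bookkeeping'': the mechanism you propose is not the correct one. Formula (\ref{distanza}) does not exhibit the geodesic sphere near $(0,0,\nep^r)$ as an ellipsoid with the transverse semi-axes you quote. Near that pole, the sphere is perpendicular to the geodesic and (\ref{distanza}) directly encodes its second fundamental form, not semi-axis lengths; at the equator $a=1$ the extents one actually reads off (\ref{distanza}) are $|X|=4\sinh(r/4)$ in $\vg$ and $|Z|=2\sinh(r/2)$ in $\zg$, which match neither your claimed $\sinh(r/2)$, $\tfrac12\sinh(r)$ nor the exponents in $A(r)$. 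The quantity that genuinely enters the polar Jacobian along $\gamma(t)=(0,0,\nep^t)$ is the norm of the Jacobi fields $Y$ with $Y(0)=0$, $Y'(0)=E_i$. These are fixed by the sectional curvatures of $\mathrm{span}\{H,E_i\}$: the curvature equals $-1/4$ when $E_i\in\vg$ (so $|Y(r)|=2\sinh(r/2)$) and $-1$ when $E_i\in\zg$ (so $|Y(r)|=\sinh r=2\sinh(r/2)\cosh(r/2)$). Taking the product gives $2^{m_\vg+m_\zg}\sinh^{m_\vg+m_\zg}(r/2)\cosh^{m_\zg}(r/2)$, which is $A(r)$ up to the benign factor $2^{m_\zg}$ absorbed by the theorem's freedom to renormalize $\lambda$ and $\sigma$. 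So the $r$-dependence is correct and derivable, but only once you bring in the curvature (equivalently, the eigenvalues $1/2$ and $1$ of $\mathrm{ad}\,H$ on $\n$); the ``ellipsoid semi-axes from (\ref{distanza})'' route, as written, would not produce it.
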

It is easy to check that
\begin{equation}\label{pesoA}
A(r)\leq C \left(\frac{r}{1+r}\right)^{n-1}\nep^{Qr}\qquad\forall r\in\RR^+\,.
\end{equation}
We say that a function $f$ on the group $S$ is radial if it depends only on the distance from the identity, i.e., if there exists a function $f_0$ defined on $[0,+\infty)$ such that $f(X,Z,a)=f_0(r)$, where $r=d\big((X,Z,a),e\big)$. We abuse the notation and write $f(r)$ instead of $f_0(r)$. For every function $f$ on $S$ let $\check{f}$ be defined by
$$\check{f}(x)=f(x^{-1})\qquad\forall x\in S\,.$$
Note that if $f$ is radial, then $\check{f}=f$.

Damek and Ricci \cite{DR1} defined the radialisation operator
$$\mathcal R: ~C^{\infty}_c(S)\to C^{\infty}_c(S)$$
in the following way:
$$\mathcal R f(x)=\left({\mathcal{\widetilde{R}}}(f\circ F^{-1})\right)(Fx)\qquad\forall x\in S,$$
where ${\mathcal{\widetilde{R}}}$ is the radialisation operator on the ball $\mathcal B\,$ defined by
$$({\mathcal{\widetilde{R}}}\phi)(\omega)=\frac{1}{|\partial \mathcal B|}\int_{\partial \mathcal B}\phi(\|\omega\|\omega)\di \sigma(\omega)\,.$$
A function $f$ is radial if and only if $\mathcal R(f)=f$. Damek and Ricci proved the following result.
\begin{prop}\label{radialisation}
The operator $\mathcal R$ extends to a bounded operator on $L^p(\lambda)$, for all $1\leq p\leq \infty$,  and is an orthogonal projector of $L^2(\lambda)$. Moreover for all $f,g\in C^{\infty}_c(S)$ 
$$\int_S\mathcal R f\dil=\int_S f\dil\,,$$
and
$$\int_S (\mathcal R f)\, g\dil=\int_S f\,(\mathcal R g)\dil\,.$$
\end{prop}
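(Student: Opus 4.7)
The plan is to push everything over to the ball $\mathcal B$ via the diffeomorphism $F$, where the integration formula of Theorem \ref{intS} tells us that the left Haar measure $\lambda$ has the product form $A(r)\,\di r \,\di\sigma(\omega)$ on $(0,\infty)\times \partial\mathcal B$. Once on the ball, $\widetilde{\mathcal R}$ is simply the fiberwise averaging on the sphere of radius $r$, and all the claimed properties become standard facts about such an averaging operator.

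First I would prove boundedness on $L^p(\lambda)$. For $1\le p<\infty$, write $\phi=f\circ F^{-1}$, apply Jensen's inequality to the definition of $\widetilde{\mathcal R}$ to obtain $|\widetilde{\mathcal R}\phi(r\omega)|^p\le \frac{1}{|\partial\mathcal B|}\int_{\partial\mathcal B}|\phi(r\omega')|^p\di\sigma(\omega')$, and then integrate against $A(r)\di r\di\sigma(\omega)$. Since the right hand side is independent of $\omega$, the $\sigma$-integration over $\partial\mathcal B$ on the outside contributes a factor $|\partial\mathcal B|$, which cancels the normalisation, yielding $\|\mathcal R f\|_{L^p(\lambda)}\le \|f\|_{L^p(\lambda)}$. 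The case $p=\infty$ is immediate from the pointwise bound $|\widetilde{\mathcal R}\phi|\le \|\phi\|_\infty$.

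Next I would establish the two integral identities. For $\int_S \mathcal R f\di\lambda=\int_S f\di\lambda$, I use Theorem \ref{intS} to rewrite both sides as integrals over $(0,\infty)\times\partial\mathcal B$ with respect to $A(r)\di r\di\sigma(\omega)$; since $\widetilde{\mathcal R}\phi(r\omega)$ does not depend on $\omega$, Fubini and the definition of $\widetilde{\mathcal R}$ immediately identify the two expressions. For the self-adjointness, set $\phi=f\circ F^{-1}$, $\psi=g\circ F^{-1}$ and observe that for each fixed $r$ both
\[
\int_{\partial\mathcal B}(\widetilde{\mathcal R}\phi)(r\omega)\,\psi(r\omega)\di\sigma(\omega)
\quad\text{and}\quad
\int_{\partial\mathcal B}\phi(r\omega)\,(\widetilde{\mathcal R}\psi)(r\omega)\di\sigma(\omega)
\]
reduce to $|\partial\mathcal B|\cdot(\widetilde{\mathcal R}\phi)(r\omega_0)\cdot(\widetilde{\mathcal R}\psi)(r\omega_0)$, because whichever factor carries $\widetilde{\mathcal R}$ is constant in $\omega$ and can be pulled out, leaving the other to average. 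Integrating against $A(r)\di r$ then gives equality.

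Finally, to conclude that $\mathcal R$ is an orthogonal projector on $L^2(\lambda)$, I note that $\mathcal R$ is self-adjoint by the identity just proved, so it remains to verify $\mathcal R^2=\mathcal R$. This is clear on the ball: $(\widetilde{\mathcal R})^2\phi(r\omega)$ averages the constant-in-$\omega$ function $\widetilde{\mathcal R}\phi(r\cdot)$ over $\partial\mathcal B$, giving back $\widetilde{\mathcal R}\phi(r\omega)$. Transporting via $F$ yields $\mathcal R^2=\mathcal R$ on $C_c^\infty(S)$ and then on $L^2(\lambda)$ by the $L^2$-boundedness established above. There is no real obstacle here; the only mildly delicate point is the bookkeeping with the unit-sphere normalisation $|\partial\mathcal B|$ when combining the integration formula of Theorem \ref{intS} with the definition of $\widetilde{\mathcal R}$, but this is a one-line computation once the product structure of the measure on $(0,\infty)\times\partial\mathcal B$ is in hand.
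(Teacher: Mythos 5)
Your proposal is correct and complete; note, however, that the paper itself gives no proof of this proposition (it is stated as a result of Damek and Ricci and cited to \cite{DR1}), so there is nothing to compare against. Your argument supplies a clean, self-contained verification: transporting to the polar coordinates $(r,\omega)\in(0,\infty)\times\partial\mathcal B$ under which $\di\lambda$ becomes $A(r)\di r\di\sigma(\omega)$, you correctly obtain the $L^p$ bounds from Jensen's inequality applied to the probability measure $|\partial\mathcal B|^{-1}\di\sigma$, the two integral identities from Fubini and the fact that $\widetilde{\mathcal R}\phi$ is constant on each sphere of radius $r$, and idempotence for the same reason; boundedness and density of $C_c^\infty(S)$ then promote self-adjointness and idempotence from the dense subspace to all of $L^2(\lambda)$, giving the orthogonal-projector claim. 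The only point worth flagging is notational: in the integration formula of Theorem \ref{intS}, $F^{-1}(r\omega)$ must be read as the point of $S$ at geodesic distance $r$ from $e$ in the direction $\omega$ (rather than literally $r\omega\in\mathcal B$, which leaves the ball for $r\ge 1$), and the definition of $\widetilde{\mathcal R}$ must be read with a dummy variable $\omega'$ of integration distinct from the evaluation point $\omega$; with those readings, the compatibility between the two that your argument hinges on is exactly what makes $\mathcal R$ the averaging operator in the angular variable at fixed geodesic radius, and your bookkeeping with the factor $|\partial\mathcal B|$ is consistent throughout.
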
 

For future developments it is useful to recall the definition of convolution on the space $S$. For all functions $f$, $g$ in $C_c(S)$ their convolution is defined by
\begin{align*}
f\ast g(x)&=\int_Sf(xy)\,g(y^{-1})\dil (y)\\
&=\int_Sf(xy^{-1})\,g(y)\dir (y)\qquad\forall x\in S\,.
\end{align*}

On the Riemannian manifold $S$ we may consider the (positive definite) Laplace-Beltrami operator $~\mathcal L$, defined by
$$\mathcal L=- {\rm div}\circ {\rm grad}\,.$$
\begin{defi}
A radial function $\phi$ on the group $S$ is called {\rm{spherical}} if $\phi$ is an eigenfunction of the Laplace-Beltrami operator $\mathcal L$ and $\phi(e)=1$.
\end{defi}
Let $E_0,...,E_{n-1}$ be an orthonormal basis of the algebra $\s$ such that 
$E_0=H$, $E_1,...,E_{m_{\vg}}$ is an orthonormal basis of $\vg$ 
and $E_{m_{\vg}+1},...,E_{n-1}$ is an orthonormal basis of $\zg$. 
Let $X_0,X_1,...,X_{n-1}$ be the left invariant vector fields 
on $S$ which agree with $E_0, E_1,...,E_{n-1}$ at the identity. For every $f$ in $ C^{\infty}_c(S)$
$$X_if(x)=\frac{d}{d\tau }\Big\lvert_{\tau =0}f\big(x\exp(\tau E_i)\big)\qquad\forall x\in S\,,i=0,...,n-1\,.$$
In particular, if $i=0$, then
\begin{align*}
X_0f(X,Z,a)&=\frac{d}{d\tau }\Big\lvert_{\tau =0}f\big((X,Z,a)\exp(\tau E_0)\big)\\
&=a\,\partial _a f(X,Z,a)\,,
\end{align*}
while the vector fields $X_i$ do not involve derivative in the variable $a$, for all $i\neq 0$.
Damek \cite {D} proved that
\begin{equation}\label{LaplaceBeltrami}
\LB f=-\sum_{i=0}^{n-1} X_i^2f+QX_0f \qquad\forall f\in C^{\infty}_c(S)\,.
\end{equation}
In particular, by applying (\ref{LaplaceBeltrami}) to the functions $\delta^{is/Q-1/2}$, $s\in\CC$, we have that
\begin{align*}
\mathcal L \big(\delta^{is /Q-1/2}\big)(X,Z,a)&=(-X_0 ^2+Q\,X_0)(a^{-is+Q/2})\\
&=\big(-(-is+Q/2)^2+Q(is+Q/2)\big)a^{-is+Q/2}\\
&=\left(s^2-{Q^2}/{4}\right)\delta^{is/Q-1/2}(X,Z,a)\qquad \forall (X,Z,a)\in S\,.
\end{align*}
Thus the functions $\delta^{is /Q-1/2}$ are eigenvalues of the Laplace-Beltrami operator and take value $1$ in $e$, but they are not radial. Let $\phi _{s}$ be defined by $\mathcal R (\delta^{is /Q-1/2})$, for $s\in\CC$. We have the following result.
\begin{prop}
The function $\phi _{s}$ is a spherical function with eigenvalue $s ^2-Q^2/4$. All spherical functions are of this type and $\phi _{s}=\phi _{-s}$.
\end{prop}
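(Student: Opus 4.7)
The plan is to verify the three assertions in sequence, leveraging the identity
$\mathcal L (\delta^{is/Q-1/2}) = (s^2 - Q^2/4)\, \delta^{is/Q-1/2}$
already established just above the statement, together with the properties of the radialisation operator $\mathcal R$ listed in Proposition \ref{radialisation}.

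First I would check that $\phi_s = \mathcal R(\delta^{is/Q-1/2})$ is radial, which is immediate from the definition of $\mathcal R$ as an orthogonal projector onto the subspace of radial functions. Next I would verify $\phi_s(e) = 1$: under the identification $F$, the identity $e$ corresponds to the center $0 \in \mathcal B$, and $\tilde{\mathcal R}\phi(0) = \frac{1}{|\partial \mathcal B|}\int_{\partial \mathcal B} \phi(0)\,\textrm{d}\sigma = \phi(0)$, so $\phi_s(e) = \delta^{is/Q-1/2}(e) = 1$. The eigenfunction property requires showing that $\mathcal L$ commutes with $\mathcal R$. This follows because $\mathcal L$ is invariant under the isometry group of $S$, and the radialisation $\mathcal R$ can be realized as an averaging over the stabilizer of $e$ inside the isometry group (equivalently, over $\partial \mathcal B$ under the ball model, on which $\mathcal L$ transfers to a rotation-invariant operator). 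Combining these, $\mathcal L \phi_s = \mathcal L\,\mathcal R(\delta^{is/Q-1/2}) = \mathcal R(\mathcal L\delta^{is/Q-1/2}) = (s^2 - Q^2/4)\,\phi_s$, so $\phi_s$ is spherical with the claimed eigenvalue.

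For the second assertion, I would show that radial eigenfunctions $\phi$ of $\mathcal L$ satisfying $\phi(e)=1$ are uniquely determined by their eigenvalue. Using the integration formula of Theorem \ref{intS}, a radial function is a function $\phi(r)$ of the geodesic radius $r$, and $\mathcal L$ restricted to radial functions becomes a second-order ordinary differential operator of the form $-\partial_r^2 - (A'(r)/A(r))\,\partial_r$ with $A$ as in \eqref{misura}. The equation $\mathcal L\phi = (s^2 - Q^2/4)\phi$ is therefore an ODE with a regular singular point at $r=0$, admitting a one-dimensional space of solutions that are smooth at the origin, normalized by $\phi(e) = 1$. Hence the spherical function with eigenvalue $s^2 - Q^2/4$ is unique; since $s \mapsto s^2 - Q^2/4$ is surjective onto $\mathbb C$, every spherical function is some $\phi_s$.

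The last assertion $\phi_s = \phi_{-s}$ is then immediate: $\phi_s$ and $\phi_{-s}$ are both spherical functions with the same eigenvalue $s^2 - Q^2/4$, so by the uniqueness just established they coincide. The main obstacle in this proof is the uniqueness step: one has to justify that the radial reduction of $\mathcal L$ yields a genuine ODE (which requires the polar-coordinate formula of Theorem \ref{intS} and some care at $r=0$), and that the smoothness condition at the origin really picks out a single solution up to normalization. Once this regular-singular-point analysis is in place, the rest of the proposition follows from the commutation of $\mathcal L$ and $\mathcal R$ and a direct computation of values at $e$.
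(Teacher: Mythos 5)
The paper itself does not prove this proposition: it is stated as a recall, with the proof deferred to the Damek--Ricci references \cite{DR1, DR2, D, R} cited at the start of Section \ref{sphericalanalysis}. Your overall skeleton — radiality and normalization are immediate, the eigenvalue claim rests on $\mathcal L\mathcal R = \mathcal R\mathcal L$, and uniqueness of the radial eigenfunction forces both the ``all spherical functions are $\phi_s$'' and the ``$\phi_s=\phi_{-s}$'' claims — is indeed the standard route and is correctly organized.

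However, your justification of the key step $\mathcal L\mathcal R=\mathcal R\mathcal L$ has a genuine gap. You argue that ``the radialisation $\mathcal R$ can be realized as an averaging over the stabilizer of $e$ inside the isometry group.'' This is true on rank-one symmetric spaces, where the isotropy group acts transitively on geodesic spheres, but it is \emph{false} on a generic Damek--Ricci space: for nonsymmetric $S$ the isotropy group at $e$ is far too small to realize $\mathcal R$ as an average over isometries. (These are precisely the harmonic manifolds that furnish counterexamples to the Lichnerowicz conjecture; their distinguishing feature is the absence of a large isotropy group.) Your parenthetical, that the pushforward $F_*\mathcal L$ on $\mathcal B$ is rotation-invariant, is the statement that is actually correct — but it is not ``equivalent'' to the isometry claim; it is a strictly weaker assertion about the differential operator, and it is a nontrivial theorem of Damek and Ricci (essentially the content of the harmonicity of $S$). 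Presented as a routine consequence of invariance, this step cannot be accepted: a correct proof must either cite or reprove the rotation-invariance of $F_*\mathcal L$, for instance by the explicit coordinate computation of $\mathcal L$ in the ball model, or by first establishing that $S$ is a harmonic manifold and then using the mean-value characterization. Once that fact is in hand, the remainder of your argument — the regular-singular ODE analysis at $r=0$ giving a one-dimensional space of smooth normalized solutions, hence uniqueness, hence the last two claims — is sound, though you should note that the radial reduction of $\mathcal L$ taking the form $-\partial_r^2 - (A'/A)\partial_r$ with $A$ as in \eqref{misura} is itself a consequence of the same rotation-invariance, not merely of the polar integration formula of Theorem \ref{intS}.
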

In particular, the spherical function $\phi_0$ satisfies the following estimate \cite[Lemma 1]{A2}:
\begin{equation}\label{stimafi0}
\phi_0(r)\leq C\,(1+r)\,\nep^{-Qr/2}\qquad\forall~ r\in\RR^+\,.
\end{equation}
We shall use the following integration formula on $S$, whose proof is reminiscent of \cite[Lemma 1.3]{CGHM} and \cite[Lemma 3]{A1}:
\begin{lem}\label{intduf}
For every radial function $f$ in $C_c^{\infty}(S)$
\begin{align*}
\int_S\delta^{1/2}f \,{\rm{d}} \rho &=\int_0^{\infty}\phi_0(r)\,f(r)\,A(r)\,{\rm{d}} r \,.
\end{align*}
\end{lem}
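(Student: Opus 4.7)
The plan is to convert the integral with respect to $\rho$ into one with respect to $\lambda$, then exploit the fact that $\delta^{-1/2}$ radialises to $\phi_0$, and finally apply the polar-coordinate formula of Theorem \ref{intS}.

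First, I would use the explicit formulas for the Haar measures, namely $\dir(X,Z,a)=a^{-1}\di X\di Z\di a$ and $\dil(X,Z,a)=a^{-(Q+1)}\di X\di Z\di a$, to observe that $\di\lambda=\delta\dir$. Hence $\delta^{1/2}\dir=\delta^{-1/2}\dil$, so that
$$\int_S\delta^{1/2}f\dir=\int_S\delta^{-1/2}f\dil.$$
Since $f$ is radial we have $f=\mathcal{R}f$, and by the selfadjointness of $\mathcal{R}$ on $L^2(\lambda)$ stated in Proposition \ref{radialisation} this equals
$$\int_S\delta^{-1/2}(\mathcal{R}f)\dil=\int_S\mathcal{R}(\delta^{-1/2})\,f\dil.$$

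The next step is to identify $\mathcal{R}(\delta^{-1/2})$ with $\phi_0$. The computation carried out just before the statement shows that $\delta^{is/Q-1/2}$ is an eigenfunction of $\mathcal L$ with eigenvalue $s^2-Q^2/4$ and takes the value $1$ at $e$; setting $s=0$ and radialising gives precisely the spherical function with eigenvalue $-Q^2/4$, namely $\phi_0=\mathcal{R}(\delta^{-1/2})$. Therefore
$$\int_S\delta^{1/2}f\dir=\int_S\phi_0\,f\dil.$$

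The product $\phi_0\cdot f$ is radial, so I would apply the polar-coordinate integration formula of Theorem \ref{intS}. Plugging the radial function $\phi_0\cdot f$ into that formula, the integrand $(\phi_0\cdot f)(F^{-1}(r\omega))$ reduces to $\phi_0(r)f(r)$, independent of $\omega\in\partial\mathcal{B}$. The integral over $\partial\mathcal B$ then produces an overall constant which, under the normalisation of $\lambda$ already adopted in Theorem \ref{intS} (or equivalently, after absorbing $|\partial\mathcal{B}|$ into the definition of $A$), yields exactly
$$\int_S\phi_0\,f\dil=\int_0^{\infty}\phi_0(r)\,f(r)\,A(r)\di r,$$
which proves the lemma.

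There is no serious obstacle here: the only subtle point is to make sure the normalisation conventions used in the definition of $\mathcal{R}$, in Theorem \ref{intS}, and in the weight $A(r)$ are consistent, so that no spurious factor of $|\partial\mathcal{B}|$ survives. Once the three ingredients (the identity $\di\lambda=\delta\dir$, the selfadjointness of $\mathcal R$, and the polar formula) are combined, the result follows by a one-line computation.
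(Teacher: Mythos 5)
Your proposal is correct and follows essentially the same chain as the paper's own proof: rewrite $\delta^{1/2}\dir=\delta^{-1/2}\dil$, pass the radialisation operator $\mathcal R$ from $f$ onto $\delta^{-1/2}$ via Proposition \ref{radialisation}, identify $\mathcal R(\delta^{-1/2})=\phi_0$, and then apply the polar-coordinate formula of Theorem \ref{intS} to the radial integrand $\phi_0 f$.
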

\begin{proof}
By using the fact that $\mathcal R f=f$, the integration formula (\ref{intS}) and Proposition \ref{radialisation}, we obtain that
\begin{align*}
\int_S\delta^{1/2}\,f\dir &=\int_S\delta^{-1/2}\,f\dil\\
&=\int_S\delta^{-1/2}\,(\mathcal R f)\dil\\
&=\int_S(\mathcal R \delta^{-1/2})\,f\dil\\
&=\int_S\phi_0\,f\dil\\
&=\int_0^{\infty}\phi_0(r)\,f(r)\,A(r)\di r\,,
\end{align*}
as required.
\end{proof}
We now define the spherical transform on $S$ and recall its properties.
\begin{defi}
The spherical Fourier transform of an integrable radial function $f$ on $S$ is defined by 
$$\mathcal H f(s)=\int_S \phi _{s}\,f\dil \,.$$
\end{defi}
For ``nice'' radial functions the spherical Fourier transform satisfies the following inversion and Plancherel formulas.
\begin{teo}
For every radial function $f$ in $C_c(S)$ the following inversion formula holds:
$$f(x)=c_S\int_{0}^{\infty}\mathcal H f (s)\phi _{s}(x)| {\bf{c}} (s) |^{-2} \di s\,,$$
where the constant $c_S$ depends only on $m_{\vg}$ and $m_{\zg}$ and ${\bf{c}}$ denotes the Harish-Chandra function. The Plancherel measure satisfies the following estimate:
\begin{equation}\label{HC}
|{\bf{c}}(s)|^{-2}\leq \begin{cases}
|s|^2 & \text{if $| s |\leq 1$}\\
|s|^{n-1} & \text{if $| s |>1\,.$}
\end{cases}
\end{equation}
Moreover the Plancherel formula holds:
$$\int_S|f|^2\dil=c_S\int_0^{\infty}|\mathcal H f (s)|^2|{\bf{c}}(s)|^{-2}\di s\,.$$
The spherical Fourier transform extends to an isometry between the space of radial functions in $L^2(\lambda)$ and $L^2(\RR^+,c_S\,|{\bf{c}}(s)|^{-2}\di s)$. 
\end{teo}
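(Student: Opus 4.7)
The plan is to follow the Damek--Ricci reduction of spherical analysis on $S$ to Euclidean Fourier analysis via the Abel transform, building on the integration formula of Lemma~\ref{intduf}. For a radial function $f \in C_c(S)$, I would define the Abel transform $\mathcal{A}f$ on $\RR$ by suitably integrating $\delta^{1/2} f$ against the natural measure on the horocycle $\{a = \nep^r\}$ (identified with $N$), with normalisation chosen so that $\mathcal{A}f$ is even in $r$. The key identity, derived by writing $\phi_s = \mathcal{R}(\delta^{is/Q-1/2})$, applying Proposition~\ref{radialisation} together with Lemma~\ref{intduf}, and recognising that $\delta^{is/Q-1/2}$ is a character in the $a$-variable, is
\begin{equation*}
\mathcal{H}f(s) \,=\, (\mathcal{F}\,\mathcal{A}f)(s) \qquad \forall s \in \RR,
\end{equation*}
where $\mathcal{F}$ denotes the standard Fourier transform on $\RR$. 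Hence the inversion and Plancherel formulae for $\mathcal{H}$ are reduced to those for $\mathcal{F}$, provided one has an explicit inverse for $\mathcal{A}$.

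The inverse $\mathcal{A}^{-1}$ is constructed in two steps, exploiting the decomposition $\n = \vg \oplus \zg$ into eigenspaces of $\mathrm{ad}\,H$ with eigenvalues $1/2$ and $1$. The Abel transform factors into partial integrations over $\vg$ and $\zg$, each of which can be inverted by an appropriate (possibly fractional) derivative acting in the radial variable; the explicit formula depends on the parities of $m_{\vg}$ and $m_{\zg}$. Composing $\mathcal{A}^{-1}$ with inverse Fourier transform on $\RR$ yields the stated inversion formula. The weight $|\mathbf{c}(s)|^{-2}$ arises as the Fourier symbol of the positive operator $(\mathcal{A}^{-1})^*\mathcal{A}^{-1}$, and the constant $c_S$ collects the normalisations from $\mathcal{F}$ and $\mathcal{A}$ (hence depends only on $m_{\vg}$ and $m_{\zg}$). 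The Plancherel identity then follows by applying the $L^2$ isometry property of $\mathcal{F}$ and using Lemma~\ref{intduf} together with the defining property of $\mathcal{A}$ to express $\int_S |f|^2 \dil$ in terms of $\mathcal{A}f$.

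The main technical point is the estimate~(\ref{HC}) on $|\mathbf{c}(s)|^{-2}$. Here I would exploit the explicit expression of $\mathbf{c}$ as a ratio of Gamma functions with arguments depending linearly on $s$ and on $m_{\vg}$, $m_{\zg}$, which emerges from the Abel transform computation above. Stirling's asymptotic formula then yields $|\mathbf{c}(s)|^{-2} \leq C\,|s|^{m_{\vg} + m_{\zg}} = C\,|s|^{n-1}$ as $|s| \to \infty$, while the bound $|\mathbf{c}(s)|^{-2} \leq C\,|s|^2$ for $|s| \leq 1$ reflects the simple zero of $1/\mathbf{c}$ at $s = 0$ coming from the Gamma factor $1/\Gamma(2is)$. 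Finally, the extension of $\mathcal{H}$ to an isometry between radial $L^2(\lambda)$ and $L^2(\RR^+, c_S\,|\mathbf{c}(s)|^{-2}\di s)$ follows from the Plancherel formula by density of radial $C_c(S)$ functions, together with the injectivity of $\mathcal{H}$ on this subspace, which is immediate from the inversion formula.
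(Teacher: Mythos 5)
The paper does not actually prove this theorem: it is recalled from \cite{DR1,DR2,R}, and the text that immediately follows merely records the ingredients a proof would use --- the factorisation $\mathcal H=\mathcal F\circ\mathcal A$ obtained exactly as you describe via Proposition~\ref{radialisation}, together with the explicit inverse Abel transform formulas (\ref{inv1})--(\ref{inv2}) --- without assembling them. Your sketch follows this same standard route and is sound in outline. Your reading of $|\mathbf{c}(s)|^{-2}$ (up to the constant $c_S$) as the Fourier multiplier of $(\mathcal A^{-1})^*\mathcal A^{-1}$, the adjoint taken against $L^2\big(A(r)\,\di r\big)$, is a correct reformulation of the Plancherel identity once $\mathcal H=\mathcal F\mathcal A$ is known. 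The one step you leave genuinely implicit is passing from $f=\mathcal A^{-1}\mathcal F^{-1}\mathcal Hf$ to the stated form $f(x)=c_S\int_0^\infty\mathcal Hf(s)\,\phi_s(x)\,|\mathbf{c}(s)|^{-2}\,\di s$: extracting the factor $\phi_s(x)\,|\mathbf{c}(s)|^{-2}$ requires identifying the dual Abel transform applied to $s\mapsto\nep^{isr}$ with $\mathbf{c}(s)\,\phi_s$, and this is where the closed Gamma-function expression for $\mathbf{c}$ actually enters the argument, not only in the asymptotic estimate. Your Stirling argument for (\ref{HC}) --- polynomial growth of order $m_\vg+m_\zg=n-1$ at infinity, and vanishing of order $|s|^2$ at the origin from the $1/\Gamma(2is)$ factor --- is correct, modulo the multiplicative constant which the paper's statement of~(\ref{HC}) silently suppresses.
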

In this context an analogue of the Paley--Wiener Theorem holds. 
\begin{teo}
The spherical Fourier transform is an isomorphism between the space of all radial functions in $C^{\infty}_c(S)$ and the space of even entire fun\-ctions of exponential type on $\CC$. Moreover the function $f$ has support in the ball $B_r$ if and only if its spherical transform $\mathcal H f$ satisfies
$$|\mathcal H f (s)|\leq C_N(1+s)^{-N}\,\nep^{r|\rm{Im}(s)|}\qquad\forall s\in\CC\quad\forall N\in\NN\,.$$
\end{teo}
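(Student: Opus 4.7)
The natural strategy is to reduce the statement to the classical Euclidean Paley--Wiener theorem via the Abel transform $\mathcal{A}$ attached to $S$. Recall that for a radial function $f\in C_c^\infty(S)$ one defines
$$
\mathcal{A}f(t)=\nep^{Qt/2}\int_{N}f\bigl((X,Z)\exp(tH)\bigr)\di X\di Z,\qquad t\in\RR,
$$
and that the fundamental factorisation
$$
\mathcal{H}f(s)=(\mathcal{F}\circ\mathcal{A})f(s)\qquad\forall s\in\CC
$$
holds, where $\mathcal{F}$ denotes the ordinary one-dimensional Fourier transform on $\RR$. With this identity in hand, the theorem follows once we establish that $\mathcal{A}$ is a support-preserving topological isomorphism between the radial subspace of $C_c^\infty(S)$ and the even subspace of $C_c^\infty(\RR)$.

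First, I would verify carefully that $\mathcal{A}$ maps radial functions in $C_c^\infty(S)$ into even functions in $C_c^\infty(\RR)$ and, crucially, that $\mathrm{supp}(f)\subseteq B_r$ implies $\mathrm{supp}(\mathcal{A}f)\subseteq[-r,r]$. This is the geometric heart of the argument and uses the relation between geodesic distance $d\bigl((X,Z,a),e\bigr)$ given by (\ref{distanza}) and the horospherical height $t=\log a$: if a point $(X,Z,\nep^t)$ belongs to $B_r$, then $|t|\leq r$, because (\ref{distanza}) forces $\cosh(r/2)\geq(\nep^{t/2}+\nep^{-t/2})/2=\cosh(t/2)$. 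Hence the integral defining $\mathcal{A}f(t)$ vanishes for $|t|>r$. Evenness follows from the functional equation $\phi_s=\phi_{-s}$ together with the definition of $\mathcal{H}$ via $\mathcal{R}$. Smoothness and compactness of support are consequences of standard fibre-integration arguments on the nilpotent group $N$.

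Next, I would establish surjectivity and continuity of the inverse. The inverse of the Abel transform on Damek--Ricci spaces admits an explicit expression involving fractional derivatives of the function on $\RR$ (differing according to the parities of $m_\vg$ and $m_\zg$); such an explicit inversion formula is given in the references \cite{ADY, A1} cited at the beginning of the chapter. From the explicit inverse one reads off continuity and, in particular, the converse support statement: if $g$ is even, smooth, and supported in $[-r,r]$, then $\mathcal{A}^{-1}g$ is radial, smooth and supported in $\overline{B_r}$.

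Finally, I would invoke the classical Paley--Wiener theorem on $\RR$: the Fourier transform is a bijection between even functions in $C_c^\infty(\RR)$ supported in $[-r,r]$ and even entire functions $F$ on $\CC$ satisfying
$$
|F(s)|\leq C_N(1+|s|)^{-N}\,\nep^{r|\mathrm{Im}(s)|}\qquad\forall s\in\CC\quad\forall N\in\NN.
$$
Combining this with the factorisation $\mathcal{H}=\mathcal{F}\circ\mathcal{A}$ and the support-preserving isomorphism property of $\mathcal{A}$ yields both conclusions of the theorem. The main obstacle I foresee is the precise verification of the support-preserving property of $\mathcal{A}^{-1}$, which depends on the explicit inversion formula and on the (parity-dependent) form of the fractional differentiation involved; this is technical but standard in the literature cited.
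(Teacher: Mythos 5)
The paper states this theorem without proof — it is explicitly a recalled fact, with the section opening ``we summarize some facts concerning the spherical analysis \ldots\ For details we refer the reader to \cite{DR1, DR2, R}''. So there is no proof in the paper to compare against; what you have done is sketch the standard proof that the cited references use.

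Your strategy is correct and is indeed the canonical one: factor $\mathcal{H}=\mathcal{F}\circ\mathcal{A}$, show that $\mathcal{A}$ is a support-preserving isomorphism onto even compactly supported smooth functions, and then pull the answer back from the one-dimensional Paley--Wiener theorem. Two remarks. First, a sign slip: the paper's Abel transform carries the weight $\nep^{-Qt/2}$, not $\nep^{+Qt/2}$; with your normalisation one instead gets $\mathcal{F}(\mathcal{A}f)(s)=\mathcal{H}f(s-iQ)$, which would shift the strip of analyticity and break the stated estimate. Second, the genuinely nontrivial step in the whole reduction is the converse support statement for $\mathcal{A}^{-1}$, and it is worth making explicit why it holds: the paper's inversion formulas (\ref{inv1}) for $m_\zg$ even and (\ref{inv2}) for $m_\zg$ odd express $\mathcal{A}^{-1}g(r)$ in terms only of the values $g(s)$ for $s\geq r$ (local differential operators $\mathcal{D}_1,\mathcal{D}_2$, plus in the odd case an integral $\int_r^\infty\cdots\di\nu$ over $[r,\infty)$), so that $g$ supported in $[-r_0,r_0]$ forces $\mathcal{A}^{-1}g$ to vanish for $r>r_0$. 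You gesture at this but do not name it; since the paper has already set up (\ref{inv1})--(\ref{inv2}), this is the one observation that would close the argument cleanly. The remaining technicalities you defer — smoothness of $\mathcal{A}^{-1}g$ at the identity as a radial function on $S$, and continuity of $\mathcal{A}^{\pm 1}$ for the topological-isomorphism claim — are indeed standard but would need to be addressed in a complete proof.
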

{F}rom the definition of the spherical transform it follows that for every radial function $f$
\begin{align*}
\mathcal H f(s)&=\int_{\RR^+}\int_N f(X,Z,a)\,\phi _{s}(X,Z,a)\,a^{-Q-1}\di X\di Z\di a\\
&=\int_{\RR^+}\int_N \mathcal R f(X,Z,a)\,\delta^{is/Q-1/2}(X,Z,a)\,a^{-Q-1}\di X\di Z\di a\\
&=\int_{\RR^+}\Big(\int_N f(X,Z,a)\,a^{-Q/2}\di X\di Z\Big)a^{-is-1}\di a\\
&=\int_{\RR}\Big(\int_N f(X,Z,\nep^t)\,\nep^{-Qt/2}\di X\di Z\Big)\nep^{-ist}\di t\\
&=\int_{\RR}\mathcal Af(t)\,e^{-ist}\di t\\
&=\mathcal F \circ \mathcal A f (s)\,,
\end{align*}
where $\mathcal F$ denotes the Fourier transform on the real line and $\mathcal A$ denotes the Abel transform defined by
$$\mathcal Af(t)= \int_N f(X,Z,\nep^t)\,\nep^{-Qt/2}\di X\di Z\,.  $$
Hence, at least formally
$$\mathcal H=\mathcal F\circ \mathcal A~~~\text{and}~~~\mathcal H ^{-1}=\mathcal A ^{-1}\circ \mathcal F ^{-1}\,.$$
We shall use the inversion formula for the Abel transform \cite[formula (2.24)]{ADY}, which we now recall. Let $\mathcal D_1$ and $\mathcal D_2$ be the differential operators on the real line defined by
\begin{align}\label{D1D2}
\mathcal D_1=\,-\frac{1}{\sinh r}\,\frac{\partial}{\partial r}\,,\qquad \mathcal D_2=\,-\frac{1}{\sinh(r/2)}\,\frac{\partial}{\partial r} \,.
\end{align} 
If $m_{\zg}$ is even, then
\begin{equation}\label{inv1}
\mathcal A^{-1}f(r)=a_S^e\,\mathcal D_1^{m_{\zg}/2}\mathcal D_2^{m_{\vg}/2}f (r)\,,
\end{equation}
where $a_S^e=2^{-(2m_{\vg}+m_{\zg})/2}\pi^{-(m_{\vg}+m_{\zg})/2}$, while if $m_{\zg}$ is odd, then
\begin{align}\label{inv2}
\mathcal A^{-1}f(r)=a_S^o\int_r^{\infty}\mathcal D_1^{(m_{\zg}+1)/2}\mathcal D_2^{m_{\vg}/2}f (s) \di\nu(s)\,,
\end{align}
where $a_S^o= 2^{-(2m_{\vg}+m_{\zg})/2}\pi^{-n/2}$ and $\di\nu(s)=(\cosh s-\cosh r)^{-1/2}\sinh s \di s$.

\section{Direct products of \DR spaces}\label{products}
Let $S'$ and $S''$ be the harmonic extensions of two $H$-type groups $N'$ and $N''$. We consider the direct product $S=S'\times S''$. 

The dimension of $S$ is obviously $n=n'+n''$. The group $S$ is nonunimodular: the right and left Haar measures on $S$ are given by $\dir\big((x',x'')\big)=\dir '(x')\dir ''(x'')$ and $\dil\big((x',x'')\big)=\dil '(x')\dil ''(x'')$, respectively. Then the modular function is $$\delta\big((X',Z',a'),(X'',Z'',a'')\big)=a'^{-Q'}\,a''^{-Q''}\,.$$
We equip $S$ with the Riemannian metric which is the direct sum of the Riemannian metrics of $S'$ and $S''$ and denote by $d$ the distance induced by this Remannian structure. We denote by $d_{max}$ the following ``product distance'' on $S'\times S''$:
$$d_{max}\big((x',x''),(y',y'')\big)=\max \big\{d_{S'}(x',y'),d_{S''}(x'',y'')\big\}\,.$$
It is easy to check that the distances $d$ and $d_{max}$ satisfy
$$d_{max}\big((x',x''),(y',y'')\big)\leq d\big((x',x''),(y',y'')\big)\leq 2\,d_{max}\big((x',x''),(y',y'')\big)\,,$$
for all $(x',x''),(y',y'')$ in $S$. 

In the sequel we denote by $B(x,r)$ the ball of centre $x$ and radius $r$ with respect to metric $d_{max}$ which is equal to $B_{S'}(x',r)\times B_{S''}(x'',r)$. Note that \cite[formula (1.18)]{ADY} there exist positive constants $\gamma_1$, $\gamma_2$ such that for all $r$ in $(0,1)$
\begin{align}\label{misurapalle1p}
\gamma_1\,r^n&\leq\rho\big(B(e,r)\big)\leq \gamma_2\,r^n\,,
\end{align}
and for all $r$ in $[1,\infty)$
\begin{align}\label{misurapalle2p}
\gamma_1\,\nep^{(Q'+Q'')r}&\leq\rho\big(B(e,r)\big)\leq \gamma_2\,\nep^{(Q'+Q'')r}\,.
\end{align}
Hence $S$, equipped with the right Haar measure $\rho$, is a group of exponential growth.

Given two functions $h'$ and $h''$ on $S'$ and $S''$ respectively, we denote by $h'\otimes h''$ the function on $S$ defined by $(h'\otimes h'')(x',x'')=h'(x')\,h''(x'')$.

On the Riemannian manifold $S$ endowed with the direct sum of the Riemannian metric of $S'$ and $S''$ we may consider the Laplace-Beltrami operator $\,\mathcal L$. It is easy to check that for all $f$ in $C^{\infty}_c(S)$ 
$$\mathcal L f(x',x'')=\big(\mathcal L'f(\cdot,x'')\big)(x')+\big(\mathcal L''f(x',\cdot)\big)(x'')\,,$$
where $\LB '$ and $\LB ''$ denote the Laplace-Beltrami operators on $S'$ and $S''$, respectively.

Let $\phi '_{s'}$ and $\phi ''_{s''}$ denote the spherical functions on $S'$ and $S''$ respectively. Then
\begin{align*}
\mathcal L( \phi'_{s'}\otimes \phi''_{s''})&=(\mathcal L'\phi'_{s'})\otimes \phi''_{s''}+\phi '_{s'}\otimes (\mathcal L''\phi ''_{s''})\\
&=\big(s'^2+Q'^2/4+s''^2+Q''^2/4\big)\phi'_{s'}\otimes \phi''_{s''}\\
&=\big(s'^2+s''^2+{(Q'^2+Q''^2)}/4\big)\phi'_{s'}\otimes \phi''_{s''}\,.
\end{align*}
Thus $\phi'_{s'}\otimes\phi''_{s''}$ are spherical functions on $S$. The spherical Fourier transform of an integrable radial function $f$ on $S$ is defined by 
$$\mathcal H f(s',s'')=\int_S (\phi'_{s'}\otimes\phi''_{s''})\,f\di s'\di s'' \,.$$
For ``nice'' radial functions the spherical Fourier transform satisfies the following inversion and Plancherel formulas:
$$f(x)=c_S\int_{0}^{\infty}\int_{0}^{\infty}\mathcal H f (s',s'')\,(\phi'_{s'}\otimes \phi''_{s''})(x)| {\bf{c}} (s') |^{-2} \di s'| {\bf{c}} (s'') |^{-2}\di s''\,,$$and 
$$\int_S|f|^2\dil=c_S\int_0^{\infty}\int_0^{\infty}|\mathcal H f (s',s'')|^2\,|{\bf{c}}(s')|^{-2}\di s'\,|{\bf{c}}(s'')|^{-2}\di s''\,,$$
where ${\bf{c}}$ denotes the Harish-Chandra fun\-ction. 

We say that a function $f$ on $S'\times S''$ is {\it{biradial}} if there exists a function $f_0$ on $\RR^+\times \RR^+$ such that $f(x',x'')=f_0\big(d(x',e'),d(x'',e'')  \big)$, for all $(x',x'')\in S'\times S''$. 
We shall use the following integration formula on $S$.
\begin{lem}\label{intdufp}
Let $f$ be a biradial function in $C_c^{\infty}(S)$. Then
\begin{align*}
\int_S\delta^{1/2}f \,{\rm{d}} \rho &=\int_0^{\infty}\int_0^{\infty}\phi '_0(r')\,\phi ''_0(r'')\,f_0(r',r'')\,A'(r')\,A''(r''){\rm{d}} r'\di r'' \,,
\end{align*}
where $A'$ and $A''$ are defined as in Theorem \ref{intS}.
\end{lem}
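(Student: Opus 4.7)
The plan is to reduce the claim to Lemma~\ref{intduf} by iterating it in each factor, exploiting the product structure of $S = S' \times S''$. The key observations are that the modular function factorises as $\delta(x',x'') = \delta'(x')\,\delta''(x'')$, the right Haar measure splits as $\di\rho = \di\rho'\otimes \di\rho''$, and biradiality of $f$ means that for each fixed $x' \in S'$ the map $x'' \mapsto f(x',x'')$ is a radial function on $S''$, and symmetrically for the other variable.

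First I would use Fubini's theorem (legitimate since $f \in C_c^\infty(S)$) to write
\begin{align*}
\int_S \delta^{1/2}\,f \di\rho
= \int_{S'} \delta'(x')^{1/2} \Big( \int_{S''} \delta''(x'')^{1/2}\,f(x',x'') \di\rho''(x'') \Big) \di\rho'(x').
\end{align*}
For each fixed $x' \in S'$ the function $x'' \mapsto f_0(d(x',e'),d(x'',e''))$ is a smooth, compactly supported radial function on $S''$. Hence Lemma~\ref{intduf}, applied on $S''$, converts the inner integral into $\int_0^\infty \phi''_0(r'')\,f_0(d(x',e'),r'')\,A''(r'')\di r''$.

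Substituting this into the previous display and swapping the order of integration once more by Fubini, I would obtain
\begin{align*}
\int_S \delta^{1/2}\,f \di\rho
= \int_0^\infty \phi''_0(r'')\,A''(r'') \Big( \int_{S'} \delta'(x')^{1/2}\,f_0(d(x',e'),r'') \di\rho'(x') \Big) \di r''.
\end{align*}
For each fixed $r'' \geq 0$ the map $x'\mapsto f_0(d(x',e'),r'')$ is a smooth, compactly supported radial function on $S'$, so Lemma~\ref{intduf} applied on $S'$ identifies the inner integral with $\int_0^\infty \phi'_0(r')\,f_0(r',r'')\,A'(r')\di r'$. Combining the two steps yields the claimed formula.

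I do not foresee any substantial obstacle: the argument is a routine iteration of Lemma~\ref{intduf}. The only points to verify are the applicability of Fubini at each step and that the partial integrals retain enough regularity in the remaining variable to invoke Lemma~\ref{intduf} a second time; both follow immediately from the compact support and smoothness of $f$, together with the continuity of $\phi''_0$ and $A''$ on $\RR^+$.
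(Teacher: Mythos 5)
Your proof is correct. The paper states Lemma~\ref{intdufp} without supplying a proof, so there is no official argument to compare against; the iteration of Lemma~\ref{intduf} via Fubini's theorem, using the factorisations $\delta=\delta'\otimes\delta''$ and $\di\rho=\di\rho'\otimes\di\rho''$ together with the observation that a biradial function is radial in each variable separately, is precisely the argument the authors would have had in mind, and the regularity checks you note (smoothness and compact support of the slices, continuity of $\phi_0''$ and $A''$) are exactly what is needed to apply Lemma~\ref{intduf} a second time.
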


\chapter{Maximal operators}
\begin{intro*}
Let $S$ be a \DR space. In this chapter we introduce some Hardy--Littlewood type maximal operators on $S$ and we study their weak type $(1,1)$ boundedness with respect to the right Haar measure. More precisely we study the left invariant Hardy--Littlewood type maximal operator $M^{\mathcal F}$ defined by
$$
M^{\mathcal F}f(x)
= \sup_{F\in\mathcal F,\,x\in F}\frac{1}{\rho(F)}\int_{F} |f|
\dir\qquad \forall f\in L^1_{\rm{loc}}(\rho)\,,
$$ 
where $\mathcal F$ is a family of open subsets of $S$. We look for families $\mathcal F$ such that $M^{\mathcal F}$ is of weak type $(1,1)$ with respect to the right Haar measure $\rho$. 

In Section \ref{nice} we give a general result: if a family $\mathcal F$ satisfies a ``good property'', i.e. it is \emph{nicely ordered}, then the associated maximal operator $M^{\mathcal F}$ is of weak type $(1,1)$.

In the following section we introduce some families of subsets of $S$: a family $\mathcal R^0$ which consists of small balls, a family $\mathcal R^{\infty}$ of ``big rectangles'' and a family $\mathcal D^{\infty}$ of ``big dyadic sets''. We prove that they are all nicely ordered and that the associated maximal operators are of weak type $(1,1)$. Then in Section \ref{equivalence} we show that the maximal operator $M^{\mathcal D^{\infty}}$ and $M^{\mathcal D^{\infty}}$ are equivalent.

In Section \ref{maxoperatorsp} we introduce some maximal operators on products of \DR spaces and study their weak type $(1,1)$ boundedness. 
\end{intro*}
\section{The maximal operator $M^{\mathcal F}$}\label{nice}
Let $\mathcal F$ be a family of open subsets of $S$. We study the maximal operator $M^{\mathcal F}$ associated to $\mathcal F$ with respect to the right Haar measure defined by
$$
M^{\mathcal F}f(x)
= \sup_{F\in\mathcal F,\,x\in F}\frac{1}{\rho(F)}\int_{F} |f|
\dir\qquad \forall f\in L^1_{\rm{loc}}(\rho)\,.
$$ 
We are interested in finding families $\mathcal F$ for which $M^{\mathcal F}$ is bounded from $L^1(\rho)$ to the Lorentz space $\lorentz{1}{\infty}{\rho}.$ 
\begin{defi}\label{wellorder}
We say that $(\mathcal F,\leq )$, where $\mathcal F$ is a family of open subsets of $S$ and $\leq$ is a preorder relation on $\mathcal F$, is {\rm{nicely ordered}} if for each set $F$ in $\mathcal F$ there exists a set $\tF$ which satisfies the following properties:
\begin{itemize}
\item [(i)] $\rho(\tF)\leq \tC\,\rho(F)$, where the constant $\tC$ does not depend on the set $F$;
\item[(ii)] if $F_1,\,F_2$ are sets in $\mathcal F$ such that $F_2\leq F_1$ and $F_2\cap F_1\neq \emptyset$, then $F_2\subseteq \tF_1$.
\end{itemize} 
\end{defi}
If $\mathcal F$ is a nicely ordered family, then it possesses a nice covering property.
\begin{lem}\label{covering}
Let $\mathcal F$ be a nicely ordered family of open subsets of $S$. For every finite collection of sets $\{F_i\}_{i\in I}$ in $\mathcal F$, there exists a subcollection of mutually disjoint sets $F_1,...,F_k$ such that $$\bigcup_{i\in I} F_i\subseteq
\bigcup_{j=1}^k\tF_j.$$
\end{lem}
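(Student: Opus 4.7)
The plan is to run a classical Vitali-type greedy selection, with the preorder $\le$ playing the role that ``radius'' plays in the Euclidean Vitali covering lemma.

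First I would enumerate the finite collection $\{F_i\}_{i\in I}$ as $F_{i_1},F_{i_2},\ldots,F_{i_N}$ compatibly with $\le$, by iteratively extracting a $\le$-maximal element of the remaining subfamily (such maximal elements exist because we are dealing with a preorder on a finite set). This ensures that for $s<t$ we never have $F_{i_s}<F_{i_t}$. Then I would select indices greedily: set $J_0=\emptyset$, and for $s=1,\ldots,N$ in turn put $i_s$ into $J_s:=J_{s-1}\cup\{i_s\}$ if $F_{i_s}$ is disjoint from every $F_{i_t}$ with $t\in J_{s-1}$, otherwise $J_s:=J_{s-1}$. Write $J=J_N=\{j_1,\ldots,j_k\}$. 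The sets $F_{j_1},\ldots,F_{j_k}$ are pairwise disjoint by construction.

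For the covering property, fix $i\in I$ and let $s$ be its position in the enumeration. If $i\in J$, then property (ii) applied with $F_1=F_2=F_i$ (legitimate by reflexivity of $\le$) gives $F_i\subseteq\tF_i$. If $i\notin J$, then at stage $s$ there was a conflict, producing $t<s$ with $i_t\in J$ and $F_{i_s}\cap F_{i_t}\ne\emptyset$. From the ordering one concludes $F_{i_s}\le F_{i_t}$, and property (ii) delivers $F_{i_s}\subseteq\tF_{i_t}\subseteq\bigcup_{\ell=1}^{k}\tF_{j_\ell}$.

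The one delicate point, which I expect to be the only real obstacle, is the step ``$t<s$ and $F_{i_s}\cap F_{i_t}\ne\emptyset$ imply $F_{i_s}\le F_{i_t}$''. The enumeration of Step~1 only guarantees $F_{i_s}\not>F_{i_t}$; to upgrade this to $F_{i_s}\le F_{i_t}$ one needs that two intersecting sets in $\mathcal F$ are always comparable under $\le$. This is implicit in the way ``nicely ordered'' is used, and will be automatic for the concrete families $\mathcal R^0$, $\mathcal R^\infty$, $\mathcal D^\infty$ introduced later, where $\le$ is essentially a size comparison and is therefore a total preorder. Modulo this comparability, the proof is the standard Vitali argument with $\tF$ in place of the usual threefold dilate of a ball.
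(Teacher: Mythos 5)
Your proof is correct and uses essentially the same greedy Vitali-type selection as the paper; the only difference is presentational, in that you first fix a decreasing enumeration and then make a single pass, while the paper chooses at each stage a $\le$-greatest set among those still disjoint from the previously selected ones. The comparability concern you flag is real but is equally present in the paper's own argument: choosing $F_1$ with $F_i\le F_1$ for all $i\in I$, ``possible because $I$ is finite,'' in fact also requires the preorder to be total on the subfamily (finiteness alone gives only a maximal element, not a greatest one), and, as you observe, totality does hold for $\mathcal R^0$, $\mathcal R^{\infty}$, $\mathcal D^{\infty}$ because $\le$ there compares scalar quantities.
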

\begin{proof}
Choose a set $F_1$ such that $F_i\leq F_1$, for all $i\in I$. This is possible because $I$ is finite.

Suppose that $F_1,..,F_n$ have been chosen. Then either there are no sets of $\{F_i\}_{i\in I}$ disjoint from $F_1,..,F_n$ and the process stops, or we choose a set $F_{n+1}$ disjoint from the sets already selected and such that $F_i\leq F_{n+1}$, for all $i\in I$ such that $F_i\cap \big(F_1\cup \ldots\cup F_n\big)=\emptyset$.

This process stops after a finite number of steps (because $I$ is finite). Let $F_1,\ldots,F_k$ be the selected sets. Now, given a set $F_i$, $i\in I$, either $F_i$ is one of the sets which we have selected or there exists an index $F_j$, $1\leq j\leq k$ such that $F_i\leq F_j$ and $F_i\cap F_j\neq \emptyset$; in this case, by property (iii) of Definition \ref{wellorder}, $F_i\subseteq \tF_j$, and the lemma is proved.
\end{proof}
A noteworthy consequence of Lemma \ref{covering} is that maximal operators associated to nicely ordered families are of weak type $(1,1)$.
\begin{teo}\label{well11}
If $\mathcal F$ is a nicely ordered family, then the maximal operator $M^{\mathcal F}$ is bounded from $L^1(\rho)$ to $\lorentz{1}{\infty}{\rho}.$ 
\end{teo}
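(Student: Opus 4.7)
The plan is to run the classical weak type $(1,1)$ argument for Hardy--Littlewood maximal operators, with the covering step replaced by Lemma~\ref{covering}. Fix $f\in L^1(\rho)$ and $t>0$, and let $E_t=\{x\in S:~M^{\mathcal F}f(x)>t\}$. Since every $F\in\mathcal F$ is open, $E_t$ is open, hence by inner regularity of $\rho$ it suffices to bound $\rho(K)$ for an arbitrary compact $K\subseteq E_t$ and take the supremum. This reduction to compact subsets is what legitimates passing from an arbitrary collection to a finite one in Lemma~\ref{covering}.

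For each $x\in K$ pick $F_x\in\mathcal F$ with $x\in F_x$ and
\[
\rho(F_x)<\frac{1}{t}\int_{F_x}|f|\dir.
\]
The sets $\{F_x\}_{x\in K}$ are an open cover of $K$, so we extract a finite subcover $\{F_i\}_{i\in I}$. Apply Lemma~\ref{covering} to obtain mutually disjoint $F_1,\ldots,F_k$ from this finite family with
\[
K\subseteq\bigcup_{i\in I}F_i\subseteq\bigcup_{j=1}^k\tF_j.
\]
Then, using property (i) of Definition~\ref{wellorder} and the disjointness of the $F_j$,
\[
\rho(K)\leq\sum_{j=1}^k\rho(\tF_j)\leq\tC\sum_{j=1}^k\rho(F_j)\leq\frac{\tC}{t}\sum_{j=1}^k\int_{F_j}|f|\dir\leq\frac{\tC}{t}\,\|f\|_{L^1(\rho)}.
\]
Taking the supremum over compact $K\subseteq E_t$ gives $\rho(E_t)\leq\tC\,t^{-1}\|f\|_{L^1(\rho)}$, which is exactly the desired weak type $(1,1)$ bound with operator quasi-norm at most $\tC$.

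The only subtle point in this outline is the extraction of a finite subcover: Lemma~\ref{covering} is stated for finite collections $\{F_i\}_{i\in I}$, so one must reduce the level set $E_t$ to something compact before invoking it. Inner regularity of the Radon measure $\rho$ handles this cleanly, since $\rho(E_t)=\sup\{\rho(K):K\subseteq E_t\text{ compact}\}$. I do not expect any genuine obstacle beyond this standard bookkeeping; the substantive work has already been done in building the covering lemma from the nicely ordered structure.
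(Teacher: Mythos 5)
Your proof is correct and follows the same route as the paper's: reduce to a compact subset $K$ of the level set, extract a finite cover by sets on which the average of $|f|$ exceeds $t$, invoke Lemma~\ref{covering} to pass to a disjoint subfamily whose dilations still cover $K$, and sum using property (i) of Definition~\ref{wellorder}. Your extra remark about inner regularity makes explicit a step the paper leaves implicit, but the argument is otherwise identical.
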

\begin{proof}
Let $f$ be in $L^1(\rho)$ and $t>0$. Set $\Omega _{t}=\{x\in S:~M^{\mathcal F}f(x)>t\}$ and let $K$ be any compact subset of $\,\Omega_{t}\,$. By the compactness of $K$, we can select a finite collection of sets $\{F_i\}$ in $\mathcal F$ which cover $K$ and such that
\begin{align*}
\frac{1}{\rho(F_i)}\int_{F_i}|f|\dir &>t\,.
\end{align*}
By Lemma \ref{covering} we may select a disjoint subcollection $F_1,...,F_k$ of $\{F_i\}$ such that $K\subseteq \bigcup_{i=1}^k \tF_i$. Thus,
$$\rho(K)\leq \sum_{i=1}^k\rho(\tF_i)\leq \tC\,\sum_{i=1}^k\rho(F_i)\leq \frac{\tC}{t}\,\sum_{i=1}^k\int_{F_i}|f|\dir\leq \frac{\tC}{t}\,\|f\|_{L^1(\rho)}\,.$$
By taking the supremum of both sides over all $K\subseteq \Omega_{t}$, we obtain that $\rho(\Omega_t)\leq \frac{\tC}{t}\,\|f\|_{L^1(\rho)}$, as required.
\end{proof}

\section{The maximal operators $M^{\mathcal R^0}$, $M^{\mathcal R^{\infty}}$ and $M^{\mathcal D^{\infty}}$ }\label{small}
In this section we introduce three families of subsets of $S$: $\mathcal R^0$, $\mathcal R^{\infty}$ and $\mathcal D^{\infty}$. 

The family $\mathcal R^0$ is the family of balls of small radius, i.e.
$$\mathcal R^{0}=\{B(x_0,\, r):~x_0\in S\,,r<1/2\}\,.$$
%For each ball $B(x_0,\, r)$ in $\mathcal R^{0}$ let $\tilde{B}$ denote the ball centred at $x_0$ of radius $3\,r$.

\bigskip
The family $\mathcal R^{\infty}$ consists of ``big rectangles''. Suppose that $b',\,b''$ are constants such that $1/2< b'<b''$. We define $\mathcal R^{\infty}$ as the family of all left-translates of the sets $E_{r,\,b}$, i.e.
\begin{equation}\label{Rinftydef}
\mathcal R^{\infty} =\{x_0 E_{r,\,b}:~x_0\in S,\,r \geq  \nep,\,b'<b<b''\}\,,
\end{equation}
where
\begin{equation}\label{Erb}
E_{r,\,b}=B_N\big(0_N,r^{b}\big)\times \big(1/r,r\big) \qquad \forall r\geq \nep\,.
\end{equation}
Let $\gamma$ be a constant greater than $\frac{4b'' +2b'+1}{2b'-1}$. For each set $E_{r,\,b}$, we define its dilated set as 
$$\tE_{r,\,b}=B_N\big(0_N,\gamma\, r^{b}\big)\times \big({1}/{r^{\gamma}},r^{\gamma}\big)  \,.$$
Note that
\begin{align}\label{misuraE}
\rho(E_{r,\,b})&=2\,r^{2b Q}\,|B_N(0,1)|\,\log r\nonumber\\
&\asymp r^{2b Q}\log r \,.
\end{align}
Furthermore the measures of $E_{r,\,b}$ and $\tE_{r,\,b}$ are comparable.

\bigskip
The family $\mathcal D^{\infty}$ is similar to $\mathcal R^{\infty}$, the main difference being that we replace $B_N(0_N,r^b)$ in (\ref{Erb}) with a dyadic set in $N$. On spaces of homogeneous type, in particular on nilpotent groups, dyadic sets have been introduced by M.~Christ \cite[Theorem 11]{C}. For the reader's convenience we recall their properties in the following theorem.
\begin{teo}\label{dyadic}
Let $N$ be an $H$-type group endowed with the homogeneous distance $d_N$ and the Haar measure. There exist a collection of sets $\{\Qka\subset N:~k\in\ZZ, \al\in I_k\}$, where $I_k$ is a countable index set, constants $\rd>1$, $c_N>0$, $C_N>0$ and an integer $M$ such that: 
\begin{itemize}
\item[(i)] $|N-\bigcup_{\al\in I_k}\Qka|=0\qquad\forall k\in\ZZ$;
\item[(ii)]there are points $n_{\al}^k$ in $N$ such that 
$B_N(n_{\al}^k,c_N\,\rd^k)\subseteq\Qka\subseteq B_N(n_{\al}^k,C_N\,\rd^k)$;
\item[(iii)] $\Qka \cap Q_{\beta}^k =\emptyset$ if $\al \neq \be$;
\item[(iv)] each set $\Qka$ has at most $M$ subsets of type $Q_{\be}^{k-1}$;
\item[(v)]$\forall \ell\leq k$ and $\beta$ in $I_{\ell}$ there is a unique $\alpha$ in $I_k$ such that $Q_{\be}^{\ell}\subseteq\Qka $;
\item[(vi)]if $\ell\leq k$, then either $\Qka\cap Q_{\be}^{\ell}=\emptyset$ or $Q_{\be}^{\ell}\subseteq \Qka $.
\end{itemize} 
\end{teo}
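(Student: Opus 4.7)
The plan is to mimic M.~Christ's construction of dyadic cubes on spaces of homogeneous type, applied to $(N,d_N)$ with its Haar measure. Observe first that $N$ is doubling: by left invariance and the dilation identity $|\delta_a E|=a^Q|E|$, one has $|B_N(x_0,r)|=r^{2Q}|B_N(0_N,1)|$, so $|B_N(x_0,2r)|=2^{2Q}|B_N(x_0,r)|$. The construction proceeds in three stages: choosing maximal separated nets at every scale, organising them into a tree, and realising the cubes as measurable partitions that refine along the tree.

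First I would fix a parameter $\eta>1$, sufficiently large in terms of the doubling constant of $N$. For each $k\in\ZZ$, by Zorn's lemma pick a maximal subset $\{n_{\al}^k\}_{\al\in I_k}\subset N$ satisfying $d_N(n_{\al}^k,n_{\be}^k)\geq\eta^k$ for $\al\neq\be$. Doubling makes $I_k$ countable, and maximality yields the cover $N=\bigcup_{\al}B_N(n_{\al}^k,\eta^k)$. Using the cover at level $k+1$, for each $\al\in I_k$ select an ancestor $p(\al)\in I_{k+1}$ with $d_N(n_{\al}^k,n_{p(\al)}^{k+1})<\eta^{k+1}$, breaking ties arbitrarily. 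Iterating $p$, each index has a unique ancestor at every higher level.

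Third, pick a very small reference level $\ell_0\ll k$, and for each $\be\in I_{\ell_0}$ form the half-open Voronoi cell $V_{\be}$ of $n_{\be}^{\ell_0}$ with respect to $d_N$, using a deterministic tiebreaker. Define
\[
\Qka=\bigcup\bigl\{V_{\be}:\be\in I_{\ell_0}\text{ and the iterated ancestor of }\be\text{ at level }k\text{ is }\al\bigr\},
\]
and pass to an inductive limit as $\ell_0\to-\infty$. The $\{\Qka\}_{\al\in I_k}$ form a measurable partition of $N$ up to a null set, giving (i) and (iii); the tree structure directly yields (v) and (vi). The outer inclusion $\Qka\subseteq B_N(n_{\al}^k,C_N\eta^k)$ in (ii) follows by summing the geometric series $\sum_{j\leq k-1}\eta^{j+1}\leq \eta^{k+1}/(\eta-1)$ along ancestor chains. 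Property (iv) follows from doubling: the children $\{Q_{\be}^{k-1}\}$ of $\Qka$ are contained in a ball of radius $C_N\eta^k$ with centres $\eta^{k-1}$-separated, so their number is bounded by a power of the doubling constant.

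The hard part is the inner inclusion $B_N(n_{\al}^k,c_N\eta^k)\subseteq\Qka$ in (ii). This is not automatic from the naive Voronoi construction, since near-ties at many descending scales could in principle carve a dense mass of misassigned points out of any prescribed neighbourhood of $n_{\al}^k$. The classical remedy, which I would follow, is a quantitative induction on scales: provided $\eta$ is large enough relative to the doubling constant, the set of points within $\eta^\ell$ of $n_{\al}^k$ whose tree chain could jump to a sibling shrinks by a definite geometric factor at each step $\ell\to\ell-1$. Consequently the cumulative set of potentially misassigned points near $n_{\al}^k$ cannot reach within $c_N\eta^k$ of $n_{\al}^k$ for some fixed $c_N>0$, which gives the inner inclusion. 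This separation-of-scales estimate is the technical heart of Christ's argument and is where the bulk of the work lies.
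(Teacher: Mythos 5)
The paper does not prove this theorem: it is M.~Christ's dyadic-cube construction on spaces of homogeneous type, cited from \cite[Theorem~11]{C}, and the paper merely records the statement for the reader's convenience. There is therefore no in-paper proof against which your attempt can be compared, and the most useful thing I can do is assess your sketch on its own terms.

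Your sketch correctly observes that $(N,d_N,|\cdot|)$ is Ahlfors $2Q$-regular (hence doubling) and follows the standard Christ scheme: maximal $\eta^k$-separated nets, an ancestor map, nested partitions, and geometric-series/doubling estimates for the outer inclusion, (iv), (v) and (vi). You also correctly single out the inner inclusion $B_N(n_\alpha^k,c_N\eta^k)\subseteq Q_\alpha^k$ as the crux. The difficulty is that the specific route you propose for it --- take Voronoi cells at a very fine reference level $\ell_0$, aggregate along ancestor chains, and pass to a limit --- does not by itself yield a ball, even after choosing $\eta$ large. The obstruction is quantitative: an ancestor chain from a level-$\ell$ net point up to level $k$ can move the base point by as much as $\sum_{j=\ell+1}^{k}\eta^{j}$, which is of size $\eta^k\cdot\tfrac{\eta}{\eta-1}>\eta^k$ uniformly in $\eta>1$. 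Consequently a fine net point lying within $c\,\eta^k$ of $n_\alpha^k$ has no guarantee of being tracked back to $\alpha$ at level $k$, for any $c>0$, and so the limiting Voronoi region around $n_\alpha^k$ need not contain any fixed ball of radius comparable to $\eta^k$. The ``shrinks by a geometric factor'' estimate you invoke is the right idea for bounding the \emph{measure} of the fuzzy boundary (which is another of Christ's conclusions, not listed here), but it does not produce a genuine ball containment. Christ's proof closes this gap by a more delicate recursive construction in which the assignment at each scale is constrained by the partition already built at the coarser scales, so that the inner balls are secured at every subdivision step rather than extracted a posteriori from a Voronoi limit. You are right that this is the technical heart of the theorem; the remedy as you have described it, however, does not yet reach it.
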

\begin{ossn} {\rm{Note that if $N=\RR^d$ the dyadic sets agree with standard dyadic cubes in $\RR^d$. For all $k$ in $\ZZ$ and $m\in \ZZ^d$ the dyadic cube $Q^k_m$ is defined by
$$Q^k_m=\prod_{i=1}^d \big(m_i\,2^k,(m_i+1)\,2^k\big)\,,  $$
which is centred at the point $x^k_m=\big((m_1+1/2)\,2^k,\ldots,(m_d+1/2)\,2^k\big)$. The constants which appear in Theorem \ref{dyadic} in this case are $\rd=2$, $c_N=1/2$, $C_N=\sqrt{d}/2$, $M=2^d$. Indeed, 
$$B_{\RR^d}(x^k_m,2^{k-1})\subseteq Q^k_m\subseteq B_{\RR^d}(x^k_m,\sqrt{d}\,2^{k-1})\,.$$ }}
\end{ossn}

\bigskip
We define ``big admissible sets'' as products of dyadic sets in $N$ and intervals in $A$. Roughly speaking, we may think of these sets as left translates of a family of sets containing the identity. We cannot exactly do that, because left translates and dilated of dyadic sets in $N$ are not dyadic sets. 
\begin{defi}\label{admissibility}
A \emph{big admissible set} is a set of the form $\Qka\times (a_0/r,a_0\,r)\,,$ where $\Qka$ is a dyadic set in $N$, $a_0\in A$, $r\geq {\rm{e}}$,
\begin{align}\label{adm}
a_0^{1/2}r^{\beta}&\leq \rd^k < a_0^{1/2}r^{4\beta}\,,\end{align}
and $\beta$ is a constant $>{\rm  max}\left\{ 3/2,1/4+\log \rd , 1+\log\big(c_3/c_N\big)\right\}\,$, where $c_3$, $\rd$, $c_N$ are the constants which appear in Proposition \ref{palleS} and Theorem \ref{dyadic}. 
\end{defi}
The family $\mathcal D^{\infty}$ is defined by
$$\mathcal D^{\infty}=\{R:\,R ~{\rm{is~ a~ big~admissible~set}} \}\,.$$
Given a set $R=\Qka\times (a_0/r,a_0\,r)$ in $\mathcal D^{\infty}$ we define 
\begin{equation}\label{dil}
\tR=\Qka\times (a_0/{r^{\gamma}},a_0\,r^{\gamma})\,,
\end{equation}
where $\gamma=\frac{9\,\beta+1}{\beta\,-1}$. 

\begin{lem}\label{famnice}
The following families are nicely ordered:
\begin{itemize}
\item[(i)] the family $(\mathcal R^0,\,\leq )$, where $B(x_2,\,r_2)\leq B(x_1,\,r_1)$ if $r_2\leq r_1$ and the dilated of $B(x_0,r)$ is the ball $\tilde{B}$ centred at $x_0$ of radius $3\,r$;
\item[(ii)] the family $(\mathcal R^0,\,\leq )$, where $B_2\leq B_1$ if $\rho(B_2)\leq 2\,\rho(B_1)$ and the dilated of $B(x_0,r)$ is the ball $\tilde{B}$ centred at $x_0$ of radius $\gamma \,r$, where $\gamma=1+2\Big(2\,\nep^Q\,\gamma_2/{\gamma_1}\Big)^{1/n}$ and $\gamma_i$, $i=1,2$, are as in (\ref{misurapalle1});
\item[(iii)] the family $(\mathcal R^{\infty},\,\leq )$, where $R_2\leq R_1$ if $\rho(R_2)\leq \rho(R_1)$ and the dilated of a set $x_0E_{r,\,b}$ is the set $x_0\tE_{r,\,b}$;
\item[(iv)] the family $(\mathcal D^{\infty},\,\leq )$, where $Q^{k_2}_{\alpha_2}\times (a_2/r_2,a_2\,r_2)\leq Q^{k_1}_{\alpha_1}\times (a_1/r_1,a_1\,r_1)$ if $k_2\leq k_1$ and the dilated of a set is defined by (\ref{dil}).
\end{itemize}
\end{lem}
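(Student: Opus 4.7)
The plan is to verify properties (i) and (ii) of Definition \ref{wellorder} for each of the four families. A unifying preliminary observation is that for every $x_0 = (X_0, Z_0, a_0)$ in $S$ one has $\rho(x_0 E) = a_0^Q \rho(E) = \delta(x_0)^{-1} \rho(E)$, as follows from the Jacobian of left translation in the $(X, Z, a)$ coordinates. Consequently, ratios of the form $\rho(x_0 E) / \rho(x_0 F)$ do not depend on $x_0$, and property (i) in every case reduces to a direct computation from (\ref{misurapalle1}), (\ref{misuraE}) and $\int_{a/r}^{a r} da'/a' = 2 \log r$. For instance, in case (iii) one finds $\rho(\tE_{r, b})/\rho(E_{r, b}) = \gamma^{2Q + 1}$, and in case (iv) $\rho(\tR)/\rho(R) = \gamma$.

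For case (i), with the radius ordering, (ii) is merely the triangle inequality: if $y \in B_1 \cap B_2$ and $z \in B_2$, then $d(z, x_1) \leq d(z, y) + d(y, x_1) \leq 2 r_2 + r_1 \leq 3 r_1$. Case (ii), with the measure ordering, is more subtle. From $B_1 \cap B_2 \neq \emptyset$ and $r_1, r_2 < 1/2$ one gets $d(x_1, x_2) < 1$, and formula (\ref{distanza}) then forces the $A$-components of the centres to satisfy $a_2/a_1 \in (\nep^{-1}, \nep)$, hence $\delta(x_2)/\delta(x_1) \in (\nep^{-Q}, \nep^Q)$. Combining this with $\rho(B_2) \leq 2 \rho(B_1)$ and (\ref{misurapalle1}) yields $r_2 \leq (2 \nep^Q \gamma_2/\gamma_1)^{1/n} r_1$, after which the triangle inequality places $B_2$ inside $B(x_1, \gamma r_1)$ with the stated $\gamma$.

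For case (iii), the intersection of the $A$-intervals of $R_1$ and $R_2$ gives $a_2/a_1, a_1/a_2 < r_1 r_2$. Feeding these into $\rho(R_2) \leq \rho(R_1)$, together with (\ref{misuraE}) and the constraints $b' < b_j < b''$, produces both $a_0 := a_2/a_1 \lesssim r_1^{2 b_1}/r_2^{2 b_2}$ and $r_2 \lesssim r_1^{(2 b'' + 1)/(2 b' - 1)}$. Writing $y := x_1^{-1} x_2$ and using the semidirect-product law, one then estimates each coordinate of $y \cdot (Y, W, a)$ for $(Y, W, a) \in E_{r_2, b_2}$ via Proposition \ref{normaN}: the $A$-component is controlled by $r_1 r_2^2$, and requiring $r_1 r_2^2 \leq r_1^\gamma$ forces the threshold $\gamma > (4 b'' + 2 b' + 1)/(2 b' - 1)$; the $N$-component is bounded by a constant multiple of $r_1^{b_1}$ using the identity $a_0^{1/2} \cdot r_2^{b_2} \lesssim r_1^{b_1}$ coming from the bound on $a_0$, together with a matching estimate on the $N$-part of $y$ itself obtained by evaluating the group law at an intersection point.

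Case (iv) is the cleanest. The $N$-part of (ii) is immediate from Theorem \ref{dyadic}(vi), which, given $k_2 \leq k_1$ and $Q^{k_2}_{\alpha_2} \cap Q^{k_1}_{\alpha_1} \neq \emptyset$, yields $Q^{k_2}_{\alpha_2} \subseteq Q^{k_1}_{\alpha_1}$. For the $A$-part, the admissibility condition (\ref{adm}) gives $a_2^{1/2} r_2^\beta \leq \rd^{k_2} \leq \rd^{k_1} < a_1^{1/2} r_1^{4 \beta}$; combined with the interval-intersection bound $a_2/a_1 > 1/(r_1 r_2)$, this upgrades to $r_2 < r_1^{(8\beta + 1)/(2\beta - 1)}$. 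The containment $(a_2/r_2, a_2 r_2) \subseteq (a_1/r_1^\gamma, a_1 r_1^\gamma)$ then reduces to $\gamma \geq (18 \beta + 1)/(2 \beta - 1)$, which a short cross-multiplication shows is implied by $\gamma = (9 \beta + 1)/(\beta - 1)$ and $\beta > 1$. The main obstacle throughout is case (iii): although the overall strategy parallels the other cases, one must carefully coordinate the measure ordering, the intersection constraint, and the semidirect-product group law in order to push both coordinates of $R_2$ into $\tE_{r_1, b_1}$ with exactly the value of $\gamma$ built into the definition of the dilated set.
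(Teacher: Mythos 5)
Your treatment of parts (i), (ii), and (iv) is essentially identical to the paper's proof. In (iv) your bound $r_2 < r_1^{(8\beta+1)/(2\beta-1)}$ is in fact a bit sharper than the bound $r_2\le r_1^{(4\beta+1)/(\beta-1)}$ the paper writes down (both are correct, both suffice for the chosen $\gamma$), and your preliminary observation $\rho(x_0 E)=\delta(x_0)^{-1}\rho(E)$ is exactly the reduction the paper uses tacitly. The real problem is in part (iii), which is indeed the crux of the lemma.

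You assert that the measure ordering together with (\ref{misuraE}) and $b'<b_j<b''$ ``produces'' $a_0:=a_2/a_1\lesssim r_1^{2b_1}/r_2^{2b_2}$, and hence the key estimate $a_0^{1/2}r_2^{b_2}\lesssim r_1^{b_1}$. But what $\rho(R_2)\le\rho(R_1)$ actually gives is
\[
a_0\;\le\;\frac{r_1^{2b_1}}{r_2^{2b_2}}\Bigl(\frac{\log r_1}{\log r_2}\Bigr)^{1/Q}\,,
\]
and the factor $(\log r_1/\log r_2)^{1/Q}$ is \emph{not} bounded a priori: it can be arbitrarily large when $r_2$ is close to $\nep$ and $r_1$ is huge. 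So the stated one-line bound on $a_0$ is simply false without additional information, and the key estimate does not follow the way you describe. The paper closes this gap with a case analysis you have not reproduced. When $r_2\ge r_1$ one has $\log r_1/\log r_2\le 1$, so the measure bound alone gives $a_0^{1/2}r_2^{b_2}\le r_1^{b_1}$. When $r_2<r_1$, a further split is needed: if $r_2< r_1^{(2b_1-1)/(2b_2+1)}$, the desired bound comes instead from the intersection constraint $a_0<r_1r_2$ combined with the smallness of $r_2$ (so that $r_1^{1/2}r_2^{b_2+1/2}<r_1^{b_1}$); if $r_1^{(2b_1-1)/(2b_2+1)}\le r_2<r_1$, then $\log r_1/\log r_2\le(2b''+1)/(2b'-1)$ is uniformly bounded and the measure bound applies again, with a constant depending only on $b',b'',Q$. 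You acknowledge in your closing sentence that one must ``carefully coordinate the measure ordering, the intersection constraint, and the semidirect-product group law,'' but you do not actually carry out this coordination, and your explicit intermediate claim is wrong as stated. Once this trichotomy is inserted, your outline of (iii) becomes the paper's proof with the $\gamma$-threshold emerging as you indicate.
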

\begin{proof}
We prove (i). First note that by (\ref{misurapalle1}) there exists a constant $\tC$ such that $\rho(\tB)\leq \tC\,\rho(B)$, for each ball $B$ in $\mathcal R^0$.

Let $B_i$ be in $\mathcal R^{0}$ such that $B_2\leq B_1$ and $B_1\cap B_2\neq \emptyset$. Then for each
point $x$ in $ B_2$, we have that
$$d(x,x_1)\leq d(x,x_2)+d(x_2,x_1)< 2\,r_2+ r_1\leq 3\, r_1\,.$$
Thus the point $x$ is in $\tB_1$. This proves that $\mathcal R^{0}$ is nicely ordered.

We now prove (ii). First note that by (\ref{misurapalle1}) there exists a constant $\tC$ such that $\rho(\tB)\leq \tC\,\rho(B)$, for each ball $B$ in $\mathcal R^0$.

Let $B_i=B(x_i,r_i)$ be in $\mathcal R^{0}$ such that $B_2\leq B_1$ and $B_1\cap B_2\neq \emptyset$. The condition $\rho(B_2)\leq 2\,\rho(B_1)$ implies that $\delta(x_2)^{-1}\,\gamma_1\,r_2^n\leq 2\,\delta(x_1)^{-1}\,\gamma_2\,r_1^n$. Thus
\begin{align*}
r_2&\leq \big(2\,\delta(x_2x_1^{-1})\,\gamma_2/{\gamma_1}\big)^{1/n}\,r_1\,.
\end{align*}
Since $x_2x_1^{-1}$ is in $B(e,1)$ we have that $\delta(x_2x_1^{-1})\leq \nep^Q$ and then 
\begin{align*}
r_2&\leq \big(2\,\nep^Q\,\gamma_2/{\gamma_1}\big)^{1/n}\,r_1\,.
\end{align*}
It follows that
\begin{align*}
B(x_2,r_2)&\subseteq B(x_2,2r_2+r_1)\\
&\subseteq B\Big(x_1,\left(1+2\big(2\,\nep^Q\,\gamma_2/{\gamma_1}\big)^{1/n}\Big)r_1\right)\\
&=B(x_1,\gamma \,r_1)\\
&=\tilde B_1\,,
\end{align*}
as required. This proves that $\mathcal R^{0}$ is nicely ordered.

We now prove (iii). Suppose that $R_i=x_iE_{r_i,\,b_i}$, for $i=1,2$, are sets in $\mathcal R^{\infty}$ such that $R_2\leq R_1$ and $R_1\cap R_2\neq \emptyset$. Without loss of generality we may suppose that $R_2$ is centred at the identity. Indeed, if not, then $x_2^{-1}R_2\leq x_2^{-1}R_1$, they intersect and $x_2^{-1}R_2$ is centred at the identity. If we prove that $x_2^{-1}R_2\subseteq x_2^{-1}\tR_1$, then also $R_2\subseteq \tR_1$.

Then we suppose that $x_2=e$ and $x_1=(n_1,a_1)$. It is straightforward to check that the condition $R_2\leq R_1$ implies that
\begin{align}\label{misure}
\frac{\big(a_1^{1/2}r_1^{b_1}\big)^{2Q}}{r_2^{2b_2Q}}\,\frac{\log r_1}{\log r_2}&\geq 1\,.
\end{align}
The fact that $R_1$ and $R_2$ intersect implies that
\begin{align}\label{condint}
&\frac{1}{r_1r_2}<{a_1}<r_1r_2 \hspace{1cm} \text{and} \hspace{1cm}  d_N(n_1,0_N)<a_1^{1/2}r_1^{b_1}+r_2^{b_2}\,.
\end{align}
Let $(n,a)$ be a point of $R_2$; we shall prove that it belongs to $\tR_1$. {F}rom (\ref{condint}) we deduce that
\begin{align}\label{As}
\frac{1}{r_2^2r_1}&<\frac{a}{a_1}<r_2^2r_1
\end{align}
and
\begin{align}\label{N}
d_N(n,n_1)&\leq d_N(n,0_N)+d_N(0_N,n_1)\nonumber\\
&< 2\,r_2^{b_2}+a_1^{1/2}\,r_1^{b_1}\,.
\end{align}
Now we examine two cases separately.

{\it{Case} $r_2\geq r_1$.} In this case, from (\ref{misure}) we deduce that
\begin{align*}
r_2^{b_2}&\leq a_1^{1/2}r_1^{b_1}\,\Big(\frac{\log r_1}{\log r_2}\Big)^{1/{2Q}}\\
&\leq a_1^{1/2}r_1^{b_1}\,,
\end{align*}
and then from (\ref{N}) we obtain that
\begin{align*}
d_N(n,n_1)&< 2\,r_2^{b_2}+a_1^{1/2}r_1^{b_1}\\
&\leq 3\,a_1^{1/2}r_1^{b_1}\,.
\end{align*}
Again from (\ref{misure}) and (\ref{condint}) it follows that
\begin{align*}
r_2^{b_2}&\leq {a_1}^{1/2}r_1^{b_1}\,\Big(\frac{\log r_1}{\log r_2}\Big)^{1/{2Q}}\\
&\leq r_2^{1/2}r_1^{b_1+1/2}\,,
\end{align*}
and then $r_2\leq r_1^{\frac{2b_1 +1}{2b_2-1}}\leq r_1^{\frac{2b'' +1}{2b'-1}}$. Thus, from (\ref{As})
\begin{align*}
\Big(\frac{1}{r_1}\Big)^{\frac{4b''+2b'+1}{2b' -1}}\leq\frac{1}{r_2^2r_1}< \frac{a}{a_1}< r_2^2r_1\leq r_1^{\frac{4b''+2b'+1}{2b' -1}}\,.
\end{align*}
Since $\gamma>\frac{4b''+2b'+1}{2b' -1}>3$ by assumption, we have that
\begin{align*}
&\frac{1}{r_1^{\gamma}}<\frac{a}{a_1}<r_1^{\gamma} \hspace{1cm} \text{and} \hspace{1cm}  d_N(n_1,0_N)<\gamma\,a_1^{1/2}r_1^{b_1}\,.
\end{align*}
Thus the point $(n,a)$ is in $\tR_1$ as required.

{\it{Case} $r_2<r_1$. } In this case, by using (\ref{As}) we have that
$$\frac{1}{r_1^3}< \frac{1}{r_2^2r_1}< \frac{a}{a_1}< r_2^2r_1< r_1^3\,.$$
It remains to verify that $d_N(n,n_1)< \gamma\,a_1^{1/2}r_1^{b_1}$. We examine two situations separately.
\begin{itemize}
\item[(i)] If $r_2<r_1^{\frac{2b_1-1}{2b_2 +1}}$, then from (\ref{N}) we obtain that
\begin{align*}
d_N(n,n_1)&< a_1^{1/2}r_1^{b_1}\,\Big(2\,\frac{r_2^{b_2}}{a_1^{1/2}r_1^{b_1}} +1\Big)\\
&<  a_1^{1/2}r_1^{b_1}\,\Big(2\,\frac{r_2^{b_2}}{r_1^{b_1}}\, r_1^{1/2}r_2^{1/2}+1\Big)\\&< 3 \,a_1^{1/2}r_1^{b_1}\,.
\end{align*}
\item[(ii)] If $r_1^{\frac{2b_1-1}{2b_2 +1}}\leq r_2<r_1$, then from (\ref{misure}) we deduce that
\begin{align*}
r_2^{b_2}&\leq a_1^{1/2}r_1^{b_1}\,\Big(\frac{\log r_1}{\log r_2}\Big)^{1/{2Q}}\\
&\leq a_1^{1/2}r_1^{b_1}\,\Big(\frac{2b_2 +1}{2b_1 -1}\Big)^{1/{2Q}}\\
&\leq a_1^{1/2}r_1^{b_1}\,\Big(\frac{2b'' +1}{2b' -1}\Big)^{1/{2Q}}\,.
\end{align*}
This implies that
\begin{align*}
d_N(n,n_1)&< 2\,r_2^{b_2}+a_1^{1/2}r_1^{b_1}\\
&\leq \left(2\,\Big(\frac{2b'' +1}{2b' -1}\Big)^{1/{2Q}}+1\right)a_1^{1/2}r_1^{b_1}\,.
\end{align*}
\end{itemize}
Since $\gamma>\frac{4b''+2b'+1}{2b'-1}>2\,\Big(\frac{2b'' +1}{2b' -1}\Big)^{1/{2Q}}+1>3$ by assumption, we have proved that
\begin{align*}
&\frac{1}{r_1^{\gamma}}<\frac{a}{a_1}<r_1^{\gamma} \hspace{1cm} \text{and} \hspace{1cm}  d_N(n_1,0_N)<\gamma\,a_1^{1/2}r_1^{b_1}\,.
\end{align*}
Thus the point $(n,a)$ is in $\tR_1$, as required. This concludes the proof of the fact that $\mathcal R^{\infty}$ is nicely ordered.

Finally we prove (iv). Suppose that $R_i=Q^{k_i}_{\alpha_i}\times (a_i/r_i,a_i\,r_i)$, $i=1,2$, are sets in $\mathcal D^{\infty}$ such that $R_2\leq R_1$ and $R_2\cap R_1\neq\emptyset$. In this case, since $k_2\leq k_1$ and $Q^{k_2}_{\alpha_2}\cap Q^{k_1}_{\alpha_1}\neq\emptyset$, by property (vi) of Theorem \ref{dyadic} we have that $Q^{k_2}_{\alpha_2}\subseteq Q^{k_1}_{\alpha_1}$. Moreover, since $R_2$ and $R_1$ intersect, we have that 
\begin{align}\label{coint}
\frac{1}{r_1\,r_2}&\leq\frac{a_1}{a_2}\leq r_1\,r_2\,.
\end{align}
By the admissibility condition (\ref{adm}) we obtain that
$$a_2^{1/2}\,r_2^{\beta}\leq \rd^{k_2}\leq \rd^{k_1}\leq a_1^{1/2}\,r_1^{4\,\beta}\,,$$
and then, by (\ref{coint}), 
$$r_2\leq r_1^{\frac{4\,\beta+1}{\beta\,-1}}\,.$$
It follows that 
\begin{align*}
(a_2/r_2,\,a_2\,r_2)&\subseteq (a_1/{r_1\,r_2^2},\,a_1\,r_1\,r_2^2)\\
&\subseteq (a_1/{r_1^{\gamma}},\,a_1\,r_1^{\gamma})\,.
\end{align*}
This proves that $R_2\subseteq \tR_1$. Thus, the family $\mathcal D^{\infty}$ is nicely ordered.
\end{proof}
A straighforward consequence of Lemma \ref{famnice} is that the maximal operators associated to the families defined above are of weak type $(1,1)$.
\begin{teo}\label{maxopweak}
The maximal operators $M^{\mathcal R^0}$, $M^{\mathcal R^{\infty}}$ and $M^{\mathcal D^{\infty}}$  are bounded from $L^1(\rho)$ to $\lorentz{1}{\infty}{\rho}.$ 
\end{teo}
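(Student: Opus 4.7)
The plan is that Theorem~\ref{maxopweak} is a direct corollary of the two preceding results. First I would invoke Lemma~\ref{famnice}, which asserts that each of the families $\mathcal R^0$, $\mathcal R^\infty$ and $\mathcal D^\infty$, equipped with the preorder relations and dilated--set constructions prescribed there, is nicely ordered in the sense of Definition~\ref{wellorder}. Then I would apply Theorem~\ref{well11}, which guarantees that the maximal operator associated to any nicely ordered family is bounded from $L^1(\rho)$ to $\lorentz{1}{\infty}{\rho}$. Specialising this to each of the three families separately yields the stated weak type $(1,1)$ bounds for $M^{\mathcal R^0}$, $M^{\mathcal R^\infty}$ and $M^{\mathcal D^\infty}$.

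At this stage there is essentially no obstacle, since all the substantive content has already been absorbed into the preceding lemma and theorem. Behind the scenes the argument rests on the Vitali--type covering Lemma~\ref{covering}, extracted from the nice--ordering hypothesis, combined with the standard weak type reasoning of Theorem~\ref{well11}: one exhausts the level set $\{x\in S : M^{\mathcal F}f(x) > t\}$ from inside by compact sets $K$, covers $K$ by sets $F_i\in \mathcal F$ on which the mean of $|f|$ exceeds $t$, extracts a pairwise disjoint subcollection $F_1,\dots,F_k$ with $K\subseteq \bigcup_{j} \tF_j$ via the lemma, and concludes
\[
\rho(K) \;\leq\; \sum_{j=1}^k \rho(\tF_j) \;\leq\; \tC\,\sum_{j=1}^k \rho(F_j) \;\leq\; \frac{\tC}{t}\,\|f\|_{L^1(\rho)}.
\]
Taking the supremum over $K$ and invoking inner regularity of $\rho$ gives the weak type inequality.

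The genuine difficulty of the whole chain is tucked inside Lemma~\ref{famnice}, in the delicate case analyses for $\mathcal R^\infty$ (the split into $r_2\geq r_1$ and $r_2<r_1$, each subcase forcing a quantitative comparison between $r_2^{b_2}$ and $a_1^{1/2}r_1^{b_1}$ drawn from the comparability (\ref{misure}) of right measures and the intersection condition (\ref{condint})) and for $\mathcal D^\infty$ (where the admissibility bound (\ref{adm}) together with the dyadic nesting property~(vi) of Theorem~\ref{dyadic} is used to control $r_2$ in terms of $r_1$). Both verifications have already been carried out, and they certify that the dilation exponent $\gamma$ chosen in the definitions of $\tE_{r,b}$ and $\tR$ is large enough to absorb every relevant ratio, so that property~(ii) of Definition~\ref{wellorder} is satisfied.
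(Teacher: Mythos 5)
Your proposal is correct and follows exactly the paper's own reasoning: the theorem is deduced by applying Theorem~\ref{well11} to each of the three families, which Lemma~\ref{famnice} shows to be nicely ordered. The additional exposition you give of the covering argument behind Theorem~\ref{well11} is accurate but not a departure from the paper's route.
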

\begin{proof}
It follows from Theorem \ref{well11} and Lemma \ref{famnice}. 
\end{proof}

\begin{ossn} {\rm{In this section we have proved that the maximal operator associated to the family of balls of small radius is of weak type $(1,1)$. Observe that the maximal operator associated to the family of balls of big radius is not of weak type $(1,1)$. More precisely, let $\mathcal R^{1}=\{B(x_0,\, r):~x_0\in S\,,r\geq 1\}$. If the weak type $(1,1)$ inequality for the maximal operator $M^{\mathcal R^1}$ holds, then it can be extended from $L^1(\rho)$ functions to finite measures by a standard limiting procedure. Let $\delta_e$ be the unit point mass at the identity $e$. At a point $x=(n,a)$, such that $d(x,e)>1$, we have that
$$M^{\mathcal R^1}\delta_e(x)\geq \sup_{r\geq 1}\frac{1}{\rho\big(B(x,r)\big)}\,\delta_e\big(B(x,r)\big)\,.$$
Notice that $\delta_e\big(B(x,r)\big)\neq 0$ if and only if $d(x,e)<r$. Since $\rho\big(B(x,r)  \big)\leq \gamma_2\, a^Q\,\nep^{Qr}$ (where $\gamma_2$ is as in (\ref{misurapalle2})),
\begin{align*}
M^{\mathcal R^1}\delta_e(x)&\geq \sup \Big\{ \frac{1}{\gamma_2\,a^Q\,\nep^{Qr}}:~\,r>d(x,e)\Big\}\\
&=\frac{1}{\gamma_2\,(a\,\nep^{d(x,e)})^Q}\,.
\end{align*}
By estimating the level sets of the function $~\frac{1}{(a\,\nep^{d(x,e)})^Q}$ in the region $\{x=(n,a)\in S:~d(x,e)> 1\}$, we disprove the weak type inequality.

Indeed, suppose that $0<t<\nep^{-2Q}$ and consider the set
\begin{align*}
\Omega_{t}&=\left\{x=(n,a)\in S:~d(x,e)> 1,\,\frac{1}{(a\,\nep^{d(x,e)})^Q}>t\right\}\\
&= \Big\{x=(n,a)\in S:~d(x,e)> 1,\,a\,\nep^{d(x,e)}<\big(1/t)^{1/Q}\Big\}\,.
\end{align*}
We prove that $\rho(\Omega_{t})=\infty$.

First we observe that the set $O_t$ defined by
$$O_t=\Big\{ x=(n,a)\in S:~0<a<1,\,4\,\cosh ^{2}(1/2)<\mathcal N(n)^4<1/{c_2^2+1}\big[\big(1/t)^{1/Q}-4\big]\Big\}\,, $$
where $c_2$ is as in Proposition \ref{normaN}, is contained in $\Omega_t$. 

Indeed, if $(n,a)=(X,Z,a)$ is in $O_t$, then 
\begin{align*}
\cosh^2\Big(d(x,e)/2 \Big)&=a^{-1}\,\Big[\Big((a+1)/2+1/8\,|X|^2\Big)^2+1/4\,|Z|^2 \Big] \\
&>(1/4)\,\Big(|X|^4/16+|Z|^2\Big)\\
&=\mathcal N(n)^4/4\\
&>\cosh^2(1/2)\,.
\end{align*}
Thus $d(x,e)> 1$ and
\begin{align*}
a\,\nep^{d(x,e)}&<4\,a\,\cosh ^2\big( d(x,e)/2\big)\\
&=4\,\Big[\Big((a+1)/2+1/8\,|X|^2\Big)^2+1/4\,|Z|^2 \Big]\\
&<4\,\Big[\Big(1+|X|^2/8\Big)^2+|Z|^2/4 \Big]\\
&<4+|X|^2+\mathcal N(X,Z)^4\\
&<4+(c_2^2+1)\,\mathcal N(X,Z)^4\\
&<\big(1/t)^{1/Q}\,.
\end{align*}
This proves that $O_t\subseteq \Omega_t$. Since $\rho(O_t)=+\infty$, there does not exist a constant $C$ such that $\rho(\Omega_t)\leq C/t\,.$ 

This shows that the maximal operator $M^{\mathcal R^1}$ is not of weak type $(1,1)$.}}
\end{ossn}

\begin{ossn} {\rm{Note that the parameter $b$ which appear in the Definition \ref{Rinftydef} of sets of the family $\mathcal R^{\infty}$ is bounded away from both $1/2$ and $\infty$. If this does not hold, then the corresponding maximal operator is not of weak type $(1,1)$. More precisely let $\tilde{\mathcal R}$ be the family
$$\tilde{\mathcal R}=\{x_0E_{r,\,b}:~x_0\in S,~r\geq \nep,\,b>1/2\}\,.$$
The maximal operator $M^{\tilde{\mathcal R}}$ is not of weak type $(1,1)$. 

Indeed, if the weak type $(1,1)$ inequality holds, then it can automatically be extended from $L^1(\rho)$ functions to finite measures. Let $\delta_e$ be the unit point mass at the identity $e$. At a point $x=(n,a)$ we have that
$$M^{\tilde{\mathcal R}}\delta_e(x)\geq \sup_{r\geq \,{\textrm e},\,b>1/2}\frac{1}{\rho\big(xE_{r,\,b}\big)}\,\delta_e(xE_{r,\,b})\,.$$
Note that $\delta_e(xE_{r,\,b})\neq 0$ if and only if $\frac{a}{r}<1<a\,r$ and $d_N(n,0_N)<a^{1/2}r^{b}$.

Since $\rho\big(xE_{r,\,b}  \big)=a^Qr^{2b Q}\log r$,
\begin{align*}
M^{\tilde{\mathcal R}}\delta_e(x)\geq \sup &\Big\{ \frac{1}{a^Qr^{2b Q}\log r}:~b>1/2,\,r\geq {\rm{e}},\,\\
&\frac{a}{r}<1<a\,r,\,a^{1/2}r^{b}>d_N(n,0_N  ) \Big\} \,.
\end{align*}
Now suppose that $a>\textrm e$ and $d_N(n,0_N)>a$. We may choose $b=\log_ad-1/2>1/2$ and $r=a\Big(1+\frac{1}{b}\Big)$. Obviously $r>a$ and $a^{1/2}r^{b}>a^{b+1/2}=d_N(n,0_N)$.

Moreover,
\begin{align*}
a^Qr^{2b Q}\log r&=a^{2Q(b+1/2)}\Big(1+\frac{1}{b}\Big)^{2Q}\log \Big(a+\frac{a}{b}\Big)\\
&\leq C\big[d_N(n,0_N)\big]^{2Q}\log a\,.
\end{align*}
It follows that
$$M^{\tilde{\mathcal R}}\delta_e(x)\geq  \frac{1}{C\big[d_N(n,0_N)\big]^{2Q}\log a}\,.$$
We shall disprove the weak type inequality by estimating the level sets of the function $~~\frac{1}{\big[d_N(n,0_N)\big]^{2Q}~\log a}~~$ in the region $\{x=(n,a)\in S:~a>\textrm e,\,d_N(n,0_N)>a\}$.

Indeed suppose that $0<t<\textrm e^{-2Q}$ and consider the set
\begin{align*}
\Omega_{t}&=\left\{(n,a)\in S:~a>\textrm e,\,d_N(n,0_N)>a,\,\frac{1}{\big[d_N(n,0_N)\big]^{2Q}~\log a}>t\right\}\\
&= \Big\{(n,a)\in S:~\textrm e<a<\alpha,\,a<d_N(n,0_N)<\frac{1}{(t\log a)^{1/{2Q}}}\Big\}\,,
\end{align*}
where $\alpha^{2Q}\log \alpha=\frac{1}{t}$. Then 
\begin{align*}
\rho(\Omega_{t})&=\int_{\textrm e}^{\alpha}\frac{\di a}{a}\int_{a^2}^{\frac{1}{(t\log a)^{1/Q}}}\sigma^{Q-1}\di\sigma\\
&=\frac{1}{Q}\int_{\nep}^{\alpha}\Big(\frac{1}{t\log a}-a^{2Q}\Big)\frac{\di a}{a}\\
&=\frac{1}{Qt}\log \log\alpha-\frac{1}{2Q^2}\alpha^{2Q}+\frac{1}{2Q^2}\textrm e^{2Q}\\
&\geq \frac{1}{Qt}\log \log\alpha-\frac{1}{2Q^2}\frac{1}{t\log\alpha}\\
&\geq\frac{1}{Qt}\Big(\log \log\alpha-\frac{1}{2Q}\Big)\,,
\end{align*}
where we have used the integration formula for radial functions on $N$ in Proposition \ref{integrationN}.

It is easy to check that $\alpha>t^{-1/(4Q)}$, and then
\begin{align*}
\rho(\Omega_t)&\geq\frac{1}{Qt} \Big(\log \log\Big(\frac{1}{t^{1/{(4Q)}}}\Big)-\frac{1}{2Q}\Big)\\
& \geq\frac{1}{Qt} \Big(\log \log\Big(\frac{1}{t}\Big)-\frac{1}{2Q}\Big)\,,
\end{align*}
which is not bounded above by $\frac{C}{t}$.  Thus, the weak type inequality for the maximal operator $M^{\tilde{\mathcal R}}$ does not hold.}}
\end{ossn}

\section{Equivalence between $M^{\mathcal R^{\infty}}$ and $M^{\mathcal D^{\infty}}$}\label{equivalence}
Given two families $\mathcal F_1$ and $\mathcal F_2$ of open subsets of $S$ we say that the associated maximal operators are {\it{equivalent}} if $M^{\mathcal F_1}$ is of weak type $(1,1)$ if and only if $M^{\mathcal F_2}$ is of weak type $(1,1)$.

In this section we prove that the maximal operators $M^{\mathcal R^{\infty}}$ and $M^{\mathcal D^{\infty}}$ are equivalent. First we prove that for every locally integrable function $f$ the maximal function $M^{\mathcal D^{\infty} }f$ is pointwise bounded above by $C\, M^{\mathcal R^{\infty} }f$, for an appropriate constant $C$. 
\begin{prop}\label{unsenso}
Let $\beta$ be as in Definition \ref{admissibility}, $b'$ and $b''$ as in Definition \ref{Rinftydef}. If $b'=\beta$ and $b''=
4\,\beta+\log C_N$, then 
$$ M^{\mathcal D^{\infty} }f(x)\leq \frac{C_N}{c_N}\,M^{\mathcal R^{\infty}}f(x)\qquad \forall f\in L^1_{\rm{loc}}(\rho)\quad\forall x\in S\,.$$
\end{prop}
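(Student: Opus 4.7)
The plan is to argue pointwise. For every $x\in S$ and every big admissible set $R\in\mathcal{D}^{\infty}$ containing $x$, I want to exhibit a rectangle $R'\in\mathcal{R}^{\infty}$ such that $R\subseteq R'$ and $\rho(R')\leq\frac{C_N}{c_N}\,\rho(R)$. Once this is done, $x\in R\subseteq R'$ and
\[
\frac{1}{\rho(R)}\int_{R}|f|\dir
\,\leq\,\frac{\rho(R')}{\rho(R)}\cdot\frac{1}{\rho(R')}\int_{R'}|f|\dir
\,\leq\,\frac{C_N}{c_N}\,M^{\mathcal{R}^{\infty}}f(x),
\]
and taking the supremum over all such $R$ produces the claimed pointwise inequality.

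To construct $R'$, write $R=\Qka\times(a_0/r,a_0 r)$ and recall from Theorem~\ref{dyadic}(ii) that $\Qka\subseteq B_N(n_\al^k,C_N\,\rd^k)$ and $\Qka\supseteq B_N(n_\al^k,c_N\,\rd^k)$. Set $x_0=(n_\al^k,a_0)\in S$. Using the group law on $S$ and the identity $(n_0,a_0)(Y,a)=(n_0\cdot_N\delta_{a_0}Y,a_0 a)$, together with the left invariance of the homogeneous distance $d_N$, a direct computation gives, for every $b$,
\[
x_0\,E_{r,b}\,=\,B_N\bigl(n_\al^k,\,a_0^{1/2}r^{b}\bigr)\times\bigl(a_0/r,\,a_0 r\bigr).
\]
I would then pick $b$ so that $a_0^{1/2}r^{b}=C_N\,\rd^k$; this makes $x_0 E_{r,b}$ contain $R$ (the $A$-components match exactly and the $N$-ball of radius $C_N\rd^k$ centred at $n_\al^k$ contains $\Qka$ by Theorem~\ref{dyadic}(ii)).

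The next step is to verify that this value of $b$ lies in the admissible interval $(b',b'')=(\beta,4\beta+\log C_N)$ for membership in $\mathcal{R}^{\infty}$. Taking $\log_r$ in $a_0^{1/2}r^{b}=C_N\rd^k$ gives $b=\log_r C_N+\bigl(k\log\rd-\log a_0^{1/2}\bigr)/\log r$. The admissibility condition (\ref{adm}) rewrites as $\beta\leq\bigl(k\log\rd-\log a_0^{1/2}\bigr)/\log r<4\beta$, so $b\in[\beta+\log_r C_N,\,4\beta+\log_r C_N)$. Since $r\geq\nep$, one has $0<\log_r C_N\leq\log C_N$ (assuming $C_N>1$, which is the relevant case), so $b$ genuinely lies in $(\beta,4\beta+\log C_N)$ as required.

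Finally I would compare measures. Because $\rho$ is right-invariant and the modular function is $\delta(x)=a^{-Q}$, one has $\rho(x_0 E_{r,b})=a_0^{Q}\,\rho(E_{r,b})=a_0^{Q}\,r^{2bQ}\,|B_N(0_N,1)|\cdot 2\log r$, which by the choice of $b$ equals $(C_N\rd^k)^{2Q}\,|B_N(0_N,1)|\cdot 2\log r$. On the other hand $\rho(R)=|\Qka|\cdot 2\log r\geq(c_N\rd^k)^{2Q}\,|B_N(0_N,1)|\cdot 2\log r$, so $\rho(x_0 E_{r,b})/\rho(R)\leq(C_N/c_N)^{2Q}$, which is the claimed comparison (up to the exact power appearing in the constant). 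The main obstacle is the bookkeeping verification that the chosen $b$ remains strictly inside $(b',b'')$ uniformly in $(a_0,r,k)$ thanks to the admissibility condition; the remaining steps are direct calculations with the group law, the formula $\rho(yE)=\delta(y^{-1})\rho(E)$, and the homogeneous-dimension scaling $|B_N(0_N,s)|=s^{2Q}|B_N(0_N,1)|$.
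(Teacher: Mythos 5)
Your proof is correct and follows essentially the same route as the paper: cover the dyadic rectangle $\Qka\times(a_0/r,a_0r)$ by the left translate $x_0 E_{r,b}$ with $x_0=(\nka,a_0)$ and $a_0^{1/2}r^b=C_N\rd^k$, check that $b$ falls in the admissible range $(b',b'')$, and compare measures via the scaling $|B_N(0_N,s)|=s^{2Q}|B_N(0_N,1)|$ and $\rho(x_0E)=\delta(x_0)^{-1}\rho(E)$. You also correctly flag two small slips in the paper: the comparison constant should be $(C_N/c_N)^{2Q}$ rather than $C_N/c_N$, and one must have (or arrange by enlarging $C_N$) $C_N>1$ in order for the chosen $b$ to lie \emph{strictly} above $b'=\beta$.
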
   
\begin{proof}
Let $f$ be in $L^1_{\rm{loc}}(\rho)$ and $x$ in $S$. Let $E=\Qka\times (a_0/r,a_0\,r)$ be a set in $\mathcal D^{\infty}$ which contains $x$. We define
$$R=B_N(\nka,\,C_N\,\rd^k)\times (a_0/r,a_0\,r)\,,$$
where $C_N,\,\rd$ are as in Theorem \ref{dyadic}. It is easy to check that $E\subset R$ and $\rho(R)\leq \big(C_N/c_N\big)\,\rho(E)\,.$ We choose $b$ such that $a_0^{1/2}\,r^{b}=C_N\,\rd^k\,.$

By the admissibility of $E$ we deduce that 
$$C_N\,a_0^{1/2}\,r^{\beta}\leq a_0^{1/2}\,r^b<C_N\,a_0^{1/2}\,r^{4\beta}\,,$$
so that
$$b'=\beta\leq b< 4\,\beta+\log C_N=b''\,.$$
Thus $R$ is in $\mathcal R^{\infty}$ and
$$\frac{1}{\rho(E)}\int_E|f|\dir \leq \frac{C_N}{c_N}\,\frac{1}{\rho(R)}\int_{R}|f|\dir\leq\frac{C_N}{c_N}\, M^{\mathcal R^{\infty}}f(x)\,.$$
By taking the supremum over all set $E$ in $\mathcal D^{\infty}$ which contain $x$, we obtain that
$$ M^{\mathcal D^{\infty} }f(x)\leq \frac{C_N}{c_N}\,M^{\mathcal R^{\infty}}f(x)\,,$$
as required.
\end{proof}
Clearly, by Proposition \ref{unsenso}, if the maximal operator $M^{\mathcal R^{\infty} }$ is of weak type $(1,1)$, then $M^{\mathcal D^{\infty}}$ is of weak type $(1,1)$.

Next we show that a pointwise inequality of the form $M^{\mathcal R^{\infty}}\leq C\,M^{\mathcal D^{\infty}}$, with $C>0$, fails.

\bigskip
{\bf{A counterexample.}} Let $S=\RR\times \RR^+$ be the affine group of the real line. Set $f=\chi_{(0,1)\times (\nep^{-1/2},\,\nep^{3/2})}$. We compute $M^{\mathcal D^{\infty}}f$ and $M^{\mathcal R^{\infty}}f$ on the set $(-1,0)\times(\nep^{-1/2},\nep^{3/2}) $.

Let $(x,a)$ be a point in $(-1,0)\times(\nep^{-1/2},\nep^{3/2}) $ and $E=Q_m^k \times (a_0/r,a_0\,r)$ be a set in $\mathcal D^{\infty}$ which contains $(x,a)$. Since $x\in Q_m^k$, $Q_m^k\cap (0,1)=\emptyset$. Thus the average of $f$ on $E$ is equal to zero. This proves that $M^{\mathcal D^{\infty}}f(x,a)=0$.

Let us consider a constant $b>1/2$ and the set $R=(-\nep^{b},\nep^{b})\times (\nep^{-1/2},\nep^{3/2})$. This is a set in ${\mathcal R^{\infty}}$ centred at $(0,\nep^{1/2})$ which contains $(x,a)$. The average of $f$ on $R$ is 
$$\frac{1}{\rho(R)}\int_R|f|\dir=\frac{1}{4\,\nep^{b}}\,\rho\big( (0,1)\times(\nep^{-1/2},\nep^{3/2}  \big)= \frac{1}{2\,\nep^{b}}\,.$$
Thus, there does not exist a constant $C$ such that
$$\frac{1}{2\,\nep^{b}}\leq M^{\mathcal R^{\infty}}f(x,a)\leq C\,M^{\mathcal D^{\infty}}f(x,a)=0\qquad\forall (x,a)\in(-1,0)\times(\nep^{-1/2},\nep^{3/2}) \,.$$

\bigskip
Though the pontwise inequality between $M^{\mathcal R^{\infty}}$ and $M^{\mathcal D^{\infty}}$ does not hold, an inequality between the measures of the level sets of $M^{\mathcal R^{\infty}}$ and $M^{\mathcal D^{\infty}}$ holds. This is proved in the following proposition.
\begin{prop}\label{altrosenso}
Let $\beta$ be as in Definition \ref{admissibility}, $b'$ and $b''$ as in Definition \ref{Rinftydef}. If $b'=\beta$ and $b''=4\beta-\log \rd$, then there exists a constant $A$ such that for every locally integrable function $f$ and $t>0$
\begin{align}\label{converse}
\rho\big(\{x:~M^{\mathcal R^{\infty} }f(x)>A\,t\}\big)&\leq \Big(\frac{4C_N+2}{c_N}  \Big)^{2Q}(\gamma+1)\,\rho\big(\{x:~M^{\mathcal D^{\infty} }f(x)>t\}\big)\,.
\end{align}
\end{prop}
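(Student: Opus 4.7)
My plan is to pass from rectangles in $\mathcal R^{\infty}$ to admissible dyadic sets in $\mathcal D^{\infty}$ via a pigeonhole argument on a canonical covering, and then apply a Vitali-type extraction using that $\mathcal R^{\infty}$ is nicely ordered (Lemma~\ref{famnice}).

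Given a rectangle $R=(n_0,a_0)E_{r,b}\in\mathcal R^{\infty}$ with $\int_R|f|\dir>At\,\rho(R)$, I would cover the $N$-ball $B_N(n_0,a_0^{1/2}r^b)$ by the disjoint dyadic sets $\{Q_{\alpha_i}^k\}$ at the unique level $k$ determined by $\eta^{k-1}<a_0^{1/2}r^b\le\eta^k$, and form the products $E_i=Q_{\alpha_i}^k\times(a_0/r,a_0\,r)$. The restriction $\beta=b'\le b<b''=4\beta-\log\eta$ together with $r\ge\nep$ ensures that each $E_i$ satisfies the admissibility condition (\ref{adm}): the inequality $a_0^{1/2}r^{\beta}\le\eta^k$ is immediate from $b\ge\beta$, while $\eta^k<a_0^{1/2}r^{4\beta}$ follows from $\eta^k\le\eta\cdot a_0^{1/2}r^b$ together with $\eta<r^{4\beta-b}$, the latter being guaranteed by $b<4\beta-\log\eta$ and $r\ge\nep$. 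A volume count in $N$, using that the inscribed balls $B_N(n_{\alpha_i}^k,c_N\eta^k)\subseteq Q_{\alpha_i}^k$ are pairwise disjoint and all fit in a ball about $n_0$ of radius comparable to $\eta^k$, bounds the number of indices by $K_0:=((4C_N+2)/c_N)^{2Q}$.

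Since $R\subseteq\bigcup_iE_i$ and $\sum_i\rho(E_i)\le K_0\,\rho(R)$, the pigeonhole principle produces an index $i^{\ast}$ with $\rho(E_{i^\ast})^{-1}\int_{E_{i^\ast}}|f|\dir>At/K_0$. Taking $A=K_0(\gamma+1)$ forces $\rho(E(R))^{-1}\int_{E(R)}|f|\dir>(\gamma+1)t\ge t$, where $E(R):=E_{i^\ast}$, hence $E(R)\subseteq\{M^{\mathcal D^{\infty}}f>t\}$.

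Using the nice ordering of $\mathcal R^{\infty}$ (Lemma~\ref{famnice}(iii)), I would extract from the witness rectangles $\{R_x\}$ a disjoint subfamily $\{R_j\}$ whose Vitali dilations $\tilde R_j$ cover $\{M^{\mathcal R^{\infty}}f>At\}$, so that
\[
\rho(\{M^{\mathcal R^{\infty}}f>At\})\le\sum_j\rho(\tilde R_j).
\]
The proof is completed by showing that each $\tilde R_j$ is contained in an enlargement $G_j$ of $E(R_j)$ with $\rho(G_j)\le((4C_N+2)/c_N)^{2Q}(\gamma+1)\,\rho(E(R_j))$, where the factor $(4C_N+2)/c_N$ in the $N$-direction reflects the enlargement from the dyadic inradius $c_N\eta^k$ to a ball containing the full $N$-component of $\tilde R_j$, while the factor $\gamma+1$ in the $A$-direction combines the $\tilde E$-dilation of $\mathcal D^{\infty}$ with the $A$-interval of $R_j$. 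Summing over $j$ and using both the disjointness of the $R_j$'s and the containment $E(R_j)\subseteq\{M^{\mathcal D^{\infty}}f>t\}$ yields the claim.

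\textbf{Main obstacle.} The principal difficulty is this last geometric containment: since $R_j$ is not contained in $E(R_j)$ in the $N$-direction (the dyadic set $Q_{\alpha_{i^\ast}}^k$ captures only part of $R_j$'s $N$-ball), the enlargement $G_j$ must be designed so that both the $N$-inflation (driven by the dyadic structure) and the $A$-inflation (driven by $\gamma$) combine to give the precise constant $((4C_N+2)/c_N)^{2Q}(\gamma+1)$, while the sum $\sum_j\rho(E(R_j))$ must still be controlled by $\rho(\{M^{\mathcal D^{\infty}}f>t\})$ despite the $E(R_j)$'s not being a priori disjoint.
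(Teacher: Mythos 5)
Your proposal runs the Vitali extraction in the wrong direction, and this creates the gap you yourself flag at the end but do not close. You extract a disjoint subfamily $\{R_j\}$ of \emph{rectangles} from $\mathcal R^{\infty}$ (using Lemma~\ref{famnice}(iii)), and then for each $R_j$ manufacture a dyadic set $E(R_j)\in\mathcal D^{\infty}$ by pigeonhole. But disjointness of the $R_j$'s gives you nothing about disjointness of the $E(R_j)$'s: two disjoint thin rectangles at the same $A$-level may well pigeonhole to the same dyadic cube at level $k$, so the family $\{E(R_j)\}_j$ can have massive multiplicity, and the crucial inequality $\sum_j\rho(E(R_j))\lesssim\rho(\{M^{\mathcal D^{\infty}}f>t\})$ simply does not follow from $E(R_j)\subseteq\{M^{\mathcal D^{\infty}}f>t\}$. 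There is no cheap fix (a bounded-overlap claim would require an argument on its own), so this is not a detail to be smoothed out but a missing idea.

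The paper's proof reverses the order and thereby sidesteps the problem entirely. One first selects, by the greedy procedure adapted to the nice ordering of $\mathcal D^{\infty}$ (Lemma~\ref{famnice}(iv)), a \emph{disjoint} family $\{E_j\}\subseteq\mathcal D^{\infty}$ of admissible dyadic sets inside the superlevel set $O^{\mathcal D^{\infty}}_t$; disjointness and the bound $\sum_j\rho(E_j)\le\rho(O^{\mathcal D^{\infty}}_t)$ are then automatic. The pigeonhole step on a rectangle $R$ witnessing $M^{\mathcal R^{\infty}}f>At$ is the same as yours and produces a dyadic $R_{\alpha_0}\in\mathcal D^{\infty}$ with average $>t$; but now, by maximality of the $E_j$'s in the selection, this $R_{\alpha_0}$ satisfies $R_{\alpha_0}\le E_{j_0}$ and $R_{\alpha_0}\cap E_{j_0}\ne\emptyset$ for some selected $j_0$, hence $R_{\alpha_0}\subseteq\tilde E_{j_0}$ by the nice-ordering property. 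A short distance computation then places the original point $x$ inside a fixed enlargement $\overline E_{j_0}$ of $E_{j_0}$ (a ball of radius $(4C_N+2)\rd^{k_{j_0}}$ in $N$ times an $A$-interval of relative width $r_{j_0}^{\gamma+1}$), and the constant $\bigl(\frac{4C_N+2}{c_N}\bigr)^{2Q}(\gamma+1)$ is exactly the measure ratio $\rho(\overline E_j)/\rho(E_j)$. Note also that in the paper the choice $A=L(C_N\rd)^{2Q}$, with $L$ the bounded covering cardinality, is separate from the enlargement constant in~(\ref{converse}); your $K_0$ conflates the two roles. If you want to salvage your write-up, the structural change needed is to move the Vitali selection to the $\mathcal D^{\infty}$ side before doing anything with the rectangle witnesses.
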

\begin{proof}
Let $f$ be in $L^1_{\rm{loc}}(\rho)$. Set $O^{\mathcal D^{\infty}}_{t}=\{x:~M^{{\mathcal D}^{\infty}}f(x)>t  \}$. For each point $x$ in $ O^{\mathcal D^{\infty}}_{t}$ we choose a set $E_x$ in $\mathcal D^{\infty}$ such that $\frac{1}{\rho(E_x)}\int_{E_x}|f|\dir >t$ and $E_x\geq E$, for every set $E$ which contains $x$ such that $\frac{1}{\rho(E)}\int_{E}|f|\dir >t$. 

Now we select a disjoint subfamily of $\{E_x\}_x$.

We choose $E_{x_1}$ such that $E_{x_1}\geq E_x$, for all $x\in O^{\mathcal D^{\infty}}_{t} \,.$ Next, suppose that $E_{x_1},...,E_{x_n}$ have been chosen. Then we choose a set $E_{x_{n+1}}$ disjoint from $E_{x_1},...,E_{x_n}$ and such that $E_{x_{n+1}}\geq E_x$, for all $x\in O^{\mathcal D^{\infty}}_{t} $ such that $E_x\cap E_{x_i}=\emptyset,\,i=1,...,n\,.$

We put $E_j=E_{x_j}=Q_{\alpha_j}^{k_j}\times (a_j/r_j,a_j\,r_j)$, $\tE_j= Q_{\alpha_j}^{k_j}\times (a_j/{r_j^{\gamma}},a_j\,r_j^{\gamma})$ and $\overline{E}_j=B_N\big(n_{\alpha_j}^{k_j},(4C_N+2)\rd^{k_j}\big)\times (a_j/{r_j^{\gamma+1}},a_j\,r_j^{\gamma+1})$. 

Now set $O^{\mathcal R^{\infty}}_{A\,t}=\{x:~M^{{\mathcal R}^{\infty}}f(x)>A\,t \}$. We prove that $O^{\mathcal R^{\infty}}_{A\,t}\subseteq \bigcup_j\overline{E}_j$. 

Let $x$ be in $O^{\mathcal R^{\infty}}_{A\,t}$. There exists a set $R=B_N(n_0,a_0^{1/2}\,r^b)\times (a_0/r,a_0r)$ in $\mathcal R^{\infty}$ which contains $x$ such that $\frac{1}{\rho(R)}\int_R|f|\dir>A\,t\,.$ We choose the integer $k$ such that $\rd^{k-1}< a_0^{1/2}\,r^b\leq\rd^k$ and denote by $I$ the set of indeces $I=\{\alpha\in I_k:~\Qka\cap B_N(n_0,a_0^{1/2}\,r^b)\neq \emptyset  \}$. We claim that the cardinality of $I$ is at most $L$, where $L$ is a fixed number which depends only on the group $N$. 

For all $\alpha$ in $I$ we define $R_{\alpha}=\Qka\times (a_0/r_0,a_0\,r_0)$. Note that $R\subseteq \bigcup_{\alpha\in I}R_{\alpha}$ and $\rho(R_{\alpha})\leq (C_N\,\rd)^{2Q}\,\rho(R)$. Each set $R_{\alpha}$ is in $\mathcal D^{\infty}$. Indeed,
$$a_0^{1/2}\,r^{\beta}\leq a_0^{1/2}\,r^{b'}\leq a_0^{1/2}\,r^{b}\leq\rd^k\,,$$
and
$$\rd^k< \rd\, a_0^{1/2}\,r^{b}<\rd\, a_0^{1/2}\,r^{b''}<a_0^{1/2}\,r^{4\beta}\,.$$
Now observe that
\begin{align*}
A\,t&< \frac{1}{\rho(R)}\int_R|f|\dir\\
&\leq \sum_{\alpha\in I}\frac{\rho(R_{\alpha})}{\rho(R)}\,\frac{1}{\rho(R_{\alpha})}\int_{R_{\alpha}}|f|\dir\\
&\leq (C_N\,\rd)^{2Q}\sum_{\alpha\in I}\frac{1}{\rho(R_{\alpha})}\int_{R_{\alpha}}|f|\dir\,.
\end{align*}
Thus there exists at least an index $\alpha_0$ in $I$ such that $\frac{1}{\rho(R_{\alpha_0})}\int_{R_{\alpha_0}}|f|\dir>\frac{A\,t}{L\,(C_N\,\rd)^{2Q}}$. By choosing $A=L\,(C_N\,\rd)^{2Q}$, we have that  $\frac{1}{\rho(R_{\alpha_0})}\int_{R_{\alpha_0}}|f|\dir>t$. It follows easily that there exists an index $j_0$ such that $R_{\alpha_0}\leq E_{j_0}$ and $R_{\alpha_0}\cap E_{j_0}\neq \emptyset$. By Lemma \ref{famnice} (iv) we have that $k\leq k_{j_0}$, $r\leq r_{j_0}^{\gamma}$ and $R_{\alpha_0}\subseteq \tE_{j_0}$. 

Now we prove that $x=(n,a)$ is in $\overline{E}_j$.

Let $\alpha_x$ be the index in $I$ such that $x\in R_{\alpha_x}$. It follows that
$$\frac{a}{a_{j_0}}= \frac{a}{a_0}\frac{a_0}{a_{j_0}}<r\,r_{j_0}\leq r_{j_0}^{\gamma +1}\,,$$
and
$$\frac{a}{a_{j_0}}>\frac{1}{r_{j_0}^{\gamma +1}}\,.  $$
Moreover
\begin{align*}
d_N(n,n^{k_{j_0}}_{\alpha_{j_0}})&<d_N(n,n^{k}_{\alpha_x})+d_N(n^{k}_{\alpha_x},n^k_{\alpha_0})+d_N(n^k_{\alpha_0},n^{k_{j_0}}_{\alpha_{j_0}})\\
&\leq C_N\,\rd^k+d_N(n^{k}_{\alpha_x},n_{0})+d_N(n_0,n^{k}_{\alpha_0})+C_N\,\rd^{k_{j_0}}\\
&\leq C_N\,\rd^{k_{j_0}}+2(C_N\rd^k+a_0^{1/2}r^b)+C_N\,\rd^{k_{j_0}}\\
&\leq C_N\,\rd^{k_{j_0}}+2(C_N+1)\rd^k+C_N\,\rd^{k_{j_0}}\\
&\leq (4C_N+2)\rd^{k_{j_0}}\,.
\end{align*} 
This proves that $x\in \overline{E}_{j_0}$. %, where $$\tilde{E}_{j_0}=B_N\big(n_{\alpha_{j_0}}^{k_{j_0}},(4C_N+2)\rd^{k_{j_0}}  \big)\times (a_{j_0}/{r_{j_0}^{\gamma +1}},a_{j_0}\,{r_{j_0}^{\gamma +1}})\,.$$

Thus $O^{\mathcal R^{\infty}}_{A\,t}\subseteq \bigcup_j \overline{E}_j$ and 
\begin{align*}
\rho(O^{\mathcal R^{\infty}}_{A\,t})&\leq\sum_j\rho(\overline{E}_j)\\
&\leq\Big(\frac{4C_N+2}{c_N}  \Big)^{2Q}(\gamma+1)\,\sum_j\rho(E_j)\\
&\leq \Big(\frac{4C_N+2}{c_N}  \Big)^{2Q}(\gamma+1)\,\rho(O^{\mathcal D^{\infty}}_{t})\,,
\end{align*}
as required.

It remains to prove that the cardinality of the index set $I$ is at most $L$. 

Indeed, for all $\alpha\in I$ we have that
$$d_N(\nka,n_0)<C_N\,\rd^k+ a_0^{1/2}\,r^b<(C_N+1)\rd^k\,.$$
Given two indeces $\alpha,\,\beta\in I$ we have that $d_N(\nka,n^k_{\beta})>\rd^k\,.$ Thus points $\nka$, $\alpha\in I$, are in the ball $B_N(n_0,(C_N+1)\rd^k)$ and their mutually distances are greater than $\rd^k$. Since $N$ is a space of homogeneous type, there exists a constant $L$ such that $\sharp I\leq  L$ \cite{CW}.
\end{proof}
By Proposition \ref{unsenso} and \ref{altrosenso} we deduce that given $\beta$ as in Definition \ref{admissibility}, $b',\,b''$ as in Definition \ref{Rinftydef}, if $b'=\beta$ and $b''=4\beta+\log C_N$, then the maximal operators $M^{\mathcal R^{\infty}}$ and $M^{\mathcal D^{\infty}}$ are equivalent.

\section{Maximal operators on products of \DR spaces}\label{maxoperatorsp}
Let $S=S'\times S''$ be the product of two \DR spaces. It is natural to consider maximal operators of Hardy--Littlewood type on $S$ which are the analogue of the maximal operators which we studied in the nonproduct case. 

Given a family $\mathcal F$ of open subsets of $S$ the associated maximal operator $M^{\mathcal F}$ is defined by
$$M^{\mathcal F}f(x)=\sup _{x\in F}\int_F|f|\dir\,,$$
where the supremum is taken over all the sets $F$ in the family $\mathcal F$. 

We now introduce two families of sets and study the weak type $(1,1)$ boundedness of the associated maximal operators. 

The family ${\mathcal R}^0$ is the family of balls of small radius, i.e. 
$$\mathcal R^0=\{ B_{d_{max}}\big((x',x''),r\big)=B_{S'}(x',r)\times B_{S''}(x'',r):~ r<1/2\}\,. $$
By (\ref{misurapalle1p}) there exist positive constants $\gamma_1$, $\gamma_2$ such that
$$\gamma_1 \,(a_0')^{Q'}(a_0'')^{Q''}\,r^n\leq \rho\big(B\big((n_0',a_0')(n_0'',a_0''),r\big)\big)\leq \gamma_2 \,(a_0')^{Q'}(a_0'')^{Q''}\,r^n\qquad \forall r\in (0,1/2)\,.$$
Set $\gamma=1+2\Big(2\,\nep^{Q'+Q''}\,\gamma_2/{\gamma_1}\Big)^{1/n}$ and $\tB(x_0,r)=B(x_0,\gamma \,r)$.

\bigskip
The family $\mathcal R^{\infty}$ consists of products of ``big sets'' in $S'$ and $S''$, i.e.
\begin{equation}\label{Rinfp}
\mathcal R^{\infty}=\{ (x',x'')\cdot (E'_{r,\,b}\times E''_{r,\,b}):~(x',x'')\in S,\,r\geq\nep\} \,,
\end{equation}
where sets $E'_{r,\,b}$, $E''_{r,\,b}$ are defined as in (\ref{Erb}) and $b$ is a constant $>1/2$.

The family $\mathcal R^0$ si nicely ordered as in the nonproduct case.

\begin{lem}\label{coverp}
The family $(\mathcal R^0, \leq )$, where $B_2\leq B_1$ if $\rho(B_2)\leq 2\,\rho(B_1)$ and the dilated of a ball $B$ is the ball $\tB$ defined above, is nicely ordered. 
\end{lem}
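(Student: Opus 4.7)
The plan is to follow the scheme of part (ii) of Lemma \ref{famnice} very closely, replacing the single \DR space estimates by the corresponding product space estimates, and using the $d_{max}$ metric in place of $d$. First I would verify property (i) of Definition \ref{wellorder}, namely that $\rho(\tB) \leq \tC\,\rho(B)$ for every $B$ in $\mathcal R^0$. This follows immediately from the two-sided measure estimate (\ref{misurapalle1p}), since $\tB$ is concentric with $B$ with radius $\gamma\,r$, and both radii are bounded by $\gamma/2 < 1$ after an obvious translation argument (one can take $\tC = \gamma_2\,\gamma^n/\gamma_1$).

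The heart of the proof is property (ii). Take $B_i = B(x_i, r_i)$, $i=1,2$, in $\mathcal R^0$ with $\rho(B_2) \leq 2\,\rho(B_1)$ and $B_1 \cap B_2 \neq \emptyset$. Writing $x_i = (x_i', x_i'')$, the modular function on $S = S' \times S''$ satisfies $\delta(x) = (a')^{-Q'}(a'')^{-Q''}$, so from (\ref{misurapalle1p}) applied to both balls I would get
\begin{equation*}
\delta(x_2)^{-1}\,\gamma_1\,r_2^n \leq \rho(B_2) \leq 2\,\rho(B_1) \leq 2\,\delta(x_1)^{-1}\,\gamma_2\,r_1^n,
\end{equation*}
hence $r_2 \leq \bigl(2\,\delta(x_2 x_1^{-1})\,\gamma_2/\gamma_1\bigr)^{1/n}\,r_1$. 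Since $B_1$ and $B_2$ intersect and both radii are less than $1/2$, one has $d_{max}(x_1, x_2) < 1$, so $x_2 x_1^{-1} \in B(e,1)$; then by definition of $\delta$, $\delta(x_2 x_1^{-1}) \leq \nep^{Q'+Q''}$. Therefore
\begin{equation*}
r_2 \leq \bigl(2\,\nep^{Q'+Q''}\,\gamma_2/\gamma_1\bigr)^{1/n}\,r_1.
\end{equation*}

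Finally, for any $x$ in $B_2$ the triangle inequality for $d_{max}$ yields
\begin{equation*}
d_{max}(x, x_1) \leq d_{max}(x, x_2) + d_{max}(x_2, x_1) < 2\,r_2 + r_1 \leq \Bigl(1 + 2\bigl(2\,\nep^{Q'+Q''}\,\gamma_2/\gamma_1\bigr)^{1/n}\Bigr)\,r_1 = \gamma\,r_1,
\end{equation*}
showing that $x \in \tB_1$. I do not expect any serious obstacle: the only point that requires mild care is ensuring $d_{max}(x_1,x_2)<1$ so that the modular function comparison $\delta(x_2 x_1^{-1}) \leq \nep^{Q'+Q''}$ is valid, but this is automatic from the radii bound $r_i < 1/2$ and the intersection hypothesis. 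Everything else is a transcription of the nonproduct argument with the product measure and $d_{max}$.
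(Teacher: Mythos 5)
Your proof is correct and follows the paper's argument essentially verbatim; the paper normalizes to $x_1 = e$ at the outset while you carry the quotient $\delta(x_2 x_1^{-1})$ explicitly, which amounts to the same computation. The only small imprecision is the parenthetical claim that $\gamma/2 < 1$ (in general $\gamma > 2$ since $\gamma = 1 + 2\bigl(2\,\nep^{Q'+Q''}\gamma_2/\gamma_1\bigr)^{1/n} > 3$), but this does not affect the argument: using (\ref{misurapalle1p}) together with (\ref{misurapalle2p}) and the lower bound $r > 1/\gamma$ in the case $\gamma r \geq 1$ still yields a uniform $\tC$ with $\rho(\tB) \leq \tC\,\rho(B)$.
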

\begin{proof}
We may suppose without loss of generality that $B_1$ is centred at the identity. Let $r_i$ denote the radius of the ball $B_i$, $i=1,2$ and $(x_i',x_i'')$ denote the centre of $B_i$. Since $B_1\cap B_2\neq \emptyset$, we have that $d_{S'}(e,x_2')<r_1+r_2<1$ and $d_{S''}(e,x_2'')<r_1+r_2<1$. The condition $\rho(B_2)\leq 2\,\rho(B_1)$ implies that $\gamma_1\,\delta(x_2)^{-1}\,r_2^n\leq 2\,\gamma_2\,r_1^n$. Thus
\begin{align*}
r_2&\leq \big(2\,\delta(x_2)\,\gamma_2/{\gamma_1}\big)^{1/n}\,r_1\,.
\end{align*}
Since $x_2$ is in $B(e,1)$ we have that $\delta(x_2)\leq \nep^{Q'+Q''}$ and then 
\begin{align*}
r_2&\leq \big(2\,\nep^{Q'+Q''}\,\gamma_2/{\gamma_1}\big)^{1/n}\,r_1\,.
\end{align*}
Hence
\begin{align*}
B(x_2,r_2)&\subseteq B(e,2r_2+r_1)\\
&\subseteq B\Big(e,\left(1+2\big(2\,\nep^{Q'+Q''}\,\gamma_2/{\gamma_1}\big)^{1/n}\Big)r_1\right)\\
&=B(e,\gamma \,r_1)\\
&=\tB_1\,,
\end{align*}
as required.
\end{proof}
{F}rom Lemma \ref{coverp} it follows that $M^{{\mathcal R}^0}$ is bounded from $\lu{\rho}$ to $\lorentz{1}{\infty}{\rho}$.

While in the nonproduct case $M^{\mathcal R^{\infty}}$ is of weak type $(1,1)$, this does not hold in the product case. 

\bigskip 
{\bf{A counterexample.}} We consider the space $S=\RR^2\times \RR^2_+=(\RR\times \RR^+)\times (\RR\times \RR^+)$ endowed with the semidirect product
$$(x',x'',a',a'')\cdot(y',y'',\alpha',\alpha'')=(x'+a^{'\,1/2}\,y',\,x''+a^{''\,1/2}\,y'',\,a'\,\alpha',\,a''\,\alpha'')\,.$$
The right Haar measure on $S$ is given by $\dir(x',x'',a',a'')=(a'\,a'')^{-1}\di x'\di x''\di a'\di a''$. 

Suppose that $b>1/2$ and let $Q$ be the set $(-\nep^{b},\nep^{b})\times (-\nep^{b},\nep^{b})\times (1/{\nep},\nep)\times (1/{\nep},\nep)$. We consider the family of sets
$$\mathcal F=\{(x',x'',a',a'')\cdot Q:~(x',x'')\in\RR^2, (a',a'')=(2^{2i},2^{-4i}), i\geq 0\}\,.$$
We want to prove that the maximal operator $M^{\mathcal F}$ is not of weak type $(1,1)$. To see this we define a sequence of functions $\{f_k\}_k$ such that $\|f_k\|_{L^1(\rho)}=16\,\nep^{2\beta}$ and $\rho(\{M^{\mathcal F}f_k>1/2\})\geq \|f_k\|_{L^1(\rho)}\, (k+1)$.

First we define a sequence $\{E_k\}_k$ of sets in $\RR$ such that $E_k$ is the union of $2^k$ intervals whose length is $2\cdot2^{-2k}\,\nep^{\beta}$. Set $E_0=(-\nep^{b},\,\nep^{b})$. Given a set $E_{k-1}$, for each component $I=(a-4\cdot 2^{-2k}\,\nep^{b},a+4\cdot 2^{-2k}\,\nep^{b})$ of $E_{k-1}$, we take two subintervals of $I$, $I_1=( a-4\cdot 2^{-2k}\,\nep^{b},a-2\cdot 2^{-2k}\,\nep^{b})$, $I_2=( a+2\cdot 2^{-2k}\,\nep^{b},a+4\cdot 2^{-2k}\,\nep^{b})$ and let $E_k$ be the union of these intervals. Then we have $E_k\subset E_{k-1}\subset...\subset E_1\subset E_0$ and $|E_k|=2\cdot 2^{-k}\,\nep^{b}$. 

We now define a sequence of functions $f_k$ as 
$$f_k=2^{k}\,\chi_{(-\nep^{b},\,\nep^{b})\times E_k\times (1/{\nep},\nep)\times (2^{-4k}/{\nep},2^{-4k}\,\nep)}\,.$$
Then $\|f_k\|_{L^1(\rho)}=16\,\nep^{2b}$. We will show that $\{M^{\mathcal F}f_k> 1/2 \}\supseteq \bigcup_{i=0}^{k} (-2^i\,\nep^{b},2^i\,\nep^{b})\times E_i\times (2^{2i}/{\nep}, 2^{2i}\,\nep)\times (2^{-4i}/{\nep},2^{-4i}\,\nep)$.

Fix $k\geq 0$ and let $I$ be one of the components of $E_i$, $i\leq k$. Let $R=(-2^{i}\,\nep^{b},2^{i}\,\nep^{b})\times I\times (2^{2i}/{\nep},2^{2i}\,\nep)\times (2^{-4i}/{\nep},2^{-4i}\,\nep)$. Obviously $R$ belongs to the family $\mathcal F$ and $\rho(R)=16\,2^{-i}\,\nep^{2b}$. Moreover
\begin{align*}
\frac{1}{\rho(R)}\int_R|f_k|\dir&=\frac{2^i}{16\,\nep^{2b}}\,2^{k}\,|(-\nep^{b},\nep^{b})|\,|I\cap E_k|\,4\\
&=\frac{1}{4}\,2^{k+i}\,2\,2^{k-i}\,2\,2^{-2k}\\
&=1\,.
\end{align*}
Then $R\subseteq \{M^{\mathcal F}f_k>1/2 \}$. It follows that $\{M^{\mathcal F}f_k>1/2 \}\supseteq \bigcup_{i=0}^{k} (-2^i\,\nep^{b},2^i\,\nep^{b})\times E_i\times (2^i/{\nep},2^i\,\nep)\times (2^{-2i}/{\nep},2^{-2i}\,\nep)$. Thus
\begin{align*}
\rho(\{M^{\mathcal F}f_k>1/2 \})&\geq\rho\Big( \bigcup_{i=0}^{k}(-2^i\,\nep^{b},2^i\,\nep^{b})\times E_i\times (2^i/{\nep},2^i\,\nep)\times (2^{-2i}/{\nep},2^{-2i}\,\nep)   \Big)\\
&\geq\rho\Big( \bigcup_{i=0}^{k-1}   (-2^i\,\nep^{b},2^i\,\nep^{b})\times E_i\times (2^i/{\nep},2^i\,\nep)\times (2^{-2i}/{\nep},2^{-2i}\,\nep)    \Big)+\\
&+2\,\rho\big((2^{k-1}\,\nep^{b},2^k\,\nep^{b})\times E_k\times (2^k/{\nep},2^k\,\nep)\times (2^{-2k}/{\nep},2^{-2k}\,\nep)\big)\\
&\geq \sum_{i=0}^{k-1}2\,(2^i-2^{i-1})\,\nep^{b}\,|E_i|\,4+2\,(2^k-2^{k-1})\,\nep^{b}\,|E_k|\,4=\\
&=16\,\nep^{2b}\,(k+1)\\
&=\|f_k\|_{L^1(\rho)}\,(k+1)\,.
\end{align*}
Hence $M^{\mathcal F}$ is not of weak type $(1,1)$. 
 
Note that $\mathcal F$ is contained in the family $\mathcal R^{\infty}$ defined by (\ref{Rinfp}). Then also the maximal operator $M^{\mathcal R^{\infty}}$ is not of weak type $(1,1)$.

\chapter{\CZ theory}\label{CZdec}
\begin{intro*}
Recently Hebisch and Steger gave an assiomatic definition of \CZ spaces and proved a boundedness theorem for singular integral operators on these spaces: we recall their results in Section \ref{CZdecgen}. In a \CZ space, satisfying an additional hypothesis of technical nature, we may define Hardy spaces $H^{1,q}$, $1<q<\infty$, and $BMO_p$ spaces, $1<p<\infty$: we prove that $BMO_p$ may be identified with the topological dualof $H^{1,p'}$. Then we prove a $H^{1,q}-L^1$ boundedness theorem for integral operators on \CZ spaces.

In Section \ref{CZdecextH} we generalize the result in \cite{HS} proving that $(S,\rho,d)$ is a \CZ space for every \DR space. To do it we shall use a family of {\it{admissible sets}}: it may be worth observing that ``small sets'' are balls of small radius, 
while ``big sets'' are rectangles, i.e. products of dyadic sets in $N$ and intervals in $A$.

In Section \ref{Hardyax+b} we study the $H^1-BMO$ theory on $ax+b\,$-groups: in this case we introduce a $H^{1,\infty}$ space and prove that all spaces $H^{1,q}$, $1<q\leq \infty $, are equivalent.

Finally in Section \ref{CZdecompositionp} we show that products of \DR spaces are \CZ spaces.
\end{intro*}
\section{General \CZ theory}\label{CZdecgen}
Recently Hebisch and Steger \cite{HS} gave the following assiomatic definition of Cal\-der\'on--Zygmund space.
\begin{defi}\label{CZdef}
Let $(X,\mu,d)$ be a metric measured space. Let $\mathcal R$ be a family of sets in $X$ and $\kappa_0$ be a positive constant. We say that $(X,\mu,d)$ is a {\emph{Calder\'on--Zygmund space}} with \CZ constant $\kappa_0$ if the following hold:
\begin{itemize}
\item [(i)] for every set $R$ in $\mathcal R$ there exists a positive number $r$ such that $R$ is contained in a ball of radius at most $\kappa_0 \,r$ and $\mu(R^*)\leq \kappa_0\,\mu(R)$, where $R^*=\{x\in X~:~d(x,R)< r\}$;
\item [(ii)] for every $f$ in $L^1(\mu)$ and $\alpha>\kappa_0\,{\|f\|_{L^1(\mu)}}/{\mu(X)}$ ($\alpha>0$ if $\mu(X)=\infty$) there exists a decomposition $f=g+\sum_{i\in \NN} b_i$ such that
\begin{itemize}
\item [(ii1)]$|g|\leq \kappa_0\,\alpha\qquad \mu$-almost everywhere;
\item [(ii2)] $b_i$ is supported in a set $R_i$ of $\mathcal R$ and $\int b_i \di\mu=0\qquad \forall i\in \NN$;
\item[(ii3)] $\|b_i\|_{L^1(\mu)}\leq \kappa_0\,\alpha\,\mu(R_i)\qquad\forall i\in\NN$;
\item[(ii4)] $\sum_i \mu(R_i)\leq \kappa_0\,\frac{\|f\|_{L^1(\mu)}}{\alpha}\,.$
%\item[(ii4)] $\|b_i\|_{L^1(\mu)}\leq \kappa_0\,\alpha\,\mu(R_i)\qquad\forall i\in\NN$.
%\item[(ii4)]$\sum_i {\|b_i\|_{L^1(\mu)}}\leq \kappa_0\,\|f\|_{L^1(\mu)}\,.$ $\|b_i\|_{L^1(\mu)}\leq \kappa_0\,\alpha\,\mu(R_i)$.
\end{itemize}
\end{itemize}
The sets in the family $\mathcal R$ are called \emph{\CZ sets} and the decomposition $f=g+\sum_{i\in \NN} b_i$ is called \emph{\CZ decomposition of $f$ at height $\alpha$}.  
\end{defi}
Clearly spaces of homogeneous type are \CZ spaces. Note that in this case we may choose $\mathcal R$ as the family of balls. It is remarkable that some spaces which are not of homogeneous type are \CZ spaces. Indeed, Hebisch and Steger proved that $ax+b\,$-groups are \CZ spaces. They proved a boundedness theorem for integral operators on \CZ spaces \cite[Theorem 2.1]{HS}. We now give a formulation of their theorem, where the hypothesis is reminiscent of the classical H\"ormander's condition.
\begin{teo}\label{Teolim}
Let $(X,\mu,d)$ be a Calder\'on--Zygmund space.
Let $T$ be a linear operator which is bounded on $L^2(\mu)$ and admits a locally integrable kernel $K$ off the diagonal that satisfies the condition 
\begin{align}\label{stimaH}
\sup_R \sup_{y,\,z\in R}\int_{(R^*)^c}|K(x,y)-K(x,z)| \,{\rm{d}}\mu (x) &<\infty\,,
\end{align}
where the supremum is taken over all \CZ sets $R$ in $\mathcal R$. 
Then $T$ extends from $L^1(\mu)\cap L^2(\mu)$ to a bounded operator from $\lu{\mu}$ to $\lorentz{1}{\infty}{\mu}$ and on $L^p(\mu)$, for all $p$ in $ (1,2]$.
\end{teo}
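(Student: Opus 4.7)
\medskip

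\textbf{Plan of proof for Theorem \ref{Teolim}.}
The strategy is the classical Calder\'on--Zygmund argument, transported to the axiomatic setting of Definition \ref{CZdef}: establish the weak type $(1,1)$ bound via a Calder\'on--Zygmund decomposition, and then obtain $L^p$ for $1<p<2$ by Marcinkiewicz interpolation with the assumed $L^2$ boundedness. Since $L^1(\mu)\cap L^2(\mu)$ is dense in $L^1(\mu)$, it suffices to prove the weak type inequality for $f$ in $L^1(\mu)\cap L^2(\mu)$.

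Fix such an $f$ and $\alpha>\kappa_0\,\|f\|_{L^1(\mu)}/\mu(X)$ (the case $\alpha$ below this threshold is trivial since $\mu(\{|Tf|>\alpha\})\leq\mu(X)\leq \kappa_0\,\|f\|_{L^1(\mu)}/\alpha$, and when $\mu(X)=\infty$ every $\alpha>0$ is allowed). Apply the \CZ decomposition from Definition \ref{CZdef} to write $f=g+\sum_i b_i$, with $b_i$ supported in $R_i\in\mathcal R$, $\int b_i\di\mu=0$, $\|b_i\|_{L^1(\mu)}\leq\kappa_0\,\alpha\,\mu(R_i)$, $\sum_i\mu(R_i)\leq\kappa_0\,\|f\|_{L^1(\mu)}/\alpha$, and $|g|\leq\kappa_0\,\alpha$ $\mu$-a.e. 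Split
$$\mu(\{|Tf|>\alpha\})\leq \mu(\{|Tg|>\alpha/2\})+\mu(\{|{\textstyle\sum_i}Tb_i|>\alpha/2\}).$$

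For the good part, combine $|g|\leq\kappa_0\,\alpha$ with the bound $\|g\|_{L^1(\mu)}\leq \|f\|_{L^1(\mu)}+\sum_i\|b_i\|_{L^1(\mu)}\leq(1+\kappa_0^{\,2})\,\|f\|_{L^1(\mu)}$ to deduce $\|g\|_{L^2(\mu)}^2\leq \|g\|_\infty\|g\|_{L^1(\mu)}\leq C\,\alpha\,\|f\|_{L^1(\mu)}$. Chebyshev and the $L^2$ boundedness of $T$ give $\mu(\{|Tg|>\alpha/2\})\leq C\,\|f\|_{L^1(\mu)}/\alpha$.

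For the bad part, set $E=\bigcup_i R_i^*$, so by condition (i) of Definition \ref{CZdef} and property (ii4), $\mu(E)\leq\kappa_0\sum_i\mu(R_i)\leq\kappa_0^{\,2}\,\|f\|_{L^1(\mu)}/\alpha$. Off $E$, use the cancellation $\int b_i\di\mu=0$: for any fixed $y_i\in R_i$ and for $x\in(R_i^*)^c$,
$$Tb_i(x)=\int_{R_i}\bigl[K(x,y)-K(x,y_i)\bigr]\,b_i(y)\di\mu(y),$$
so Fubini and the H\"ormander-type hypothesis (\ref{stimaH}) yield
$$\int_{(R_i^*)^c}|Tb_i(x)|\di\mu(x)\leq \Bigl(\sup_R\sup_{y,z\in R}\int_{(R^*)^c}|K(x,y)-K(x,z)|\di\mu(x)\Bigr)\|b_i\|_{L^1(\mu)}\leq C\,\|b_i\|_{L^1(\mu)}.$$
Summing in $i$ and using (ii3)-(ii4) gives $\int_{E^c}|\sum_i Tb_i|\di\mu\leq C\,\|f\|_{L^1(\mu)}$, and then Chebyshev bounds $\mu(\{|\sum_i Tb_i|>\alpha/2\}\cap E^c)\leq C\,\|f\|_{L^1(\mu)}/\alpha$. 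Together with the estimate on $\mu(E)$, this proves $\mu(\{|Tf|>\alpha\})\leq C\,\|f\|_{L^1(\mu)}/\alpha$. Marcinkiewicz interpolation between the weak type $(1,1)$ and the assumed strong type $(2,2)$ bounds then gives boundedness on $L^p(\mu)$ for $1<p\leq 2$.

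The proof is structurally routine once one has Definition \ref{CZdef} in hand; the only subtle point is the use of cancellation together with the condition (\ref{stimaH}), where one must integrate over the complement of the enlarged set $R_i^*$ corresponding to the actual \CZ set supporting $b_i$ (rather than, as in the classical case, a concentric dilate of a ball). This is the step that makes explicit why (\ref{stimaH}) is formulated with the supremum ranging over $\mathcal R$ and not over arbitrary balls, and it is what obliges one, in the applications to \DR spaces, to produce a family $\mathcal R$ for which (\ref{stimaH}) can actually be verified for concrete kernels such as those of $M(\D_\ell)$.
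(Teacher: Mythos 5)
Your proof is correct and is the canonical Calder\'on--Zygmund argument transported to the axiomatic setting: decompose $f=g+\sum_i b_i$ at height $\alpha$, use $\|g\|_{L^2}^2\leq\|g\|_\infty\|g\|_{L^1}\lesssim\alpha\|f\|_{L^1}$ together with $L^2$-boundedness of $T$ and Chebyshev for the good part, use cancellation of $b_i$ plus the H\"ormander-type hypothesis \eqref{stimaH} to bound $\int_{E^c}|\sum_i Tb_i|\di\mu$ where $E=\bigcup_i R_i^*$, and control $\mu(E)$ through (i) and (ii4) of Definition \ref{CZdef}; Marcinkiewicz interpolation then yields $L^p$ for $1<p<2$. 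Note that the paper does not reprove this statement; it simply quotes \cite[Theorem 2.1]{HS}, so there is no in-text proof to compare against, but the Hebisch--Steger argument is precisely the one you give. Your closing observation is also the right one: the entire point of condition \eqref{stimaH} being taken over $\mathcal R$ (rather than balls or concentric dilates) is so that the cancellation step integrates over $(R_i^*)^c$ for the actual \CZ set $R_i$, which is the only enlargement for which Definition \ref{CZdef} guarantees the measure comparison $\mu(R_i^*)\leq\kappa_0\mu(R_i)$. The one technical point you do not make explicit, though it is standard, is the convergence of $\sum_i Tb_i$: each $b_i$ lies in $L^1\cap L^2$ (since $f\in L^1\cap L^2$ and $b_i$ is $f$ minus an average, cut off to a set of finite measure), so each $Tb_i$ is well-defined, but the series only converges in $L^1$; the weak-type estimate should therefore be established for finite partial sums with constants independent of the truncation, and then passed to the limit. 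This does not affect correctness, but it is the usual caveat in such arguments.
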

\begin{ossn}\label{condHS}{\rm{It is easy to verify that if $T$ is a linear operator bounded on $L^2(\mu)$ such that $T=\sum_{j\in \ZZ}T_j$, where
\begin{itemize}
\item [(i)] the series converges in the strong topology of $L^2(\mu)$;
\item [(ii)] every $T_j$ is an integral operator with kernel $K_j$;
\item [(iii)] there exist positive constants $a,A,\varepsilon$ and $c>1$ such that
\begin{align}\label{stime1}
\int_X|K_j(x,y)|\,\big(1+c^j d(x,y)\big)^{\varepsilon}\,{\rm{d}}\mu (x) &\leq A\qquad\forall y\in X;
\end{align}
\begin{align}\label{stime2}
\int_X|K_j(x,y)-K_j(x,z)| \,{\rm{d}}\mu (x) &\leq A\,\big(c^j d(y,z)\big)^a\qquad\forall y,z\in X\,,
\end{align}
\end{itemize}
then $T$ satisfies the hypothesis (\ref{stimaH}) of Theorem \ref{Teolim}. The conditions (i)-(iii) are formulated by Hebisch and Steger in \cite[Theorem 2.1]{HS} and in some case they are more convenient than (\ref{stimaH}) to verify (as in the proof of Theorem \ref{moltiplicatori}). }}
\end{ossn}

\begin{ossn}\label{dualcondition}{\rm{Note that if the operator $T$ in Theorem \ref{TeolimH1} satisfies also the ``dual condition'' 
\begin{align}\label{stimaHdual}
\sup_R \sup_{y,\,z\in R}\int_{(R^*)^c}|K(y,x)-K(z,x)| \,{\rm{d}}\mu (x) &< \infty \,,
\end{align}
where the supremum is taken over all \CZ sets $R$ in $\mathcal R$, then $T$ is bounded on $L^p(\mu)$, for all $p$ in $ (1,\infty)$. 

Indeed,  by (\ref{stimaHdual}) the adjoint operator $T'$ satisfies the hypothesis of Theorem \ref{TeolimH1}. Thus $T'$ is bounded on $L^p(\mu)$, for $1<p<2$. By duality it follows that $T$ is bounded on $L^p(\mu)$, for $2<p<\infty$.}}
\end{ossn}

\bigskip
We now show that if a generic \CZ space $(X,\mu,d)$ satisfies an additional hypothesis, then we may introduce an Hardy space and a bounded mean oscillation  space on $X$.  

Throughout this section we shall work on \CZ spaces satisfying the following additional condition (C).

\vspace{0,3cm}
\emph{There exists a subfamily $\mathcal R'$ of $\mathcal R$ such that the following hold:
\begin{itemize}
\item[(i)] given $R_1,\,R_2$ in $\mathcal R'$ such that $R_2\cap  R_1\neq \emptyset$, then either $R_1\subseteq R_2$ or $R_2\subseteq R_1$;
\item[(ii)] for every set $R$ in $\mathcal R$ there exists a set $R'$ in $\mathcal R'$ which contains $R$.
\end{itemize} 
}
\vspace{0,3cm}

We now introduce the space $H^{1,q}$ on $X$, for $1<q<\infty$.
\begin{defi}
A $(1,q)$-{\rm{atom}} is a function $a$ in $L^1(\mu)$ such that
\begin{itemize}
\item [(i)] $a$ is supported in a \CZ set $R$;
\item [(ii)]$\Big(\frac{1}{\mu(R)}\int_R|a|^q\di\mu\Big)^{1/q}\leq \mu(R)^{-1}\,;$ 
\item [(iii)]$\int_S a\di\mu =0$\,.
\end{itemize}
\end{defi}
Note that if $a$ is a $(1,q)$-atom supported in $R$, then
$$\|a\|_1=\int_R|a|\dir\leq \|a\|_{q}\,\mu(R)^{1/q'}\leq 1\,.$$
\begin{defi}
The Hardy space $H^{1,q}$ is the space of all functions $f$ in $ L^1(\mu)$ such that $f=\sum_j \lambda_j\, a_j$, where $a_j$ are $(1,q)$-atoms and $\lambda _j$ are complex numbers such that $\sum _j |\lambda _j|<\infty$. We denote by $\|f\|_{H^{1,q}}$ the infimum of $\sum_j|\lambda_j|$ over all decompositions $f=\sum_j|\lambda_j|\,a_j$, where $a_j$ are $(1,q)$-atoms.
\end{defi}
We now introduce the bounded mean oscillation space. For every locally integrable function $f$ and every set $R$ we denote by $f_R$ the average of $f$ on $R$, i.e. $f_R=\frac{1}{\mu(R)}\int_Rf\di \mu$. 

\begin{defi}
The space $\mathcal{B}\mathcal{M}\mathcal{O}_p$, for $1<p<\infty$, is the space of all functions in $L^p_{\rm{loc}}(\mu)$ such that
$$\sup_R\Big(\frac{1}{\mu(R)}\int_R|f-f_R|^p\di\mu \Big)^{1/p}<\infty\,,$$
where the supremum is taken over all \CZ sets in the family $\mathcal R$. The space $BMO_p$ is the quotient of $\mathcal{B}\mathcal{M}\mathcal{O}_p$ module constant functions. It is a Banach space with the norm defined by
$$\|f\|_{BMO_p}=\sup\Big\{\Big(\frac{1}{\mu(R)}\int_R|f-f_R|^p\di\mu \Big)^{1/p}\,:~R\in\mathcal R \Big\}\,.$$
\end{defi}
\begin{ossn}\label{BMOP}{\rm{We summarize some properties of $BMO_p$.
\begin{itemize}
\item[(i)] $L^{\infty}(\mu)$ is contained in $BMO_p$ and $\|f\|_{BMO_p}\leq 2\|f\|_{\infty}$. 

Indeed, for each \CZ set $R$ we have that
\begin{align*}
\Big(\frac{1}{\mu(R)}\int_R|f-f_R|^p\di\mu \Big)^{1/p}&\leq \frac{1}{\mu(R)^{1/p}}\big(\|f\|_{L^p(R)}+|f_R|\,\mu(R)^{1/p}\big)\\
&\leq 2\,\|f\|_{\infty}\,.
\end{align*}
\item[(ii)] Suppose that there exists a constant $C$ such that for all \CZ set $R$ there exists a constant $c_R$ such that $$\Big(\frac{1}{\mu(R)}\int_R|f-c_R|^p\di\mu \Big)^{1/p}\leq C< \infty \,.$$ Then $f$ is in $ BMO_p$. Indeed,
\begin{align*}
\Big(\frac{1}{\mu(R)}\int_R|f-f_R|^p\di\mu \Big)^{1/p}&\leq \Big(\frac{1}{\mu(R)}\int_R|f-c_R|^p\di\mu \Big)^{1/p}\\
&+\Big(\frac{1}{\mu(R)}\int_R|c_R-f_R|^p\di\mu \Big)^{1/p}\\
&\leq C+|c_R-f_R|\\
&\leq C+\frac{1}{\mu(R)^{1/p}}\left(\int_R |c_R-f|^p\di\mu \right)^{1/p}\\
&\leq 2\,C\,.
\end{align*}
\item[(iii)] If $f,g$ are in $ BMO_p$, then $|f|,~\max\{f,g\},~\min\{f,g\}$ are in $BMO_p$.
\item[(iv)] If $f$ is in $BMO_p$ and $a$ is a $(1,p')$-atom supported in $R$, then by H\"older's inequality we obtain that 
\begin{align*}
\Big|\int_S f\,a\di\mu \Big|&=\Big|\int_R(f(x)-f_R)a(x)\di\mu (x)\Big|\\
&\leq \Big(\int_R |f(x)-f_R|^p\di\mu (x)\Big)^{1/p}\Big(\int_R|a|^{p'}\di\mu\Big)^{1/{p'}}\\
&\leq \mu(R)^{1/p}\,\|f\|_{BMO_p}\,\mu(R)^{1/{p'}}\,\mu(R)^{-1}\\
&\leq \|f\|_{BMO_p}\,.
\end{align*}
\end{itemize}
Property (iv) is key to prove the duality between $BMO_p$ and $H^{1,p'}$.}}
\end{ossn}
\begin{teo}\label{duality}
For all $1<p<\infty$ the space $BMO_p$ is the dual of $H^{1,p'}$.
\end{teo}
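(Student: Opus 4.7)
The plan is to establish an isometric isomorphism $\Phi : BMO_p \to (H^{1,p'})^*$ via the pairing $\Phi(f)(h) = \int fh\,{\rm d}\mu$, initially defined on finite linear combinations of atoms and extended by continuity. For the easy direction, given $f \in BMO_p$ and $h = \sum_{j=1}^N \lambda_j a_j$ a finite atomic sum, the integral is well-defined because $f \in L^p_{\rm loc}(\mu)$ and each $a_j$ has bounded support. Remark \ref{BMOP}(iv) gives $|\int f a_j\,{\rm d}\mu| \leq \|f\|_{BMO_p}$, hence $|\Phi(f)(h)| \leq \|f\|_{BMO_p} \sum_j |\lambda_j|$, and taking the infimum over atomic decompositions yields $|\Phi(f)(h)| \leq \|f\|_{BMO_p}\|h\|_{H^{1,p'}}$, so $\Phi(f)$ extends continuously to $H^{1,p'}$.

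For the converse, fix $\ell \in (H^{1,p'})^*$ and for each $R \in \mathcal R$ consider
$$L^{p'}_0(R) = \Bigl\{ g \in L^{p'}(\mu) : \operatorname{supp} g \subseteq R,\ \int g\,{\rm d}\mu = 0 \Bigr\}.$$
If $g \in L^{p'}_0(R)$ is nonzero then $g / (\|g\|_{p'}\mu(R)^{1/p})$ is a $(1,p')$-atom, so $\|g\|_{H^{1,p'}} \leq \mu(R)^{1/p}\|g\|_{p'}$ and $|\ell(g)| \leq \|\ell\|\,\mu(R)^{1/p}\|g\|_{p'}$. By Hahn--Banach (extending $\ell$ to $L^{p'}(R)$) and Riesz representation, there is a unique mean-zero $g_R \in L^p(R)$ with $\|g_R\|_{L^p(R)} \leq \|\ell\|\,\mu(R)^{1/p}$ such that $\ell(g) = \int g_R\, g\,{\rm d}\mu$ for all $g \in L^{p'}_0(R)$.

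Next I patch these local representatives using condition (C). Whenever $R_1 \subseteq R_2$ in $\mathcal R$, both $g_{R_1}$ and $g_{R_2}|_{R_1}$ represent $\ell$ on $L^{p'}_0(R_1)$, so they differ by a constant. Since the subfamily $\mathcal R'$ is nested by (C)(i) and cofinal in $\mathcal R$ by (C)(ii), the mutually intersecting elements of $\mathcal R'$ form chains, so I can normalize representatives $\{f_{R'}\}_{R' \in \mathcal R'}$ by adding suitable constants along each maximal chain so that $f_{R'_2}|_{R'_1} = f_{R'_1}$ whenever $R'_1 \subseteq R'_2$; different chains, being disjoint, are normalized independently. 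I then define $f(x) := f_{R'}(x)$ for any $R' \in \mathcal R'$ containing $x$, unambiguous by (C)(i). For every $R \in \mathcal R$, pick $R' \in \mathcal R'$ with $R \subseteq R'$: then $g_R - f|_R$ is a constant $-c$, whence
$$\Bigl(\frac{1}{\mu(R)}\int_R |f - c|^p\,{\rm d}\mu \Bigr)^{1/p} = \Bigl(\frac{1}{\mu(R)}\int_R |g_R|^p\,{\rm d}\mu\Bigr)^{1/p} \leq \|\ell\|,$$
and Remark \ref{BMOP}(ii) yields $\|f\|_{BMO_p} \leq 2\|\ell\|$. Finally, for any $(1,p')$-atom $a$ supported in $R$, we have $a \in L^{p'}_0(R)$ with $\int a\,{\rm d}\mu = 0$, so $\ell(a) = \int g_R\, a\,{\rm d}\mu = \int fa\,{\rm d}\mu - c\int a\,{\rm d}\mu = \Phi(f)(a)$; by continuity $\Phi(f) = \ell$ on $H^{1,p'}$.

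The main obstacle is the patching step: one must verify that the additive constants can be chosen coherently along each maximal chain in $\mathcal R'$, which requires analyzing the tree-like structure forced by (C)(i), and confirm that (C)(ii) makes every CZ set land inside some element of $\mathcal R'$ so the global $f$ controls the BMO seminorm on all of $\mathcal R$. A secondary technical point is measurability of $f$, which follows from realizing it as a pointwise limit over an increasing exhaustion within each connected component of $\mathcal R'$.
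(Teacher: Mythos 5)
Your overall strategy matches the paper's: pair $f$ against atoms to get one containment, and for the converse use $L^{p'}$--$L^p$ duality on each CZ set together with condition (C) to patch local representatives into a single BMO function. In the converse direction you are in fact more careful than the paper: since $g_{R_1}$ and $g_{R_2}|_{R_1}$ both annihilate $L^{p'}_0(R_1)$ after pairing, they differ by an element of the annihilator of the mean-zero subspace, i.e.\ by a constant on $R_1$ --- they are \emph{not} equal, contrary to what the paper's proof asserts for the normalised representatives $f^R \in L^2_{R,0}$. Your normalisation of $\{f_{R'}\}_{R'\in\mathcal R'}$ by additive constants, justified by the cocycle identity for the discrepancy constants along nested triples, is exactly what is needed to make $f$ well defined, and because atoms integrate to zero the residual constant $c$ drops out of $\ell(a)=\int g_R a\,\mathrm{d}\mu=\int fa\,\mathrm{d}\mu$. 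One small imprecision: the maximal chains in $\mathcal R'$ are not pairwise disjoint (they can share a common upper segment before branching), so you should phrase the normalisation as choosing a base set in each ``connected component'' of the laminar family and propagating through the resulting tree; the cocycle condition then guarantees consistency.

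There is, however, a genuine gap in your easy direction. You prove $|\int fh\,\mathrm{d}\mu|\le\|f\|_{BMO_p}\sum_{j=1}^N|\lambda_j|$ only for \emph{finite} atomic decompositions $h=\sum_{j=1}^N\lambda_j a_j$, and then claim that ``taking the infimum over atomic decompositions'' gives $|\Phi(f)(h)|\le\|f\|_{BMO_p}\|h\|_{H^{1,p'}}$. But $\|h\|_{H^{1,p'}}$ is the infimum over \emph{all} decompositions, including infinite ones, and for an infinite decomposition $h=\sum_j\lambda_j a_j$ one cannot simply write $\int fh\,\mathrm{d}\mu=\sum_j\lambda_j\int fa_j\,\mathrm{d}\mu$: the partial sums converge to $h$ in $L^1(\mu)$ while $f$ is merely locally $L^p$, so the required dominated-convergence step fails unless $f\in L^\infty(\mu)$. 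Consequently the bound you actually have is in terms of the ``finite-decomposition'' norm of $h$, which a priori dominates $\|h\|_{H^{1,p'}}$, and the continuous extension to $H^{1,p'}$ does not follow. This is precisely what the paper's three-case argument repairs: first for $f\in L^\infty(\mu)$, where the sum--integral interchange is immediate; then for real-valued $f\in BMO_p$ by truncating to $f_k$, using that $\|f_k\|_{BMO_p}\le C\|f\|_{BMO_p}$ and passing to the limit by dominated convergence on the compact support of $h\in H^{1,p'}_{\mathrm{fin}}$; then for complex $f$ by linearity. That truncation step is not optional and needs to be inserted into your argument.
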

\begin{proof}
We first prove that each $f$ in $ BMO_p$ represents a bounded linear functional $\ell_f$ on $H^{1,p'}$, in the sense that
$$\ell_f(g)=\int_S f\,g\di\mu\,,$$
and $|\ell_f(g)|\leq  C\,\|f\|_{BMO_p}\,\|g\|_{H^{1,p'}}$ for all functions $g$ in 
$$H^{1,p'}_{\rm{fin}}=\Big\{g\in L^1(\mu):~g=\sum_{j=1}^N\lambda_j\,a_j,\,a_j~ (1,p')-atoms, N\in\NN \Big\}\,.$$ 
Since $H^{1,p'}_{\rm{fin}}$ is dense in $H^{1,p'}$, it suffices to identify a unique bounded linear functional on $H^{1,p'}$. We distinguish three cases.

{\it{Case~$f\in L^{\infty}(\mu)$}}. For each $g$ in $ H^{1,p'}$ and for all $\varepsilon>0$, there exists a decomposition $g=\sum_j \lambda_j a_j$, where $\sum_j |\lambda _j|<\|g\|_{H^{1,p'}}+\varepsilon$ and $a_j$ are $(1,p')$-atoms. Since $f$ is in $ L^{\infty}(\mu)$ and the series converges to $g$ in $L^1(\mu)$, we have that
$$ \int_S f\,g\di\mu=\sum_j\lambda_j\int_S f\,a_j\di\mu\,,$$
and then, by using property (iv) above, we obtain that
\begin{align*}
\Big|\int_S f\,g\di\mu\Big|&\leq \sum_j|\lambda_j|\Big|\int_S f\,a_j\di\mu\Big|\\
&\leq \|f\|_{BMO_p}\,\sum_j|\lambda_j|\\
&\leq \|f\|_{BMO_p}\,(\|g\|_{H^{1,p'}}+\varepsilon)\,.
\end{align*}
By considering the infimum over $\varepsilon>0$, we deduce that the functional $\ell_f$ is bounded on $H^{1,p'}$ and has norm $\leq \|f\|_{BMO_p}$.

{\it{Case~$f\in BMO_p$ real valued.}} In this case we define for each $k\in\NN$
$$f_k(x)=\begin{cases}
k & \text{if $f(x)>k$}\\
f(x) & \text{if $|f(x)|\leq k$}\\
-k & \text{if $f(x)<-k$\,.}
\end{cases}
$$
Each function $f_k$ is in $L^{\infty}(\mu)$ and $\|f_k\|_{BMO_p}\leq C\,\|f\|_{BMO_p}$. By the previous case, for all $g$ in $ H^{1,p'}_{\rm{fin}}$ and $k\in\NN$ 
$$\Big|\int_S f_k\,g\di\mu\Big|\leq \|f_k\|_{BMO_p}\,\|g\|_{H^{1,p'}}\leq C\,\|f\|_{BMO_p}\,\|g\|_{H^{1,p'}}\,.$$
Since $g\in H^{1,p'}_{\rm{fin}}$ and $f\in BMO_p$, we have that $g$ and $f$ belong locally to $L^{p'}(\mu)$ and $L^p(\mu)$, respectively. Thus, by the dominated convergence theorem, since $f_k$ converges to $f$ almost everywhere, we deduce that 
$$\Big|\int_S f\,g\di\mu\Big|\leq C\,\|f\|_{BMO_p}\,\|g\|_{H^{1,p'}}\,,$$
as required.

{\it{Case~$f\in\Bp$ complex valued.}} It suffices to write $f={\rm{Re}}f+i{\rm{Im}}f$, use the previous case and the fact that $\|{\rm{Re}}f\|_{BMO_p}\leq C\,\|f\|_{BMO_p}$ and $\|{\rm{Im}}f\|_{BMO_p}\leq C\,\|f\|_{BMO_p}$.

So far we have proved that $BMO_p$ is contained in $(H^{1,p'})'$. Now we prove the converse inclusion. 

Let $\ell$ be in $ (H^{1,p'})'$. Our purpose is to define a function $f$ in $ BMO_p$ such that $\ell_f =\ell$ and $\|f\|_{BMO_p}\leq C\,\|\ell\|_{(H^{1,p'})'}$. For simplicity we first prove this fact in the case where $p=2$. 

For each \CZ set $R$ we denote by $L^2_{R}$ the set of all functions in $L^{2}(\mu)$ supported in $R$ and by $L^{2}_{R,\,0}$ the subspace of functions whose integral is zero. If $g$ is in $L^{2}_{R,\,0}$, then the function $a=\mu(R)^{-1/2}\,\|g\|_{2}^{-1}\,g$ is a $(1,2)$-atom. Then
$$|\langle \ell,g\rangle |\leq \mu(R)^{1/2}\,\|g\|_{2}\,\|\ell\|_{(H^{1,2})'}\,.$$
This shows that $\ell$ is in $(L^{2}_{R,\,0})'$. Thus, since $L^2_{R,\,0}$ is an Hilbert space, there exists a function $f^R\in L^{2}_{R,0}$ such that $\|f^R\|_2=\mu(R)^{1/2}\,\|\ell\|_{(H^{1,2})'}$ and $\langle \ell,g\rangle =\int_Rf^R\,g\di\mu$ for all $g\in L^2_{R,0}$. 

To define the function $f$ which represents the functional $\ell$ we proceed in the following way. Let $\mathcal R'$ be the family of \CZ sets which satisfies condition (C). We now define the function $f$ by
$$f(x)=f^{R}(x)\qquad{\rm{if}}~x\in R\,,$$
where $R$ is in $\mathcal R'$ (note that by (ii) in condition (C) there exists a set $R$ in $\mathcal R'$ which contains $x$). First we observe that this definition makes sense, because if $R_1\cap R_2\neq\emptyset$, then either $R_2\subseteq R_1$ or $R_1\subseteq R_2$. If $R_2\subseteq R_1$, then 
$$\int_{R_2}(f^{R_1}-f^{R_2})g\di\mu =0\qquad \forall g\in L^{2}_{R_2,0}\,,$$
which implies that $f^{R_2}$ and $f^{R_1}$ are equal on $R_2$. Otherwise we deduce that $f^{R_2}$ and $f^{R_1}$ are equal on $R_1$.

It remains to prove that $f\in BMO_2$ and $\ell_f=\ell$. For each \CZ set $R$ there exists a set $R'$ in $\mathcal R'$ such that $R\subseteq R'$. For every function $g\in L^{2}_{R,\,0}$ we have that
$$\int_R f\,g\di\mu=\,\langle \ell,g\rangle \,,$$
and then the restriction of $f$ on $R$ is equal to $f^R$. In particular it follows that
\begin{align*}
\Big(\frac{1}{\mu(R)}\int_R |f|^{2}\di\mu\Big)^{1/2}&=\Big(\frac{1}{\mu(R)}\int_R |f^R|^{2}\di\mu\Big)^{1/2}\\
&\leq \frac{1}{\mu(R)^{1/2}}\,\|f^R\|_2\\
&\leq C\,\|\ell\|\,.
\end{align*}
This shows that $f\in BMO_2$ and $\|f\|_{BMO_2}\leq C\, \|\ell\|\,,$ as required.

The proof in the case $p\neq 2$ is similar and then it is omitted.
\end{proof}
We now prove a boundedness theorem on a \CZ space for integral operators which satisfy the same hypotheses of Theorem \ref{Teolim}.
\begin{teo}\label{TeolimH1}
Let $(X,\mu,d)$ be a Calder\'on--Zygmund space which satisfies condition (C). Let $T$ be a linear operator as in Theorem \ref{Teolim}. Then $T$ is bounded from $H^{1,q}$ to $L^1(\mu)$, for all $1<q\leq 2$.
\end{teo}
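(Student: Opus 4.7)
The strategy is the classical reduction: since $H^{1,q}$ is defined atomically, it suffices to establish a uniform bound
\[
\|T a\|_{L^1(\mu)} \leq C
\]
for every $(1,q)$-atom $a$, and then pass to a general element of $H^{1,q}$ by linearity and the fact that $\sum_j |\lambda_j| < \infty$ whenever $f = \sum_j \lambda_j a_j$.

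Fix a $(1,q)$-atom $a$ supported in a Calder\'on--Zygmund set $R$, and let $R^*$ be the dilated set given by Definition \ref{CZdef}(i). I would split
\[
\int_X |Ta|\di\mu = \int_{R^*} |Ta|\di\mu + \int_{(R^*)^c}|Ta|\di\mu
\]
and handle the two pieces separately. For the local piece, I would use H\"older's inequality together with the $L^q(\mu)$ boundedness of $T$ guaranteed by Theorem \ref{Teolim} (valid since $1 < q \leq 2$):
\[
\int_{R^*}|Ta|\di\mu \leq \mu(R^*)^{1/q'}\,\|Ta\|_{L^q(\mu)} \leq C\,\mu(R^*)^{1/q'}\,\|a\|_{L^q(\mu)} \leq C\,\kappa_0^{1/q'},
\]
where the last inequality uses the size condition $\|a\|_{L^q(\mu)} \leq \mu(R)^{-1/q'}$ and $\mu(R^*) \leq \kappa_0 \mu(R)$.

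For the far piece, I would exploit the vanishing moment of $a$: fixing an arbitrary $z \in R$, we may write
\[
Ta(x) = \int_R \bigl(K(x,y) - K(x,z)\bigr) a(y)\di\mu(y) \qquad \forall x \in (R^*)^c.
\]
Then Fubini and the hypothesis (\ref{stimaH}) give
\[
\int_{(R^*)^c}|Ta(x)|\di\mu(x) \leq \int_R |a(y)| \sup_{y,z\in R}\int_{(R^*)^c}|K(x,y)-K(x,z)|\di\mu(x)\di\mu(y) \leq C\,\|a\|_{L^1(\mu)} \leq C,
\]
since $\|a\|_{L^1(\mu)} \leq 1$. Combining the two estimates yields the uniform atomic bound.

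Finally, for $f \in H^{1,q}$ with decomposition $f = \sum_j \lambda_j a_j$ and $\sum_j |\lambda_j| \leq \|f\|_{H^{1,q}} + \varepsilon$, the atomic bound gives $\sum_j |\lambda_j|\,\|Ta_j\|_{L^1(\mu)} < \infty$, so $\sum_j \lambda_j Ta_j$ converges in $L^1(\mu)$; calling this sum $Tf$ (well-defined since on $H^{1,q} \cap L^2(\mu)$ it must coincide with the $L^2$-definition, by the weak type $(1,1)$ bound of Theorem \ref{Teolim} applied to the partial-sum differences), one concludes $\|Tf\|_{L^1(\mu)} \leq C\,\|f\|_{H^{1,q}}$. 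The only non-routine step is the cancellation argument at the far piece, but it is exactly what (\ref{stimaH}) is tailored to handle; the main subtlety is verifying that the atomic extension agrees with the initial definition of $T$ on $L^1\cap L^2$, which follows from uniqueness of limits in $L^{1,\infty}(\mu)$.
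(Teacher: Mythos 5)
Your proposal is correct and takes essentially the same route as the paper: atomic reduction, split over $R^*$ and $(R^*)^c$, H\"older plus $L^q$-boundedness for the local piece, the cancellation of the atom together with hypothesis (\ref{stimaH}) for the far piece, and the weak type $(1,1)$ bound to pass from atoms to general $f$. The only cosmetic difference is that you subtract $K(x,z)$ for an arbitrary $z\in R$ rather than the ball's centre $x_0$, which is if anything more faithful to the form of condition (\ref{stimaH}).
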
 
\begin{proof}
Let $q$ be in $(1,2]$: we claim that there exists a constant $A$, which depends only on the norm of $T$, such that $\|Ta\|_1\leq A$ for each $(1,q)$-atom $a$.

Let $a$ be a $(1,q)$-atom supported in the \CZ set $R$. Recall that $R\subseteq {B(x_0,\kappa_0 \,r)}$, for some $x_0$ in $X$ and $r>0$, and denote by $R^*$ the dilated set of $R$. We need to estimate the integral $\int_S |Ta|\di\mu$.

We first estimate the integral on $R^*$ by H\"older's inequality:
\begin{align}\label{suR^*}
\int_{R^*} |Ta|\di\mu&\leq \|Ta\|_{q}\,\mu(R^*)^{1/q'}\nonumber\\
&\leq\kappa_0^{1/q'}\, \opnorm T\opnorm_{q}\,\|a\|_q\,\mu(R)^{1/q'}\nonumber\\
&\leq \kappa_0^{1/q'}\,\opnorm T\opnorm_{q}\,\mu(R)^{-1+1/q}\,\mu(R)^{1/q'}\nonumber\\
&=\kappa_0^{1/q'}\,\opnorm T\opnorm_{q}\,.
\end{align}
We consider the integral on the complementary set of $R^*$ by using the fact that $a$ has average zero: 
\begin{align*}
\int_{R^{*c}} |Ta|\di\mu&\leq \int_{(R^{*})^c}\Big|\int_R K(x,y)\,a(y)\di\mu(y)  \Big|\di\mu(x)\\
&=
\int_{(R^{*})^c}\Big|\int_R [K(x,y)-K(x,x_0)]\,a(y)\di\mu(y)  \Big|\di\mu(x)\\
&\leq \int_{(R^{*})^c}\int_R |K(x,y)-K(x,x_0)|\,|a(y)|\di\mu(y)\di\mu(x)\\
&=\int_R|a(y)|\Big( \int_{(R^{*})^c} |K(x,y)-K(x,x_0)|\di\mu(x) \Big)\di\mu(y)\\
&\leq  \|a\|_1\,\sup_{y\in R}\int_{(R^{*})^c}|K(x,y)-K(x,x_0)|\di\mu(x)\\
&\leq  C\,,
\end{align*}
as required by the claim.

Now let $g$ be in $ H^{1,q}$. There exists a decomposition $g=\sum_j \lambda_j a_j$ such that $\sum_j |\lambda_j|<\|g\|_{H^{1,q}}+\varepsilon$ and $a_j$ are $(1,q)$-atoms. Since the operator $T$ is of weak type $(1,1)$ we have that $Tg=\sum_j\lambda_j\,Ta_j$, whose series converges in $L^{1,\infty}(\mu)$. Thus,
\begin{align*}
\|Tg\|_1&\leq \sum_j |\lambda_j|\,\|Ta_j\|_1\\
&\leq A\,\sum_j |\lambda_j|\\
&\leq A (\|g\|_{H^{1,q}}+\varepsilon)\,.
\end{align*}
This shows that $T$ is bounded from $H^{1,q}$ to $L^1(\mu)$. 
\end{proof}
\begin{coro}\label{TeolimBMO}
Let $(X,\mu,d)$ and $T$ be as in Theorem \ref{TeolimH1}. If $T$ satisfies estimate (\ref{stimaHdual}), then it is bounded from $H^{1,q}$ to $L^1(\mu)$, for all $2\leq q<\infty$, and from $L^{\infty}(\mu)$ to $BMO_p$, for all $1< p<\infty$.
\end{coro}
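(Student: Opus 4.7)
The statement splits into two independent assertions, which I would address in order, reducing each to material already available in the excerpt.

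For the bound $T:H^{1,q}\to L^1(\mu)$ with $2\leq q<\infty$, the key observation is that every $(1,q)$-atom is automatically a $(1,2)$-atom. Indeed, if $a$ is supported in a \CZ set $R$ and satisfies $\big(\frac{1}{\mu(R)}\int_R|a|^q\di\mu\big)^{1/q}\leq \mu(R)^{-1}$, then H\"older's inequality yields
$$\Big(\frac{1}{\mu(R)}\int_R|a|^2\di\mu\Big)^{1/2}\leq \Big(\frac{1}{\mu(R)}\int_R|a|^q\di\mu\Big)^{1/q}\leq \mu(R)^{-1},$$
so the size condition for a $(1,2)$-atom is met, while support and cancellation are unchanged. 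Consequently $H^{1,q}$ embeds continuously in $H^{1,2}$ with $\|f\|_{H^{1,2}}\leq \|f\|_{H^{1,q}}$, and the desired bound follows from Theorem \ref{TeolimH1} applied at $q=2$.

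For the bound $T:L^\infty(\mu)\to BMO_p$ with $1<p<\infty$, I would first observe that hypothesis (\ref{stimaHdual}) for $T$ is precisely condition (\ref{stimaH}) for the kernel $K'(x,y)=K(y,x)$ of the adjoint $T'$, and symmetrically (\ref{stimaH}) for $T$ becomes (\ref{stimaHdual}) for $T'$. Hence $T'$ is a bounded operator on $L^2(\mu)$ to which both Theorem \ref{TeolimH1} and the first assertion just proved apply, so $T':H^{1,q}\to L^1(\mu)$ is bounded for every $1<q<\infty$. Fix now $p\in(1,\infty)$ and $f\in L^\infty(\mu)$. On the dense subspace $H^{1,p'}_{\mathrm{fin}}\subset H^{1,p'}$ of finite linear combinations of $(1,p')$-atoms, define
$$\ell_f(g)=\int_X f\cdot (T'g)\di\mu,$$
which is well defined because $T'g\in L^1(\mu)$ and $f\in L^\infty(\mu)$, and satisfies $|\ell_f(g)|\leq \|f\|_\infty\,\|T'g\|_{L^1(\mu)}\leq C\,\|f\|_\infty\,\|g\|_{H^{1,p'}}$. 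By density, $\ell_f$ extends to a bounded linear functional on $H^{1,p'}$, which by Theorem \ref{duality} is represented by a unique element $F\in BMO_p$ with $\|F\|_{BMO_p}\leq C\,\|f\|_\infty$. The $L^2$-adjoint relation $\int_X Tf\cdot g\di\mu=\int_X f\cdot T'g\di\mu$ identifies $F$ with $Tf$ modulo constants on the overlap $L^\infty\cap L^2$, which is the sense in which $T$ is now bounded from $L^\infty(\mu)$ into $BMO_p$.

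The only non-routine point is this last identification: since $L^\infty(\mu)$ is not contained in the natural $L^2(\mu)$-domain of $T$, the symbol $Tf$ for $f\in L^\infty(\mu)$ must be interpreted via the duality pairing constructed above, with agreement on $L^\infty\cap L^2$ following from the $L^2$-adjoint relation. This is the standard device by which \CZ operators are extended to $L^\infty$ with values in $BMO$, and I expect no additional difficulty beyond careful bookkeeping of these identifications.
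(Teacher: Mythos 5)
Your proof is correct. The second half (the $L^\infty\to BMO_p$ bound) follows the same duality route as the paper, spelling out the details the paper's two-line proof leaves implicit: $T'$ satisfies (\ref{stimaH}) precisely because $T$ satisfies (\ref{stimaHdual}), so Theorem \ref{TeolimH1} applies to $T'$, and the functional $g\mapsto\int f\,(T'g)\di\mu$ is then bounded on $H^{1,p'}$ and represented by an element of $BMO_p$ via Theorem \ref{duality}; the caveat about interpreting $Tf$ for general $f\in L^\infty$ via this pairing is exactly the right one.

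For the first half you take a genuinely different and in fact more economical route. The paper invokes Remark \ref{dualcondition} to conclude from (\ref{stimaHdual}) that $T$ is bounded on $L^q(\mu)$ for $q\in(2,\infty)$, and then re-runs the atomic estimate of Theorem \ref{TeolimH1} with $(1,q)$-atoms, using that $L^q$-boundedness in the H\"older step on $R^*$. You instead observe that for $q\geq 2$ normalized $L^q$-averages dominate normalized $L^2$-averages, so every $(1,q)$-atom is a $(1,2)$-atom, whence $H^{1,q}\hookrightarrow H^{1,2}$ contractively and Theorem \ref{TeolimH1} at $q=2$ already gives the conclusion. This is cleaner, and it reveals something the paper's phrasing obscures: the $H^{1,q}\to L^1$ statement for $q\geq 2$ does not actually require (\ref{stimaHdual}) at all — it is already a consequence of the hypotheses of Theorem \ref{TeolimH1}. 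The paper's proof, by contrast, makes visible use of (\ref{stimaHdual}) here (harmlessly, but not necessarily). The only slight imprecision in your write-up is the remark that ``the first assertion just proved'' applies to $T'$; since your proof of that assertion only used the hypotheses of Theorem \ref{TeolimH1}, it is cleaner to say directly that $T'$ inherits those hypotheses from $T$'s condition (\ref{stimaHdual}), so $T':H^{1,q}\to L^1(\mu)$ for all $q\in(1,\infty)$ follows from the embedding argument applied to $T'$.
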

\begin{proof}
Suppose that $2\leq q<\infty$. By Remark \ref{dualcondition} it follows that $T$ is bounded on $L^q(\mu)$. By arguing as in the proof of Theorem \ref{TeolimH1}, we may prove that $T$ is bounded from $H^{1,q}$ to $L^1(\mu)$. 

Suppose that $1< p<\infty$. Since the adjoint operator $T'$ is bounded from $H^{1,p'}$ to $L^1(\mu)$, by duality it follows that $T$ is bounded from $L^{\infty}(\mu)$ to $BMO_p$.
\end{proof}
\section{The CZ decomposition in \DR spaces}\label{CZdecextH}
In this chapter we prove that \DR spaces are \CZ spaces. 

The simplest example of \DR spaces are the $ax+b\,$-groups, whose \CZ theory has been studied by Hebisch and Steger \cite{HS}. We now recall their main ideas.

Let $S=\RR^d\times \RR^+$ be an $ax+b\,$-group. Hebisch and Steger introduced a family of admissible sets which are products of dyadic cubes in $\RR^d$ and intervals in $\RR^+$. More precisely, a rectangle $Q^k_m\times (a_0/r,a_0\,r)$, where $Q^k_m$ is a dyadic cube in $\RR^d$, $a_0\in \RR^+$, $r>1$, is \emph{admissible} if
\begin{align}\label{admb}
a_0^{1/2}r^{\beta}&\leq 2^k < a_0^{1/2}r^{4\beta}\qquad{\rm{if}}~r\geq\nep\,,\end{align}
and
\begin{align}\label{adms}
a_0^{1/2}\log r&\leq 2^k < \nep^{4\beta}\,a_0^{1/2}\log r\qquad{\rm{if}}~1<r<\nep\,,\end{align}
and $\beta$ is a constant $>1+\log 2\,$.

Note that admissible sets are rectangles either {\emph{big}}, if $r\geq \nep$, or {\emph{small}}, if $r<\nep$. By using the properties of the metric, one can prove that these sets satisfy property (i) of Definition \ref{CZdef}. More precisely, each admissible set $R=Q^k_m\times (a_0/r,a_0\,r)$ is contained in a ball of radius $\kappa_0\,\log r$ and $\rho(R^*)\leq \kappa _0\,\rho(R)$, where $R^*=\{(n,a)\in S: d\big((n,a),R\big)<\log r\}$, for an appropriate constant $\kappa _0$.

A noteworthy property of admissible sets is that every admissible set, either big or small, can be split up in a finite number of admissible sets. More precisely, given an admissible set $R=Q^k_m\times (a_0/r,a_0\,r)$, we may split up $R$ either in the union of $2^d$ admissible subsets $Q^{k-1}_i\times (a_0/r,a_0\,r)$, where $Q^{k-1}_i$ are $2^d$ dyadic subsets of $Q^k_m$, or in the union of two admissible subsets $Q^k_m\times (a_0/r,a_0)$ and $Q^k_m\times (a_0,a_0\,r)$. This ``splitting property'' has an important r\^ole in the \CZ decomposition: it allows to reproduce the standard ``stopping argument'' of the classical \CZ decomposition in this context. 

Now let us consider a partition $\mathcal P$ of $S$ in admissible sets. If we split up each set in $\mathcal P$ as described above, then, by iterating the process, we find a sequence $\{\mathcal P_j\}_j $ of partitions of $S$ in admissible sets such that:
\begin{itemize}
\item[(i)] $\mathcal P_0=\mathcal P$;
\item[(ii)] each set in $\mathcal P_{j}$ is the union of at most $2^d$ sets of the partition $\mathcal P_{j+1}$ of equal measure;
\item[(iii)] if $\{  R_j\}_j$ is a sequence of sets such that $R_j\in\mathcal P_j$ and
$$R_0\supseteq  R_1\supseteq  ...\supseteq 
R_j\supseteq  R_{j+1}\supseteq ...\,,$$
then the diameter of $R_j$ tends to zero as $j$ tends to $\infty$.
\end{itemize} 

Let $\mathcal R$ denote $\bigcup_j{{\mathcal P} _j}$. It is easy to see that for all $x\in S$ and $j\in \NN$ there exists a unique set $R_j^x\in {\mathcal P}_j$ such that $x\in R_j^x$. The sequence $\{ R_j^x\}_j$ is decreasing. For every $f$ in $ L^1_{\rm{loc}}(\rho)$ and $j\in \NN$ we define the operator $\mathcal E_j$ by
\begin{equation}\label{martingale}
\mathcal E_jf(x)=\frac{1}{\rho(R^x_j)}\int_{R^x_j}f\dir \qquad \forall x\in S\,.
\end{equation}
By a standard argument, one can show that the maximal operator $M^{{\mathcal R}}$, which is defined by
$$M^{{\mathcal R}}f(x)=\sup_{R\in\mathcal R,\,x\in R}\frac{1}{\rho(R)}\int_R|f|\dir \qquad\forall f\in
L_{\rm{loc}}^1(\rho)\,,$$
is of weak type $(1,1)$ and that for every locally integrable function $f$
$$\lim_{j\to +\infty}\mathcal E_jf(x)=f(x)\qquad{\rm{ for~almost~every~}}x\in S\,.$$

By using the maximal operator $M^{\mathcal R}$, the operators $\mathcal E_j$ and a classical stopping argument, Hebisch and Steger \cite{HS} defined a \CZ decomposition of integrable functions on $ax+b\,$-groups.

\bigskip
Now let $S$ be a \DR space. We shall prove that $(S,\rho,d)$ is a \CZ space. The idea is to generalize the \CZ decomposition in \cite{HS} to this context, but this is not an obvious generalization. It is tempting to extend the definition of admissible sets from $ax+b\,$-groups to the space $S$ by strict analogy. This would lead us to define an admissible set $R$ as a product  $\Qka\times (a_0/r,a_0\,r)$, where $\Qka$ is a dyadic set in $N$ and $a_0\in A$, which satisfies conditions that are strictly analogous to (\ref{admb}) and (\ref{adms}). In particular for small sets we would obtain the condition
\begin{align*}
a_0^{1/2}\log r&\leq \rd^k < \nep^{4\beta}\,a_0^{1/2}\log r\qquad{\rm{if}}~1<r<\nep\,,\end{align*}
where $\beta$ is an appropriate constant and $\rd$ is as in Theorem \ref{dyadic}. Unfortunately this does not work.

Indeed, given a small set $R$ defined as above, it is contained in a ball of radius $C\,\log r$ and $\rho(R)\asymp a_0^{Q}\,(\log r)^{2Q+1}$. It is clear that $B\big((\nka,\,a_0),\log r)\subseteq R^*$, where $R^*=\{x\in S:~d(x,R)<\log r\} $, and then $\rho(R^*)\geq \gamma_1\,a_0^{Q}\, (\log r)^n$. Thus,
$$\frac{\rho(R^*)}{\rho(R)}\geq C\, (\log r)^{n-2Q-1}=C\,(\log r)^{-m_{\zg}}\qquad \forall r\in (1,\nep)\,.$$
The quantity above is bounded by a constant $\kappa_0$ (which is required by Definition \ref{CZdef}) if and only if $m_{\zg}=0$, i.e. if $\n$ is abelian. This holds only in the case where $S$ is an $ax+b\,$-group. 

%Since $2Q+1=m_{\vg}+2\,m_{\zg}+1\geq n=m_{\vg}+m_{\zg}+1$ and $1<r<\nep$, this is compatible with the condition $\rho(R^*)\leq \kappa_0\,\rho(R)$ if and only if $2Q+1=n$, i.e. if $\n$ is abelian ($~\zg=(0)~$). This holds only in the case where $S$ is an $ax+b\,$-group. Moreover in $ax+b\,$-groups one can also prove that $R^*\subseteq B_{\RR^d}(x^k_{\alpha},C\,a_0^{1/2}\log r)\times (a_0/r,a_0\,r)$ and then $\rho(R^*)\asymp a_0^Q\,(\log r)^{2Q+1}= a_0^Q\,(\log r)^{2Q+1}$ which is comparable with the measure of $R$.

Then, in \DR spaces we need a new definition of small admissible sets. A {\it{small admissible set}} is a set in the family $\mathcal R^0$, i.e. a geodesic ball of radius less than $1/2$.

On the contrary the definition of big admissible sets is an extension of big admissible sets of Hebisch and Steger.

A {\it{big admissible set}} is a set in $\mathcal D^{\infty}$, i.e. a set $R= \Qka\times (a_0/r,a_0\,r)\,,$ where $\Qka$ is a dyadic set in $N$, $a_0\in A$, $r\geq {\rm{e}}$,
\begin{align}\label{admbis}
a_0^{1/2}r^{\beta}&\leq \rd^k < a_0^{1/2}r^{4\beta}\,,\end{align}
and $\beta$ is a constant $>{\rm  max}\left\{ 3/2,1/4+\log \rd , 1+\log\big(c_3/c_N\big)\right\}\,$, where $c_3$, $\rd$, $c_N$ are the constants which appear in Proposition \ref{palleS} and Theorem \ref{dyadic}. 

We now investigate some geometric properties of big admissible sets, which correspond to property (i) in Definition \ref{CZdef}.
\begin{prop}\label{proprieta}
Let $R$ denote the big admissible set $\Qka\times (a_0/r,a_0\,r)$ and let $c_3$, $c_N$, $C_N$, $\rd$ be as in Proposition \ref{palleS} and Theorem \ref{dyadic} (ii). The following hold:
\begin{itemize}
\item[(i)] there exists a constant $C_{N,\beta}$ such that $R\subseteq B\big((n_{\alpha}^k, a_0),C_{N,\,\be}\,\log r\big)$;
\item[(ii)] $c_N^{2Q}\,|B_N(0_N,1)|\,(a_0^{1/2}r^{\be})^{2Q}\,\log r \leq \rho(R)\leq C_N^{2Q}\,|B_N(0_N,1)|\,(a_0^{1/2}r^{4\be})^{2Q}\,\log r\,;$
\item[(iii)] let $R^*$ be the set $\{(n,a)\in S: d\big((n,a),R\big)<\log r\}$; then 
$$\rho\big(R^*\big)\leq \left(\frac{c_3+C_N}{c_N}\right)^{2Q}\rho\big(R\big)\,.$$
\end{itemize}
\end{prop}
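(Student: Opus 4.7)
The plan is to prove the three assertions by reducing each to explicit computations involving the product structure of $R = \Qka \times (a_0/r, a_0 r)$, the group law on $S$, the metric formula (\ref{distanza}), and the consequences of Proposition \ref{palleS}. The admissibility condition (\ref{admbis}), and in particular the lower bound $\beta > 3/2$ on the exponent, is the mechanism by which error terms will be absorbed into the constants.

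For (i), I would use left invariance of $d$ to reduce to showing $(\nka, a_0)^{-1} R \subseteq B(e, C_{N,\beta} \log r)$. A direct calculation with the group law gives $(\nka, a_0)^{-1}(n, a) = (\delta_{1/a_0}(\nka^{-1} n), a/a_0)$, where $\delta_{1/a_0}$ denotes the homogeneous dilation on $N$; the crucial point is that the two occurrences of the commutator term cancel exactly in the $\zg$-coordinate. Since $\Qka \subseteq B_N(\nka, C_N \rd^k)$ and $\rd^k < a_0^{1/2} r^{4\beta}$ by admissibility, the homogeneous norm of the $N$-component of the translated point is strictly less than $C_N r^{4\beta}$, so $(\nka, a_0)^{-1} R \subseteq B_N(0_N, C_N r^{4\beta}) \times (1/r, r)$; Proposition \ref{palleS}(ii) with $b = C_N$, $B = 4\beta$ (admissible since $4\beta > 1/2$) then yields the claim. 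Part (ii) is a direct integration: $\rho(R) = |\Qka| \log(r^2)$ from $\dir = a^{-1} \di X \di Z \di a$, and Theorem \ref{dyadic}(ii) combined with the admissibility bounds on $\rd^k$ yields the stated two-sided estimate (the factor $2 = \log(r^2)/\log r$ is absorbed into the constants, which are not claimed to be sharp).

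For (iii), the strategy is to embed $R^*$ in a single rectangle and integrate. Given $(n, a) \in R^*$, choose $(n', a') \in R$ with $d((n, a), (n', a')) < \log r$. Left-translating Proposition \ref{palleS}(i) by $(n', a')$ shows $B((n', a'), \log r) \subseteq B_N(n', c_3 \, a'^{1/2} r) \times (a'/r, a' r)$. Using $n' \in B_N(\nka, C_N \rd^k)$, $a' \in (a_0/r, a_0 r)$, together with the chain $c_3 \, a'^{1/2} r \leq c_3 \, a_0^{1/2} r^{3/2} \leq c_3 \, \rd^k$ — this is precisely where $\beta > 3/2$ enters — I conclude that $R^* \subseteq B_N(\nka, (c_3 + C_N) \rd^k) \times (a_0/r^2, a_0 r^2)$. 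Integrating with respect to $\rho$ and comparing with the lower bound on $\rho(R)$ from (ii) yields the ratio $((c_3 + C_N)/c_N)^{2Q}$. The main obstacle is the group-theoretic calculation behind (i): one must verify the exact cancellation of the commutator terms so that the translated $N$-component scales as a clean homogeneous dilation by $1/a_0$, and then check that the lower bound $\beta > \max\{3/2,\, 1/4 + \log \rd,\, 1 + \log(c_3/c_N)\}$ is indeed strong enough for every admissibility inequality to propagate through the computations in (i) and (iii).
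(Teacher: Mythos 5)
Your proposal is correct and follows essentially the same route as the paper: part (i) reduces via left invariance to Proposition \ref{palleS}(ii) with the dilation/translation identity on $N$ (which you verify in more detail than the paper, including the commutator cancellation), part (ii) is the same direct computation, and part (iii) uses the identical chain $c_3\,a'^{1/2}r \leq c_3\,a_0^{1/2}r^{3/2} \leq c_3\,\rd^k$ via $\beta > 3/2$ to embed $R^*$ in $B_N(\nka, (c_3+C_N)\rd^k) \times (a_0/r^2, a_0\,r^2)$ and then integrate.
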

\begin{proof}
To prove (i), note that by Theorem \ref{dyadic} (ii)
\begin{align*}
R&\subseteq B_N\big(\nka,C_N\,\rd^k\big)\times (a_0/r,a_0\,r)\,,
\end{align*}
which, in turn, is contained in $B_N\big(\nka,C_Na_0^{1/2}r^{4\be})\times (a_0/r,a_0\,r)$ by the admissibility condition (\ref{admbis}). By the left invariance of the metric and Proposition \ref{palleS} (ii), 
\begin{align}\label{Rinpalla}
R&\subseteq (\nka,a_0)\cdot [B_N\big(0_N,C_N\,r^{4\be})\times (1/r,r)]\nonumber\\
&\subseteq (\nka, a_0)\cdot \big[B(e,C_{N,\be}\,\log r)\big]\nonumber\\
&=B\big((\nka, a_0), C_{N,\be}\,\log r \big)\,,
\end{align}
as required.

We now prove (ii). Since $\rho(R)=|\Qka|\,\log r$ and by Theorem \ref{dyadic} (ii) 
\begin{align*}
c_N^{2Q}\,|B_N(0_N,1)|\,\rd^{2Qk}\,\log r&\leq \rho(R)\leq C_N^{2Q}\,|B_N(0_N,1)|\,\rd^{2Qk}\,\log r\,.
\end{align*}
Since $a_0^{1/2}r^{\beta}\leq \rd^k < a_0^{1/2}r^{4\beta}$, (ii) follows.

To prove (iii), we observe that
$$R^*=\bigcup_{(n,a)\in R} B\big((n,a),\log r\big)\,.$$
By using the left invariance of the metric and Proposition \ref{palleS} (i), we obtain that\begin{align*}
B\big((n,a),\log r\big)&=(n,a)\cdot \big[B(e,\log r)\big]\\
&\subseteq (n,a)\cdot \big[B_N\big(0_N,c_3\,r\big)\times (1/r,r)\big]\\
&=B_N\big(n,c_3\,a^{1/2}r\big)\times (a/r,a\,r)\,\qquad\forall (n,a)\in R\,.
\end{align*}
Since $(n,a)$ is in $R$ and $R$ is admissible, we see that
\begin{align*}
(a/r,ar)&\subseteq (a_0/r^2,a_0r^2)
\end{align*}
and 
\begin{align*}
B_N\big(n,c_3\,a^{1/2}r\big)&\subseteq B_N\big(n,c_3\,a_0^{1/2}r^{3/2}\big)\\&\subseteq B_N\big(\nka,c_3\,a_0^{1/2}r^{\be}+C_N\,\rd^k\big)\\
&\subseteq B_N\big(\nka,(c_3+C_N)\,\rd^k\big) \,.
\end{align*}
Thus
$$R^*\subseteq B_N\big(\nka,(c_3+C_N)\,\rd^k\big)\times (a_0/r^2,a_0\,r^2)\,.$$
Finally,
\begin{align*}
\rho\big(R^*\big)&\leq \Big(\frac{c_3+C_N}{c_N}\Big)^{2Q}\,|B_N\big(\nka,c_N\,\rd^k\big)|\,\log r\\
&\leq  \Big(\frac{c_3+C_N}{c_N}\Big)^{2Q}\,\rho\big(R\big)\,,
\end{align*}
as required.
\end{proof}
\begin{ossn} {\rm{Let $R=\Qka\times (a_0/r,a_0\,r)$ be a big admissible set. We have defined either the dilated set $\tR=\Qka\times (a_0/{r^{\gamma}},a_0\,{r^{\gamma}})$ or the dilated set $R^*=\{x\in S:~d(x,R)<\log r  \} $. They contain $R$ and their measures are comparable. Note that they are different and $\tR$ is not contained in $R^*$ and $R^*$ is not contained in $\tR$. Nevertheless there exists a set $R^{**}= B_N\big(\nka,(c_3+C_N)\,\rd^k\big)\times (a_0/r^{\gamma},a_0\,r^{\gamma})$ which contains either $\tR$ or $R^*$.}}
\end{ossn}

We remarked that an important property of admissible sets in $ax+b\,$-groups is their ``splitting property''. In  generic \DR spaces only big admissible sets satisfy a ``splitting property'' which is analogue, but not equal, to the $ax+b\,$-case. Indeed, most (but not all) big admissible sets may be split up in a finite number of mutually disjoint smaller subsets which are still admissible. 
\begin{lem}\label{tagli}
Let $R$ denote the big admissible set $\,\Qka\times (a_0/r_0,a_0r_0)$ and let $\rd$, $M$,$\nka$, $c_N$, $C_N$ be as in Theorem \ref{dyadic}. The following hold:\begin{itemize}
\item[(i)] if $\rd^{k-1}\geq a_0^{1/2}r^{\be}$, then there exist $J$ mutually disjoint big admissible sets $R_1,...,R_J$ such that $2\leq J\leq M$, $R=\bigcup_{i=1}^J R_i$ and
$$\big(c_N/{(\rd \,C_N)}\big)^{2Q}\rho(R)\leq\rho(R_i)\leq \rho(R) \qquad i=1,...,J\,;$$
\item[(ii)] if $\rd^{k-1}< a_0^{1/2}r^{\be}$ and $r\geq \nep ^2$, then there exist two disjoint big admissible sets $R_1$  and $R_2$ such that $R=R_1\cup R_2$ and $\rho(R_i)=\rho(R)/2$, for $i=1,2$;
\item[(iii)] if $\rd^{k-1}< a_0^{1/2}r^{\be}$ and $r< \nep ^2$, then there exists a constant $\Rnd$ such that
\begin{align}\label{grandeallimite}
B\big((\nka,a_0),1\big)&\subseteq R\subseteq B\big((\nka,a_0),\Rnd\big)\,.
\end{align}
\end{itemize}
\end{lem}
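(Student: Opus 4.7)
The plan is to treat the three cases independently using the dyadic tree on $N$ from Theorem \ref{dyadic} and the ball inclusions in Proposition \ref{palleS}.

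For (i), the natural move is to split $\Qka$ along Christ's dyadic tree: let $Q^{k-1}_{\be_1},\ldots,Q^{k-1}_{\be_J}$ be the children of $\Qka$ of generation $k-1$, and set $R_i=Q^{k-1}_{\be_i}\times(a_0/r,a_0r)$. Pairwise disjointness and $R=\bigcup_iR_i$ (up to a null set) come from Theorem \ref{dyadic}, and $J\le M$ is property (iv) there; the lower bound $J\ge 2$ is a feature of Christ's particular construction. Admissibility of each $R_i$ requires $a_0^{1/2}r^\be\le\rd^{k-1}<a_0^{1/2}r^{4\be}$: the lower bound is precisely the case hypothesis, and the upper one follows from $\rd^{k-1}<\rd^k<a_0^{1/2}r^{4\be}$ via admissibility of $R$. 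The measure ratio is then a direct consequence of Theorem \ref{dyadic}(ii), which gives $|Q^{k-1}_{\be_i}|\ge c_N^{2Q}\rd^{2Q(k-1)}|B_N(0_N,1)|$ and $|\Qka|\le C_N^{2Q}\rd^{2Qk}|B_N(0_N,1)|$, hence $\rho(R_i)/\rho(R)\ge(c_N/(\rd\,C_N))^{2Q}$.

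For (ii), split the $A$-interval in the multiplicative sense: $R_1=\Qka\times(a_0/r,a_0)$ and $R_2=\Qka\times(a_0,a_0r)$. Rewriting these as $\Qka\times(a_i/r_i,a_ir_i)$ yields $r_1=r_2=\sqrt{r}$, $a_1=a_0/\sqrt{r}$, $a_2=a_0\sqrt{r}$, so both have measure $|\Qka|\log\sqrt{r}=\rho(R)/2$. The admissibility conditions for the $R_i$ read $a_0^{1/2}r^{\be/2\pm 1/4}\le\rd^k<a_0^{1/2}r^{2\be\pm 1/4}$; the binding lower bound $\rd^k\ge a_0^{1/2}r^{\be/2+1/4}$ follows from $\rd^k\ge a_0^{1/2}r^\be$ and $\be\ge 1/2$, while the binding upper bound $\rd^k<a_0^{1/2}r^{2\be-1/4}$, combined with the case hypothesis $\rd^k<\rd\,a_0^{1/2}r^\be$, reduces to $\rd<r^{\be-1/4}$, which is guaranteed by $r\ge\nep^2$ together with the choice $\be>1/4+\log\rd$. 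The assumption $r\ge\nep^2$ is also exactly what secures $r_i=\sqrt{r}\ge\nep$.

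For (iii), the strategy is to sandwich $R$ between two balls of essentially unit radius centered at $(\nka,a_0)$. For the upper inclusion, left-translate by $(\nka,a_0)^{-1}$: the $N$-part of any $(n,a)\in R$ transforms via $\delta_{a_0^{-1}}(\nka^{-1}n)$, whose homogeneous norm equals $a_0^{-1/2}d_N(n,\nka)<a_0^{-1/2}C_N\rd^k$, so the translated $R$ lies in $B_N(0_N,a_0^{-1/2}C_N\rd^k)\times(1/r,r)\subseteq B_N(0_N,C_N\rd\,r^\be)\times(1/r,r)$ by the case hypothesis. Proposition \ref{palleS}(ii), applied with $b=C_N\rd$ and $B=\be>1/2$, embeds this into $B(e,c_{C_N\rd,\,\be}\log r)$, and since $r<\nep^2$ this radius is bounded by a constant $\Rnd$ depending only on the structure of $S$. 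For the lower inclusion, apply Proposition \ref{palleS}(i) with $r=\nep$ to obtain $B(e,1)\subseteq B_N(0_N,c_3\nep)\times(1/\nep,\nep)$ and translate back: the $A$-factor fits in $(a_0/r,a_0r)$ because $r\ge\nep$, while the $N$-factor lands in $B_N(\nka,a_0^{1/2}c_3\nep)$, which is contained in $\Qka\supseteq B_N(\nka,c_N\rd^k)$ as soon as $\rd^k\ge a_0^{1/2}c_3\nep/c_N$; since admissibility gives $\rd^k\ge a_0^{1/2}\nep^\be$, this reduces to $\nep^{\be-1}\ge c_3/c_N$, precisely what $\be>1+\log(c_3/c_N)$ guarantees. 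The main technical burden is keeping careful track of how left translation on $S$ acts on the $N$-coordinate through the dilation $\delta_{a_0^{\pm 1/2}}$, and verifying that the lower bound on $\be$ built into Definition \ref{admissibility} is tight enough to beat $\log\rd$ in (ii) and $\log(c_3/c_N)$ in (iii).
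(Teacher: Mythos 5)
Your proposal is correct and follows essentially the same route as the paper's proof: splitting $\Qka$ into its dyadic children for (i), bisecting the $A$-interval and verifying admissibility of both halves (with the binding lower constraint coming from $R_2$ and the binding upper constraint from $R_1$) for (ii), and sandwiching $R$ between two $S$-balls via left-translation, Proposition \ref{palleS}, and the lower-bound conditions built into $\beta$ for (iii). The only cosmetic difference is in (iii), where you use the case hypothesis $\rd^k<\rd\,a_0^{1/2}r^\beta$ to bound the $N$-radius while the paper uses the coarser admissibility bound $\rd^k<a_0^{1/2}r^{4\beta}$; both yield a fixed $\Rnd$ once $r<\nep^2$ is inserted.
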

\begin{proof}
To prove (i), suppose that $\rd^{k-1}\geq a_0^{1/2}r^{\be}$. We split up $R$ in the following way: let $Q_i^{k-1}$, $\,i=1,...,J$ be the subsets of $\Qka$ as in Theorem \ref{dyadic} ($2\leq J\leq M$). Define
$$R_i=Q_i^{k-1}\times (a_0/r_0,a_0\,r_0)\qquad i=1,...,J\,.$$
Since $\rd^{k-1}\geq a_0^{1/2}r^{\be}$, the sets $R_i$ are admissible. Obviously $R=\bigcup _{i=1}^JR_i$ and $\rho(R_i)\leq \rho(R)$. By Theorem \ref{dyadic} (ii) 
\begin{align*}
\rho(R_i)&=|Q_i^{k-1}|\,\log r\\
&\geq |B_N(0_N,c_N\,\rd ^{k-1})|\,\log r\\
&=\Big|B_N\Big(0_N,\big(c_N/(\rd \,C_N)\big)\,C_N\,\rd ^k\Big)\Big|\,\log r\\
&\geq \big(c_N/(\rd \,C_N)\big)^{2Q} \big|B_N(0_N,C_N\,\rd^k)\big|\,\log r\\
&\geq \big(c_N/(\rd \,C_N)\big)^{2Q}\rho(R)\,,
\end{align*}
as required.

To prove (ii), suppose that $\rd^{k-1}< a_0^{1/2}r^{\be}$ and $r\geq \nep ^2$. Then by the admissibility condition (\ref{adm}), 
\begin{align}\label{admis}
a_0^{1/2}r^{\be}&\leq \rd^k<\rd\, a_0^{1/2}r^{\be}\,.
\end{align}
Define $R_1$ and $R_2$ by
$$R_1=\Qka\times (a_0/r,a_0) \qquad{\rm and}\qquad R_2=\Qka\times (a_0,a_0\,r)\,.$$
Clearly $R_1$ and $R_2$ are ``centred'' at $(\nka,{a_0}/{\sqrt r})$ and $(\nka,a_0\sqrt r)$ respectively. Note that $\sqrt r\geq \textrm e$. To prove that $R_1$ and $R_2$ are admissible we use (\ref{admis}):
\begin{align*}
({a_0}/{\sqrt r})^{1/2}(\sqrt r)^{\be}&\leq a_0^{1/2}r^{\be}\\
&\leq \rd^k\,;\\
({a_0}/{{\sqrt r}})^{1/2}(\sqrt r)^{4\be}&=\rd^{-1}r^{\be-1/4}\,\rd\,a_0^{1/2}r^{\be}\\
&>\rd^{-1}\textrm e^{\be-1/4}\,\rd^k\\
&>\rd^k\,.
\end{align*}
This proves that $R_1$ is admissible. The proof of the admissibility of $R_2$ is similar and is omitted. %For the set $R_2$ we proceed similarly:
%\begin{align*}
%\big(a_0{\sqrt r}\big)^{1/2}(\sqrt r)^{\be}&\leq a_0^{1/2}r^{\be}\\
%&\leq \rd^k\,;\\
%\big(a_0{\sqrt r}\big)^{1/2}(\sqrt r)^{4\be}&=\rd^{-1}r^{\be+1/4}\,\rd \,a_0^{1/2}r^{\be}\\
%&\geq\rd^{-1}\textrm e^{\be+1/4}\,\rd^k\\
%&>\rd^k\,.
%\end{align*}
%This proves that also $R_2$ is admissible. \\
Obviously $R=R_1\cup R_2$ and $\rho(R_i)=\rho(R)/2$, $i=1,2$, as required.

We now consider (iii). Suppose that $\rd^k\leq \rd\,a_0^{1/2} r^{\be}$ and $\textrm e\leq r<\textrm e ^2$. By the admissibility condition (\ref{adm}) and the left invariance of the metric we have that
\begin{align*}
R&\subseteq B_N\big(\nka,C_N\,\rd^k\big)\times (a_0/r,a_0\,r)\\
&\subseteq B_N\big(\nka,C_N\,a_0^{1/2} r^{4\be}\big)\times (a_0/r,a_0r\,)\\
&=(\nka,a_0)\cdot \big[B_N\big(0_N,C_N \,r^{4\be}\big)\times (1/r,r)\big]\,.
\end{align*}
Since $r<\textrm{e}^2$ and by Proposition \ref{palleS} (ii) we conclude that
\begin{align*}
R&\subseteq (\nka,a_0)\cdot \big[B_N\big(0_N,C_N\,\textrm e^{8\be}\big)\times (1/{\textrm e^2},\textrm e^2)\big]\\
&\subseteq B\big((\nka,a_0),\Rnd\big)\,,
\end{align*}
where $\Rnd$ depends only on $\beta$ and $C_N$. Similarly, (\ref{adm}) and the left invariance of the metric imply that
\begin{align*}
R&\supseteq B_N\big(\nka,c_N\,\rd^k\big)\times (a_0/r,a_0\,r)\\
&\supseteq B_N\big(\nka,c_N\,a_0^{1/2} r^{\be}\big)\times (a_0/r,a_0r)\\
&=(\nka,a_0)\cdot \big[B_N\big(0_N,c_N \,r^{\be}\big)\times (1/r,r)\big]\,.
\end{align*}
Since $r\geq\textrm{e}$ and by Proposition \ref{palleS} we conclude that
\begin{align*}
R&\supseteq (\nka,a_0)\cdot \big[B_N\big(0_N,c_N \,\textrm e^{\be}\big)\times (1/{\textrm e},{\textrm e})\big]\\
&\supseteq B\big((\nka,a_0),1\big)\,,
\end{align*}
as required.
\end{proof}
For later developments it is useful to distinguish big admissible sets that satisfy condition (i) or (ii) in Lemma \ref{tagli}, which may be split up in a finite number of smaller big admissible sets, and big admissible sets that satisfy condition (iii) in Lemma \ref{tagli}, which cannot be split up in that way. 
\begin{defi}
A big admissible set $\Qka\times (a_0/r,a_0\,r)$ is said to be {\emph{divisible}} if either $\rd^{k-1}\geq a_0^{1/2}r^{\be}$ or $r\geq\nep^2$.\\
A big admissible set $\Qka\times (a_0/r,a_0\,r)$ is said to be {\emph{nondivisible}} if $\rd^{k-1}< a_0^{1/2}r^{\be}$ and  $r<\nep^2$.
\end{defi}
Next we show that there exists a partition of $S$ which consists of  big admissible sets whose measure is as large as needed. 
\begin{lem}\label{partizionegrande}
For all $\sigma>0$ there exists a partition $\mathcal P_{\sigma}$ of $S$ which consists of big admissible sets whose measure is $>\sigma$.
\end{lem}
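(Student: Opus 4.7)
My plan is to build $\mathcal P_{\sigma}$ by partitioning $A = \RR^+$ into countably many disjoint open intervals $I_j = (a_j/r_j, a_j r_j)$, $j \in \ZZ$, with $r_j \geq \nep$, and then tiling each horizontal strip $N \times I_j$ by dyadic sets $Q_\alpha^{k(j)}$ in $N$ at a suitably chosen level $k(j)$. The admissibility condition (\ref{admbis}) forces $\rd^{k(j)} \in [a_j^{1/2} r_j^{\be},\, a_j^{1/2} r_j^{4\be})$; this is an interval of logarithmic length $3\be \log r_j \geq 3\be > \log \rd$ (using $\be > 1/4 + \log\rd$), so some such integer $k(j)$ always exists. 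Theorem \ref{dyadic}(i) and (iii) then guarantee that the collection $\mathcal P_{\sigma} := \{\,Q_\alpha^{k(j)} \times I_j \,:\, j \in \ZZ,\ \alpha \in I_{k(j)}\,\}$ is a disjoint cover of $S$ up to a $\rho$-null set, and by construction each cell is a big admissible set.

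By Theorem \ref{dyadic}(ii) and (\ref{admbis}), each cell satisfies
$$
\rho\bigl(Q_\alpha^{k(j)} \times I_j\bigr) = |Q_\alpha^{k(j)}| \cdot 2\log r_j \geq 2\,c_N^{2Q}\,|B_N(0_N, 1)|\, a_j^Q\, r_j^{2\be Q}\, \log r_j,
$$
so the problem reduces to choosing $\{a_j, r_j\}$ so that this lower bound exceeds $\sigma$ for every $j$. For the ``upper'' half $j \geq 0$ my concrete choice will be $\rho_j := \nep^{2j}\rho_0$, $I_j := (\rho_j,\rho_{j+1})$, $a_j = \nep\,\rho_j$, $r_j = \nep$; the lower bound then equals a fixed multiple of $\rho_j^Q$ and exceeds $\sigma$ for every $j \geq 0$ provided $\rho_0$ is initially chosen large, explicitly $\rho_0^Q > \sigma/\bigl(2\, c_N^{2Q} |B_N(0_N,1)|\, \nep^{(1+2\be)Q}\bigr)$.

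For $j < 0$ I plan to define $\rho_j$ recursively backwards: given $\rho_{j+1}$, set $\rho_j := \rho_{j+1}/t_{j+1}$ with $t_{j+1} \geq \nep^2$ chosen large enough that the corresponding lower bound
$$
c_N^{2Q}\,|B_N(0_N,1)|\, \rho_{j+1}^Q\, t_{j+1}^{(\be - 1/2)Q}\, \log t_{j+1}
$$
exceeds $\sigma$. This is the one step where the size condition on $\be$ is used essentially: since $\be > 3/2 > 1/2$ the exponent $(\be - 1/2)Q$ is strictly positive, so the displayed quantity tends to $\infty$ with $t_{j+1}$ and a valid $t_{j+1}$ exists no matter how small $\rho_{j+1}$ has become. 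Because $t_{j+1} \geq \nep^2$, iteration yields $\rho_j \leq \rho_0\,\nep^{2j} \to 0$ as $j \to -\infty$, so $\{I_j\}_{j \in \ZZ}$ really does exhaust $A$.

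The only delicate point is this backward step for very negative $j$: the strips must be allowed to widen (large $r_j$) to compensate for the small factor $a_j^Q$ appearing in the measure lower bound. Once one sees that $\be > 1/2$ supplies the needed boost $r_j^{(2\be-1)Q}$, the construction is routine, and the remainder of the proof is just assembling these choices and invoking Theorem \ref{dyadic}.
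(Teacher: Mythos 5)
Your proposal is correct and follows essentially the same strategy as the paper's proof: partition $A=\RR^+$ into intervals, tile each strip $N\times I_j$ by dyadic sets $Q^{k(j)}_\alpha$ at the level forced by admissibility, and exploit the fact that $\beta>1/2$ makes the measure lower bound $a^Q r^{2\beta Q}\log r$ unbounded in $r$ even when the centre $a$ shrinks, so the strips near $0$ can be widened to keep the measure above $\sigma$. The only difference is cosmetic: the paper starts from a central strip around $a=1$ and extends inductively both upward and downward (leaving the downward step as "a similar procedure"), whereas you use a fixed-width geometric lattice for $a\geq\rho_0$ and carry out the downward recursion explicitly; you also spell out the existence of an integer $k(j)$ in the admissible range, which the paper takes as given.
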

\begin{proof}
As a first step, we choose $r_0\geq \textrm{e}$ and $k_0\in\ZZ$ such that $r_0^{2\be Q}\,\log r_0>\frac{\sigma}{c_N^{2Q}\,|B_N(0_N,1)|}$ and $r_0^{\be}\leq \rd^{k_0}<r_0^{4\be}$. The sets $R_{\alpha}^0=Q_{\alpha}^{k_0}\times (1/{r_0},r_0)$, $\alpha\in I_{k_0}$,  are big admissible sets and

\begin{align*}
\rho(R_{\alpha}^0)&=|Q^{k_0}_{\alpha}|\,\log r_0\\
&\geq c_N^{2Q}\,|B(0_N,1)|\,\rd^{2k_0Q}\log r_0\\
&\geq  c_N^{2Q}\,|B(0_N,1)|\,r_0^{2\be Q}\log r_0\\
&>\sigma\qquad \forall\alpha\in I_{k_0}\,.
\end{align*}
Then the sets $R_{\alpha}^0\,,\alpha\in I_{k_0}$, give a partition of the strip $N\times (1/{r_0},r_0)$ which consists of big admissible sets whose measure is $>\sigma$.

Next suppose that a partition of a strip $N\times (a_n/{r_n},a_n\,r_n)$ which consists of admissible sets whose measure is $>\sigma$ has been chosen. Then we choose $r_{n+1}\geq \nep$ and $k_{n+1}\in\ZZ$ such that $(a_{n+1}^{1/2}\,r_{n+1}^{\be})^{2 Q}\,\log r_{n+1}>\frac{\sigma}{ c_N^{2Q}\,|B(0_N,1)|}$ and $a_{n+1}^{1/2}\,r_{n+1}^{\be}\leq \rd^{k_1}<a_{n+1}^{1/2}\,r_{n+1}^{4\be}$, where $a_{n+1}=a_n\,r_n\,r_{n+1}$. The sets $R_{\alpha}^{n+1}=Q_{\alpha}^{k_{n+1}}\times (a_n\,r_n,a_{n+1}\,r_{n+1})$, $\alpha\in I_{k_{n+1}}$,  are big admissible sets whose measure is $>\sigma$. They give a partition of the strip $N\times (a_n\,r_n,a_{n+1}\,r_{n+1})$.

By iterating this process we obtain a partition of $N\times (r_0,\infty)$. By a similar procedure, we define a partition of $N\times (0,1/{r_0})$ which consists of big admissible sets with the required property. 
\end{proof}

We need a geometric lemma concerning  intersection properties between  balls and ``big \nondivisible sets''. 
\begin{lem}\label{BintQ}
Let $B$ be a ball of radius $1/2\leq R\leq\gamma/2$, where $\gamma$ is the constant which appears in Section \ref{small}. Let $\{F_{\ell}\}_{\ell}$ be a family of mutually disjoint \nondivisible big admissible sets. Then:
\begin{itemize}
\item[(i)] if $B\cap F_{\ell}\neq \emptyset$, then $\rho(B)\geq 2^{-n}\,\big(\gamma_1/{\gamma_2}\big)\,\nep^{-Q(2\Rnd+\gamma/2)}\,\rho(F_{\ell})$;
\item[(ii)] the ball $B$ intersects at most $\big(\gamma_2/{\gamma_1}\big)\,\nep^{Q(1+\Rnd+\gamma/2)}$ sets of the family $\{F_{\ell}\}_{\ell}\,$, where $\Rnd$ is the constant which appears in Lemma \ref{tagli}.
%\item[(ii)] the ball $B$ intersects a number of sets of the family $\{F_{\ell}\}\,$ which is $\leq C\nep^{Q(1+\Rnd+\gamma/2)}$, where $\Rnd$ is the constant which appears in Lemma \ref{tagli}.
\end{itemize}
\end{lem}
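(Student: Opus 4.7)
For each non\-divisible big admissible set $F_\ell$, Lemma \ref{tagli}(iii) yields a distinguished point $c_\ell=(n^{k_\ell}_{\alpha_\ell},a_{0,\ell})$ with $B(c_\ell,1)\subseteq F_\ell\subseteq B(c_\ell,\Rnd)$. Denote the centre of $B$ by $c$ and its radius by $R\in[1/2,\gamma/2]$. Two elementary facts will do all the work. First, since the metric is left invariant, $B(x,r)=x\cdot B_r$; and since $\rho$ is the right Haar measure while $\delta$ is the modular function, the transformation law under left translation gives
\[
\rho\big(B(x,r)\big)=\delta(x)^{-1}\,\rho(B_r).
\]
Second, if $y\in B_s$ with $s\geq 0$, then by the proof of Proposition \ref{palleS}(i) its $A$-component $a$ lies in $(\nep^{-s},\nep^{s})$, so $\delta(y)=a^{-Q}\in(\nep^{-Qs},\nep^{Qs})$.

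\textbf{Part (i).} Pick any $y\in B\cap F_\ell$. The triangle inequality gives $d(c,c_\ell)<R+\Rnd$, hence $c_\ell^{-1}c\in B_{R+\Rnd}$, and the second fact yields
\[
\delta(c)^{-1}=\delta(c_\ell)^{-1}\,\delta(c_\ell^{-1}c)\geq \delta(c_\ell)^{-1}\,\nep^{-Q(\gamma/2+\Rnd)}.
\]
Since $R\geq 1/2$, combining (\ref{misurapalle1}) and (\ref{misurapalle2}) gives $\rho(B_R)\geq \gamma_1\,2^{-n}$, whence
\[
\rho(B)=\delta(c)^{-1}\rho(B_R)\geq \delta(c_\ell)^{-1}\,\gamma_1\,2^{-n}\,\nep^{-Q(\gamma/2+\Rnd)}.
\]
On the other hand, $F_\ell\subseteq B(c_\ell,\Rnd)$ together with (\ref{misurapalle2}) yields
\[
\rho(F_\ell)\leq \delta(c_\ell)^{-1}\rho(B_{\Rnd})\leq \delta(c_\ell)^{-1}\,\gamma_2\,\nep^{Q\Rnd}.
\]
The factors $\delta(c_\ell)^{-1}$ cancel when we take the ratio, which gives the bound claimed in (i).

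\textbf{Part (ii).} Set $\mathcal I=\{\ell:F_\ell\cap B\neq\emptyset\}$. Since the $F_\ell$ are pairwise disjoint and each contains $B(c_\ell,1)$, the balls $\{B(c_\ell,1)\}_{\ell\in\mathcal I}$ are pairwise disjoint as well. For every $\ell\in\mathcal I$ the inequality $d(c,c_\ell)<R+\Rnd$ forces $B(c_\ell,1)\subseteq B(c,\gamma/2+\Rnd+1)$, so
\[
\sum_{\ell\in\mathcal I}\rho\big(B(c_\ell,1)\big)\leq \rho\big(B(c,\gamma/2+\Rnd+1)\big).
\]
Now apply the transformation law on each side: the right-hand side is bounded above by $\delta(c)^{-1}\gamma_2\,\nep^{Q(\gamma/2+\Rnd+1)}$ via (\ref{misurapalle2}), while each term on the left is bounded below by $\delta(c_\ell)^{-1}\gamma_1\geq \delta(c)^{-1}\gamma_1\,\nep^{-Q(\gamma/2+\Rnd)}$, where the last step reuses the $\delta$-comparison from Part (i). The common factor $\delta(c)^{-1}$ cancels and one reads off an upper bound for the cardinality $|\mathcal I|$ of the form claimed in (ii).

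\textbf{Where the real content lies.} There is no deep obstacle: the entire argument is a careful bookkeeping of the right Haar measure under left translation, which is what produces the exponential factors $\nep^{\pm Q\cdot }$ and is the only way in which the nonunimodularity of $S$ enters. Once the identity $\rho\big(B(x,r)\big)=\delta(x)^{-1}\rho(B_r)$ is in hand, both (i) and (ii) reduce to triangle-inequality volume comparisons in the spirit of a standard Vitali covering argument; the bound $B(c_\ell,1)\subseteq F_\ell$ furnished by Lemma \ref{tagli}(iii) is precisely what gives enough room from below to count the sets in (ii).
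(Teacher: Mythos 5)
Your proof of part (i) is correct and follows essentially the same route as the paper: bound $\rho(B)$ below by $\delta(c)^{-1}\gamma_1 2^{-n}$, bound $\rho(F_\ell)$ above by $\delta(c_\ell)^{-1}\gamma_2\nep^{Q\Rnd}$, and compare $\delta(c)^{-1}$ with $\delta(c_\ell)^{-1}$ via the triangle inequality $d(c,c_\ell)<\gamma/2+\Rnd$ and the fact that the $a$-component of a point in $B_s$ lies in $(\nep^{-s},\nep^s)$.

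For part (ii) you take a genuinely different, and slightly lossier, route. You run the counting argument entirely with the \emph{right} Haar measure $\rho$, which forces you to compare $\delta(c_\ell)^{-1}$ with $\delta(c)^{-1}$ once more on the left-hand side of the inequality
$\sum_{\ell\in\mathcal I}\rho\big(B(c_\ell,1)\big)\leq\rho\big(B(c,\gamma/2+\Rnd+1)\big)$.
This yields $\sharp\mathcal I\leq(\gamma_2/\gamma_1)\,\nep^{Q(\gamma+2\Rnd+1)}$, which is finite but strictly larger than the stated $(\gamma_2/\gamma_1)\,\nep^{Q(\gamma/2+\Rnd+1)}$; your phrase ``of the form claimed'' glosses over this. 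The paper avoids the extra $\delta$-comparison by switching to the \emph{left} Haar measure $\lambda$ for the counting step: since $\lambda(B(x,r))=\lambda(B_r)$ for every $x$, the inclusion $\bigcup_{\ell\in\mathcal I}B(c_\ell,1)\subseteq B\big(c,\gamma/2+\Rnd+1\big)$ immediately gives $\sharp\mathcal I\,\lambda(B(e,1))\leq\lambda\big(B(e,\gamma/2+\Rnd+1)\big)$ with no modular factor at all, and hence the sharper constant. Both approaches establish a finite bound, which is all that matters downstream in Theorem \ref{CZd}, but to match the stated constant one should pass to $\lambda$ as the paper does.
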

\begin{proof}
Let $x_0$ be the centre of the ball $B$. Note that $B(x_0,1/2)\subseteq B\subseteq B(x_0,\gamma/2)$. By (\ref{grandeallimite}) there exist points $y_{\ell}$ such that $B\big(y_{\ell},1\big)\subseteq F_{\ell}\subseteq B\big(y_{\ell},\Rnd\big)$.\\
To prove (i), note that 
$$\rho(B)\geq \gamma_1\,\delta^{-1}(x_0)\,R^n\geq \gamma_1\,\delta^{-1}(x_0)\,(1/{2})^n\,,$$
while
$$\rho(F_{\ell})\leq\delta^{-1}(y_{\ell})\,\rho\big(B(e,\Rnd)\big)\leq \gamma_2\,\delta^{-1}(y_{\ell})\,\nep^{Q\,\Rnd}\,.$$
If $B\cap F_{\ell}\neq\emptyset$, then $d(x_0,y_{\ell})<\gamma/2+\Rnd$, and so $\delta(y_{\ell}x_0^{-1})\geq \nep^{-Q(\Rnd+\gamma/2)}$. Therefore
\begin{align*}
\rho(B)/\rho(F_{\ell})&\geq 2^{-n}\,\big(\gamma_1/{\gamma_2}\big)\,\nep^{-Q(2\Rnd+\gamma/2)} \,,
\end{align*} 
as required in (i).\\
To prove (ii), note that if $\ell\neq k$, then $B\big(y_{\ell},1\big)\cap B\big(y_{k},1\big)=\emptyset$, since $F_{\ell}\cap F_k=\emptyset$. Now let $\mathcal I=\{\ell:~B\cap F_{\ell}\neq \emptyset\}$. Obviously, if $\ell$ is in $\mathcal I$, then $B(y_{\ell},1)\subseteq B\big(x_0, \gamma/2+1+\Rnd\big)$, so that
$$\bigcup_{\ell\in \mathcal I} B(y_{\ell},1)\subseteq B\big(x_0, \gamma/2+1+\Rnd\big)\,.$$
Now consider the left invariant measure of the sets above:
$$\sharp\mathcal I\,\cdot\,\lambda\big(B(e,1)\big)\leq \lambda\big(B(e,\gamma /2+1+\Rnd)\big)\,.$$
Then $B$ intersects at most 
\begin{align*}
\sharp\mathcal I&\leq{\lambda\big(B\big(e,\gamma /2+1+\Rnd\big)\big)}/{\lambda\big(B(e,1)\big)}\\
&\leq \big(\gamma_2/{\gamma_1})\,\nep^{Q(1+\Rnd+\gamma/2)}
\end{align*} 
sets of the family $\{F_{\ell}\}_{\ell}$, as required. 
\end{proof}
We now prove that $(S,\rho,d)$ is a \CZ space. The ingredients are similar to those in the proof of \cite[Lemma 5.1 ]{HS}: the admissible sets, the maximal function, a stopping argument. Our proof is much more complicated because big and small admissible sets have a different structure and the ``splitting property'' is more complicated.
\begin{teo}\label{CZd}
Let $S$ be a \DR space. Then $(S,\rho,d)$ is a \CZ space. 
\end{teo}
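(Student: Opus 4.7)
The plan is to verify the two defining properties of Definition \ref{CZdef} with the family $\mathcal R = \mathcal R^0 \cup \mathcal D^\infty$ of all admissible sets. Condition (i) is already in hand: for a small admissible ball $B = B(x_0, r_0) \in \mathcal R^0$ with $r_0 < 1/2$, the inclusion $B \subseteq B$ is trivial and the estimate $\rho(B^*) \leq \kappa_0\,\rho(B)$ follows at once from the local doubling (\ref{misurapalle1}); for a big admissible set $R \in \mathcal D^\infty$ the two required properties are exactly parts (i) and (iii) of Proposition \ref{proprieta}. A sufficiently large $\kappa_0$ works uniformly for both families.

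For condition (ii) I would adapt the Hebisch--Steger stopping-time decomposition, splicing an extra layer to deal with the obstruction created by \nondivisible big admissible sets. Given $f \in L^1(\rho)$ and $\alpha > 0$, apply Lemma \ref{partizionegrande} with $\sigma = \|f\|_{L^1(\rho)}/\alpha$ to obtain a partition $\mathcal P_0$ of $S$ into big admissible sets of measure exceeding $\sigma$; on every $R \in \mathcal P_0$ the average of $|f|$ is strictly less than $\alpha$. For each $R \in \mathcal P_0$ I iterate the splitting Lemma \ref{tagli}: a divisible child whose average of $|f|$ exceeds $\alpha$ is selected as a bad set $R_i$, with $\alpha < \rho(R_i)^{-1} \int_{R_i} |f| \dir \leq C\alpha$ thanks to the comparable-measure estimates in Lemma \ref{tagli}(i)--(ii); a divisible child with average still below $\alpha$ is subdivided further. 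On the set where subdivision continues indefinitely, the martingale convergence argument driven by the weak type $(1,1)$ of $M^{\mathcal D^\infty}$ from Theorem \ref{maxopweak} forces $|f| \leq \alpha$ almost everywhere.

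The remaining case is when the chain of subdivisions terminates at a \nondivisible big admissible set $F_\ell$. By Lemma \ref{tagli}(iii) each such $F_\ell$ is squeezed between the balls $B((n_{\alpha}^k, a_0), 1)$ and $B((n_{\alpha}^k, a_0), \Rnd)$, hence lives in a doubling regime. I would then perform a \emph{secondary} Calder\'on--Zygmund decomposition of $f\chi_{F_\ell}$ at height $C\alpha$ using the small admissible family $\mathcal R^0$: the level set $\{M^{\mathcal R^0}(f\chi_{F_\ell}) > C\alpha\} \cap F_\ell$ is covered, via the nicely ordered structure of Lemma \ref{famnice}(i), by the dilates of a disjoint collection of small balls $B_{\ell, j} \in \mathcal R^0$ on each of which the average of $|f|$ lies between $\alpha$ and $C\alpha$. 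These small balls are appended to the list of bad sets, while on their complement the maximal inequality together with Lebesgue differentiation with respect to small balls yields $|f| \leq C\alpha$ almost everywhere.

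Finally one assembles the decomposition: let $g$ equal the average of $f$ over each selected bad set and equal $f$ elsewhere, and set $b_i = (f - g)\chi_{R_i}$ on each selected set $R_i$. The bounds $|g| \leq C\alpha$ a.e., $\int b_i \dir = 0$, and $\|b_i\|_{L^1(\rho)} \leq C\alpha\,\rho(R_i)$ are immediate from the construction. The global summability $\sum_i \rho(R_i) \leq C\|f\|_{L^1(\rho)}/\alpha$ comes, for the big selected sets, from the standard disjointness in the stopping-time selection, and for the small balls from the weak type $(1,1)$ of $M^{\mathcal R^0}$ applied inside each $F_\ell$ combined with the summability $\sum_\ell \rho(F_\ell) \leq C\|f\|_{L^1(\rho)}/\alpha$ of the \nondivisible terminal sets; Lemma \ref{BintQ} supplies the geometric control needed so that small balls produced in different $F_\ell$ do not accumulate uncontrollably. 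The main obstacle is precisely this two-scale gluing: the martingale descent collapses at \nondivisible sets since no further admissible subdivision exists, and the small-ball layer must be inserted in such a way as to preserve simultaneously the good-function bound and the global $\ell^1$ count of the bad sets.
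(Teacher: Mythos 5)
Your overall strategy matches the paper's: partition $S$ into big admissible sets of measure $>\|f\|_{L^1(\rho)}/\alpha$ via Lemma~\ref{partizionegrande}, run the stopping-time argument over divisible sets using Lemma~\ref{tagli}, and then treat the part of $f$ living on the \nondivisible terminal sets $\{F_\ell\}$ by a Vitali covering with small admissible balls $\mathcal R^0$. However, the small-ball layer in your account has two real gaps.

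First, you propose a \emph{separate} Calder\'on--Zygmund decomposition of $f\chi_{F_\ell}$ for each \nondivisible $F_\ell$ and then take the union of the resulting small balls as bad sets. A small ball of radius $<1/2$ selected for $F_\ell$ can and will overlap both $F_{\ell'}$ for adjacent $\ell'$ and the small balls selected for $F_{\ell'}$; you then define $g$ to equal ``the average of $f$ over each selected bad set,'' which is ambiguous on the overlap. Worse, the Vitali selection produces a \emph{disjoint} collection of balls that does not cover the level set $\{M^{\mathcal R^0}h>\alpha\}$; only its dilates do, and those overlap. So either your bad sets are disjoint and fail to cover (leaving no uniform bound on $g$ off the bad sets), or they cover and are not disjoint (leaving $g$ ill-defined). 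The paper resolves exactly this tension by working globally with $h=f\chi_{(\cup_iE_i)^c}$ and a single Vitali family, and then taking as \CZ sets not the balls but the auxiliary sets $G_j=\tB_{x_j}\cap\big(\bigcup_{k<j}G_k\big)^c\cap\big(\bigcup_{\ell>j}B_{x_\ell}\big)^c\cap O^0_\alpha$, which are simultaneously pairwise disjoint, cover $O^0_\alpha$, satisfy $B_{x_j}\subseteq G_j\subseteq\tB_{x_j}$, and obey condition~(i) of Definition~\ref{CZdef}. The bounded average on $G_j$ is then obtained by comparing $G_j$ with $\tB_{x_j}$ and splitting into the cases where $\tB_{x_j}\in\mathcal R^0$ (where maximality of $B_{x_j}$ is used) or not (where Lemma~\ref{BintQ}, applicable precisely because $\tB_{x_j}$ has radius in $[1/2,\gamma/2]$, is used against the family $\{F_\ell\}$). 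This disjointification device is the key step missing from your argument, and it is what allows a well-defined, bounded $g_h$.

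Second, the assertion $\sum_\ell\rho(F_\ell)\leq C\|f\|_{L^1(\rho)}/\alpha$ is false: the $F_\ell$ exhaust $(\cup_i E_i)^c$, so $\sum_\ell\rho(F_\ell)$ is infinite whenever $\rho(S)=\infty$. The correct bookkeeping (which the paper uses implicitly) is that the $F_\ell$ are disjoint, so $\sum_\ell\|f\chi_{F_\ell}\|_{L^1(\rho)}\leq\|f\|_{L^1(\rho)}$, and the summability of the bad set measures at the small scale follows from the weak type $(1,1)$ bound for $M^{\mathcal R^0}$ applied once to $h$, yielding $\rho(O^0_\alpha)\leq C\|h\|_{L^1(\rho)}/\alpha\leq C\|f\|_{L^1(\rho)}/\alpha$. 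Your argument should be rephrased in this form; as written, it rests on a false premise.
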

\begin{proof}
Let $f$ be in $L^1(\rho)$ and $\alpha > 0$. Our purpose is to define a Cal\-der\'on--Zyg\-mund  decomposition of $f$ at height $\alpha$ .

Let ${\mathcal P}$ be a partition of $S$ which consists of big admissible sets whose measure is $>{\|f\|_{L^1(\rho)}}/{\alpha}$  (it does exist by Lemma \ref{partizionegrande}). For each $R$ in $ {\mathcal P}$ we have that $\frac{1}{\rho(R)}\int_R|f|\dir < \alpha$.

Now we split up each divisible set $R$ in ${\mathcal P}$ into big admissible disjoint subsets $R_i$, $i=1,...,J$, such that $2\leq J\leq M$, as in Lemma \ref{tagli}. If $\frac{1}{\rho(R_i)}\int_{R_i}|f|\dir \geq\alpha\,,$ then we stop, otherwise, if $R_i$ is divisible, then we split up $R_i$ and stop when we find a subset $E$ such that $\frac{1}{\rho(E)}\int_{E}|f|\dir\geq\alpha$.

By iterating this process, we obtain the family $\{E_i\}_i$ of the stopping sets. The sets $E_i$ have the following properties:
\begin{itemize}
\item[(i)] $E_i$ are mutually disjoint big admissible sets;
\item[(ii)] $\frac{1}{\rho(E_i)}\int_{E_i}|f|\dir \geq\alpha$;
\item[(iii)] for each set $E_i$, there exists a set $E_i'$ such that $\frac{1}{\rho(E_i')}\int_{E_i'}|f|\dir <\alpha$ and $\rho(E_i')\leq\max\big\{2,\,\big(\rd \,C_N/{c_N}\big)^{2Q}\big\}\,\rho(E_i)$. Then
\begin{align*}
\frac{1}{\rho(E_i)}\int_{E_i}|f|\dir&\leq \max\big\{2,\,\big(\rd \,C_N/{c_N}\big)^{2Q}\big\}\,\frac{1}{\rho(E_i')}\int_{E_i'}|f|\dir \\
&<\max\big\{2,\,\big(\rd \,C_N/{c_N}\big)^{2Q}\big\}\,\alpha\,;
\end{align*}
\item[(iv)]the complementary of the $\bigcup_i E_i$ is the union of mutually disjoint \nondivisible big admissible sets $\{F_{\ell}\}_{\ell}$ such that $\frac{1}{\rho(F_{\ell})}\int_{F_{\ell}}|f|\dir < \alpha$.
\end{itemize}
By Proposition \ref{proprieta} for each set $E_i$ there exists a poistive number $r_i$ such that $E_i$ is contained in a ball of radius at most $C_{N,\,\beta}\,\log r_i$. Moreover the sets $E_i^*=\{x\in S:~d(x,E_i)<\log r_i\}$ have measures $\rho\big(E_i^*\big)\leq \Big(\frac{c_3+C_N}{c_N}\Big)^{2Q}\,\rho(E_i)$. 

Let $g_f$, $b_f^i$ and $h$ be defined by
\begin{align*}
g_f&=\sum_i\Big(\frac{1}{\rho(E_i)}\int_{E_i}f\dir \Big)\,\chi_{E_i}\,,\\
b_f^i&=\Big(f-\frac{1}{\rho(E_i)}\int_{E_i}f\dir \Big)\,\chi_{E_i}\,,\\
h&=f\,\chi_{(\cup_iE_i)^c}\,.
\end{align*}
By (iii), $|g_f|\leq \max\big\{2,\,\big(\rd \,C_N/{c_N}\big)^{2Q}\big\}\,\alpha$. Each function $b_f^i$ is supported in $E_i$ and its integral vanishes. By (iii) the $L^1$-norm of $b_f^i$ is
\begin{align*}
\|b_f^i\|_{L^1(\rho)}&\leq 2\,\int_{E_i}|f|\dir\\
&\leq 2\,\max\big\{2,\,\big(\rd \,C_N/{c_N}\big)^{2Q}\big\}\,\alpha\,\rho(E_i)\,,
\end{align*}
and
\begin{align*}
\sum_i\rho(E_i)&\leq \frac{1}{\alpha}\,\sum_i\int_{E_i}|f|\dir\\
&\leq\frac{\|f\|_{L^1(\rho)}}{\alpha}\,.
\end{align*}
It remains to define a suitable decomposition of the function $h$. 

Let $O^0_{\alpha}=\{x\in S:~M^{{\mathcal R}^0}h(x)>\alpha  \}$. For each point $x\in O^0_{\alpha}$ we choose a ball $B_x$ in $\mathcal R^0$ such that $\frac{1}{\rho(B_x)}\int_{B_x}|h|\dir >\alpha$ and 
\begin{align}\label{maximality}
\rho(B_x)&>(1/2)\sup\Big\{\rho(B):~B\in\mathcal R^0,\,x\in B,\,\frac{1}{\rho(B)}\int_B|h|\dir>\alpha\Big\}\,.
\end{align}
Now we select a disjoint subfamily of $\{B_x\}_x$.

We choose $B_{x_1}$ such that $\rho(B_{x_1})>(1/2)\,\sup\{\rho(B_x):~x\in O^0_{\alpha}\}\,.$ Next, suppose that $B_{x_1},...,B_{x_n}$ have been chosen. Then $B_{x_{n+1}}$ is chosen so that it is disjoint from $B_{x_1},...,B_{x_n}$ and $\rho(B_{x_{n+1}})>(1/2)\,\sup\{\rho(B_x):~x\in O^0_{\alpha},\, B_x\cap B_{x_i}=\emptyset,\,i=1,...,n\}\,.$\\
We have that $\bigcup_jB_{x_j}\subseteq O^0_{\alpha}\subseteq \bigcup_j\tilde{B}_{x_j}$, where $\tilde{B}_{x_j}=B(c_{x_j},\gamma \,r_{x_j})$ and $\gamma$ is the constant which appear in Section \ref{small}.\\
Indeed, each set $B_{x_j}$ is contained in $O^0_{\alpha}$ by construction. Moreover, for each point $x\in O^0_{\alpha}$ either $B_x=B_{x_{j_0}}\subset \tilde{B}_{x_{j_0}}$ for some index $j_0$ or $B_x\neq B_{x_j}$ for all $j$. In this case there exists an index $j_0$ such that $B_x\cap B_{x_{j_0}}\neq\emptyset$ and $\rho(B_x)\leq 2\,\rho (B_{x_{j_0}})$. Since $\mathcal R^0$ is nicely ordered, $x\in B_x\subseteq B_{x_{j_0}}^*$.

Now set $G_j=\tilde{B}_{x_j}\cap\big(\bigcup_{k<j}G_k\big)^c\cap\big(\bigcup_{\ell>j}B_{x_{\ell}}\big)^c\cap O^0_{\alpha}$. It is easy to check that the sets $G_j$ are mutually disjoint and that $B_{x_j}\subseteq G_j\subseteq \tilde{B}_{x_j}$, so that their measure are comparable. Moreover $\bigcup_jG_j=O^0_{\alpha}\,$.

Indeed, on the one hand, $ \bigcup_jG_j\subseteq O^0_{\alpha}\,$ by construction; on the other hand, if $x$ is in $O^0_{\alpha}$, then there exists an index $j_0$ such that $x$ is in $\tilde{B}_{x_{j_0}}$. Now either $x$ is in $B_{x_{\ell}}$ for some index $\ell>j_0$ (and then $x$ is in $G_{\ell}$) or $x$ is in $G_{k}$ for some index $k<j_0$ or $x$ is in $G_{j_0}$. 

We claim that
\begin{align}\label{claim}
\frac{1}{\rho(G_j)}\int_{G_j}|h|\dir&\leq \tilde{C}\,2^n\,\big(\gamma_2/{\gamma_1}\big)^2\,\nep^{Q(3\,\Rnd+\gamma+1)} \,\alpha\,,
\end{align}
where $\tilde{C}$ is the constant which appears in Section \ref{small}. To see this fact, we first observe that
\begin{align}\label{FjBj*}
\frac{1}{\rho(G_j)}\int_{G_j}|h|\dir&\leq \tC\,\frac{1}{\rho\big(\tB_{x_j}\big)}\int_{\tB_{x_j}}|h|\dir\,.
\end{align}
To estimate this average we shall distinguish two cases. 

First suppose that $\tB_{x_j}$ is in ${\mathcal R}^0$. Since $\rho(B_{x_j})\leq 2\,\rho(\tB_{x_j})$, by (\ref{maximality})
\begin{align}\label{caso1}
\frac{1}{\rho\big(\tB_{x_j}\big)}\int_{\tB_{x_j}}|h|\dir&\leq \alpha\,.
\end{align}
Next suppose that $\tB_{x_j}$ is not in ${\mathcal R}^0$. Then $1/2\leq\gamma\,r_{x_j}\leq\gamma/2$. Hence we may apply Lemma \ref{BintQ} to the ball $B_{x_j}^*$ and the family of \nondivisible big admissible sets $\{F_{\ell}\}_{\ell}$. Let $\mathcal I=\{\ell:~\tB_{x_j}\cap F_{\ell}\neq\emptyset\}$. Since $h$ is supported in $\bigcup_{\ell} F_{\ell}\,$, by Lemma \ref{BintQ} we obtain that
\begin{align}\label{caso2}
\frac{1}{\rho(\tB_{x_j})}\int_{\tB_{x_j}}|h|\dir&=\sum_{\ell\in\mathcal I}\frac{1}{\rho(\tB_{x_j})}\int_{\tB_{x_j}\cap F_{\ell}}|h|\dir\nonumber\\
&\leq \sum_{\ell\in\mathcal I}2^n\,\big(\gamma_2/{\gamma_1}\big)\,\nep^{Q(2\Rnd+\gamma/2)}\,\frac{1}{\rho(F_{\ell})}\int_{F_{\ell}}|h|\dir\nonumber\\
&\leq \,{\sharp\,\mathcal I}\,\cdot\,2^n\,\big(\gamma_2/{\gamma_1}\big)\,\nep^{Q(2\Rnd+\gamma/2)}\,\alpha\nonumber\\
&\leq 2^n\,\big(\gamma_2/{\gamma_1}\big)^2\,\nep^{Q(3\Rnd+\gamma+1)}\,\alpha\,.
\end{align}
By (\ref{FjBj*}), (\ref{caso1}) and (\ref{caso2}) the claim follows.

Each set $G_j$ is contained in the ball $\tB_{x_j}$ and $G_j^*=\{x\in S:~d(x,G_j)<r_{x_j}\}\subseteq B\big(c_{x_j},(\gamma +1) r_{x_j}\big)$; then there exists a constant $C^{*}$ such that  $\rho(G_j^*)\leq C^{*}\,\rho(G_j)$.

We now define the decomposition of $h$:
\begin{align}\label{secondadec}
g_h&=h\,\chi_{(O^0_{\alpha})^c}+\sum_j\Big(\frac{1}{\rho(G_j)}\int_{G_j}h\dir \Big)\,\chi_{G_j}\,,\nonumber\\
b_h^j&=\Big(h-\frac{1}{\rho(G_j)}\int_{G_j}h\dir \Big)\,\chi_{G_j}\,.
\end{align} 
By (\ref{claim}), $|g_h|\leq  \tC\,2^n\,\big(\gamma_2/{\gamma_1}\big)^2\,\nep^{Q(3\,\Rnd+\gamma+1)}\,\alpha$ on each set $G_j$ and $|g_h|=|h|\leq\alpha$ on $(O^0_{\alpha})^c$. Each function $b_h^j$ is supported in $G_j$ and its integral vanishes. The $L^1$-norm of the functions $b_h^j$ is\begin{align*}
\|b_h^j\|_{L^1(\rho)}&\leq 2\,\int_{G_j}|h|\dir\\
&\leq 2\,  \tilde{C}\,2^n\,\big(\gamma_2/{\gamma_1}\big)^2\,\nep^{Q(3\,\Rnd+\gamma+1)} \,\alpha       \,\rho(G_j)\,.
\end{align*}
and
\begin{align*}
\sum_j\rho(G_j)&\leq \rho(O^0_{\alpha})\\
&\leq \frac{1}{\alpha}\,\opnorm M^{\mathcal R^0}\opnorm_{L^1(\rho);\,L^{1,\infty}(\rho)}\,\|h\|_{L^1(\rho)}\\
&\leq \frac{1}{\alpha}\,\opnorm M^{\mathcal R^0}\opnorm_{L^1(\rho);\,L^{1,\infty}(\rho)}\,\|f\|_{L^1(\rho)}\,,
\end{align*}
since $M^{\mathcal R^0}$ is bounded from $L^1(\rho)$ to $L^{1,\infty}(\rho)$.

Then $f=g_f+g_h+\sum_ib_f^i+\sum_jb_h^j$ is a \CZ decomposition of the function $f$ at height $\alpha$, where the \CZ sets are the sets $\{E_i\}_i$ and $\{G_j \}_j$ . The \CZ constant of the space is 
\begin{align*}
\kappa_0=\max \Big\{&2,\,\big(\rd\,C_N/c_N\big)^{2Q},\,C_{N,\beta},\, \gamma, \,\Big(\frac{c_3+C_N}{c_N}\Big)^{2Q},\\
&\tC\,2^n\,\big(\gamma_2/{\gamma_1}\big)^2\,\nep^{Q(3\,\Rnd+\gamma+1)},\,C^{*},\,\opnorm M^{\mathcal R^0}\opnorm_{L^1(\rho);\,L^{1,\infty}(\rho)}\Big\}\,.
\end{align*}
\end{proof}

\section{The Hardy spaces and $BMO$ spaces on $ax+b\,$-groups}\label{Hardyax+b}
In Section \ref{CZdecgen} we introduced the Hardy spaces and the $BMO$ spaces in a generic \CZ space. 

Let $S=\RR^d\times \RR^+$ be an $ax+b\,$-group and let $\mathcal R$ denote the family of \CZ sets, which are admissible sets satisfying conditions (\ref{admb}) and (\ref{adms}). The family $\mathcal R$ satisfies the additional condition (C). 

Indeed, for all $k\geq 2$ we choose a number $r_k\geq \nep$ such that $r_k^{\be}\leq 2 ^k<r_k^{4\be}$. We define 
\begin{equation}\label{Rkm}
R^k_{m}=Q^k_m\times (1/{r_k},r_k)\qquad\forall m\in \ZZ^d\,,
\end{equation}
where $Q^k_m$ are the dyadic cubes in $\RR^d$. Set $\mathcal R'=\bigcup_{k\geq 2, m\in\ZZ^d}R^k_m$. It satisfies the following properties:
\begin{itemize}
\item[(i)] if $R^k_m\cap R^{\ell}_n\neq\emptyset$ and $k>\ell$, then $R^{\ell}_n\subseteq R^k_m$ ;
\item[(ii)] if $R=Q_{n}^{\ell}\times (a_0/r,a_0\,r)$ is an admissible set, then there exist  $k>\ell$ and $m\in\ZZ^d$ such that $R\subseteq R^k_m$. Indeed, we may choose $k\geq\ell$ such that $(a_0/r,a_0\,r)\subseteq (1/{r_k},r_k)$. In this case, there exists $m\in\ZZ^d$ such that $Q_{n}^{\ell}\subseteq Q^k_m$. 
\end{itemize}
Thus the condition (C) is satisfied.
 
So far we have defined spaces $H^{1,q}$ for all $1<q< \infty$. In this context we can also define a space $H^{1,\,\infty}$.
\begin{defi}
A $(1,\infty )$-atom on $S$ is a function $a$ in $L^1(\rho)$ such that
\begin{itemize}
\item [(i)] $a$ is supported in an admissible set $R$;
\item [(ii)] $\|a\|_{\infty}\leq  \rho(R)^{-1}$ if $q=\infty$;
\item [(iii)]$\int_S a\dir =0$\,.
\end{itemize}
\end{defi}
\begin{ossn}{\rm{
\begin{itemize}
\item[(1)] If $a$ is a $(1,\infty)$-atom supported in $R$, then
$$\|a\|_p=\Big(\int_R|a|^p\dir\Big)^{1/p}\leq \|a\|_{\infty}\,\rho(R)^{1/p}\leq \rho(R)^{-\frac{1}{p'}}\,.$$
\item[(2)] If $a$ is a $(1,\infty)$-atom, then $a$ is a $(1,q)$-atom for all $q\in (1,\infty)$.
\end{itemize}
}}
\end{ossn}
\begin{defi}
$H^{1,\infty }$ is the space of all functions $f$ in $ L^1(\rho)$ such that $f=\sum_j \lambda_j\, a_j$, where $a_j$ are $(1,\infty)$-atoms and $\lambda _j$ are complex numbers such that $\sum _j |\lambda _j|<\infty$. We denote by $\|f\|_{H^{1,\infty}}$ the infimum of $\sum_j|\lambda_j|$ over all decompositions $f= \sum_j\lambda_j\,a_j$, where $a_j$ are $(1,\infty)$-atoms. 
\end{defi}
Next we prove that all the spaces $H^{1,q}$, $1<q\leq \infty$, are equivalent. We have already remarked that each $(1,\infty)$-atom is a $(1,q)$-atom, for $q\in (1,\infty)$, hence it is in $H^{1,q}$. The following proposition shows that the converse is true: the proof is similar to the proof of \cite[Theorem A]{CW2} given by Coifman and Weiss in the case of spaces of homogeneous type.
\begin{prop}\label{atomo}
Let $q\in(1,\infty)$ and $a$ be a $(1,q)$-atom. Then $a$ is in $ H^{1,\infty}$ and there exists a constant $C_q$, which depends only on $q$, such that
$$\|a\|_{H^{1,\infty}}\leq C_q\,.$$
\end{prop}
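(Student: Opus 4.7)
The plan is to adapt the classical Coifman--Weiss argument, used in \cite{CW2} for spaces of homogeneous type, to the Calder\'on--Zygmund structure of the $ax+b$-group. Given a $(1,q)$-atom $a$ supported on an admissible set $R$, the goal is to exhibit a decomposition $a = \sum_{k,j} \mu_{k,j}\,\alpha_{k,j}$ into $(1,\infty)$-atoms with $\sum_{k,j}|\mu_{k,j}| \le C_q$. I would fix an integer $k_0$ with $2^{k_0}\sim \rho(R)^{-1}$, and apply the \CZ decomposition of Theorem \ref{CZd} (specialised to the $ax+b$-group as in \cite{HS}) to $a$ at each height $2^k$ for $k\ge k_0$. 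This produces, for every such $k$, a good--bad splitting $a = g_k + \sum_j b_{j,k}$ with $|g_k|\le \kappa_0\,2^k$, each $b_{j,k}$ supported in an admissible set $R_{j,k}$, $\int b_{j,k}\dir=0$, $\|b_{j,k}\|_1 \le \kappa_0\,2^k\,\rho(R_{j,k})$, and $\sum_j \rho(R_{j,k})\le \kappa_0\,\|a\|_1/2^k$.

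Next, I would exploit a telescoping identity $a = g_{k_0} + \sum_{k\ge k_0}(g_{k+1}-g_k)$, where convergence holds in $L^1(\rho)$ since $\|a-g_k\|_1\to 0$ as $k\to\infty$ (the bad part lives on a set whose measure tends to zero). The term $g_{k_0}$ is bounded by $\kappa_0 \,2^{k_0}\sim \rho(R)^{-1}$ and, modulo an average correction, is essentially supported in $R$, so it gives rise to finitely many $(1,\infty)$-atoms contributing $O(1)$ to the total coefficient norm. Each difference $g_{k+1}-g_k$ is bounded by $C\,2^k$ and is supported on the union of the bad sets at levels $k$ and $k+1$; inside each admissible set $R_{j,k}$ it can be written, after subtracting suitable averages, as a finite sum of mean-zero pieces bounded by $C\,2^k$ and supported in admissible subsets, each of which, up to normalisation, is a $(1,\infty)$-atom with coefficient comparable to $2^k\,\rho(R_{j,k})$.

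Summing, the total coefficient norm is controlled by $\sum_{k\ge k_0} 2^k \sum_j \rho(R_{j,k})$. To get convergence in $k$, one must upgrade from $\|a\|_1$ to $\|a\|_q$ information: using the weak $(q,q)$ boundedness of the maximal operator associated to the family of admissible sets (by Marcinkiewicz interpolation between weak $(1,1)$ from Theorem \ref{well11} and the trivial $L^\infty$ bound), one obtains $\sum_j \rho(R_{j,k}) \le C\,\|a\|_q^q/2^{kq}\le C\,\rho(R)^{1-q}/2^{kq}$. Hence $\sum_{k\ge k_0}2^k\cdot C\,\rho(R)^{1-q}/2^{kq}$ is a geometric series in $k$ (since $q>1$) whose sum is $\le C\,\rho(R)^{1-q}\,2^{k_0(1-q)} \le C_q$ by the choice of $k_0$.

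The main obstacle will be handling the mean-zero condition consistently across successive \CZ decompositions: unlike classical dyadic cubes, the admissible sets produced by Theorem \ref{CZd} at two different heights need not be nested, so the average corrections inside each $R_{j,k}$ require delicate bookkeeping to ensure that the atomic pieces of $g_{k+1}-g_k$ truly have vanishing integral and are supported in a single admissible set. A natural way around this is to perform a single stopping-time construction producing the decompositions at all levels $k\ge k_0$ simultaneously from the same nested family of admissible sets (in the $ax+b$ case, using the dyadic rectangles of Hebisch and Steger, which do enjoy a nesting property), so that the bad sets $R_{j,k+1}$ refine the $R_{j,k}$ and the restriction of $g_{k+1}-g_k$ to each $R_{j,k}$ is automatically mean-zero.
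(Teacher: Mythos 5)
Your approach is correct and takes a genuinely different route from the paper's. You telescope the Calder\'on--Zygmund good functions across dyadic heights, writing $a = g_{k_0} + \sum_{k\ge k_0}(g_{k+1}-g_k)$ and controlling $\sum_{k,j}2^k\rho(R_{j,k})$ via the $L^q$-boundedness of the dyadic maximal operator (obtained by Marcinkiewicz interpolation from the weak $(1,1)$ bound). The paper instead follows Coifman--Weiss \cite{CW2} nearly verbatim: starting from $b=\rho(R)\,a$, it runs an iterated stopping-time argument at geometrically growing thresholds $M^{n/q}\,2^n\,\alpha^n$, at each stage peeling off one $(1,\infty)$-atom per residual piece $h_{j_n}$ and pushing the bad part to the next stage (Lemma~\ref{nuovadec}), and sums the resulting geometric series of coefficients. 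The two constructions use the same underlying structure --- the nested family of admissible rectangles $\mathcal R' = \bigcup_j\mathcal P_j$, the conditional expectations $\mathcal E_j$ and their a.e.\ convergence, and the $L^q$ decay of the bad-set measures --- but arrange them differently. Your telescoping route is more modular (it cleanly reuses the maximal inequality), at the cost of having to align the CZ decompositions at successive heights so that $g_{k+1}-g_k$ is mean-zero on each $R_{j,k}$; you correctly identify this as the sensitive point and propose the right fix, namely running a single stopping-time construction against one fixed nested filtration so that the level-$(k+1)$ bad sets refine the level-$k$ ones and the martingale property supplies the cancellation for free. The paper's iterative residual approach avoids that cross-level bookkeeping entirely but must carry six inductive estimates through Lemma~\ref{nuovadec} instead; both lead to a coefficient sum that converges geometrically precisely because $q>1$.
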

\begin{proof}
Let $a$ be a $(1,q)$-atom supported in the admissible set $R$. We define $b:=\rho(R)\,a$. Note that $b\in L^q(\rho)$ and $\|b\|_q\leq \rho(R)^{1/q}$. 

Let $\alpha$ be a positive number such that $\alpha> \max\{ 1, M^{-1/q}\,2^{\frac{1}{q-1}}\} $, where $M$ is the number of admissible sets in which we may split up a given admissible set ($M=2^d$). It suffices to prove the following lemma.
\begin{lem}\label{nuovadec}
For all $n\in \NN$ there exist functions $a_{j_{\ell}}$, $h_{j_n}$ and admissible sets $R_{j_{\ell}}$, $j_{\ell}\in \NN ^{\ell}$, $\ell=0,...,n$ such that
$$b=\sum_{\ell=0}^{n-1}M^{\frac{\ell +1}{q}}\,2^{\ell}\,\alpha^{\ell +1}\sum _{j_{\ell}}\rho(R^*_{j_{\ell}})\,\,a_{j_{\ell}}+\sum _{j_n}h_{j_n}\,,$$
where the following properties are satisfied:
\begin{itemize}
\item[(i)] $a_{\jl}$ is a $(1,\infty)$-atom supported in $R_{\jl}$;
\item[(ii)] $h_{j_n}$ is supported in $R_{j_n}$ and $\int_S  h_{j_n}\dir=0$;
\item[(iii)] $\Big(\frac{1}{\rho(R_{j_n})}\int_{R_{j_n}}|h_{j_n}|^q\dir\Big)^{1/q}\leq M^{n/q}\,2^n\,\alpha^n$;
\item[(iv)]$\sum_{j_n}\|h_{j_n}\|^q_q\leq 2^{qn}\,\|b\|^q_q$;
\item[(v)] $|h_{j_n}(x)|\leq |b(x)|+M^{n/q}\,2^n\,\alpha ^n\,\chi _{R_{j_n}}(x)\qquad \forall x\in S$;
\item[(vi)] $\sum_{j_n}\rho(R^*_{j_n})\leq \kappa_0\, M^{-n+1}\,\alpha ^{-nq}\,\|b\|^q_q$,
\end{itemize}
where $\kappa_0$ is the \CZ constant of the space $S$.
\end{lem}
Before proving the Lemma \ref{nuovadec}, we conclude the proof of the Proposition \ref{atomo}. Let $H_n=\sum_{j_n}h_{j_n}$. We prove that $H_n\in L^1(\rho)$ and that its $L^1(\rho)$-norm tends to zero when $n$ tends to $\infty$. Indeed, by H\"older's inequality
\begin{align*}
\|H_n\|_1&\leq\sum_{j_n}\|h_{j_n}\|_1\\
&\leq\sum_{j_n}\rho(R_{j_n})^{1/{q'}}\,\|h_{j_n}\|_q\,,
\end{align*}
where $q'$ is the conjugate exponent of $q$. Now by properties (iii) and (vi) in Lemma \ref{nuovadec} we have that
\begin{align*}
\|H_n\|_1&\leq\sum_{j_n}\rho(R_{j_n})^{1/{q'}}\rho(R_{j_n})^{1/q}\,  M^{n/q}\,2^n\,\alpha ^n\\
&=\sum_{j_n}\rho(R_{j_n}) M^{n/q}\,2^n\,\alpha ^n\\
&\leq \kappa_0 \,M^{-n+1}\,\alpha ^{-nq}\,\|b\|^q_q\,M^{n/q}\,2^n\,\alpha^n\\
&\leq \kappa_0\,M \big(2\,M^{\frac{1-q}{q}}\,\alpha^{1-q}\big)^n \rho(R)\,,
\end{align*}
which tends to zero when $n$ tends to $\infty$, since $\alpha >M^{-1/q}\,2^{\frac{1}{q-1}}$.
Thus, 
$$b=\sum_{\ell=0}^{\infty}M^{\frac{\ell +1}{q}}\,2^{\ell}\,\alpha^{\ell +1}\sum _{j_{\ell}}\rho(R^*_{j_{\ell}})\,a_{j_{\ell}}\,,$$
where the series converges in $L^1(\rho)$. It follows that
$$a=\rho(R)^{-1}\,b=\rho(R)^{-1}\,\sum_{\ell=0}^{\infty}M^{\frac{\ell +1}{q}}\,2^{\ell}\,\alpha^{\ell +1}\sum _{j_{\ell}}\rho(R^*_{j_{\ell}})\,a_{j_{\ell}}\,.$$
By Lemma \ref{nuovadec} (vi) we have that
\begin{align*}
\rho(R)^{-1}\,\sum_{\ell=0}^{\infty}M^{\frac{\ell +1}{q}}\,2^{\ell}\,\alpha^{\ell +1}\sum _{j_{\ell}}\rho(R^*_{j_{\ell}})&\leq \rho(R)^{-1}\,\kappa_0\,\sum_{\ell=0}^{\infty}M^{\frac{\ell +1}{q}}\,2^{\ell}\,\alpha^{\ell +1}\,\rho(R)^{-1} \,M^{-\ell+1}\,\alpha ^{-\ell q}\,\|b\|^q_q\\
&\leq \kappa_0\,M^{\frac{q+1}{q}}\,\alpha \sum_{\ell=0}^{\infty}\big(2\,M^{\frac{1-q}{q}}\,\alpha^{1-q}\big)^{\ell}\\
&=C_q\,,
\end{align*}
because $\alpha >M^{-1/q}\,2^{\frac{1}{q-1}}$, where $C_q$ depends only on $\kappa_0,\, M,\, q,\, \alpha$.

Since $a_{j_{\ell}}$ are $(1,\infty)$-atoms, this shows that $a\in H^{1,\infty}$ and that
$$\|a\|_{H^{1,\infty}}\leq C_q \,,$$
as required.
\end{proof}
Thus, to conclude the proof of Proposition \ref{atomo} it remains to prove Lemma \ref{nuovadec}. 

\begin{proof}
Let $\mathcal P$ be a partition of $S$ in admissible sets which contains the set $R$. 
We prove the Lemma  by induction on $n$. 

{\bf{Step~$n=1$.}} We choose $R_0=R$. Since $b=\rho(R)\,a$ and $a$ is a $(1,q)$-atom we have that
\begin{align*}
\frac{1}{\rho(R)}\int_R |b|^q \dir &\leq \frac{1}{\rho(R)} \,\rho(R)^q\int_R |a|^q \dir\leq 1\leq \alpha^q\,.
\end{align*}
Now we split up the set $R$ in at most $M$ admissible subsets. If the average of $|b|^q$ on a subset is greater than $\alpha^q$, then we stop; otherwise we divide again the subset until we find sets on which the average of $|b|^q$ is greater than $\alpha ^q$. We denote by $\mathcal{C}$ the collection of stopping sets. Now we distinguish two cases.

{\it{Case~$\mathcal C \neq \emptyset$.}} Let $\mathcal C=\{R_i:~i\in\NN\}$. The average of $|b|^q$ on each set $R_i$ is comparable with $\alpha^q$. Indeed, by construction we have that
$$\frac{1}{\rho(R_i)}\int_{R_i}|b|^q\dir>\alpha^q\,.$$
On the other hand, there exists a set $R_i'$ which contains $R_i$ such that $\rho(R_i)\geq\frac{\rho(R_i')}{M}$ and $\frac{1}{\rho(R_i')}\int_{R_i'}|b|^q\dir\leq \alpha^q$. It follows that
$$\frac{1}{\rho(R_i)}\int_{R_i}|b|^q\dir\leq \frac{M}{\rho(R_i')}\int_{R_i'}|b|^q\dir\leq M \,\alpha^q\,.$$
Now we define 
\begin{align*}
g(x)&=\begin{cases}
b(x) & {\rm{if}}~ x\notin\bigcup _i{ R_i}\\
\frac{1}{\rho(R_i)}\int_{R_i}b\dir & {\rm{if}}~ x\in R_i
\end{cases}\\
h_i(x)&=\begin{cases}
0 & {\rm{if}}~ x\notin R_i\\
b(x)-\frac{1}{\rho(R_i)}\int_{R_i}b\dir & {\rm{if}}~x\in R_i\qquad\forall i\in \NN\,.
\end{cases}
\end{align*}
Obviously 
\begin{align}\label{passouno}
b=g+\sum_i h_i&=M^{1/q}\,\alpha\,\rho(R_0^*)\big(M^{-1/q}\,\alpha^{-1}\,\rho(R_0^*)^{-1}g\big)+\sum_i h_i\nonumber\\
&=M^{1/q}\,\alpha\,\rho(R_0^*)\,a_0+\sum_i h_i\,,
\end{align}
where $a_0=M^{-1/q}\,\alpha^{-1}\,\rho(R_0^*)^{-1}\,g$. 

First we prove that $a_0$ is a $(1,\infty)$-atom. Obviously $a_0$ is supported in $R$ and its average is equal to zero. Now we find pointwise estimates of $g$. For all $x\in R_i$
$$|g(x)|\leq \frac{1}{\rho(R_i)}\int_{R_i}|b|\dir\leq \frac{1}{\rho(R_i)}\rho(R_i)^{1/{q'}}\Big(\int_{R_i}|b|^q\dir\Big)^{1/q}\leq M^{1/q}\,\alpha\,.$$
In order to estimate $g$ on the complementary of $\bigcup_i{R_i}$, we may consider the operators $\mathcal E_j$ associated to the partition $\mathcal P$ as in (\ref{martingale}). For each $x\notin\bigcup_i {R_i}$ we have that
$$\mathcal E_j(|b|^q)(x)\leq \alpha ^q\qquad \forall j\in \NN\,;$$
so we obtain that 
\begin{align*}
|g(x)|&=|b(x)|\\
&=\lim_{j\to +\infty}|\mathcal E_jb(x)|\\
&\leq \lim_{j\to +\infty} \mathcal E_j|b|(x)\\
&\leq \lim_{j\to +\infty}(\mathcal E_j(|b|^q)(x))^{1/q}\\
&\leq \alpha\qquad {\rm{ for~almost~every~}}x\notin \bigcup_i{R_i}\, .
\end{align*}
This allows us to conclude that
$$\|a_0\|_{\infty}\leq M^{-1/q}\,\alpha^{-1}\,\rho(R_0^*)^{-1}\,M^{1/q}\,\alpha\leq \rho(R_0)^{-1}\,.$$
Thus $a_0$ is a $(1,\infty)$-atom. 

Now we verify that properties (ii),...,(vi) are satisfied by functions $h_i$. Each function $h_i$ is supported in $R_i$ and has average zero. Moreover
\begin{align}\label{hi}
\|h_i\|_q&\leq \|b\|_{L^q(R_i)}+\rho(R_i)^{1/q}\,\frac{1}{\rho(R_i)}\int_{R_i}|b|\dir\nonumber\\
&\leq \|b\|_{L^q(R_i)}+\rho(R_i)^{1/q}\,\rho(R_i)^{-1}\,\rho(R_i)^{1/{q'}}\,\|b\|_{L^q(R_i)}\nonumber\\
&=2\,\|b\|_{L^q(R_i)}\,.
\end{align}
By summing estimates (\ref{hi}) over $i\in\NN$ we obtain property (iv):
$$\sum_i\|h_i\|^q_q\leq 2^q\,\sum_i\|b\|^q_{L^q(R_i)}\leq 2^q\,\|b\|^q_q\,,$$
since the sets $R_i$ are mutually disjoint. From estimate (\ref{hi}) follows also property (iii):
$$\frac{1}{\rho(R_i)}\int_{R_i}|h_i|^q\dir\leq 2^q\,\frac{1}{\rho(R_i)}\int_{R_i}|b|^q\dir\leq M\,2^q\,\alpha^q\,.$$
The pointwise estimate (v) of $h_i$ is an easy consequence of H\"older's inequality, since for all $x\in R_i$
\begin{align*}
|h_i(x)|&\leq |b(x)|+\frac{1}{\rho(R_i)}\int_{R_i}|b|\dir\\
&\leq|b(x)|+\rho(R_i)^{-1}\,\rho(R_i)^{1/{q'}}\Big(\int_{R_i}|b|\dir\Big)^{1/q}\\
&\leq |b(x)|+M^{1/q}\,\alpha\\
&\leq |b(x)|+M^{1/q}\,2\,\alpha\,\chi _{R_{i}}(x)\,.
\end{align*}
It remains to prove property (vi):
\begin{align*}
\sum_i\rho(R^*_i)&\leq \kappa_0\sum_i\rho(R_i)\\
&\leq \kappa_0\,\alpha^{-q}\sum_i\int_{R_i}|b|^q\dir\\
&\leq \kappa_0\,\alpha^{-q}\,\|b\|^q_q\,.
\end{align*}
This concludes the proof of the first step in the case $\mathcal{C}\neq \emptyset$.

{\it{Case}} ~$\mathcal C=\emptyset$. In this case it suffices to define $R_0=R$, $g=b$ and $h_i=0$ for all $i\in\NN$. It follows that
$$b=M^{1/q}\,\alpha\,\rho(R_0^*)\,a_0\,,$$
where $a_0=M^{-1/q}\,\alpha^{-1}\,\rho(R_0^*)^{-1}\,b$. It is easy to verify that $|b(x)|\leq \alpha$ almost everywhere on $S$. Then
$$\|a_0\|_{\infty}\leq\rho(R_0^*)^{-1}\,M^{-1/q}\,\alpha^{-1}\,\alpha\leq \rho(R_0)^{-1}\,,$$and $a_0$ is a $(1,\infty)$-atom.

{\bf{Inductive step}}. Suppose that
$$ b=\sum_{\ell=0}^{n-1}M^{\frac{\ell +1}{q}}\,2^{\ell}\,\alpha^{\ell +1}\,\sum _{j_{\ell}}\rho(R^*_{j_{\ell}})a_{j_{\ell}}+\sum _{j_n}h_{j_n}\,,$$
where functions $a_{\jl},~h_{\jl}$ and sets $R_{\jl}$ satisfy properties (i),\ldots,(vi). We shall prove that a similar decomposition of $b$ holds with $n+1$ in place of $n$. To do this we decompose each function $h_{j_n}$. Let us choose a multiindex $j_n$. By property (iv) the average of $|h_{j_n}|^q$ on $R_{j_n}$ is less or equal to $M^n\,2^{nq}\,\alpha^{(n+1)q}\,.$ Now we split up the set $R_{j_n}$ in at most $M$ admissible subsets and we stop if we find a set on which the average of $|h_{j_n}|^q$ is greater than $M^n\,2^{nq}\,\alpha^{(n+1)q}\,.$ Let ${\mathcal{C}}_{j_n}$ be the collection of stopping sets. We distinguish two cases.

{\it{Case}}~${\mathcal{C}}_{j_n} \neq \emptyset$. Let ${\mathcal{C}}_{j_n}=\{R_{j_n,i}:~i\in \NN\}$. The average of $|h_{j_n}|^q$ on each set $R_{j_n,i}$ is comparable with $M^n\,2^{nq}\,\alpha^{(n+1)q}$. Indeed, by construction we have that
$$\frac{1}{\rho(R_{j_n,i})}\int_{R_{j_n,i}}|h_{j_n}|^q\dir>M^n\,2^{nq}\,\alpha^{(n+1)q}\,.$$
On the other hand, there exists a set $R_{j_n,i}'$ which contains $R_{j_n,i}$ such that $\rho(R_{j_n,i})\geq\frac{\rho(R_{j_n,i}')}{M}$ and $\frac{1}{\rho(R_{j_n,i}')}\int_{R_{j_n,i}'}|h_{j_n,i}|^q\dir\leq M^n\, 2^{nq}\, \alpha^{(n+1)q}$. It follows that
$$\frac{1}{\rho(R_{j_n,i})}\int_{R_{j_n,i}}|h_{j_n}|^q\dir\leq \frac{M}{\rho(R_{j_n,i}')}\int_{R_{j_n,i}'}|h_{j_n}|^q\dir\leq M^{n+1}\, 2^{nq} \,\alpha^{(n+1)q}\,.$$
Now we define 
\begin{align*}
g_{j_n}(x)&=\begin{cases}
h_{j_n}(x) & {\rm{if}}~x\notin \bigcup_i R_{j_n,i}\\
\frac{1}{\rho(R_{j_n,i})}\int_{R_{j_n,i}}h_{j_n}\dir & {\rm{if}}~ x\in R_{j_n,i}
\end{cases}\\
h_{j_n,i}(x)&=\begin{cases}
0 & {\rm{if}}~x\notin R_{j_n,i}\\
h_{j_n}(x)-\frac{1}{\rho(R_{j_n,i})}\int_{R_{j_n,i}}h_{j_n}\dir & {\rm{if}}~x\in R_{j_n,i}\,.
\end{cases}
\end{align*}
Obviously 
\begin{align}\label{passoinduttivo}
b=\sum_{\ell=0}^{n-1}M^{\frac{\ell +1}{q}}\,2^{\ell}\,\alpha^{\ell +1}\,\sum _{j_{\ell}}\rho(R^*_{j_{\ell}})a_{j_{\ell}}
&+M^{\frac{n +1}{q}}\,2^{n}\,\alpha^{n +1}\,\sum _{j_{n}}\rho(R^*_{j_{n}})a_{j_n}+\sum _{j_n,i}h_{j_n,i}\,,
\end{align}
where $a_{j_n}=M^{-\frac{n+1}{q}}\,2^{-n}\,\alpha^{-n-1}\,\rho(R_{j_n}^*)^{-1}\,g_{j_n}$. \\
First we prove that $a_{j_n}$ are $(1,\infty)$-atoms. Obviously $a_{j_n}$ is supported in $R_{j_n,i}$ and has average equal to zero. We now find pointwise estimates of $g_{j_n}$. For all $x\in R_{j_n,i}$
$$|g_{j_n}(x)|\leq \frac{1}{\rho(R_{j_n,i})}\int_{R_{j_n,i}}|h_{j_n}|\dir\leq M^{\frac{n+1}{q}}\,2^{n}\,\alpha^{n+1}\,.$$
In order to estimate $g_{j_n}$ on the complementary of $\bigcup_i{R_{j_n,i}}$, we observe that for each $x\notin\bigcup_i {R_{j_n,i}}$ we have that
$$\mathcal E_j( |h_{j_n}|^q )(x)\leq M^n\,2^{nq}\,\alpha ^{(n+1)q}\qquad\forall j\in \NN\,,$$
and so we obtain that 
\begin{align*}
|g_{j_n}(x)|&=|h_{j_n}(x)|\\
&=\lim_{j\to +\infty}|\mathcal E_jh_{j_n}(x)|\\
&\leq \lim_{j\to +\infty}(\mathcal E_j|h_{j_n}|^q(x))^{1/q}\\
&\leq M^{\frac{n+1}{q}}\,2^n\,\alpha^{n+1}\qquad{\rm{ for~almost~every~}}x\notin \bigcup_i R_{j_n,i}\, .
\end{align*}
This allows us to conclude that
$$\|a_{j_n}\|_{\infty}\leq M^{-\frac{n+1}{q}}\,2^{-n}\,\alpha^{-(n+1)}\,\rho(R_{j_n}^*)^{-1}\,M^{\frac{n+1}{q}}\,2^n\,\alpha^{n+1}\leq \rho(R_{j_n})^{-1}\,,$$
and then that $a_{j_n}$ are $(1,\infty)$-atoms. 

Now we verify that properties (ii),...,(vi) are satisfied by functions $h_{j_n,i}$. Each function $h_{j_n,i}$ is supported in $R_{j_n,i}$ and has average zero. Moreover
\begin{align}\label{hjn}
\|h_{j_n,i}\|_q&\leq \|h_{j_n}\|_{L^q(R_{j_n,i})}+\rho(R_{j_n,i})^{1/q}\,\frac{1}{\rho(R_{j_n,i})}\int_{R_{j_n,i}}|h_{j_n}|\dir\nonumber\\
&\leq \|h_{j_n}\|_{L^q(R_{j_n,i})}+\rho(R_{j_n,i})^{1/q}\,\rho(R_{j_n,i})^{-1}\,\rho(R_{j_n,i})^{1/{q'}}\,\|h_{j_n}\|_{L^q(R_{j_n,i})}\nonumber\\
&=2\,\|h_{j_n}\|_{L^q(R_{j_n,i})}\,.
\end{align}
Thus, by summing estimates (\ref{hjn}) over $(j_n,i)\in\NN^n \times \NN$, and using inductif hypothesis, we obtain property (iv):
$$\sum_{(j_n,i)}\|h_{j_n,i}\|^q_q\leq 2^q\,\sum_{j_n}\|h_{j_n}\|^q_{L^q(R_{j_n,i})}\leq 2^{q(n+1)}\,\|b\|^q_q\,,$$
since the sets $R_{j_n,i}$ are mutually disjoint. From estimate (\ref{hjn}) follows also property (iii):
\begin{align*}
\frac{1}{\rho(R_{j_n,i})}\int_{R_{j_n,i}}|h_{j_n,i}|^q\dir&\leq 2^q\,\frac{1}{\rho(R_{j_n,i})}\int_{R_{j_n,i}}|h_{j_n}|^q\dir\\
&\leq M^{n+1}\,2^{(n+1)q}\,\alpha^{(n+1)q}\,.
\end{align*}
The pointwise estimate (v) of $h_{j_n,i}$ is an easy consequence of the H\"older's inequality and inductif hypothesis, since for all $x\in R_{j_n,i}$
\begin{align*}
|h_{j_n,i}(x)|&\leq |h_{j_n}(x)|+\frac{1}{\rho(R_{j_n,i})}\int_{R_{j_n,i}}|h_{j_n}|\dir\\
&\leq|h_{j_n}(x)|+\rho(R_{j_n,i})^{-1}\,\rho(R_{j_n,i})^{1/{q'}}\Big(\int_{R_{j_n,i}}|h_{j_n}|\dir\Big)^{1/q}\\
&\leq |h_{j_n}(x)|+M^{\frac{n+1}{q}}\,2^{n}\,\alpha^{n+1}\\
&\leq |b(x)|+M^{{n}/{q}}\,2^{n}\,\alpha^{n}+M^{\frac{n+1}{q}}\,2^{n}\,\alpha^{n+1}\\&\leq |b(x)|+M^{\frac{n+1}{q}}\,2^{n+1}\,\alpha^{n+1}\,\chi _{R_{j_n,i}}(x)\,.
\end{align*}
It remains to prove property (vi):
\begin{align*}
\sum_{j_n,i}\rho(R^*_{j_n,i})&\leq\kappa_0\,\sum_{j_n,i}\rho(R_{j_n,i})\\
&\leq \kappa_0\,M^{-n}\,2^{-nq}\,\alpha^{-(n+1)q}\,\sum_{j_n,i}\int_{R_{j_n,i}} |h_{j_n}|^q\dir\\
&\leq \kappa_0\,M^{-n}\,2^{-nq}\,\alpha^{-(n+1)q}\,\sum_{j_n} \|h_{j_n}\|^q_q\\
&\leq \kappa_0\,M^{-n}\,\alpha^{-(n+1)q}\,\|b\|^q_q\,.
\end{align*}

{\it{Case ${\mathcal{C}}_{j_n}= \emptyset$.}} In this case it suffices to define $g_{j_n}=h_{j_n}$ and $h_{j_n,i}=0$ for all $i\in \NN$. 

This concludes the proof of the fact that (\ref{passoinduttivo}) gives a decomposition of $b$ which satisfies the required properties.

Then the lemma is proved by induction.
\end{proof}
{F}rom Proposition \ref{atomo} the following corollary follows.
\begin{coro}\label{coincidono}
For all $q\in (1,\infty)$, we have that $H^{1,q}=H^{1,\infty}$ and norms $\|\cdot\|_{H^{1,q}}$ and $\|\cdot\|_{H^{1,\infty}}$ are equivalent.
\end{coro}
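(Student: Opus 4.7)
The plan is to deduce the corollary from Proposition \ref{atomo} by a direct atomic expansion argument, exhibiting two continuous embeddings in opposite directions.

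First I would dispose of the easy inclusion $H^{1,\infty}\hookrightarrow H^{1,q}$. By the remark following the definition of $(1,\infty)$-atom, every $(1,\infty)$-atom is a $(1,q)$-atom for all $q\in(1,\infty)$. Hence if $f\in H^{1,\infty}$ and $f=\sum_j\lambda_j\,a_j$ is any atomic decomposition with $(1,\infty)$-atoms $a_j$, the same decomposition realises $f$ as an element of $H^{1,q}$; taking the infimum gives $\|f\|_{H^{1,q}}\leq \|f\|_{H^{1,\infty}}$.

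Next I would establish the nontrivial inclusion $H^{1,q}\hookrightarrow H^{1,\infty}$ using Proposition \ref{atomo}. Fix $f\in H^{1,q}$ and $\varepsilon>0$. Choose an atomic decomposition $f=\sum_j\lambda_j\,a_j$ in $(1,q)$-atoms with $\sum_j|\lambda_j|<\|f\|_{H^{1,q}}+\varepsilon$. By Proposition \ref{atomo} each $(1,q)$-atom $a_j$ admits a decomposition $a_j=\sum_k\mu_{j,k}\,b_{j,k}$, where the $b_{j,k}$ are $(1,\infty)$-atoms and $\sum_k|\mu_{j,k}|\leq C_q$ (with $C_q$ the constant of Proposition \ref{atomo}, depending only on $q$). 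Substituting gives
\[
f=\sum_{j,k}\lambda_j\,\mu_{j,k}\,b_{j,k},
\]
where the double series converges in $L^1(\rho)$ because $\sum_{j,k}|\lambda_j\mu_{j,k}|\leq C_q\sum_j|\lambda_j|<\infty$ and each $(1,\infty)$-atom has $L^1$-norm at most $1$. This expresses $f$ as an $H^{1,\infty}$-sum and yields
\[
\|f\|_{H^{1,\infty}}\leq \sum_{j,k}|\lambda_j\mu_{j,k}|\leq C_q\,\bigl(\|f\|_{H^{1,q}}+\varepsilon\bigr).
\]
Letting $\varepsilon\to 0$ gives $\|f\|_{H^{1,\infty}}\leq C_q\,\|f\|_{H^{1,q}}$, which together with the first step proves $H^{1,q}=H^{1,\infty}$ with equivalent norms.

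There is essentially no obstacle left: the entire content of the corollary is already encoded in Proposition \ref{atomo}, whose proof (built on the stopping-time decomposition of Lemma \ref{nuovadec}) does all the hard work. The only point one should verify carefully is that rearranging the double sum $\sum_j\lambda_j\sum_k\mu_{j,k}b_{j,k}$ into a single $H^{1,\infty}$-series is legitimate, which is immediate from absolute convergence in $L^1(\rho)$ together with the uniform bound $\|b_{j,k}\|_1\leq 1$.
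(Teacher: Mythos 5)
Your argument is correct and supplies precisely the verification the paper leaves implicit (the paper simply states that the corollary ``follows'' from Proposition \ref{atomo}). The two-embedding structure, the use of absolute $L^1$-convergence to justify rearranging the double series, and the role of $\|b_{j,k}\|_1\leq1$ are all the right ingredients, and the resulting norm inequalities give the stated equivalence. The only nitpick is cosmetic: Proposition \ref{atomo} gives $\|a_j\|_{H^{1,\infty}}\leq C_q$, which is an infimum and need not be attained, so strictly you should choose for each $j$ a decomposition with $\sum_k|\mu_{j,k}|\leq C_q+\delta_j$ (say $\delta_j=2^{-j}\varepsilon$) rather than $\leq C_q$; this does not affect the final estimate after letting $\varepsilon\to0$.
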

Finally we can define the Hardy space on $S$ as follows.
\begin{defi}
The {\rm{Hardy space}} $H^1$ is the space $H^1=H^{1,q}$, for all $q\in (1,\infty]$. We denote by $\|\cdot\|_{H^1}$ the norm $\|\cdot\|_{H^{1,\infty}}$, which is equivalent to each norm $\|\cdot\|_{H^{1,q}}$, for all $q\in (1,\infty)$.
\end{defi}
We have defined $\Bp$ spaces on $S$ for $1< p<\infty$. We also define the space $BMO_1$.
\begin{defi}
$\mathcal{B}\mathcal{M}\mathcal{O}_1$ is the space of all functions in $L^1_{\rm{loc}}(\rho)$ such that
$$\sup_R\frac{1}{\rho(R)}\int_R|f-f_R|\dir <\infty$$
where the supremum is taken over all admissible sets. The space $BMO_1$ is the quotient of $\mathcal{B}\mathcal{M}\mathcal{O}_1$ module constant functions. It is a Banach space with the norm defined by
$$\|f\|_{BMO_1}=\sup\Big\{\frac{1}{\rho(R)}\int_R|f-f_R|\dir \,:~R ~{\rm{admissible~set}}\Big\}\,.$$
\end{defi}
\begin{ossn}{\rm{It is clear that $BMO_1$ possesses the properties (i)-(iv) of Remark \ref{BMOP}.}}
\end{ossn}
In Theorem \ref{duality} we proved that the space $\Bp$ is the dual of $H^{1,p'}$, for $1<p<\infty$. The same duality result holds for $BMO_1$ and $H^{1,\infty}$. To prove this fact we use the equivalence between $H^{1,\infty}$ and $H^{1,2}$, which holds in the particular case of $ax+b\,$-groups. This is the reason why we did not define the spaces $BMO_1$ and $H^{1,\infty}$ in a general \CZ space.
\begin{teo}\label{dualitybis}
The space $BMO_1$ is the dual of $H^{1,\infty}$.
\end{teo}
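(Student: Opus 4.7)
The plan is to reduce the assertion to the $p=2$ case of Theorem \ref{duality}, exploiting Corollary \ref{coincidono}.

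\textbf{Step 1 (forward direction).} I would first show that each $f \in BMO_1$ induces a bounded linear functional $\ell_f$ on $H^{1,\infty}$ via $\ell_f(g) = \int_S fg\dir$. Following the strategy of Theorem \ref{duality}, I would verify the key atomic estimate: for a $(1,\infty)$-atom $a$ supported on an admissible set $R$, the vanishing integral condition gives
$$\Big|\int_S f\,a\dir\Big| = \Big|\int_R (f-f_R)\,a\dir\Big| \leq \|a\|_\infty \int_R |f-f_R|\dir \leq \rho(R)^{-1}\,\rho(R)\,\|f\|_{BMO_1} = \|f\|_{BMO_1}.$$
Then I would extend by linearity and density, via truncations $f_k$ and dominated convergence, exactly as in the three-case argument of Theorem \ref{duality}, to obtain $\|\ell_f\|_{(H^{1,\infty})'} \leq C\,\|f\|_{BMO_1}$.

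\textbf{Step 2 (reverse direction).} Here I invoke Corollary \ref{coincidono}, which asserts that $H^{1,\infty}$ and $H^{1,2}$ coincide as sets with equivalent norms. Consequently $(H^{1,\infty})'$ and $(H^{1,2})'$ are isomorphic as Banach spaces. By Theorem \ref{duality} applied with $p = p' = 2$, every element of $(H^{1,2})'$ is represented uniquely (modulo constants) by some $f \in BMO_2$ with $\|f\|_{BMO_2} \leq C\,\|\ell\|_{(H^{1,2})'}$. Hence every $\ell \in (H^{1,\infty})'$ is represented by an $f \in BMO_2$ with $\|f\|_{BMO_2} \leq C'\,\|\ell\|_{(H^{1,\infty})'}$. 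The inclusion $BMO_2 \hookrightarrow BMO_1$ is an immediate consequence of H\"older's inequality, since
$$\frac{1}{\rho(R)}\int_R |f-f_R|\dir \leq \Big(\frac{1}{\rho(R)}\int_R |f-f_R|^2\dir\Big)^{1/2},$$
so $\|f\|_{BMO_1} \leq \|f\|_{BMO_2}$. Combining these estimates yields $\|f\|_{BMO_1} \leq C\,\|\ell\|_{(H^{1,\infty})'}$, which finishes the representation.

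\textbf{Main obstacle.} The entire argument hinges on the identification $H^{1,\infty} \simeq H^{1,2}$ of Corollary \ref{coincidono}, which is the nontrivial ingredient and depends crucially on the splitting property of admissible sets specific to $ax+b$-groups; it is precisely the reason why the theorem is formulated only in this setting rather than in arbitrary Damek--Ricci spaces. Once this equivalence is granted, everything else is a clean reduction to the known $p=2$ duality together with H\"older's inequality and the standard truncation/density techniques already deployed in the proof of Theorem \ref{duality}; no new analytic difficulty arises.
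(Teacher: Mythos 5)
Your proof is correct and slightly more economical than the paper's. Both proofs handle the forward direction identically (the atomic estimate for a $(1,\infty)$-atom against $f \in BMO_1$, followed by the truncation/density argument lifted from Theorem \ref{duality}). The difference is in the reverse direction: the paper re-runs the Riesz representation and gluing construction from the proof of Theorem \ref{duality} (introducing $L^2_{R,0}$, producing $f^R$ for each admissible $R$, gluing along the sets $R^k_m$ of condition (C), then converting an $L^2$ bound to an $L^1$ bound via Cauchy--Schwarz at the very end), whereas you invoke Theorem \ref{duality} with $p=2$ as a black box to obtain a representing $f\in BMO_2$ and then apply the elementary embedding $BMO_2 \hookrightarrow BMO_1$ with $\|f\|_{BMO_1}\leq\|f\|_{BMO_2}$. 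The content is the same — in both cases the real engine is Corollary \ref{coincidono}, which you correctly identify as the nontrivial ingredient specific to the $ax+b$ setting — but your reduction is more modular and avoids repeating the Hahn--Banach/Riesz argument, making the logical dependence on Theorem \ref{duality} explicit rather than implicit. One small point you leave tacit but which is fine: the $f$ produced by Theorem \ref{duality} represents $\ell$ on $H^{1,2}$, and since $H^{1,\infty}=H^{1,2}$ as sets with comparable norms by Corollary \ref{coincidono}, it automatically represents $\ell$ on $H^{1,\infty}$.
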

\begin{proof}
The proof that each $f$ in $ BMO_1$ represents a bounded linear functional $\ell_f$ on $H^{1,\infty}$ is the same as the proof of Theorem \ref{duality} and it is omitted.

Now we prove the converse inclusion. 

Let $\ell$ be in $ (H^{1,\infty})'$. Our purpose is to define a function $f$ in $BMO_1$ such that $\ell_f =\ell$ and $\|f\|_{BMO_1}\leq C\,\|\ell\|_{(H^{1,\infty})'}$. 

For each admissible set $R$ we denote by $L^2_{R}$ the set of all functions in $L^{2}(\rho)$ supported in $R$ and by $L^{2}_{R,\,0}$ the subspace of functions whose integral is zero. If $g$ is in $L^{2}_{R,\,0}$, then the function $a=\rho(R)^{-1}\,\|g\|_{2}^{-1}\,g$ is a $(1,2)$-atom. Then, since $H^{1,2}$ and $H^{1,\infty}$ are equivalent, the functional $\ell$ is bounded also on $H^{1,2}$ and
$$|\langle \ell,g\rangle |\leq \rho(R)\,\|g\|_{2}\,\|\ell\|_{(H^{1,2})'}\,.$$
This shows that $\ell$ is in $(L^{2}_{R,\,0})'$ and, since $L^{2}_{R,\,0}$ is an Hilbert space, there exists a function $f^R\in L^{2}_{R,\,0}$ such that $\|f^R\|_2\leq \rho(R)\,\|\ell\|_{(H^{1,2})'}$ and $\langle \ell,g\rangle =\int_Rf^R\,g\dir$ for all $g\in L^2_{R,\,0}$. 

In order to define the function $f$ which represents the functional $\ell$, let
$$f(x)=f^{R^k_m}(x)\qquad{\rm{if}}~x\in R^k_m\,,$$
where $R^k_m$ are the sets defined by (\ref{Rkm}). First we observe that this definition makes sense.

It remains to prove that $f\in BMO_1$ and that $\ell_f=\ell$. For each admissible set $R$, there exists a set $R^k_{m}$ such that $R\subseteq R^k_{m}$. For all function $g\in L^{2}_{R,0}$ we have that
$$\int_R f\,g\dir=\,\langle \ell,g\rangle \,,$$
and then the restriction of $f$ on $R$ is equal to $f^R$. In particular it follows that
\begin{align*}
\frac{1}{\rho(R)}\int_R |f|\dir &=\frac{1}{\rho(R)}\int_R |f^R|\dir\\
&\leq \frac{1}{\rho(R)}\|f^R\|_{1}\\
&\leq {\rho(R)}^{-1}\,\rho(R)^{1/2}\,\|f^R\|_{2}\\
&\leq C\,\|\ell\|\,.
\end{align*}
This shows that $f\in BMO_1$ and that $\|f\|_{BMO_1}\leq C\, \|\ell\|\,,$ as required.
\end{proof}
{F}rom Corollary \ref{coincidono} and Theorem \ref{dualitybis} the following result follows immediately.
\begin{coro}
For all $p\in [1,\infty)$, $\Bp=BMO_1$ and norms $\|\cdot\|_{\Bp}$ and $\|\cdot\|_{BMO_1}$ are equivalent.
\end{coro}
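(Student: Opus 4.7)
The plan is to read the corollary off directly from the duality statements and the atomic equivalence already established. The case $p=1$ is a tautology, so I assume $p\in(1,\infty)$ throughout.

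First I would invoke Theorem \ref{duality} to identify $BMO_p$ with the dual of $H^{1,p'}$, and Theorem \ref{dualitybis} to identify $BMO_1$ with the dual of $H^{1,\infty}$. A crucial point I would emphasize is that in both theorems the duality pairing is realised by the \emph{same} bilinear form, namely $\ell_f(g)=\int_S f\,g\dir$ evaluated on atoms (and extended by density). Consequently, once a linear functional on one Hardy space is represented by some $f$, the same $f$ represents the same functional on the other Hardy space.

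Next I would apply Corollary \ref{coincidono}: for every $q\in(1,\infty)$ the spaces $H^{1,q}$ and $H^{1,\infty}$ coincide as sets of functions, and there exist constants $c_q,C_q>0$ such that
$$
c_q\,\|g\|_{H^{1,\infty}}\leq \|g\|_{H^{1,q}}\leq C_q\,\|g\|_{H^{1,\infty}}\qquad\forall g\in H^{1,\infty}.
$$
Specialising this to $q=p'$ and taking duals, I would conclude that $(H^{1,p'})'$ and $(H^{1,\infty})'$ coincide as Banach spaces, and that their operator norms are equivalent with constants depending only on $p$.

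Combining these two steps, each $f\in BMO_p$ defines, through $\ell_f$, a bounded functional on $H^{1,p'}=H^{1,\infty}$, and Theorem \ref{dualitybis} produces an element of $BMO_1$ representing this functional; but, by the uniqueness (modulo constants) in the dual representation and the fact that both pairings are integration against $f$, this representative must be $f$ itself. The converse inclusion is symmetric. The norm equivalence
$$
\|f\|_{\Bp}\asymp \|\ell_f\|_{(H^{1,p'})'}\asymp \|\ell_f\|_{(H^{1,\infty})'}\asymp \|f\|_{BMO_1}
$$
then follows from Theorems \ref{duality}, \ref{dualitybis} and Corollary \ref{coincidono}. No genuine obstacle arises at this stage, since the heavy lifting — establishing the $H^{1,q}$--$H^{1,\infty}$ equivalence via the iterated Calderón--Zygmund construction in Lemma \ref{nuovadec} and the two duality theorems — has already been carried out in the preceding sections.
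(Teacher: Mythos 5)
Your argument is correct and is essentially the argument the paper has in mind: the paper justifies the corollary in one line (``From Corollary \ref{coincidono} and Theorem \ref{dualitybis} the following result follows immediately''), and your proposal simply unpacks that line, explicitly invoking Theorem \ref{duality} as the third ingredient and observing that the two duality pairings are the same integration-against-$f$ pairing, so the Riesz-type representatives constructed in Theorems \ref{duality} and \ref{dualitybis} coincide modulo constants. One small remark: the inclusion $\Bp\subseteq BMO_1$ with $\|f\|_{BMO_1}\le\|f\|_{\Bp}$ is immediate from H\"older's inequality and needs none of the duality machinery; the genuinely nontrivial direction is $BMO_1\subseteq\Bp$, which is precisely where Corollary \ref{coincidono} (i.e.\ the iterated decomposition of Lemma \ref{nuovadec}) is needed, but your symmetric argument handles both directions at once and is perfectly valid.
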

\begin{defi}
We denote by $BMO$ the space $BMO=\Bp$, for all $p\in [1,\infty)$ and by $\|\cdot\|_{BMO}$ the norm $\|\cdot\|_{BMO_1}$ which is equivalent to $\|\cdot\|_{BMO_p}$, for all $p\in (1,\infty)$.
\end{defi}

\section{The CZ decomposition in the product case}\label{CZdecompositionp}
Let $S=S''\times S''$ be the direct product of two \DR spaces. In this section we shall prove that $(S,\rho,d_{max})$ is a \CZ space: we shall generalize the results which we proved in Section \ref{CZdecextH} in the nonproduct case to the product case. This is a natural and simple generalization, except for the proof of Lemma \ref{partizionegrandep} below.

We first define admissible sets as products of admissible sets in $S'$ and $S''$. We start from big admissible sets.
\begin{defi}\label{admissibilityp}
A \emph{big admissible set} in $S$ is a set of the form $R'\times R''$, where $R'$ and $R''$ are big admissible sets in $S'$ and $S''$ of the form
\begin{align*}
R'&=Q_{\alpha}^{'\,k} \times (a_0'/r,a_0'\,r)\qquad {\rm{and}}\qquad R''= Q_{\beta}^{''\,\ell}\times (a_0''/r,a_0''\,r)\,,
\end{align*}
where $Q_{\alpha}^{'\,k}$ and $Q_{\beta}^{''\,\ell}$ are dyadic sets in $N'$ and $N''$ respectively (see Theorem \ref{dyadic}),
\begin{align*}%\label{admp}
a_0'^{1/2}r^{\beta}&\leq \rd ^{'\,k} < a_0'^{1/2}r^{4\beta}\,,\\
a_0''^{1/2}r^{\beta}&\leq \rd ^{''\,{\ell}} < a_0''^{1/2}r^{4\beta}\,,\end{align*}
and $\beta$ is a constant greater than $\max\{3/2,\,1/4+\log\rd ',\, 1/4+\log\rd '',\,1+\log\big(c_3'/c_N'\big),\,1+\log\big({c_3''}/{c_N''}\big) \}$ . 
\end{defi}
We now investigate some geometric properties of big admissible sets.
\begin{prop}\label{proprietap}
Let $R$ denote the big admissible set $R'\times R''$. The following hold:
\begin{itemize}
\item[(i)] $R\subseteq B_{d_{max}}\big((n_{\alpha }^{'\,k} , a_0')(n_{\beta }^{''\,\ell} , a_0''),\,C\,\log r\big)$;%, where $C_N=\max\{C_{N',\,\beta}, C_{N'',\,\beta}$;
\item[(ii)] %$c_{N'}^{2Q'}\,c_{N''}^{2Q''}\,|B_{N'}(0_{N'},1)|\,|B_{N''}(0_{N''},1)|\,(a_0'^{1/2}r^{\be})^{2Q'}\,(a_0''^{1/2}r^{\be})^{2Q''}\,\log^2 r \leq \rho(R)\leq C_{N'}^{2Q'}\,C_{N''}^{2Q''}\,|B_{N'}(0_{N'},1)|\,|B_{N''}(0_{N''},1)|\,(a_0'^{1/2}r^{4\be})^{2Q'}\,(a_0''^{1/2}r^{4\be})^{2Q''}\,\log^2 r;$
there exist constants $c_{N',N''}, C_{N',N''}$ such that
\begin{align*}
\rho(R)&\geq c_{N',N''}\,(a_0'^{1/2}r^{\be})^{2Q'}\,(a_0''^{1/2}r^{\be})^{2Q''}\,\log^2 r \,;\\
\rho(R)&\leq C_{N',N''}\,(a_0'^{1/2}r^{4\be})^{2Q'}\,(a_0''^{1/2}r^{4\be})^{2Q''}\,\log^2 r;
\end{align*}
\item[(iii)] let $R^*$ be the set $\{x\in S: d_{max}\big(x,R\big)<\log r\}$; there exists a constant $C^*$ such that 
$$\rho\big(R^*\big)\leq C^*\,\rho\big(R\big)\,.$$
\end{itemize}
\end{prop}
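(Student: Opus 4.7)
The plan is to derive all three assertions by reducing each to a componentwise application of Proposition \ref{proprieta}, exploiting three product-type factorisations: the measure satisfies $\rho = \rho' \otimes \rho''$; the max-distance is designed so that $d_{max}$-balls factorise as $B_{d_{max}}(x,r) = B_{S'}(x',r) \times B_{S''}(x'',r)$; and by Definition \ref{admissibilityp} a big admissible set in $S$ is by construction the product of two big admissible sets in the factors sharing the common parameter $r$. Once these factorisations are recorded, the content of the proof is essentially bookkeeping; the only step that requires a brief observation is the identification of the dilated set $R^*$ in (iii).

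For (i), I would apply Proposition \ref{proprieta}(i) separately to $R'$ and $R''$ with the common parameter $r$, obtaining constants $C_{N',\,\beta}$ and $C_{N'',\,\beta}$ such that
\[
R' \subseteq B_{S'}\big((n_\alpha^{'\,k}, a_0'),\, C_{N',\,\beta}\log r\big), \qquad R'' \subseteq B_{S''}\big((n_\beta^{''\,\ell}, a_0''),\, C_{N'',\,\beta}\log r\big);
\]
taking the Cartesian product and invoking the factorisation of $d_{max}$-balls then yields (i) with $C = \max\{C_{N',\,\beta},\, C_{N'',\,\beta}\}$. For (ii), since $\rho(R) = \rho'(R')\,\rho''(R'')$, I would multiply the two-sided estimates provided by Proposition \ref{proprieta}(ii) applied separately to each factor; the displayed inequalities then hold with, for instance, $c_{N',N''} = (c_N')^{2Q'}\,(c_N'')^{2Q''}\,|B_{N'}(0,1)|\,|B_{N''}(0,1)|$ and an analogous expression for $C_{N',N''}$ obtained from the $C_N$-factors.

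The key observation for (iii) is that since $d_{max}(x,y) = \max\{d_{S'}(x',y'),\, d_{S''}(x'',y'')\}$, a standard infimum argument yields $d_{max}(x, R) = \max\{d_{S'}(x', R'),\, d_{S''}(x'', R'')\}$ for every $x = (x', x'') \in S$, from which
\[
R^* = R'^* \times R''^*,
\]
where $R'^*$ and $R''^*$ are precisely the $\log r$-dilations of $R'$ and $R''$ appearing in Proposition \ref{proprieta}(iii). Applying that result to each factor and multiplying then gives the estimate $\rho(R^*) \leq C^*\,\rho(R)$ with $C^* = \big((c_3' + C_N')/c_N'\big)^{2Q'}\big((c_3'' + C_N'')/c_N''\big)^{2Q''}$. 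I do not foresee any real obstacle: the whole argument hinges on checking that balls, dilated sets, and Haar measures all respect the product structure of $S$, which is built into the definitions.
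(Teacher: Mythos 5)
Your proof is correct and follows the same route as the paper's: reduce each claim to a componentwise application of Proposition \ref{proprieta}, using that $\rho = \rho'\otimes\rho''$, that $d_{max}$-balls factor as products of balls in $S'$ and $S''$, and that $R^* = (R')^*\times(R'')^*$. The paper states the last identity without spelling out the $\inf$-$\max$ exchange; your brief justification of $d_{max}(x,R)=\max\{d_{S'}(x',R'),\,d_{S''}(x'',R'')\}$ is a welcome clarification of the same step.
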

\begin{proof}
Recall that by Proposition \ref{proprieta}
$$R\subseteq B_{S'}\big((n_{\alpha }^{'\,k} , a_0'),C_{N',\be}\,\log r\big)\times B_{S''}\big((n_{\beta}^{''\,\ell} , a_0''),C_{N'',\be}\,\log r\big)\,,$$
and (i) follows.

Property (ii) is an immediate consequence of Proposition \ref{proprieta}.

To prove (iii) note that
\begin{align*}
R^*&=\{x'\in S':~d_{S'}(x',R')<\log r\}\times \{x''\in S'':~d_{S''}(x'',R'')<\log r\}\\
&=(R')^*\times (R'')^*\,.
\end{align*}
Again (iii) follows by Proposition \ref{proprieta}.
\end{proof}
We observe that most big admissible sets may be split up in a finite number of mutually disjoint smaller subsets which are still admissible. More precisely the following lemma holds. 
\begin{lem}\label{taglip}
Let $R$ denote the big admissible set $R'\times R''$.

Let $\,\rd'\,,\rd ''\,,\,M'\,,\,M''\,,\,n^{'\,k}_{\alpha}\,,\,n^{''\,\ell}_{\beta}\,,\,c_{N'}\,,\,c_{N''}\,,\,C_{N'}\,,\,C_{N''}\,$ be as in Theorem \ref{dyadic}. The following hold:
\begin{itemize}
\item[(i)] if either $\rd '^{k-1}\geq a_0'^{1/2}r^{\be}$ or $\rd ''^{\ell-1}\geq a_0''^{1/2}r^{\be}$, then there exist $J$ mutually disjoint big admissible sets $R_1,...,R_J$ such that $2\leq J\leq M'M''$, $R=\bigcup_{i=1}^J R_i$ and
$$\big(c_{N'}/{(\rd ' \,C_{N'})}\big)^{2Q'}\,\big(c_{N''}/{(\rd '' \,C_{N''})}\big)^{2Q''}\,\rho(R)\leq\rho(R_i)\leq \rho(R) \qquad i=1,...,J\,;$$
\item[(ii)] if $\rd'^{k-1}< a_0'^{1/2}r^{\be}$ and $\rd''^{\ell-1}< a_0''^{1/2}r^{\be}$ and $r\geq \nep ^2$, then there exist four disjoint big admissible sets $R_1,...,R_4$ such that $R=\bigcup_{i=1}^{4}R_i$ and $\rho(R_i)=\rho(R)/4$, for $i=1,...,4$;
\item[(iii)] if $\rd'^{k-1}< a_0'^{1/2}r^{\be}$ and $\rd''^{\ell-1}< a_0''^{1/2}r^{\be}$ and $r< \nep ^2$, then there exists a constant $\Rndp$ such that
\begin{align*}
B_{d_{max}}\big((n^{'\,k}_{\alpha},a_0')(n_{\beta}^{''\,\ell},a_0''),1\big)&\subseteq R\subseteq B_{d_{max}}\big((n^{'\,k}_{\alpha},a_0'),(n_{\beta}^{''\,\ell},a_0''),\Rndp\big)\,.
\end{align*}
\end{itemize}
\end{lem}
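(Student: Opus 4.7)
The plan is to reduce everything to the nonproduct Lemma \ref{tagli} applied to each factor separately. Since $R = R' \times R''$ and the right Haar measure on $S$ factors as $\rho = \rho' \otimes \rho''$, a partition of either $R'$ or $R''$ into admissible pieces immediately yields a partition of $R$ into products of admissible pieces, and the admissibility of $R' \times R''$ in the sense of Definition \ref{admissibilityp} is preserved because the splittings in Lemma \ref{tagli} all use the \emph{same} parameter $r$, so the joint condition needed for big admissibility in the product is automatic.

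For part (i), assume without loss of generality that $\rd'^{\,k-1} \geq a_0'^{1/2} r^{\be}$ (the other case is symmetric). Apply Lemma \ref{tagli}(i) to $R'$ to obtain $2 \leq J' \leq M'$ mutually disjoint big admissible sets $R'_1, \ldots, R'_{J'}$ with $R' = \bigcup_i R'_i$ and $(c_{N'}/(\rd' C_{N'}))^{2Q'} \rho'(R') \leq \rho'(R'_i) \leq \rho'(R')$. Set $R_i = R'_i \times R''$; each $R_i$ is big admissible in $S$ by construction, they are mutually disjoint, and multiplying through by $\rho''(R'')$ yields the required measure comparison with $J = J' \leq M'M''$. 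If \emph{both} conditions hold, one can split both factors simultaneously to obtain up to $M'M''$ pieces, but splitting only one factor already suffices to establish the claim.

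For part (ii), both components satisfy the hypothesis of Lemma \ref{tagli}(ii), so each factor splits into two big admissible subsets of half the measure: $R' = R'_1 \cup R'_2$ with $\rho'(R'_j) = \rho'(R')/2$ and similarly $R'' = R''_1 \cup R''_2$. The four products $R'_i \times R''_j$ for $i, j \in \{1,2\}$ are disjoint big admissible sets whose union is $R$, and each has measure $\rho(R)/4$.

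For part (iii), both factors satisfy the hypothesis of Lemma \ref{tagli}(iii), so there exists a constant $\Rnd$ (depending only on $\beta$ and on $C_{N'}$, $C_{N''}$) such that
\begin{align*}
B_{S'}((n^{'\,k}_\alpha, a'_0), 1) &\subseteq R' \subseteq B_{S'}((n^{'\,k}_\alpha, a'_0), \Rnd), \\
B_{S''}((n^{''\,\ell}_\beta, a''_0), 1) &\subseteq R'' \subseteq B_{S''}((n^{''\,\ell}_\beta, a''_0), \Rnd).
\end{align*}
Since $B_{d_{max}}(x, r) = B_{S'}(x', r) \times B_{S''}(x'', r)$ for any point $x = (x', x'')$ and radius $r$, taking products yields the desired sandwich with $\Rndp = \Rnd$ (or any constant $\geq \Rnd$). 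No genuine obstacle appears here; the only thing to be careful about is the bookkeeping in part (i) to verify that $R'_i \times R''$ still satisfies Definition \ref{admissibilityp}, which follows because the $r$ parameter is common to both factors and the index $k$ only decreases by one.
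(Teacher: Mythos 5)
Your proof is correct and takes essentially the same route as the paper, which simply notes that Lemma \ref{tagli} applied to each factor $R'$ and $R''$ yields the result; your writeup just spells out the elementary bookkeeping (including the observation that in part (i) splitting a single factor already gives a constant at least as large as the one claimed, since $c_{N''}/(\rd'' C_{N''}) \leq 1$).
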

\begin{proof}
To prove this lemma it suffices to apply Lemma \ref{tagli} to the admissible sets $R'$ and $R''$.
\end{proof}
For later developments it is useful to distinguish between big admissible sets that satisfy condition (i) or (ii) in Lemma \ref{taglip}, which may be split up in a finite number of smaller big admissible sets, and big admissible sets that satisfy condition (iii) in Lemma \ref{taglip}, which cannot be split up in that way. 
\begin{defi}
A big admissible set $R=Q^{'\,k}_{\alpha}\times (a_0'/r,a_0'\,r)\times Q^{''\,\ell}_{\beta}\times (a_0''/r,a_0''/,r)$ is said to be {\emph{divisible}} if either $\rd'^{k-1}\geq a_0'^{1/2}r^{\be}$ or $\rd''^{\ell-1}\geq a_0''^{1/2}r^{\be}$ or $r\geq\nep^2$.

A big admissible set is said to be {\emph{nondivisible}} if $\rd'^{k-1}< a_0'^{1/2}r^{\be}$, $\rd''^{\ell-1}< a_0''^{1/2}r^{\be}$ and  $r<\nep^2$.
\end{defi}
Next we show that there exists a partition of $S$ which consists of  big admissible sets whose measure is as large as needed. The proof of this fact, as we will see below, is not a simple generalization of the proof of the analogue result in the nonproduct case. 
\begin{lem}\label{partizionegrandep}
For all $\sigma>0$ there exists a partition $\mathcal P_{\sigma}$ of $S$ which consists of big admissible sets whose measure is $>\sigma$.
\end{lem}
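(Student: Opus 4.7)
The plan is to adapt the construction of Lemma \ref{partizionegrande} to the product setting. The naive attempt of taking a product of the partitions of $S'$ and $S''$ fails because, by Definition \ref{admissibilityp}, a big admissible set in $S=S'\times S''$ is a product $R'\times R''$ whose two factors share the \emph{same} size parameter $r$, whereas the parameters $r_n$ in Lemma \ref{partizionegrande} vary from strip to strip. Passing to logarithmic $A$-coordinates $(u,v):=(\log a',\log a'')$ and setting aside the $N'\times N''$ direction, a big admissible set of parameter $r$ projects onto a \emph{square} of side $2\log r$ in the $(u,v)$-plane. The problem thus reduces to tiling $\mathbb{R}^2$ by axis-parallel squares (of possibly varying side lengths) and then decorating each tile by a product of dyadic sets $Q_\alpha^{'\,k}\times Q_\beta^{''\,\ell}$ of the correct size dictated by the admissibility relation.

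Concretely, fix $T_0$ large (to be chosen in terms of $\sigma$), set $T_n:=3^n T_0$ for $n\geq 1$, let $r_0:=\nep^{T_0}$ and $r_n:=\nep^{T_{n-1}}$ for $n\geq 1$. The central square $[-T_0,T_0]^2$ corresponds to the slab $N'\times(1/r_0,r_0)\times N''\times(1/r_0,r_0)$, which we partition into big admissible sets of the form $Q_\alpha^{'\,k_0}\times(1/r_0,r_0)\times Q_\beta^{''\,\ell_0}\times(1/r_0,r_0)$ with $k_0,\ell_0$ chosen so that the admissibility inequalities of Definition \ref{admissibilityp} hold with $a_0'=a_0''=1$ and $r=r_0$; this is possible once $r_0^{3\beta}\geq\max\{\rd',\rd''\}$, which holds for $T_0$ large. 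For each $n\geq 1$, the square annulus $[-T_n,T_n]^2\setminus[-T_{n-1},T_{n-1}]^2$ has outer side $2T_n=6T_{n-1}$ and inner side $2T_{n-1}$, and hence decomposes \emph{exactly} into $8$ axis-parallel squares of side $T_n-T_{n-1}=2T_{n-1}$ (three across the top strip, three across the bottom, one on each of the left and right strips). Each such square has side $2\log r_n$ and, paired with a suitable choice of dyadic sets in $N'\times N''$ (available again for $T_0$ large), defines a big admissible set. Altogether, since $\bigcup_n [-T_n,T_n]^2=\mathbb{R}^2$, this yields a partition of $S$ into big admissible sets.

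It remains to verify the uniform lower bound $\rho(R)>\sigma$. By Proposition \ref{proprietap}, every tile centred at $(u_0,v_0)$ with parameter $r_n$ satisfies $\rho(R)\geq c_{N',N''}\,\nep^{Q'u_0+Q''v_0}\,r_n^{2(Q'+Q'')\beta}(2\log r_n)^2$. Within the $n$-th annulus the worst case is the southwest-corner tile with centre $(u_0,v_0)=(-2T_{n-1},-2T_{n-1})$, which yields $\rho(R)\gtrsim \nep^{2(Q'+Q'')(\beta-1)T_{n-1}}\,T_{n-1}^2$. Because $\beta>3/2>1$, this quantity is increasing in $T_{n-1}\geq T_0$, so by choosing $T_0$ large enough we force $\rho(R)>\sigma$ for every tile (the central square is treated by the same estimate with $(u_0,v_0)=(0,0)$). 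The main subtlety of the argument is precisely this matching between geometry and admissibility: the ratio $T_n/T_{n-1}=3$ is chosen so that the annulus width $T_n-T_{n-1}$ equals exactly one tile-side, allowing a finite tiling at each level; and the inequality $\beta>1$ (a consequence of $\beta>3/2$) is what ensures that the growth of $r_n^{2(Q'+Q'')\beta}$ dominates the decay of $\nep^{Q'u_0+Q''v_0}$ in the southwest direction, so that the measure lower bound is uniform over the partition.
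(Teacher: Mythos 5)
Your proof is correct and follows the same overall strategy as the paper's: reduce to tiling $\RR^2_+$ (in logarithmic $A$-coordinates) by axis-parallel squares whose side length equals $2\log r$, then decorate each square with products of dyadic sets of the correct size, and verify the measure lower bound via Proposition \ref{proprietap}(ii) together with the exponent condition on $\beta$. The difference is in the tiling scheme for the plane: the paper uses a doubling construction ($I_0 = (1,a^{2^n})$, $I_1 = (a^{2^n},a^{2^{n+1}})$) that requires $16$ tiles for the central square and $12$ tiles per annulus, and for which the tile centres lie within $[r^{-3},r^3]$, so the measure bound hinges on $\beta > 3/2$; you use a tripling scheme ($T_n = 3^n T_0$) with a single central tile and $8$ tiles per annulus, with centres within $[r^{-2},r^2]$, so only $\beta > 1$ is needed. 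Both constraints are subsumed by the paper's standing hypothesis $\beta > 3/2$. Your version is geometrically slightly cleaner and gives a marginally weaker constraint on $\beta$, but the essential idea and the verification are identical to the paper's.
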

\begin{proof}
The proof of this lemma is quite technical. The idea is to fix a number $a>1$ large enough and define a partition of $\RR^2_+$ in ``squares of size $a^{2^{n-1}}$'', for $n\geq 0$. Then we consider the product of each ``square'' of $\RR^2_+$ with dyadic sets in $N'\times N''$ of suitable size, to obtain admissible sets in $S$.

We first introduce some more notation. Given an interval $I=(b,c)$ in $\RR^+$ we denote by $I^{\lor}$ the interval $(1/c,1/b)$. Given a ``square'' $E=(b',c')\times (b'',c'')$ in $\RR^2_+$ we denote by $\tilde{E}$ the ``transposed square'' $(b'',c'')\times (b',c')$. 

Now we define a partition of $\RR^2_+$ step by step.

The first step is to split up $(1/a^2,a^2)\times (1/a^2,a^2)$ in $16$ ``squares'' of the form $(a'/r,a'\,r)\times (a''/r,a''\,r)$, where $r=\sqrt a$ and $a',\,a''\in [r^{-3},r^{3}]$. To do it set $I_0=(1,a)$ and $I_1=(a,a^2)$ and consider $I_i\times I_j$, $I_i\times I^{\lor}_j$, $I_i^{\lor}\times I_j$ and $I_i^{\lor}\times I^{\lor}_j$, with $i,j=0,1$. 

As a second step, we set $I_0=(1,a^2),\,I_1=(a^2,a^4)$ and $I_{-1}=I_1^{\lor}$. Then we consider sets $E$ of the form $I_i\times I_0$, $I_i\times I_0^{\lor}$,  $I_i\times I_j$, with $i,j=1,-1$ and their transposed sets $\tilde{E}$. We obtain $12$ sets of the type $(a'/r,a'\,r)\times (a''/r,a''\,r)$, where $r=a$ and $a',\,a''\in [r^{-3},r^{3}]$.

Now suppose we have defined a partition of $(1/{a^{2^n}},\,{a^{2^n}})$ in squares of the type $(a'/r,a'\,r)\times (a''/r,a''\,r)$ such that $r=a^{2^{\ell-1}}$, $\ell\leq n-1$, and $a',\,a''\in [r^{-3},\,r^3]$. We define $I_0=(1,a^{2^{n}}),\,I_1=(a^{2^n},a^{2^{n+1}})$ and $I_{-1}=I_1^{\lor}$. We consider sets $E$ of the form $I_i\times I_0$, $I_i\times I_0^{\lor}$, $I_i\times I_j$, wth $i,j=1,-1$ and their transposed sets $\tilde{E}$. These are $12$ sets of the type $(a'/r,a'\,r)\times (a''/r,a''\,r)$, where $r=a^{2^{n-1}}$ and $a',\,a''=[r^{-3},r^3]$.

If we collect all the sets defined above, then we obtain a partition of $\RR^2_+$.

Now for each square of the partition $E=(a'/r,a'\,r)\times (a''/r,a''\,r)$, with $r\geq \sqrt a$ and $a',\,a''\in [r^{-3},\,r^3]$, we choose two integers $k$ and $\ell$ such that
$$a'^{1/2}\,r^{\beta}\leq\rd ^{'\,k}\leq a'^{1/2}\,r^{4\beta}\,,$$
$$a''^{1/2}\,r^{\beta}\leq\rd ^{''\,\ell}\leq a''^{1/2}\,r^{4\beta}\,.$$
The sets $Q^{'\,k}_{\alpha}\times Q^{''\,\ell}_{\beta}\times  E$ give a partition of $N'\times N''\times E$ in big admissible sets whose measure is greater than
\begin{align*}
a'^{Q'}\,a''^{Q''}\,r^{2\beta(Q'+Q'')}\,(\log r)^2&\geq \,r^{-3(Q'+Q'')}\,r^{2\beta(Q'+Q'')}\,(\log r)^2\\
&\geq  \,a^{(\beta-3/2)(Q'+Q'')}\,1/4\,\log ^2 a\\
&> \,\sigma\,,
\end{align*}
if $a$ is sufficently large.

Thus we have defined a partition of $S$ in big amissible sets whose measure is greater than $\sigma$.
\end{proof}
We now define small admissible sets.

A \emph{small admissible set} is a ball with radius $<1/2$. More precisely small admissible sets are sets of the family $\mathcal R^0$ which we defined in Section \ref{maxoperatorsp}.

Before proving the main result of this section, we prove a geometric lemma concerning  intersection properties between  balls and big \nondivisible sets. 
\begin{lem}\label{BintQp}
Let $B$ be a ball of radius $1/2\leq R\leq\gamma/2$. Let $\{F_{\ell}\}_{\ell}$ be a family of mutually disjoint \nondivisible big admissible sets. Then:
\begin{itemize}
\item[(i)] if $B\cap F_{\ell}\neq \emptyset$, then there exists a constant $C$ such that
$$\rho(B)\geq C\,\rho(F_{\ell})\,;$$
\item[(ii)] there exists a constant $L$ such that the ball $B$ intersects at most $L$ sets of the family $\{F_{\ell}\}_{\ell}\,$.
\end{itemize}
\end{lem}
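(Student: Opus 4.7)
The plan is to adapt the proof of Lemma \ref{BintQ} to the product setting, replacing the single-factor geometric estimates with their natural product-space analogues. The key observation is that by Lemma \ref{taglip}(iii), each nondivisible big admissible set $F_\ell$ sits between two balls of fixed size: there exists $y_\ell\in S$ such that $B_{d_{max}}(y_\ell,1)\subseteq F_\ell\subseteq B_{d_{max}}(y_\ell,\Rndp)$. Combined with the ball measure estimates (\ref{misurapalle1p}) and (\ref{misurapalle2p}) and with the product modular function $\delta(x',x'')=a'^{-Q'}a''^{-Q''}$, these supply the raw material to transpose the original argument.

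For part (i), I would bound $\rho(B)$ below using $B\supseteq B(x_0,1/2)$ and (\ref{misurapalle1p}), obtaining $\rho(B)\geq \gamma_1\,\delta(x_0)^{-1}\,(1/2)^n$, where $x_0$ denotes the centre of $B$. Then I would bound $\rho(F_\ell)\leq \rho(B_{d_{max}}(y_\ell,\Rndp))$ from above by $\gamma_2\,\delta(y_\ell)^{-1}\,\nep^{(Q'+Q'')\Rndp}$ via (\ref{misurapalle2p}). The hypothesis $B\cap F_\ell\neq\emptyset$ forces $d_{max}(x_0,y_\ell)<\gamma/2+\Rndp$, which, through the multiplicative nature of the modular function, yields a lower bound on $\delta(y_\ell x_0^{-1})$. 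Dividing the two inequalities yields the required constant $C$, depending only on $n$, $Q'$, $Q''$, $\gamma_1$, $\gamma_2$, $\gamma$ and $\Rndp$.

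For part (ii), I would exploit the fact that the mutual disjointness of the $F_\ell$ forces the balls $B_{d_{max}}(y_\ell,1)$ to be pairwise disjoint. Setting $\mathcal{I}=\{\ell\,:\,B\cap F_\ell\neq\emptyset\}$, the triangle inequality places each $B_{d_{max}}(y_\ell,1)$ with $\ell\in\mathcal{I}$ inside $B_{d_{max}}(x_0,\gamma/2+1+\Rndp)$. Passing to the left-invariant Haar measure $\lambda$, whose virtue is that $\lambda(B_{d_{max}}(y_\ell,1))=\lambda(B_{d_{max}}(e,1))$ independently of $\ell$, one obtains
$$\sharp\mathcal{I}\leq \frac{\lambda(B_{d_{max}}(e,\gamma/2+1+\Rndp))}{\lambda(B_{d_{max}}(e,1))}\,,$$
which furnishes the required constant $L$.

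I do not anticipate a serious obstacle: the argument is structurally identical to the nonproduct case, and all necessary ingredients are already available, namely the sandwiching of nondivisible sets between balls of fixed sizes from Lemma \ref{taglip}(iii), the two-sided ball measure estimates (\ref{misurapalle1p})--(\ref{misurapalle2p}), and the compatibility of $d_{max}$ and $\delta$ with left translation in the product group. The only care needed is bookkeeping of the constants involving both $Q'$ and $Q''$.
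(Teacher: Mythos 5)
Your argument is correct, and it takes a genuinely more direct route than the paper. The paper's own proof is one line: write $B = B' \times B''$ and $F_\ell = F'_\ell \times F''_\ell$, and ``apply Lemma~\ref{BintQ} to the ball $B'$ and the family $\{F'_\ell\}$ and to the ball $B''$ and the family $\{F''_\ell\}$.'' For part (i) this factorisation is unproblematic, since that half of Lemma~\ref{BintQ} does not use disjointness of the family. For part (ii), however, the factorised argument is delicate: mutual disjointness of $\{F_\ell\}$ in $S'\times S''$ does \emph{not} imply mutual disjointness of the projected families $\{F'_\ell\}$ or $\{F''_\ell\}$ — two product sets can be disjoint even though their first-factor projections coincide, as long as the second-factor projections are disjoint. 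So the hypothesis of Lemma~\ref{BintQ}(ii) need not hold for $\{F'_\ell\}$, and the paper's reduction, read literally, skips a step. Your approach avoids this entirely: you work with the balls $B_{d_{max}}(y_\ell,1) = B_{S'}(y'_\ell,1)\times B_{S''}(y''_\ell,1)$ \emph{inside the product group}, where $B_{d_{max}}(y_\ell,1)\subseteq F_\ell$ guarantees pairwise disjointness directly from the hypothesis, and the packing bound follows from left-invariance of $\lambda$ on $S$ together with the two-sided ball estimates (\ref{misurapalle1p})--(\ref{misurapalle2p}). The only bookkeeping detail worth spelling out is the relation $\rho(B(x_0,r)) = \delta(x_0)^{-1}\rho(B(e,r))$, obtained from $B(x_0,r) = x_0 B(e,r)$ and $\dir(xz)=\delta(x)^{-1}\dir(z)$, and the bound $\delta(x_0^{-1}y_\ell) \geq \nep^{-(Q'+Q'')(\gamma/2+\Rndp)}$ which follows because $d_{max}(x_0^{-1}y_\ell,e)<\gamma/2+\Rndp$ confines both $a$-coordinates of $x_0^{-1}y_\ell$ to $(\nep^{-(\gamma/2+\Rndp)},\nep^{\gamma/2+\Rndp})$. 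With these in hand, your computation goes through exactly as in the nonproduct Lemma~\ref{BintQ}.
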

\begin{proof}
The ball $B$ is equal to the product $B'\times B''$ of two balls in $S'$ and $S''$ and each set $F_{\ell}$ is the product $F_{\ell}'\times F_{\ell}''$ of two big nondivible sets in $S'$ and $S''$ respectively. It suffices to apply Lemma \ref{BintQ} to the ball $B'$ and the family $\{F_{\ell}'\}$ and to the ball $B''$ and the family $\{F_{\ell}''\}$. 
\end{proof}
\begin{teo}
Let $S=S'\times S''$ be the product of two \DR spaces. The space $(S,\rho,d_{max})$ is a \CZ space. 
\end{teo}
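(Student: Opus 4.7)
The plan is to imitate, step by step, the proof of Theorem \ref{CZd}, exploiting the fact that every ingredient used there has already been set up in the product setting: big admissible sets as in Definition \ref{admissibilityp}; their geometric properties (Proposition \ref{proprietap}); the splitting property for divisible sets and the ball-estimate for nondivisible ones (Lemma \ref{taglip}); the existence of partitions into big admissible sets of arbitrarily large measure (Lemma \ref{partizionegrandep}); the intersection lemma between small balls and nondivisible big admissible sets (Lemma \ref{BintQp}); and the weak type $(1,1)$ boundedness of $M^{\mathcal R^0}$ on $(S,\rho,d_{max})$, which was proved in Section \ref{maxoperatorsp} (Lemma \ref{coverp}).

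Given $f\in L^1(\rho)$ and $\alpha>0$, first I would use Lemma \ref{partizionegrandep} to choose a partition $\mathcal P$ of $S$ into big admissible sets of $\rho$-measure strictly greater than $\|f\|_{L^1(\rho)}/\alpha$; in particular the average of $|f|$ on each set of $\mathcal P$ is less than $\alpha$. Then I would run the standard stopping-time argument, splitting each divisible element of $\mathcal P$ by Lemma \ref{taglip} (cases (i) and (ii), which give at most $M'M''$ or $4$ children with comparable measure) and stopping as soon as the average of $|f|$ on a child exceeds $\alpha$. The stopping sets $\{E_i\}_i$ are mutually disjoint big admissible sets; their parents have average $<\alpha$, which, together with the comparability of measures in Lemma \ref{taglip}, gives the upper bound $\rho(E_i)^{-1}\int_{E_i}|f|\dir \le C_1\alpha$. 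What remains, $S\setminus\bigcup_i E_i$, is a disjoint union of nondivisible big admissible sets $\{F_\ell\}_\ell$ (those satisfying (iii) of Lemma \ref{taglip}) on each of which the average of $|f|$ is less than $\alpha$. Setting $g_f$, $b_f^i$ and $h$ as in Theorem \ref{CZd}, Proposition \ref{proprietap} furnishes the radii $r_i$ with $E_i\subset B$ of radius $\le \kappa_0 \log r_i$ and $\rho(E_i^*)\le C^*\rho(E_i)$, which are exactly the requirements (i)–(ii3) of Definition \ref{CZdef} for the ``big'' pieces.

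To decompose the residual function $h = f\chi_{(\cup_iE_i)^c}$, which is supported on $\bigcup_\ell F_\ell$ and whose average on each $F_\ell$ is $<\alpha$, I would proceed exactly as in Theorem \ref{CZd}. Consider the level set $O^0_\alpha=\{M^{\mathcal R^0}h>\alpha\}$ and, for each $x\in O^0_\alpha$, pick a small ball $B_x\in\mathcal R^0$ with average $>\alpha$ and measure at least half the supremum. A standard selection yields a countable disjoint subfamily $\{B_{x_j}\}$ with $O^0_\alpha\subseteq\bigcup_j \tilde B_{x_j}$ (the dilation $\tilde B$ from Lemma \ref{coverp}); set $G_j=\tilde B_{x_j}\cap\bigl(\bigcup_{k<j}G_k\bigr)^c\cap\bigl(\bigcup_{\ell>j}B_{x_\ell}\bigr)^c\cap O^0_\alpha$. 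The key averaging estimate $\rho(G_j)^{-1}\int_{G_j}|h|\dir\le C_2\alpha$ splits into two cases: if $\tilde B_{x_j}\in\mathcal R^0$, it follows from the maximality of $\rho(B_{x_j})$; otherwise the radius of $\tilde B_{x_j}$ lies in $[1/2,\gamma/2]$, and Lemma \ref{BintQp} bounds uniformly both the number of $F_\ell$ the ball intersects and the ratio $\rho(F_\ell)/\rho(\tilde B_{x_j})$, whence the average is controlled by a multiple of $\alpha$. The definitions of $g_h$ and $b_h^j$ as in \eqref{secondadec}, the weak type $(1,1)$ of $M^{\mathcal R^0}$, and the inclusion $G_j^*\subseteq B(c_{x_j},(\gamma+1)r_{x_j})$ then yield (ii1)–(ii4) for the ``small'' pieces.

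Putting together $f=g_f+g_h+\sum_i b_f^i+\sum_j b_h^j$ with \CZ sets $\{E_i\}\cup\{G_j\}$ gives the required decomposition, and the \CZ constant is the maximum of all the constants collected along the way. No genuinely new idea is needed: the only places where the product structure plays a role are Lemma \ref{partizionegrandep} (whose two-dimensional geometric construction in $\RR^+\times\RR^+$ is the delicate part, but is already proved) and the verification of the geometric estimates in Propositions \ref{proprietap} and Lemma \ref{BintQp}, which reduce coordinatewise to the nonproduct statements. Hence the main obstacle, if any, is purely organisational: keeping track that each constant in the stopping argument and in the ball-selection procedure has its analogue in the product setting, so that the final $\kappa_0$ depends only on $S'$ and $S''$.
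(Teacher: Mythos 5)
Your proposal is correct and follows exactly the same route as the paper: the author simply states that the proof is almost verbatim that of Theorem \ref{CZd}, replacing Lemma \ref{partizionegrande} and Lemma \ref{BintQ} by their product analogues Lemma \ref{partizionegrandep} and Lemma \ref{BintQp}. Your write-up spells out in full the substitutions (also Proposition \ref{proprietap} for \ref{proprieta}, Lemma \ref{taglip} for \ref{tagli}, Lemma \ref{coverp} for the weak type $(1,1)$ of $M^{\mathcal R^0}$) that the paper leaves implicit, but the argument is the same.
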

\begin{proof}
The proof is almost verbatim the same as the proof of Theorem \ref{CZd}. we just need to use Lemma \ref{partizionegrandep} and Lemma \ref{BintQp} in place of Lemma \ref{partizionegrande} and Lemma \ref{BintQ}.
\end{proof}

\chapter{Spectral multipliers for the Laplacian $\D$}
\begin{intro*}
In this chapter we introduce a distinguished left invariant Laplacian $\D$ on a \DR space $S$ which is essentially selfadjoint on $L^2(\rho)$. Then we study spectral multipliers for $\D$. 

Specifically we prove that if a bounded measurable function $M$ on $\RR^+$ satisfies a \MH condition both at infinity and locally, then the operator 
$M(\D)$ is bounded from $L^1(\rho)$ to $\lorentz{1}{\infty}{\rho}$ and on 
$L^p(\rho)$, for $1<p<\infty$. 

The strategy of the proof, which is similar to that of \cite[Theorem 2.4]{HS},
is to show that $M(\D)$ may be realized as a singular integral operator,
and that such operators are bounded 
from $L^1(\rho)$ to $\lorentz{1}{\infty}{\rho}$ and on 
$L^p(\rho)$, for $1<p<\infty$. 

In Section \ref{multipliersp} we consider a left invariant Laplacian $\D$ on the product of two \DR spaces $S=S'\times S''$: we prove a multiplier theorem for $\D$ which is analogue to the theorem which holds in the nonproduct case. 
\end{intro*}

\section{The Laplacian $\D$}
Let $S$ be the harmonic extension of an $H$-type group $N$.
 
Let $E_0,...,E_{n-1}$ be an orthonormal basis of the algebra $\s$ such that 
$E_0=H$, $E_1,...,E_{m_{\vg}}$ is an orthonormal basis of $\vg$ 
and $E_{m_{\vg}+1},...,E_{n-1}$ is an orthonormal basis of $\zg$. 
Let $X_0,X_1,...,X_{n-1}$ be the left invariant vector fields 
on $S$ which agree with $E_0, E_1,...,E_{n-1}$ at the identity.
Let $\Delta$ be the operator defined by  
$$
\Delta= -\sum_{i=0}^{n-1} X_i^2\,.
$$
The \emph{Laplacian} $\D$ is left invariant
and essentially selfadjoint on 
$C^{\infty}_c(S)\subset L^2(\rho).$ 
Therefore there exists a spectral resolution $E_{\D}$ of the identity for which
$$
\D f = \int_0^{\infty}t\di E_{\D}(t)f
\qquad\forall f \in \rm{Dom}(\D).
$$
By the spectral theorem, 
for each bounded measurable function $M$ on $\RR^+$ the operator
$M(\D)$ defined by
$$
M(\D) f 
= \int_0^{\infty}M(t)\di E_{\D}(t)f
\qquad\forall f \in \ld\rho,
$$
is bounded on $L^2(\rho)$; $M(\D)$ is called the \emph{spectral operator}
associated to the \emph{spectral multiplier}~$M$. 

A classical problem is to find conditions on 
$M$ which ensure that $M(\D)$ extends to a bounded operator 
from $L^1(\rho)$ to the Lorentz 
space $\lorentz{1}{\infty}{\rho}$ and on $L^p(\rho)$, for $1<p<\infty$. In this case we say that
$M$ is a $L^p(\rho)$ spectral multiplier for $\D$.

To solve this problem it is useful to consider the relationship between the Laplacian $\D$ and the Laplace-Beltrami operator $\LB$ on $S$. 

More precisely, let $\LQ$ denote the shifted operator $\LB-{Q^2}/{4}$; it is known \cite[Proposition 2]{A1} that
\begin{align}\label{relationship}
\delta^{-1/2}\Delta\,\delta^{1/2}f=\LQ f\,,
\end{align}
for all smooth compactly supported radial functions $f$ on $S$.\\
The spectra of $\LQ$ on $L^2(\lambda)$ and $\Delta$ on $L^2(\rho)$ are $[0,+\infty)$. Let $E_{\LB _Q}$ and $E_{\Delta}$ be the spectral resolution of the identity for which 
$$\LB _Q=\int_0^{+\infty}t\, \di E_{\LB _Q}(t)\qquad{\rm and}\qquad\Delta=\int_0^{+\infty}t\, \di E_{\Delta}(t)\,.$$
For each bounded measurable function $M$ on $\RR^+$ the operators $M(\LB _Q)$ and $M(\Delta)$, spectrally defined by  
$$M(\LB _Q)=\int_0^{+\infty}M(t) \di E_{\LB _Q}(t)\qquad{\rm and}\qquad M(\Delta)=\int_0^{+\infty}M(t) \di E_{\Delta}(t)\,,$$
are bounded on $L^2(\lambda)$ and $L^2(\rho)$ respectively. By (\ref{relationship}) and the spectral theorem, we see that 
$$\delta^{-1/2}M(\Delta)\,\delta^{1/2}f=M(\mathcal L _Q)f\,,$$ 
for smooth compactly supported radial functions $f$ on $S$.\\
Let $k_{M(\D)}$ and $k_{M(\LQ)}$ denote the convolution kernels of $M(\D)$ and $M(\LQ)$ respectively; then 
$$M(\LQ) f=f\ast k_{M(\LQ)}\qquad{\rm {and}}\qquad M(\D) f=f\ast k_{M(\D)}\qquad\forall f\in C^{\infty}_c(S)\,,$$
where $\ast$ denotes the convolution on $S$.
\begin{prop}\label{relazionenuclei}
Let $M$ be a bounded measurable function on $\RR^+$. Then $k_{M(\LQ)}$ is radial and $k_{M(\D)}=\du\,k_{M(\LQ)}$. The spherical transform of $k_{M(\LQ)}$ is
$$\mathcal H k_{M(\LQ)}(s)=M(s^2)\qquad\forall s \in\RR^+\,.$$
\end{prop}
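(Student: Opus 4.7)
The plan is to verify the three assertions in turn, exploiting the identity $\delta^{-1/2}\D\delta^{1/2}=\LQ$ on radial functions, the spectral theorem, and the convolution structure of $S$. Both $\LQ$ and $\D$ are left invariant, hence each admits a convolution kernel via $M(\LQ)f=f\ast k_{M(\LQ)}$ and $M(\D)f=f\ast k_{M(\D)}$. Radiality of $k_{M(\LQ)}$ will follow from the isometry invariance of $\LB$: since $\LB$ commutes with every Riemannian isometry of $(S,d)$ fixing the identity, it commutes with the radialisation operator $\mathcal R$ of Proposition \ref{radialisation}, and by the spectral theorem so does $M(\LQ)$. Translating the commutation $M(\LQ)\mathcal R=\mathcal R M(\LQ)$ through the convolution representation (for instance, by testing against an approximate identity at $e$) yields $\mathcal Rk_{M(\LQ)}=k_{M(\LQ)}$.

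To identify the spherical transform of $k_{M(\LQ)}$, I would fix a radial $f\in C_c^\infty(S)$. Since radial functions form a commutative algebra under convolution, $f\ast k_{M(\LQ)}$ is radial and satisfies $\mathcal H(f\ast k_{M(\LQ)})=\mathcal Hf\cdot\mathcal Hk_{M(\LQ)}$ by the convolution theorem for $\mathcal H$. On the other hand, the eigenvalue relation $\LQ\phi_s=s^2\phi_s$ combined with the definition of $\mathcal H$ and the spectral theorem gives $\mathcal H(M(\LQ)f)(s)=M(s^2)\,\mathcal Hf(s)$. Letting $f$ range over a family of radial test functions whose spherical transforms have no common zero then forces $\mathcal Hk_{M(\LQ)}(s)=M(s^2)$.

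For the relation $k_{M(\D)}=\du\,k_{M(\LQ)}$, I would iterate $\delta^{-1/2}\D\delta^{1/2}f=\LQ f$ on radial $f\in C_c^\infty(S)$ and invoke the spectral theorem to extend the identity to $M(\D)(\du f)=\du\,M(\LQ)f$. Rewriting both sides as convolutions and using the multiplicativity of the modular function in the form $\du(xy^{-1})=\du(x)\,\dum(y)$, a short direct calculation yields
$$(\du f)\ast k_{M(\D)}(x)=\du(x)\,\bigl(f\ast(\dum\,k_{M(\D)})\bigr)(x),$$
so the identity reduces to $f\ast(\dum k_{M(\D)})=f\ast k_{M(\LQ)}$ for every radial $f$. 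Uniqueness of the radial kernel then forces $\dum k_{M(\D)}=k_{M(\LQ)}$, whence $k_{M(\D)}=\du\,k_{M(\LQ)}$. The only technical delicacy, rather than a genuine obstacle, is extending the various identities from smooth, compactly supported $M$ to a general bounded Borel $M$; this is routine via spectral calculus together with the Plancherel theorem for $\mathcal H$ on radial $L^2$ functions.
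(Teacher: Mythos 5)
Your arguments for the spherical transform formula $\mathcal H k_{M(\LQ)}(s)=M(s^2)$ and for the relation $k_{M(\D)}=\du\,k_{M(\LQ)}$ via the conjugation $\delta^{-1/2}\D\delta^{1/2}=\LQ$ and the modular-function identity $\du(xy^{-1})=\du(x)\,\dum(y)$ are sound, and they follow the standard route of \cite{A1} and \cite{ADY}, to which the paper simply refers without giving a proof.

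There is, however, a genuine gap in your radiality step. You assert that because $\LB$ commutes with every Riemannian isometry of $(S,d)$ fixing $e$, it commutes with the radialisation operator $\mathcal R$. This implication is false: $\mathcal R$ is not an average over isometries of $(S,d)$. It is defined by pulling back the spherical average on the ball $\mathcal B$ through the map $F$, and the orthogonal group of $\mathcal B$, conjugated by $F$, is not in general a group of Riemannian isometries of $S$. For a non-symmetric Damek--Ricci space the isotropy group at $e$ is far too small to act transitively on geodesic spheres --- these spaces are harmonic but not two-point homogeneous, which is precisely why they refute the Lichnerowicz conjecture. The fact that $\LB$ commutes with $\mathcal R$, equivalently that $\LB$ preserves radial functions, is the nontrivial mean-value property of harmonic manifolds and is one of the central theorems of Damek and Ricci; it is not a formal consequence of isometry invariance. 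If you invoke the harmonicity of $S$ (or the Damek--Ricci commutation theorem) directly in place of the isometry argument, the remainder of your radiality step --- spectral calculus followed by testing against a radial approximate identity at $e$ --- goes through.
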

\begin{proof}
See \cite{A1}, \cite{ADY}.
\end{proof}

\section{The multiplier theorem}
In this section we formulate our main result about spectral multipliers for the Laplacian $\D$. We need some more notation.

Let $M$ be a bounded measurable function on $\RR^+$. We denote by $K_{M(\D)}$ the integral kernel of the operator $M(\D)$ defined by
$$K_{M(\D)}(x,y)=k_{M(\D)}(y^{-1}x)\,\delta(y)\qquad\forall x,y\in S\,.$$
The reason for this definition is that if the convolution kernel $k_{M(\D)}$ is smooth, then 
\begin{align*}
M(\D)f(x)&=\int_SK_{M(\D)}(x,y)\,f(y)\dir (y)\\
&=f\ast k_{M(\D)}(x)\qquad \forall f\in C^{\infty}_c(S)\quad\forall x\in S\,.
\end{align*}
Note that
\begin{align*}
f\ast k_{M(\D)}(x)&=\int_Sf(xy^{-1})\,k_{M(\D)}(y)\dir (y)\\
&=\int_Sf(xy)\,k_{M(\D)}(y^{-1})\,\delta(y)\dir (y)\\
&=\int_SK_{M(\D)}(x,y)\,f(y)\dir (y)\qquad \forall f\in C^{\infty}_c(S)\quad\forall x\in S\,.
\end{align*}
Thus,
$$K_{M(\D)}(x,y)=k_{M(\D)}(y^{-1}x)\,\delta(y)\qquad\forall x,y\in S\,.$$

Now let $\psi$ be a function in $C^{\infty}_c(\RR^+)$, supported in $[1/4,4]$, such that
\begin{align}\label{sumpsi}
\sum_{j\in\ZZ}\psi(2^{-j}\lambda)&=1\qquad \forall \lambda\in\RR^+\,.
\end{align} 
We define $\|M\|_{0,s}$ and $\|M\|_{\infty,s}$ thus:
\begin{align*}
\|M\|_{0,s}&=\sup_{t<1}\|M(t\cdot)\,\psi(\cdot)\|_{H^s(\RR)}\,,\\
\|M\|_{\infty,s}&=\sup_{t\geq 1}\|M(t\cdot)\,\psi(\cdot)\|_{H^s(\RR)}\,,
\end{align*}
where $H^s(\RR)$ denotes the $L^2$-Sobolev space of order $s$ on $\RR$, i.e. the space of all measurable functions $f$ on $\RR$ such that
$$\|f\|_{H^s(\RR)}=\Big(\int_{\RR}|\hat{f}(\xi)|^2\,(1+|\xi|^2)^{s/2}\di \xi\Big)^{1/2}<\infty\,.$$
\begin{defi}
We say that a bounded measurable function $M$ defined on $\RR^+$ satisfies a {\rm{mixed \MH condition of order $(s_0,s_{\infty})$}} if $\|M\|_{0,s_0}<\infty$ and $\|M\|_{\infty,s_{\infty}}<\infty$.
\end{defi}
The next result, which is the main contribution of this chapter, gives a sufficient condition of \MH type for spectral multipliers of the Laplacian $\D$ to be bounded from $L^1(\rho)$ to $\lorentz{1}{\infty}{\rho}$ and on $L^p(\rho)$, for $1<p<\infty$.
\begin{teo}\label{moltiplicatori}
Let $S$ be a \DR space. Suppose that $s_0>\frac{3}{2}$ and $s_{\infty}>\max\left\{\frac{3}{2},\frac{n}{2}\right\}$, where $n$ denotes the dimension of $S$. If $M$ satisfies a mixed \MH condition of order $(s_0,s_{\infty})$, then $M(\D)$ extends to a bounded operator from $\lu{\rho}$ to $\lorentz{1}{\infty}{\rho}$ and on $L^p(\rho)$, for all $p$ in $ (1,\infty)$.
\end{teo}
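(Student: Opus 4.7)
The strategy, which follows the outline in the introduction and that of \cite[Thm~2.4]{HS}, is to realize $M(\D)$ as a singular integral operator on the Calder\'on--Zygmund space $(S,\rho,d)$ (guaranteed by Theorem~\ref{CZd}) and then invoke Theorem~\ref{Teolim}.  Since $M$ is bounded, the spectral theorem gives $L^2(\rho)$-boundedness of $M(\D)$ with norm at most $\|M\|_{\infty}$, so by Remark~\ref{condHS} it suffices to decompose $M(\D)=\sum_{j\in\ZZ}T_j$ strongly on $L^2(\rho)$ with kernels $K_j(x,y)=k_{T_j}(y^{-1}x)\,\delta(y)$ satisfying the two integral bounds (\ref{stime1}) and (\ref{stime2}) uniformly in $j$, for some $c>1$ and $a,\varepsilon>0$.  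I use the dyadic decomposition provided by (\ref{sumpsi}): put $M_j(\lambda)=M(\lambda)\,\psi(2^{-j}\lambda)$, $T_j=M_j(\D)$, and rescale by $m_j(\lambda)=M_j(2^j\lambda)$, so that each $m_j$ is supported in $[1/4,4]$ with $\|m_j\|_{H^{s_0}(\RR)}\leq\|M\|_{0,s_0}$ for $2^j<1$ and $\|m_j\|_{H^{s_\infty}(\RR)}\leq\|M\|_{\infty,s_\infty}$ for $2^j\geq 1$.  Taking $c=2^{1/2}$, the left invariance of $d$ and the change of variable $u=y^{-1}x$ reduce (\ref{stime1}) to a weighted $L^1(\rho)$-estimate of $k_{T_j}$, and (\ref{stime2}) to the companion gradient estimate on $k_{T_j}$ with scaling~$2^{j/2}$, via a mean value argument along a geodesic from $y$ to $z$ combined with the bound $|\delta(y)-\delta(z)|\lesssim d(y,z)$ needed to handle the modular factor.

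By Proposition~\ref{relazionenuclei}, $k_{T_j}=\delta^{1/2}\,k_{M_j(\LQ)}$, where $k_{M_j(\LQ)}$ is radial with spherical transform $M_j(s^2)$ supported in $s\in[2^{(j-2)/2},2^{(j+2)/2}]$.  By Lemma~\ref{intduf} the weighted $L^1(\rho)$-norm of $k_{T_j}$ equals
$$\int_0^{\infty}\phi_0(r)\,|k_{M_j(\LQ)}(r)|\,(1+2^{j/2}r)^{\varepsilon}\,A(r)\di r.$$
Cauchy--Schwarz against a polynomial weight together with the Plancherel formula $\int|f|^2\dil=c_S\int|\mathcal{H}f(s)|^2|\mathbf{c}(s)|^{-2}\di s$ translates this into a weighted $L^2$-estimate of $M_j(s^2)$ on the spectral side, and integration by parts trades factors of $(1+r)$ for derivatives of $M_j(s^2)$ in $s$.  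The Harish--Chandra bound (\ref{HC}), combined with (\ref{stimafi0}) for $\phi_0$ and (\ref{pesoA}) for $A$, dictates the Sobolev thresholds on $m_j$: in the local regime $2^j<1$ only the low-frequency behaviour $|\mathbf{c}(s)|^{-2}\lesssim s^2$ enters and forces $s_0>3/2$, whereas in the high-frequency regime $2^j\geq 1$ the growth $|\mathbf{c}(s)|^{-2}\lesssim s^{n-1}$ combines with the extra $2^{j/2}$-scaling required for the gradient bound to force $s_\infty>\max\{3/2,n/2\}$.  Strong convergence of $\sum_jT_j$ on $L^2(\rho)$ follows from Plancherel and the pointwise identity $\sum_jM_j\equiv M$ of (\ref{sumpsi}).

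The technical heart of the argument, and the main obstacle, is the companion gradient bound, which I would derive from the heat kernel estimate $\|\nabla h_t\|_{L^1(\rho)}\leq C\,t^{-1/2}$ by a subordination argument writing $T_j=[M_j(\D)\,\nep^{2^{-j}\D}]\,\nep^{-2^{-j}\D}$, noting that the factor in brackets is $L^2$-bounded uniformly in $j$ since $M_j$ is supported in $[2^{j-2},2^{j+2}]$, and reducing the gradient of $k_{T_j}$ to that of the heat kernel $h_{2^{-j}}$.  Unlike the setting of \cite{HS}, the descent method is unavailable in our generality, so the heat kernel estimate has to be proved directly from the Abel inversion formulas (\ref{inv1}) and (\ref{inv2}) applied to the Gaussian $\nep^{-ts^2}$.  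When $m_\zg$ is even, (\ref{inv1}) is a finite composition of the first-order operators $\mathcal{D}_1,\mathcal{D}_2$ of (\ref{D1D2}), and a careful distribution of the derivatives together with sharp pointwise bounds yields $h_t$ and $X_ih_t$ in the required $L^1(\rho)$-norm with the correct $t^{-1/2}$ scaling, splitting the integration over $r$ into a near-diagonal region $r\lesssim\sqrt t$, where the behaviour is Euclidean-like, and a far region $r\gtrsim\sqrt t$, where the Gaussian damping dominates the exponential growth of $A(r)$.  The hard case is $m_\zg$ odd, in which (\ref{inv2}) carries the singular integral with measure $\di\nu(s)=(\cosh s-\cosh r)^{-1/2}\sinh s\,\di s$ on top of those derivatives; commuting the geometric vector fields $X_i$ past this integral while retaining enough decay to integrate against $\phi_0A$ will require a careful splitting of the integration region near $s=r$ and a change of variables designed to extract the cancellation between the Abel-type singularity and the hyperbolic weight $A(r)$.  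This is the step that will consume most of the work and for which the argument of \cite{HS} is not directly adaptable.
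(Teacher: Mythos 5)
Your overall strategy is the right one and matches the paper: dyadic decomposition $M=\sum_j m_j(2^{-j}\cdot)$, a uniform weighted $L^1(\rho)$ estimate on the kernels of $m_j(2^{-j}\D)$, a H\"ormander-type difference estimate reduced by subordination through the heat semigroup to $\|\nabla h_t\|_{\lu\rho}\leq C\,t^{-1/2}$, and finally Remark~\ref{condHS} plus Theorem~\ref{Teolim}. You also correctly flag that the descent argument of Hebisch--Steger is unavailable, that the gradient bound must come directly from the Abel inversion formulas (\ref{inv1})--(\ref{inv2}), and that the odd-$m_{\zg}$ case with the measure $\di\nu(s)=(\cosh s-\cosh r)^{-1/2}\sinh s\,\di s$ is where the real work is.

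There are, however, three concrete gaps. First, ``Cauchy--Schwarz against a polynomial weight'' will not produce the $L^1(\rho)$ estimate: the volume growth is exponential, $\rho(B_r)\asymp\nep^{Qr}$, so $\int_{B_r}w^{-1}\dir$ blows up for any weight polynomial in $r=d(x,e)$. The paper uses the carefully tuned weight $w=\delta^{-1/2}\nep^{Qd(\cdot,e)/2}$, whose reciprocal, integrated against $\phi_0\,A$ via Lemma~\ref{intduf}, cancels the exponential growth and gives $\int_{B_r}w^{-1}\dir\lesssim r^2$ for $r\geq 1$ (Lemma~\ref{peso}). Without this specific weight (or the companion Paley--Wiener decomposition $f=\sum_\ell f_\ell$ with $\hat f_\ell$ compactly supported of Lemma~\ref{decomp}), Step~1 does not close. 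Second, the asserted bound $|\delta(y)-\delta(z)|\lesssim d(y,z)$ is false: $\delta(X,Z,a)=a^{-Q}$ is unbounded, so $\delta(y)-\delta(z)$ can be arbitrarily large even when $d(y,z)$ is small. In the paper the modular factor never needs to be Lipschitz-estimated; it is removed outright by the symmetry $H_t(u,y)=H_t(y,u)$ and the change of variable that converts $\int|H_t(u,y)-H_t(u,z)|\dir(u)$ into $\int|h_t(uv)-h_t(u)|\dir(u)$ with $v=z^{-1}y$, and only then is the geodesic mean-value argument applied. Third, in the subordination $T_j=[M_j(\D)\nep^{2^{-j}\D}]\,\nep^{-2^{-j}\D}$ it is not enough that the bracketed factor is $L^2$-bounded uniformly in $j$: writing $k_{T_j}=h_{2^{-j}}\ast k_{\mu_j(2^{-j}\D)}$ with $\mu_j(s)=m_j(s)\nep^{-s}$, the difference estimate requires $\|k_{\mu_j(2^{-j}\D)}\|_{\lu\rho}$ to be bounded uniformly (see (\ref{KM}) in the paper), which is an $L^1$-kernel bound and must be extracted by applying the Step~1 estimate to the function $\mu_j$; $L^2$ operator boundedness does not control the $L^1$ norm of the convolution kernel. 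These three points are all fixable and the needed tools are in the paper, but as written the proposal skips them.
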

{\bf{Sketch of the proof. }} Let $\varepsilon$ be such that $s_0>\frac{3}{2}+\varepsilon$ and $s_{\infty}>\max\left\{\frac{3}{2},\frac{n}{2}\right\}+\varepsilon$. We split up the proof into three steps.

{\bf{Step 1.}} Let $m$ be in $H^{s_0}(\RR)\cap H^{s_{\infty}}(\RR)$ supported in $[1/4,4]$. We shall prove that there exists a constant $C$ such that the integral kernel $K_{m(t\D)}$ satisfies the following estimate:
\begin{align}\label{stimaA}
\int_S|K_{m(t\D)}(x,y)|\,\big(1+t^{-1/2}d(x,y)\big)^{\varepsilon}\dir(x)&\leq  \begin{cases}
C\|m\|_{H^{s_0}(\RR)}& \forall~t\in [1,\infty)\\
C\|m\|_{H^{s_{\infty}}(\RR)}& \forall~t\in (0,1)\,\qquad \forall y\in S\,.
\end{cases}
\end{align}

{\bf{Step 2.}} Let $m$ and $K_{m(t\D)}$ be as in Step 1. Then there exists a constant $C$ such that
\begin{align}\label{stimaB}
&\int_S|K_{m(t\D)}(x,y)-K_{m(t\D)}(x,z)|\dir(x)\nonumber\\
\leq&\, \begin{cases}
C\,t^{-1/2}d(y,z)\,\|m\|_{H^{s_0}(\RR)}& \forall t\in [1,\infty)\\
C\,t^{-1/2}d(y,z)\,\|m\|_{H^{s_{\infty}}(\RR)}& \forall t\in (0,1)\,\qquad\forall y,z\in S\,.
\end{cases}
\end{align}
The proof of (\ref{stimaB}) hinges on an $L^1$-estimate of the gradient of the heat kernel associated to the Laplacian $\D$.

{\bf{Step 3.}} We show how Step 1 and Step 2 imply the conclusion of the theorem. Let $M$ be as in the statement of the theorem. Define 
$$m_j(\lambda)=M(2^j\lambda)\,\psi(\lambda)\qquad\forall j\in\ZZ\quad\forall\lambda\in\RR^+\,,$$
where $\psi$ is as in (\ref{sumpsi}). We observe that, at least formally,
$$M(\D)=\sum_{j\in\ZZ}m_j(2^{-j}\D)\,.$$
Since each function $m_j$ is supported in $[1/4,4]$ we may apply estimates (\ref{stimaA}) and (\ref{stimaB}) to $m_j$ and $t=2^{-j}$, to obtain that
\begin{align}\label{stimaAD}
\int_S|K_{m_j(2^{-j}(\D)}(x,y)|(1+2^{j/2}d(x,y))^{\varepsilon}\dir(x)&\leq  \begin{cases}
C\,\|M\|_{0,s_0}& \forall~j\leq 0\\
C\,\|M\|_{\infty,s_{\infty}}& \forall~j>0\quad\forall y\in S\,,
\end{cases}
\end{align}
and
\begin{align}\label{stimaBD}
&\int_S|K_{m_j(2^{-j}(\D)}(x,y)-K_{m_j(2^{-j}(\D)}(x,z)|\dir(x)\nonumber \\
\leq& \begin{cases}
C\,2^{j/2}\,d(y,z) \,\|M\|_{0,s_0}& \forall~j\leq 0\\
C\,2^{j/2}\,d(y,z) \,\|M\|_{\infty,s_{\infty}}& \forall~j>0\quad\forall y,z\in S\,.
\end{cases}
\end{align}
Then by Remark \ref{condHS} the operator $M(\D)$ satisfies the hypotheses of Theorem \ref{Teolim}, then it is bounded from $L^1(\rho)$ to $L^{1,\,\infty}(\rho)$, on $L^p(\rho)$ for all $p$ in $(1,2]$ and, by duality, for all $p$ in $[2,\infty)$. 

To conclude the proof of the theorem, we need to give full details of Step 1 and Step 2. This will be done in Subsections \ref{stima1section} and \ref{stima2section} below respectively.

\subsection{Step 1}\label{stima1section}
The proof of (\ref{stimaA}) is based on some technical lemmata and follows \cite{HEB}. The weight function $w$ on $S$ defined by
$$w(x)=\dum (x)\,\nep^{{Qd(x,e)}/2}\qquad\forall x\in S\,,$$
will play an important r\^ole in the sequel. 
\begin{lem}\label{peso}
There exists a constant $C$ such that the following hold:
\begin{itemize}
\item[(i)]$\int_{B_r}w^{-1}\,{\rm{d}}\rho\leq \begin{cases}
C\,r^2& \forall r\in [1,\infty)\\
C\,r^n& \forall r\in (0,1)\,;
\end{cases}$
\item[(ii)] for every compactly supported function  $f$ on $\RR^+$
\begin{align*}
\int_{B_r}|k_{f(\D)}|^2\,w\,{\rm{d}}\rho&\leq C\, (1+r)\, \int_{B_r}|k_{f(\D)}|^2\di\rho\,.
\end{align*}
\end{itemize}
\end{lem}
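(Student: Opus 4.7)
\medskip

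\textbf{Proof proposal for Lemma \ref{peso}.}

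My plan is to reduce both assertions to one-dimensional integrals on $\RR^+$ against the Jacobian $A(r)$, by exploiting the radial symmetry of the integrands together with the integration formula in Lemma \ref{intduf}. Since $w^{-1} = \delta^{1/2}\,\nep^{-Qd(\cdot,e)/2}$ is the product of $\delta^{1/2}$ and the radial function $\nep^{-Qd(\cdot,e)/2}$, Lemma \ref{intduf} gives
$$
\int_{B_r} w^{-1}\dir \;=\; \int_0^r \phi_0(s)\,\nep^{-Qs/2}\,A(s)\dir s.
$$
I would then insert the bound $\phi_0(s)\leq C(1+s)\,\nep^{-Qs/2}$ from (\ref{stimafi0}) together with the bound $A(s)\leq C\bigl(s/(1+s)\bigr)^{n-1}\nep^{Qs}$ from (\ref{pesoA}) to obtain
$$
\phi_0(s)\,\nep^{-Qs/2}\,A(s)\;\leq\; C\,(1+s)\,\Bigl(\tfrac{s}{1+s}\Bigr)^{n-1}.
$$
The right-hand side behaves like $s^{n-1}$ for $s\in(0,1)$ and like $s$ for $s\geq 1$. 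Integrating in $s$ from $0$ to $r$ yields $Cr^n$ in the small-$r$ regime and, once one also uses that the integral up to $1$ contributes a fixed constant, $Cr^2$ in the large-$r$ regime, which proves (i).

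For (ii), the key observation is that by Proposition \ref{relazionenuclei} the kernel $k_{f(\D)}$ is the product of the radial kernel $k_{f(\LQ)}$ with $\delta^{1/2}$; thus $|k_{f(\D)}|^2 = \delta\,|k_{f(\LQ)}|^2$. I would compute the right-hand side of (ii) by writing
$$
\int_{B_r}|k_{f(\D)}|^2\dir \;=\; \int_{B_r}|k_{f(\LQ)}|^2\dil \;=\; \int_0^r |k_{f(\LQ)}(s)|^2\,A(s)\dir s,
$$
where in the first equality I used $\dir=\delta\dil$ and in the second the polar integration formula of Theorem \ref{intS} applied to the radial function $|k_{f(\LQ)}|^2$. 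For the left-hand side I would write $|k_{f(\D)}|^2\,w = \delta^{1/2}\,|k_{f(\LQ)}|^2\,\nep^{Qd(\cdot,e)/2}$, so that $|k_{f(\D)}|^2\,w$ is $\delta^{1/2}$ times the radial function $|k_{f(\LQ)}|^2\,\nep^{Qd(\cdot,e)/2}$. A second application of Lemma \ref{intduf} gives
$$
\int_{B_r}|k_{f(\D)}|^2\,w\dir \;=\; \int_0^r \phi_0(s)\,\nep^{Qs/2}\,|k_{f(\LQ)}(s)|^2\,A(s)\dir s.
$$
Using once more $\phi_0(s)\leq C(1+s)\,\nep^{-Qs/2}$, the weight $\phi_0(s)\,\nep^{Qs/2}$ is pointwise bounded by $C(1+s)\leq C(1+r)$ for $s\in(0,r)$, so the last integral is at most $C(1+r)\int_0^r|k_{f(\LQ)}(s)|^2 A(s)\dir s$, which is exactly $C(1+r)\int_{B_r}|k_{f(\D)}|^2\dir$.

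There is really no hard step here, once one sets things up correctly; the only point that requires a little care is the observation that the non-radial factor $\delta^{1/2}$ in $k_{f(\D)}$ can be absorbed either into the measure (turning $\dir$ into $\dil$) or into the integration formula of Lemma \ref{intduf}, and that the two resulting radial integrals differ precisely by the factor $\phi_0(s)\,\nep^{Qs/2}$, which is controlled by $1+r$ thanks to the sharp bound (\ref{stimafi0}) on the ground spherical function.
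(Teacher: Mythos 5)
Your proposal is correct and follows essentially the same route as the paper: both reduce to one-dimensional integrals via Lemma \ref{intduf} and then invoke the bounds (\ref{stimafi0}) on $\phi_0$ and (\ref{pesoA}) on $A$. The only (cosmetic) difference is that for part (ii) you bound $\phi_0(s)\,\nep^{Qs/2}\leq C(1+r)$ uniformly on $(0,r)$ in one step, whereas the paper first splits $B_r$ into unit annuli and applies Lemma \ref{intduf} annulus by annulus before recombining — a decomposition your argument shows to be unnecessary.
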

\begin{proof}
If $r<1$, then
$$\int_{B_r}w^{-1}\dir\leq C\,\rho(B_r)\leq C\,r^n\,.$$
If $r\geq 1$, then by Lemma \ref{intduf}
\begin{align*}
\int_{B_r}w^{-1}\dir&=\int_{B_r}\du (x)\,\nep^{{-Qd(x,e)}/2}\dir(x)\\
&=\int_0^{r}\phi _0(t)\,\nep^{-Qt/2}\,A(t)\di t\,,
\end{align*}
which, by (\ref{stimafi0}) and (\ref{pesoA}), is bounded above by
\begin{align*}
&C\int_0^{r}(1+t)\,\nep^{-Qt/2}\nep^{-Qt/2}\Big(\frac{t}{1+t}\Big)^{n-1}\nep^{Qt}\di t\\
\leq\,& C\,r^2\,.
\end{align*}
This concludes the proof of (i).

To prove (ii), let $f$ be compactly supported on $\RR^+$  and let $k_{f(\LQ)}$ denote the convolution kernel of the operator $f(\LQ)$. By Proposition \ref{relazionenuclei}, $k_{f(\D)}=\du \,k_{f(\LQ)}$.

We split up the ball $B_r$ into the annuli $A_R=\{x\in S:~R-1<d(x,e)<R\}$, $R=1,...,[r]$ and $C_r=\{x\in S:~[r]<d(x,e)<r\}$ and estimate the integral of $|k|^2\,w$ on $A_R$ and $C_r$ separately. 

By Lemma \ref{intduf} 
\begin{align*}
\int_{A_R}|k_{f(\D)}|^2\,w\,\dir&=\int_{A_R}|\du (x)\,k_{f(\LQ)}(x)|^2\,\dum (x)\,\nep^{{Qd(x,e)}/2}\dir(x)\\
&=\int_{R-1}^R\phi_0(t)\,|k_{f(\LQ)}(t)|^2\,\nep^{Qt/2}\,A(t)\di t\,,
\end{align*}
which, by (\ref{stimafi0}), is bounded above by
\begin{align}\label{AR}
& C\int_{R-1}^R (1+t)\,|k_{f(\LQ)}(t)|^2\,A(t)\di t\nonumber\\
\leq&\, C \,(1+r)\, \int_{A_R}|k_{f(\LQ)}|^2\dil\nonumber\\
=&\,C\,(1+r)\,\int_{A_R}|k_{f(\D)}|^2\dir\,.
\end{align}
The proof of the estimate
\begin{equation}\label{CR}
\int_{C_r}|k_{f(\D)}|^2\,w\dir\leq C\,(1+r)\,\int_{C_r}|k_{f(\D)}|^2\dir\,
\end{equation}
is similar and is omitted. Now (ii) follows immediately from (\ref{AR}) and (\ref{CR}).
\end{proof}
Our main purpose is to prove an $L^1$-estimate for the convolution kernel of a multiplier of the Laplacian $\D$. The following result gives an estimate of this type in a particular case.
\begin{lem}\label{norma1}
Let $f$ be an even function on $\RR$ such that its Fourier transform $\hat{f}$ is supported in $[-r,r]$. Then $k_{f(\sqrt{\D})}$ satisfies the following estimate:
\begin{align*}
\int_S|k_{f(\sqrt{\D})}|{\rm{d}}\rho&\leq\, \begin{cases} 
Cr^{n/2}\big(\int_0^{\infty}|f(s)|^2\,(s ^2+s^{n-1})\di s\big)^{1/2}& \forall r\in (0,1)\\
Cr^{3/2}\big(\int_0^{\infty}|f(s)|^2\,(s ^2+s^{n-1})\di s\big)^{1/2}& \forall r\in [1,\infty).
\end{cases}
\end{align*}
\end{lem}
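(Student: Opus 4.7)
The plan is to reduce the $L^1$ estimate on $k_{f(\sqrt{\Delta})}$ to an $L^2$ estimate via a weighted Cauchy--Schwarz argument, exploiting finite propagation speed to restrict the integration to the ball $B_r$.

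First I would observe that $k_{f(\sqrt{\Delta})}$ is supported in the ball $B_r$. Since $f$ is even with $\hat f$ supported in $[-r,r]$, we may write
\begin{equation*}
f(\sqrt{\D})=\frac{1}{2\pi}\int_{-r}^{r}\hat f(\tau)\,\cos(\tau\sqrt{\D})\,\di\tau,
\end{equation*}
and since the wave propagator $\cos(\tau\sqrt{\D})$ has propagation speed at most one in the metric $d$ (this is inherited from the Laplace--Beltrami operator $\LB$ via the identity $\delta^{-1/2}\D\,\delta^{1/2}=\LQ$, because $k_{f(\sqrt{\D})}=\du\,k_{f(\sqrt{\LQ})}$ and $\LQ$ differs from $\LB$ only by a constant), the convolution kernel is supported in the closed ball of radius $r$ about the identity.

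Next I would apply Cauchy--Schwarz with the weight $w(x)=\dum(x)\,\nep^{Qd(x,e)/2}$ introduced in Lemma \ref{peso}:
\begin{equation*}
\int_S|k_{f(\sqrt{\D})}|\dir=\int_{B_r}|k_{f(\sqrt{\D})}|\,w^{-1/2}\,w^{1/2}\dir\leq\Big(\int_{B_r}w^{-1}\dir\Big)^{1/2}\Big(\int_{B_r}|k_{f(\sqrt{\D})}|^2\,w\dir\Big)^{1/2}.
\end{equation*}
Lemma \ref{peso}(i) bounds the first factor by $Cr^{n/2}$ when $r<1$ and by $Cr$ when $r\geq 1$, while Lemma \ref{peso}(ii) gives
\begin{equation*}
\int_{B_r}|k_{f(\sqrt{\D})}|^2\,w\dir\leq C(1+r)\int_{B_r}|k_{f(\sqrt{\D})}|^2\dir.
\end{equation*}

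The final step is to compute the remaining $L^2(\rho)$ norm by Plancherel. Using $k_{f(\sqrt{\D})}=\du\,k_{f(\sqrt{\LQ})}$ and $\dil=\delta\dir$, we get
\begin{equation*}
\int_S|k_{f(\sqrt{\D})}|^2\dir=\int_S|k_{f(\sqrt{\LQ})}|^2\dil=c_S\int_0^{\infty}|f(s)|^2\,|\mathbf{c}(s)|^{-2}\di s,
\end{equation*}
since by Proposition \ref{relazionenuclei} the spherical transform of $k_{f(\sqrt{\LQ})}$ is $f$. Inserting the estimate (\ref{HC}) for the Plancherel density $|\mathbf{c}(s)|^{-2}\leq s^{2}+s^{n-1}$ and collecting the factors $(1+r)^{1/2}$ with either $r^{n/2}$ (if $r<1$) or $r$ (if $r\geq 1$, where $(1+r)^{1/2}\leq C r^{1/2}$) yields the two stated bounds $Cr^{n/2}$ and $Cr^{3/2}$ respectively.

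I do not expect serious obstacles: the main technical point is the justification of finite propagation speed for $\sqrt{\D}$ in the Riemannian metric $d$, which must be handled carefully because $\D$ differs from $\LB$ by the drift term $QX_0$; however, conjugation by $\du$ transfers the question to $\sqrt{\LQ}$, for which finite propagation speed in $d$ is classical. Everything else is routine spherical Plancherel and weight bookkeeping.
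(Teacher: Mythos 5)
Your proposal is correct and follows essentially the same line as the paper's proof: Cauchy--Schwarz against the weight $w=\dum\,\nep^{Qd(\cdot,e)/2}$, Lemma \ref{peso} for the two factors, and spherical Plancherel together with the bound (\ref{HC}) on $|\mathbf{c}(s)|^{-2}$. The only variation is the justification of ${\rm supp}\,k_{f(\sqrt{\D})}\subseteq \overline{B_r}$: you derive it from finite propagation speed for $\cos(\tau\sqrt{\LQ})$ after writing $f(\sqrt{\D})$ as a superposition of wave propagators, whereas the paper simply invokes the spherical Paley--Wiener theorem (already stated in Section 1.3) applied to $\mathcal H k_{f(\sqrt{\LQ})}=f$; both are standard and equivalent here, and conjugation by $\du$ indeed leaves the support unchanged, so your substitution is sound.
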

\begin{proof}
Let $k_{f(\sqrt{\LQ})}$ denote the convolution kernel of the operator $f(\sqrt{\LQ})$. By Proposition \ref{relazionenuclei}, $k_{f(\sqrt{\D})}=\du\, k_{f(\sqrt{\LQ})}$ and $\mathcal Hk_{f(\sqrt{\LQ})}(s)=f(\sqrt{s ^2})=f(s)$ for all $s\in \RR^+\,.$ The hypothesis that the Fourier transform of $f$ is contained in $[-r,r]$ implies that $k_{f\left(\sqrt{\LQ}\right)}$ is supported in the ball $B_r$.

By H\"older's inequality we obtain that
\begin{align}\label{holder}
\int_S|k_{f(\sqrt{\D})}|\dir&=\int_{B_r}|k_{f(\sqrt{\D})}|\,w^{1/2}\,w^{-1/2}\dir\nonumber\\
&\leq \Big( \int_{B_r}w^{-1}\dir \Big)^{1/2}\Big( \int_{B_r}|k_{f(\sqrt{\D})}|^2\,w\dir   \Big)^{1/2}\,.
\end{align}
By Lemma \ref{peso} (ii) we have that
\begin{align*}
\Big( \int_{B_r}|k_{f(\sqrt{\D})}|^2\,w\dir   \Big)^{1/2}&\leq  C\, (1+r)^{1/2}\,\Big( \int_{B_r}|k_{f(\sqrt{\D})}|^2\dir  \Big)^{1/2}\\
&= C \,(1+r)^{1/2}\,\Big( \int_{B_r}|k_{f(\sqrt{\LQ})}|^2\dil   \Big)^{1/2}\\
&= C \,(1+r)^{1/2}\,\Big(\int_0^{\infty}|f(s)|^2\,|{\bf{c}}(s)|^{-2}\di s\Big)^{1/2}\\
&\leq C\,(1+r)^{1/2}\,\Big(\int_0^{\infty}|f(s)|^2\,(s^2+s^{n-1})\di s\Big)^{1/2}\,.
\end{align*}
Here we have used Plancherel formula and estimate (\ref{HC}) for the Plancherel measure. Thus, by (\ref{holder}) and 
Lemma \ref{peso} (i) we deduce that
\begin{align*}
\int_S|k_{f(\sqrt{\D})}|{\rm{d}}\rho&\leq\, \begin{cases} 
Cr^{n/2}\big(\int_0^{\infty}|f(s)|^2\,(s ^2+s^{n-1})\di s\big)^{1/2}& \forall r\in (0,1)\\
Cr^{3/2}\big(\int_0^{\infty}|f(s)|^2\,(s ^2+s^{n-1})\di s\big)^{1/2}& \forall r\in [1,\infty)\,,
\end{cases}
\end{align*}
as required.
\end{proof}
The previous result gives a good estimate for the $L^1$-norm of the kernel of a multiplier whose Fourier transform has compact support. Next lemma shows that every function $f$ supported in $[1/2,2]$ may be written as sum of functions whose Fourier transform has compact support (for the proof see \cite[Lemma 1.3]{HEB}).\begin{lem}\label{decomp}
Let $q$ and $Q$ be real numbers such that $0<q\leq Q$ and $f$ be a function in $H^s(\RR)$ supported in $[1/2,2]$. Then there exist even functions $f_{\ell}\,,~\ell\geq 0$, such that
\begin{itemize}
\item[(i)]$f=\sum_{\ell=0}^{\infty} f_{\ell}\,;$
\item[(ii)] ${\rm{supp}}(\hat{f}_{\ell})\subset[-2^{\ell},2^{\ell}]\,;$
\item[(iii)]$\int_0^{\infty}|f_{\ell}(s)|^2\,(s^{2q}+s^{2Q})\,{\rm{d}} s\leq C\, 2^{-2sl}\,\|f\|_{H^s(\RR)}\,.$
\end{itemize}
Let $f_t$ denote the dilated of $f$ defined by $f_t(\cdot)=f(t\cdot)$. Then
\begin{itemize}
\item[(i')]$f_t=\sum_{\ell} f_{\ell,t}\,$, where $f_{\ell,t}(\cdot)=f_{\ell}(t\cdot)\,;$
\item[(ii')] ${\rm{supp}}(\hat{f}_{\ell,t})\subset[-2^{\ell}t,2^{\ell}t]\,;$
\item[(iii')] $\int_0^{\infty}|f_{\ell,t}(s)|^2\,(s^{2q}+s^{2Q})\,{\rm{d}} s\leq \begin{cases}
C\,t^{-(2q+1)}\, 2^{-2s\ell}\,\|f\|_{H^s(\RR)}& \forall t\in [1,\infty)\\
C\,t^{-(2Q+1)} \,2^{-2s\ell}\,\|f\|_{H^s(\RR)}& \forall t\in (0,1).
\end{cases}$
\end{itemize}
\end{lem}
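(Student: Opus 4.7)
The plan is to perform a Littlewood--Paley dyadic decomposition on the Fourier side of $f$ (after extending $f$ to an even function on $\RR$), and then exploit the compact support of $f$ to handle the polynomial weights $s^{2q}+s^{2Q}$.

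First I would extend $f$ to an even function on $\RR$, still denoted $f$, supported in $[-2,-1/2]\cup [1/2,2]$. Then I would pick an even cutoff $\Phi\in C^\infty_c(\RR)$ with $\Phi\equiv 1$ on $[-1/2,1/2]$ and $\mathrm{supp}(\Phi)\subset[-1,1]$, set $\Phi_\ell(\xi)=\Phi(2^{-\ell}\xi)$, and define
\begin{equation*}
\psi_0=\Phi_0,\qquad \psi_\ell=\Phi_\ell-\Phi_{\ell-1}\quad(\ell\geq 1),
\end{equation*}
so that $\sum_\ell\psi_\ell\equiv 1$ by telescoping, each $\psi_\ell$ is even with $\mathrm{supp}(\psi_\ell)\subset[-2^\ell,2^\ell]$, and for $\ell\geq 1$ the function $\psi_\ell$ vanishes on $(-2^{\ell-2},2^{\ell-2})$. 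I then put $\widehat{f}_\ell=\psi_\ell\widehat{f}$ and $f_\ell=\check\psi_\ell*f$, which immediately gives (i) $f=\sum_\ell f_\ell$ and (ii) $\mathrm{supp}(\widehat{f}_\ell)\subset[-2^\ell,2^\ell]$, together with the fact that each $f_\ell$ is even.

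The real work is (iii). Since $0<q\leq Q$, on $[0,\infty)$ we have $s^{2q}+s^{2Q}\leq 2(1+s^{2Q})$, so it suffices to bound $\|f_\ell\|_{L^2}^2$ and $\|s^Qf_\ell\|_{L^2(0,\infty)}^2$ by $C\,2^{-2s\ell}\|f\|_{H^s(\RR)}^2$. The first is pure Plancherel: for $\ell\geq 1$, $\psi_\ell$ is supported where $|\xi|\geq 2^{\ell-2}$, so
\begin{equation*}
\|f_\ell\|_{L^2}^2=\|\psi_\ell\widehat f\|_{L^2}^2\leq \int_{|\xi|\geq 2^{\ell-2}}|\widehat f(\xi)|^2\,\di\xi\leq C\,2^{-2s\ell}\|f\|_{H^s}^2,
\end{equation*}
and for $\ell=0$ one trivially has $\|f_0\|_{L^2}\leq\|f\|_{L^2}\leq\|f\|_{H^s}$. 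For the weighted piece, I split the integral at $s=4$: on $[0,4]$ the weight $s^{2Q}$ is bounded by $4^{2Q}$, so that part reduces to $\|f_\ell\|_{L^2}^2$. For $s>4$, since $f$ is supported in $[-2,2]$ we have $f_\ell(s)=\int_{-2}^2 f(y)\check\psi_\ell(s-y)\,\di y$ with $|s-y|\geq s/2$, and since $\check\psi_\ell(x)=2^\ell\check\Phi(2^\ell x)-2^{\ell-1}\check\Phi(2^{\ell-1}x)$ satisfies $|\check\psi_\ell(x)|\leq C_N\,2^\ell(1+2^\ell|x|)^{-N}$ for any $N$, I get $|f_\ell(s)|\leq C_N\,\|f\|_{L^1}\,2^{\ell(1-N)}s^{-N}$; choosing $N$ much larger than $Q+s+1$ renders $\int_4^\infty s^{2Q}|f_\ell(s)|^2\,\di s$ negligible compared to $2^{-2s\ell}\|f\|_{H^s}^2$ (using $\|f\|_{L^1}\leq C\|f\|_{H^s}$ from compact support). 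This is the main obstacle, and the key tool is the Schwartz-type decay of $\check\psi_\ell$.

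Finally, for the dilated statements (i$'$)--(iii$'$), set $f_{\ell,t}(s)=f_\ell(ts)$; then $\widehat{f}_{\ell,t}(\xi)=t^{-1}\widehat f_\ell(\xi/t)$ yields $\mathrm{supp}(\widehat{f}_{\ell,t})\subset[-2^\ell t,2^\ell t]$. A change of variables $u=ts$ gives
\begin{equation*}
\int_0^\infty|f_{\ell,t}(s)|^2(s^{2q}+s^{2Q})\,\di s=t^{-(2q+1)}\!\!\int_0^\infty|f_\ell(u)|^2 u^{2q}\,\di u+t^{-(2Q+1)}\!\!\int_0^\infty|f_\ell(u)|^2u^{2Q}\,\di u,
\end{equation*}
and since $q\leq Q$, the dominant prefactor is $t^{-(2q+1)}$ when $t\geq 1$ and $t^{-(2Q+1)}$ when $t<1$, giving precisely the two cases of (iii$'$) after applying (iii).
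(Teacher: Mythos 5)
Your proof is correct and self-contained. The paper itself does not prove this lemma; it simply refers to \cite[Lemma 1.3]{HEB}, so there is no in-paper proof to compare against. However, the pointwise tail estimate the paper quotes from Hebisch in the proof of Lemma \ref{decompp} -- namely $|f_{\ell}(x)|\leq C\,\|f\|_{L^2(\RR)}\,2^{\ell}\,[2^{\ell}(|x|-2)]^{-(Q'+Q''+s+1)}$ for $|x|>4$ -- is exactly the kind of bound you derive from the Schwartz decay of $\check\psi_\ell$, so your Littlewood--Paley dyadic decomposition on the Fourier side is essentially the same strategy as Hebisch's. Two small remarks: (a) you are right to even-extend $f$ first, since the $f_\ell$ are required to be even while $f$ supported in $[1/2,2]$ is not, and in the application $f(s)=m(s^2)$ is naturally even anyway; (b) as stated in the paper, the right-hand sides of (iii) and (iii') should carry $\|f\|_{H^s(\RR)}^2$ rather than $\|f\|_{H^s(\RR)}$ (the left side is quadratic in $f$); your argument correctly produces the squared norm.
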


\bigskip
\begin{proof}{\bf{(of estimate (\ref{stimaA}))}}
First we observe that
\begin{align*}
&\int _S |K_{m(t\D)}(x,y)|\,\big(1+t^{-1/2}d(x,y)\big)^{\varepsilon}\dir(x)\\
=&\int_S |k_{m(t\D)}(y^{-1}\,x)|\,\delta(y)\,\big(1+t^{-1/2}d(y^{-1}\,x)\big)^{\varepsilon}\dir(x)   \\
=&\int_S|k_{m(t\D)}(x)|\,\big(1+t^{-1/2}d(x,e)\big)^{\varepsilon}\dir(x)\qquad\forall y\in S\,.
%&=\int_S |k_{m(t\D)}(y^{-1}x)|\,\delta(y)\,\big(1+t^{-1/2}d(y^{-1}x,e)\big)^{\varepsilon}\,\delta ^{-1}(x)\dil(x)\\
%=&\int_S|k_{m(t\D)}(x)|(1+t^{-1/2}d(x,e))^{\varepsilon}\delta^{-1}(x)d_lx\nonumber\\
%&=\int_S|k_{m(t\D)}(x)|\,\big(1+t^{-1/2}d(x,e)\big)^{\varepsilon}\dir(x)\qquad\forall y\in S\,.
\end{align*}
Now it suffices to define $f(s)=m(s^2)$ for all $s\in \RR^+$. The function $f$ is supported in $[1/2,2]$ and the operator $m(t\D)$ agrees with the operator $f(t^{1/2}\sqrt{\D})$. By applying Lemma \ref{decomp} with $q=1$ and $Q={(n-1)}/{2}$, we find functions $f_{\ell,t^{1/2}}$ such that $f(t^{1/2}\cdot)=\sum_{\ell} f_{\ell,t^{1/2}}(\cdot)$ and ${\rm{supp}}(\hat{f}_{\ell,t^{1/2}})\subset [-2^{\ell}t^{1/2}, 2^{\ell}t^{1/2}]$. Then we can apply Lemma \ref{norma1} to each function $f_{\ell,t^{1/2}}$ and sum these esti\-ma\-tes up. We distinguish the cases $t\geq 1$ and $t<1$.

{\it{Case}}~$t< 1$. In this case the quantity $2^{\ell}t^{1/2}$ is $\geq 1 $ if $\ell\geq (1/2)\,\log (1/t)$ and $<1$ otherwise. By applying again Lemma \ref{norma1} we have that
\begin{align*}
&\int_S|k_{f_{\ell,t^{1/2}}(\sqrt{\D})}(x)|\,\big(1+t^{-1/2}d(x,e)\big)^{\varepsilon} \dir(x)\\
=\,& \int_{B(e,2^{\ell}t^{1/2})}|k_{f_{\ell,t^{1/2}}(\sqrt{\D})}(x)|\,\big(1+t^{-1/2}d(x,e)\big)^{\varepsilon} \dir(x)\\
\leq\,& C\,(1+t^{-1/2}2^{\ell}t^{1/2})^{\varepsilon}\,(2^{\ell}t^{1/2})^{\max \{3/2,n/2\}}\,\Big(\int_0^{\infty}|f_{\ell,t^{1/2}}(s)|^2\,(s^2+s^{n-1})\di s\Big)^{1/2}\,,
\end{align*}
which by Lemma \ref{decomp} (iii') is bounded above by
\begin{align*}
& C\,2^{\ell\varepsilon}\,(2^{\ell}t^{1/2})^{\max \{3/2,n/2\}}\,t^{-n/4}\,2^{-s_{\infty}\ell}\,\|f\|_{H^{s_{\infty}}(\RR)}\\
\leq\,  &C\,2^{\ell(\max\{3/2,n/2\}+\varepsilon-s_{\infty})}\,\|f\|_{H^{s_{\infty}}(\RR)}\,.
\end{align*}
If we sum over $\ell\geq 0$ we obtain that
\begin{align*}
\int_S  |k_{m(t\D)}(x)|\,\big(1+t^{-1/2}d(x,e)\big)^{\varepsilon}\dir(x)&=\int_S |k_{f(t^{1/2}\sqrt{\D})}(x)|\,\big(1+t^{-1/2}d(x,e)\big)^{\varepsilon}\dir(x)\\&\leq C \,\|f\|_{H^{s_{\infty}}(\RR)}\\
&=C\,\|m(\cdot^2 )\|_{H^{s_{\infty}}(\RR)}\\
&\leq C\,\|m\|_{H^{s_{\infty}}(\RR)} \qquad \forall t\in (0,1)\,,
\end{align*}
since $s_{\infty}>\max \{3/2,n/2\}+\varepsilon$.

{\it{Case}} ~$t\geq 1$. By arguing very much as in the case where $t<1$, we obtain that
\begin{align*}
\int_S  |k_{m(t\D)}(x)|\,\big(1+t^{-1/2}d(x,e)\big)^{\varepsilon}\dir(x)&\leq C\,\|m\|_{H^{s_0}(\RR)}\,,
\end{align*}
as required.
\end{proof}

\subsection{Step 2}\label{stima2section}
To prove (\ref{stimaB}) we define $\mu(s)=m(s)e^{-s}$ for all $s\in \RR$. An easy calculation shows that 
$$k_{m(t\D)}=h_t\ast k_{\mu(t\D)}\qquad\forall t\in\RR^+\,,$$
and
\begin{align}\label{calcolo}
K_{m(t\D)}(x,y)=\int_S K_{\mu(t\D)}(x,u)\,H_t(u,y)\dir(u)\qquad\forall x,y\in S\,.\end{align}
where $h_t$ and $H_t$ denote the convolution and integral heat kernels associated to the Laplacian $\D$, respectively. %Let $q_t$ denote the heat kernel of the operator $\LQ$. It is well known that $h_t=\du\,q_t$ and that $\mathcal Hq_t(s)=\nep^{-ts^2}$ for all $s\in\RR^+$. It follows that
%\begin{align*}
%h_t=\mathcal A^{-1}\mathcal F^{-1}(\nep^{-ts^2})=\mathcal A^{-1}(h_t^{\RR})\,,
%\end{align*}
%where $h_t^{\RR}$ denote the heat kernel on $\RR$. An easy calculation shows that
%\begin{align}\label{calcolo}
%K_{m(t\D)}(x,y)=\int_S K_{M(t\D)}(x,u)\,H_t(u,y)\dir(u)\qquad\forall x,y\in S\,.\end{align}
This fact is useful to prove the following proposition. 
\begin{prop}\label{stima2}
There exists a constant $C$ such that
\begin{align*}
\int_S|K_{m(t\D)}(x,y)-K_{m(t\D)}(x,z)| {\rm{d}}\rho (x)&\leq \begin{cases}
C\,d(y,z)\,\|m\|_{H^{s_0}(\RR)}\,\|\nabla h_t\|_{L^1(\rho)}& \forall t\in [1,\infty)\\
C\,d(y,z)\,\|m\|_{H^{s_{\infty}}(\RR)}\,\|\nabla h_t\|_{L^1(\rho)}& \forall t\in (0,1)\,.
\end{cases}
\end{align*}
\end{prop}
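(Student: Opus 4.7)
The plan is to exploit formula (\ref{calcolo}) to factor the problem into three pieces. First, by the triangle inequality and Fubini's theorem,
\begin{align*}
\int_S|K_{m(t\D)}(x,y)-K_{m(t\D)}(x,z)|\dir(x)
\leq \int_S|H_t(u,y)-H_t(u,z)| \Bigl(\int_S|K_{\mu(t\D)}(x,u)|\dir(x)\Bigr)\dir(u).
\end{align*}
The inner integral on the right-hand side equals $\|k_{\mu(t\D)}\|_{L^1(\rho)}$ and is independent of $u$: combining the explicit form $K_{\mu(t\D)}(x,u) = k_{\mu(t\D)}(u^{-1}x)\,\delta(u)$ with the change-of-variable identity $\int_S f(u^{-1}x)\dir(x) = \delta(u)^{-1}\int_S f\dir$ makes the factor $\delta(u)$ cancel out.

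Second, since $\mu(\lambda)=m(\lambda)\,\nep^{-\lambda}$ has the same support as $m$ and $\nep^{-\lambda}$ is smooth, one has $\|\mu\|_{H^s(\RR)}\leq C\,\|m\|_{H^s(\RR)}$ for every $s\geq 0$. Applying estimate (\ref{stimaA}) from Step 1 to $\mu$ with $\varepsilon=0$ then yields
\begin{align*}
\|k_{\mu(t\D)}\|_{L^1(\rho)}
\leq \begin{cases}
C\,\|m\|_{H^{s_0}(\RR)} & \forall\,t\in[1,\infty),\\
C\,\|m\|_{H^{s_{\infty}}(\RR)} & \forall\,t\in(0,1).
\end{cases}
\end{align*}

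Third and most delicately, the task reduces to proving
\begin{align*}
\int_S|H_t(u,y)-H_t(u,z)|\dir(u) \leq d(y,z)\,\|\nabla h_t\|_{L^1(\rho)}.
\end{align*}
Since $\D$ is selfadjoint on $L^2(\rho)$, the kernel $H_t$ is symmetric, $H_t(u,y)=H_t(y,u)$; this allows me to rewrite the integrand as $|H_t(y,u)-H_t(z,u)|$ and to view the variation as lying in the \emph{first} slot, where $H_t(\cdot,u)=h_t(u^{-1}\,\cdot)\,\delta(u)$. The map $y\mapsto u^{-1}y$ is an isometry of the left-invariant metric, so $|\nabla_y h_t(u^{-1}y)|=|\nabla h_t|(u^{-1}y)$, and hence $|\nabla_y H_t(y,u)|=\delta(u)\,|\nabla h_t|(u^{-1}y)$. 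Integrating along a unit-speed minimal geodesic joining $y$ to $z$ and applying Fubini reduces the claim to
\begin{align*}
\int_S \delta(u)\,|\nabla h_t|(u^{-1}w)\dir(u) = \|\nabla h_t\|_{L^1(\rho)} \qquad \forall\,w\in S,
\end{align*}
which in turn follows from $\delta\dir=\dil$, the inversion identity $\int_S g(u^{-1})\dil(u)=\int_S g\dir$, and the right-invariance of $\rho$.

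The main obstacle is the third step: exploiting the symmetry of $H_t$ is essential, because differentiating $H_t(u,y)=h_t(y^{-1}u)\,\delta(y)$ directly in its second argument $y$ produces \emph{right}-invariant derivatives of $h_t$ (in place of the left-invariant ones that define the Riemannian gradient) together with an extra term coming from $\partial_y\delta(y)$, and neither of these can be cleanly controlled by $\|\nabla h_t\|_{L^1(\rho)}$ alone.
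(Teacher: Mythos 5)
Your argument is correct and follows essentially the same route as the paper's: the factorization via (\ref{calcolo}), reducing the first factor to $\|k_{\mu(t\D)}\|_{L^1(\rho)}$ and bounding it by (\ref{stimaA}), then exploiting the symmetry $H_t(u,y)=H_t(y,u)$ and a geodesic integration plus Fubini to obtain $d(y,z)\,\|\nabla h_t\|_{L^1(\rho)}$. The only cosmetic difference is ordering: the paper first performs the measure changes of variable (reducing to $\int_S|h_t(uv)-h_t(u)|\,\dir(u)$ with $v=z^{-1}y$) and then integrates along a curve from $e$ to $v$, whereas you integrate along a geodesic from $z$ to $y$ first and defer the change of variable to the verification that $\int_S\delta(u)\,|\nabla h_t|(u^{-1}w)\,\dir(u)=\|\nabla h_t\|_{L^1(\rho)}$; both computations are identical in substance.
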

\begin{proof}
By (\ref{calcolo}) we have that for all $y,z$ in $S$
\begin{align}\label{primopasso}
&\int_S |K_{m(t\D)}(x,y)-K_{m(t\D)}(x,z)|\dir(x)\nonumber\\
\leq\,&\int_S|H_t(u,y)-H_t(u,z)|\left(\int_S |K_{\mu(t\D)}(x,u)|\dir(x)\right)\dir(u)\nonumber\\
=&\,\|k_{\mu(t\D)}\|_{L^1(\rho)}\int_S|H_t(u,y)-H_t(u,z)|\dir(u)\,.
\end{align}
%Using the expression of $K_{M(t\D)}$ we obtain that
%$$\int_S |K_{M(t\D)}(x,u)|\dir(x)=\|k_{M(t\D)}\|_{L^1(\rho)}\,.$$
%\begin{align*}
%\int_S |K_{M(t\D)}(x,u)|\dir(x)&=\int_S |k_{M
%(t\D)}(u^{-1}x)|\,\delta(u)\dir(x)\\
%&=\int_S |k_{M(t\D)}(x)|\dir(x)\\
%&=\|k_{M(t\D)}\|_{L^1(\rho)}\,.
%\end{align*}
By applying (\ref{stimaA}) to the operator $\mu(t\D)$ we obtain that
\begin{align}\label{KM}
\|k_{\mu(t\D)}\|_{L^1(\rho)}&\leq  \begin{cases}
C\,\|\mu\|_{H^{s_0}(\RR)}& \forall t\in [1,\infty)\\
C\,\|\mu\|_{H^{s_{\infty}}(\RR)}& \forall t\in (0,1)
\end{cases}\nonumber\\
&\leq \begin{cases}
C\,\|m\|_{H^{s_0}(\RR)}& \forall t\in [1,\infty)\\
C\,\|m\|_{H^{s_{\infty}}(\RR)}& \forall t\in (0,1)\,.
\end{cases}
\end{align}
Since the operator $\nep^{t\D}$ is symmetric, $H_t(u,y)=H_t(y,u)$ for all $u,y\in S$. Thus
\begin{align}\label{simm}
\int_S |H_t(u,y)-H_t(u,z)|\dir(u)
&=\int_S |H_t(y,u)-H_t(z,u)|\dir(u)\nonumber\\
&=\int_S |h_t(uv) -h_t(u) |\dir(u)\,,
\end{align}
where $v=z^{-1}y$. Let $\gamma\,:~[0,1]\to S$ be a curve in $S$ such that $\gamma(0)=e$, $\gamma(1)=v$ and
$|\dot{\gamma}(\sigma)|=d(v,e)$ for all $\sigma\in [0,1]$. Then
\begin{align*}
h_t(uv) -h_t(u)&=h_t(u\gamma(1))-h_t(u\gamma(0))\\
&=\int_0^1\frac{d}{d\tau}\Big\lvert_{ \tau =\sigma}
h_t(u\gamma(\tau))\di\sigma\\
&=\int_0^1\langle \nabla h_t(u\gamma(\sigma)),\dot{\gamma}(\sigma)\rangle_{\gamma(\sigma)}\di\sigma \,, 
\end{align*}
where the gradient and the inner product are calculated with respect to the Riemannian structure of $S$. Therefore
\begin{align*}
| h_t(uv) -h_t(u)|&\leq\int_0^1|\dot{\gamma}(\sigma)|\,|\nabla h_t(u\gamma(\sigma))|\di\sigma\\
&\leq d(v,e)\int_0^1|\nabla h_t(u\gamma(\sigma))|\di\sigma\,.
\end{align*}
By using this estimate and (\ref{simm}) we obtain that
\begin{align}\label{HT}
\int_S |H_t(y,u)-H_t(z,u)|\dir(u)&=\int_S | h_t(uv) -h_t(u)| \dir(u)\nonumber\\
&\leq d(v,e)\int_S\int_0^1 |\nabla h_t(u\gamma(\sigma))|\,\di\sigma\dir(u) \nonumber\\
&=d(y,z)\int_S |\nabla h_t(u)|\dir(u)\,.              
\end{align}
By combining (\ref{primopasso}), (\ref{KM}) and (\ref{HT}) we obtain the required estimate.
\end{proof}
To conclude the proof of (\ref{stimaB}) it suffices to prove the following $L^1$-estimate of the gradient of the heat kernel $h_t$. 
\begin{prop}\label{norma}
There exists a constant $C$ such that 
\begin{align*}
\int_S |\nabla h_t| \dir&\leq C \,t^{-1/2}\qquad \forall t\in\RR^+\,.
\end{align*}
\end{prop}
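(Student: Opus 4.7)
The strategy is to decompose the kernel using the conjugation between $\D$ and $\LQ$, reduce the $L^1(\rho)$ estimate to a one-dimensional integral via Lemma \ref{intduf}, and then obtain sharp pointwise estimates for the radial factor and its derivative by inverting the Abel transform.

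By Proposition \ref{relazionenuclei}, $h_t = \delta^{1/2}\,h_t^Q$, where $h_t^Q$ is the radial convolution kernel of $\nep^{-t\LQ}$ with $\mathcal H h_t^Q(s) = \nep^{-ts^2}$. Writing $h_t^Q(x) = \phi_t(r)$ with $r=d(x,e)$, I would use that $(X_i)_{i=0}^{n-1}$ is a left-invariant orthonormal frame on $(S,d)$ together with the elementary facts $X_0\delta^{1/2}=-(Q/2)\delta^{1/2}$, $X_i\delta^{1/2}=0$ for $i\geq 1$, and $\sum_{i=0}^{n-1}(X_i r)^2=|\nabla r|^2=1$ almost everywhere, to compute
\[
|\nabla h_t|^2(x) \;=\; \delta(x)\Bigl[\bigl(\phi_t'(r)\bigr)^2 \;-\; Q\,\phi_t'(r)\,\phi_t(r)\,X_0 r \;+\; \tfrac{Q^2}{4}\,\phi_t(r)^2\Bigr].
\]
Since $|X_0 r|\leq 1$ a.e., this yields the pointwise bound $|\nabla h_t|\leq \delta^{1/2}\bigl(|\phi_t'(r)|+\tfrac{Q}{2}\phi_t(r)\bigr)$, and Lemma \ref{intduf} reduces the claim to showing
\[
\int_0^\infty \phi_0(r)\bigl(|\phi_t'(r)|+\tfrac{Q}{2}\phi_t(r)\bigr)\,A(r)\,\di r \;\leq\; C\,t^{-1/2} \qquad \forall t>0.
\]

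To obtain the pointwise estimates for $\phi_t$ and $\phi_t'$, I would apply the inverse Abel transform to the 1D Gaussian: since $\mathcal H^{-1}=\mathcal A^{-1}\circ\mathcal F^{-1}$ and $\mathcal F^{-1}(\nep^{-ts^2})(u)=(4\pi t)^{-1/2}\nep^{-u^2/(4t)}=:g_t(u)$, formula (\ref{inv1}) gives $\phi_t = a_S^e\,\mathcal D_1^{m_{\zg}/2}\mathcal D_2^{m_{\vg}/2}g_t$ when $m_{\zg}$ is even, while (\ref{inv2}) gives
\[
\phi_t(r) = a_S^o\int_r^\infty \mathcal D_1^{(m_{\zg}+1)/2}\mathcal D_2^{m_{\vg}/2}g_t(s)\,\di\nu(s),\qquad \di\nu(s)=(\cosh s-\cosh r)^{-1/2}\sinh s\,\di s,
\]
when $m_{\zg}$ is odd. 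Each application of $\mathcal D_1=-(\sinh r)^{-1}\partial_r$ or $\mathcal D_2=-(\sinh(r/2))^{-1}\partial_r$ to a Gaussian produces a sum of products of hyperbolic cosecant factors, polynomials in $r/t$, and the Gaussian itself; iterating and differentiating once more in $r$ to handle $\phi_t'$, one obtains explicit estimates of the form $|\phi_t^{(k)}(r)|\lesssim t^{-(k+1)/2}(1+r)^{N}\nep^{-Qr/2}\nep^{-cr^2/t}$ for suitable $N$, uniformly in $t$. Plugging these into the 1D integral above and combining with the growth bounds $\phi_0(r)\leq C(1+r)\nep^{-Qr/2}$ from (\ref{stimafi0}) and $A(r)\leq C(r/(1+r))^{n-1}\nep^{Qr}$ from (\ref{pesoA}), the contribution from the near region $r\leq 1$ yields $O(t^{-1/2})$ after the change of variables $u=r/\sqrt t$, while in the far region $r\geq 1$ the Gaussian factor $\nep^{-cr^2/t}$ balances the growth $\nep^{Qr/2}$ from $\phi_0 A$ to give again $O(t^{-1/2})$.

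The main obstacle will be the case when $m_{\zg}$ is odd, where the representation (\ref{inv2}) is a singular Abel-type integral. Differentiating under the integral sign to estimate $\phi_t'(r)$ brings out an additional $(\cosh s-\cosh r)^{-3/2}$ factor that is not locally integrable; I would control this by integration by parts in $s$, transferring one derivative from the singular kernel onto the smooth Gaussian profile, and then splitting the $s$-integration into the regions $\{s-r\lesssim \sqrt t\,\}$ and $\{s-r\gtrsim \sqrt t\,\}$. A further subtlety is that the naive majorisation $|\nabla h_t|\leq \delta^{1/2}(|\phi_t'|+\tfrac{Q}{2}\phi_t)$ is wasteful for large $t$: indeed $\int_0^\infty\phi_0\phi_t A\,\di r$ is (up to normalisation) $\mathcal H h_t^Q(0)=1$, a positive constant independent of $t$, so the term $(Q/2)\phi_t$ alone contributes $\Theta(1)$ and cannot by itself be bounded by $Ct^{-1/2}$ for $t$ large. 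In that regime one must retain the cross-term $-Q\phi_t'\phi_t X_0 r$ in the identity for $|\nabla h_t|^2$ and exploit the Harish-Chandra asymptotic $\phi_t'/\phi_t\approx -Q/2$ at infinity, which produces the cancellation $(\phi_t')^2-Q\phi_t'\phi_t\,X_0 r+(Q/2)^2\phi_t^2\approx (Q/2)^2\phi_t^2\,(1+X_0 r)^2$ in the far region, allowing one to trade the lost constant for the required $t^{-1/2}$ decay.
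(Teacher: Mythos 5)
Your setup (writing $h_t = \du\,q_t$ with $q_t$ the radial heat kernel of $\LQ$, the identity $|\nabla h_t|^2 = \delta\big[(q_t')^2 - Q\,q_t q_t'\,X_0 r + \tfrac{Q^2}{4} q_t^2\big]$, and the reduction to a one-dimensional integral via Lemma \ref{intduf}) matches the paper's, and your observation that the crude bound $|\nabla h_t|\leq\du\big(|q_t'|+\tfrac{Q}{2}q_t\big)$ cannot close for $t\geq 1$, because $\int_0^\infty\phi_0\,q_t\,A\,\di r = \int_S h_t\,\dir = 1$ is not $O(t^{-1/2})$, is exactly right and identifies the real difficulty. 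But the cancellation you then invoke is miscomputed: substituting $q_t'=-\tfrac{Q}{2}q_t$ into the quadratic form gives $2\,(Q/2)^2\,q_t^2\,(1+X_0 r)$, not $(Q/2)^2q_t^2(1+X_0 r)^2$ (test $X_0 r=0$). The exact identity is
\[
(q_t')^2-Q\,q_tq_t'\,X_0 r+\tfrac{Q^2}{4}q_t^2 \;=\; \big(q_t'+\tfrac{Q}{2}q_t\big)^2 \,-\, Q\,q_tq_t'\,(1+X_0 r),
\]
so the good factor $(1+X_0 r)$ sits under a square root and $|\nabla h_t|\lesssim\du\big(|q_t'+\tfrac{Q}{2}q_t|+q_t\sqrt{1+X_0 r}\,\big)$. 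This is fatal to your plan: even granting the cancellation estimate $\int\du\,q_t(1+X_0 r)\,\dir\leq C\,t^{-1/2}$, Cauchy--Schwarz against $\int\du\,q_t\,\dir=1$ only yields $\int\du\,q_t\sqrt{1+X_0 r}\,\dir=O(t^{-1/4})$, a factor $t^{1/4}$ short of what Proposition \ref{norma} requires.

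The paper never passes through $|\nabla h_t|^2$ and so sees no square root: it bounds $|\nabla h_t|\leq\sum_{i,j}|X_i(\du H_j)|$ after decomposing $q_t=\sum_j H_j$, with $H_j=t^{-j}\cosh^{-Q}(\cdot/2)\big(\alpha_j r^j+f_j(r)\big)\,h_t^{\RR}$, via the inverse Abel formulas (\ref{inv1})--(\ref{inv2}) and Lemma \ref{pezzi}. For $j\geq 2$ the extra $t^{-(j-1)}$ already gives the decay; the borderline term $j=1$ is controlled by factoring out $\du\chQ$ and invoking, for every $i=0,\dots,n-1$, the cancellation estimate of Lemma \ref{X_i}(ii), namely $\int_S h_{2t}^{\RR}(|x|)\,\big|X_i(\du\chQ)(x)\big|\,\dir(x)\leq C$. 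Since $X_0(\du\chQ)=-\tfrac{Q}{2}\du\chQ\big(1+\tanh(|\cdot|/2)\,X_0 r\big)$, that inequality is precisely the quantitative form of the heuristic you gesture at, but it is proved by a direct coordinate computation using (\ref{distanza}), not by the asymptotic alone, and it is needed for the transverse directions $i\geq 1$ as well; your proposal would have to supply it or an equivalent even after fixing the algebra. Finally, the claimed pointwise bound $|q_t^{(k)}(r)|\lesssim t^{-(k+1)/2}(1+r)^N\nep^{-Qr/2}\nep^{-cr^2/t}$ has the wrong power of $t$: the decomposition above (and the estimate (\ref{pointwise}) quoted from \cite{ADY}) gives $t^{-3/2}$ for $q_t$ when $t\geq 1$ and $t^{-n/2}$ near $r=0$ when $t<1$, so your form is not even a valid upper bound for small $t$ and is too weak for large $t$ to drive the $t^{-(j-1)}$ bookkeeping.
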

Before proving Proposition \ref{norma}, we need some technical results which are contained in Lemma \ref{pezzi}, Lemma \ref{X_i} and Lemma \ref{I,I'}.

For all $x$ in $S$ define $|x|=d(x,e)$ and $\chQ (x)=\cosh^{-Q}(|x|/2)$. 

Let $h_t$ and $q_t$ denote the heat kernels associated to the operators $\D$ and $\LQ$, respectively. By Proposition \ref{relazionenuclei}, $h_t=\du\,q_t$ and $\mathcal Hq_t(s)=\nep^{-ts^2}=\mathcal F \htR(s)$ for all $s\in\RR^+$, where $h_t^{\RR}$ denotes the heat kernel on $\RR$. Then,
\begin{align}\label{hthtR}
h_t(x)&=\du(x)\,\big(\mathcal A^{-1}\circ\mathcal F^{-1}\big)\,(\mathcal F \htR)(|x|)\nonumber\\
&=\du(x)\, \mathcal A^{-1}(h_t^{\RR})(|x|)\,\qquad \forall x\in S\,.
\end{align}
In the following lemma we recall various estimates of $ h_t^{\RR}$ and its derivatives (see \cite[Proposition 5.22]{ADY}). 
\begin{lem}\label{pezzi}
For all $r$ in $\RR^+$ and $t$ in $\RR^+$ the following hold:
\begin{itemize}
\item[(i)]for all integer $j\geq 1$, there exists a constant $C$, independent of $t$ and $r$, such that 
$$\,r^j\,\htR(r)\leq C\,t^{j/2}\,h_{2t}^{\RR}(r)\,;$$
\item[(ii)] let $\mathcal D_1$ and $\mathcal D_2$ be the differential operators defined by
\begin{align*}
\mathcal D_1=\,-\frac{1}{\sinh r}\,\frac{\partial}{\partial r}\,,\qquad \mathcal D_2=\,-\frac{1}{\sinh(r/2)}\,\frac{\partial}{\partial r} \,.
\end{align*} 
For all integer $p,q\geq 0$
$$\mathcal D_1^q\,\mathcal D_2^p\,(\htR)(r)=\sum_{j=1}^{p+q}\,t^{-j}\,a_j(r)\,\htR(r)\,,$$
where 
\begin{align*}
a_j(r)&=\cosh^{-(p+2q)} (r/2)\,\big(\alpha _j\,r^j+f_j(r)\big)\,,
%a_j'(r)&=\beta_j\,r^j\cosh^{-(p+2q)} (r/2)+\cosh^{-(p+2q)} (r/2)\,g_j(r)\,,
\end{align*}
$f_j$, $f_j'$ are bounded functions on $\RR^+$ and $\alpha_j$ are constants.
\end{itemize}
\end{lem}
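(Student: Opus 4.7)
\textbf{Proof plan for Lemma \ref{pezzi}.}

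\emph{Part (i).} The heat kernel on $\RR$ is the Gaussian $h_t^{\RR}(r)=(4\pi t)^{-1/2}\nep^{-r^2/(4t)}$, so a direct computation gives
$$
\frac{r^j\,h_t^{\RR}(r)}{t^{j/2}\,h_{2t}^{\RR}(r)}
=\sqrt{2}\;\Bigl(\frac{r^2}{t}\Bigr)^{j/2}\,\nep^{-r^2/(8t)}.
$$
Setting $u=r^2/t$, the right-hand side equals $\sqrt{2}\,u^{j/2}\,\nep^{-u/8}$, which is bounded on $[0,\infty)$ by a constant depending only on $j$. This yields (i).

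\emph{Part (ii).} I would proceed by induction on $N:=p+q$. The base identities come from $\partial_r h_t^{\RR}(r)=-\frac{r}{2t}\,h_t^{\RR}(r)$, which gives
$$
\mathcal D_1 h_t^{\RR}(r)=\frac{r}{2t\sinh r}\,h_t^{\RR}(r),\qquad
\mathcal D_2 h_t^{\RR}(r)=\frac{r}{2t\sinh(r/2)}\,h_t^{\RR}(r).
$$
Using $\frac{1}{\sinh(r/2)}=\cosh^{-1}(r/2)\,\coth(r/2)$ and $\frac{1}{\sinh r}=\cosh^{-2}(r/2)\cdot\tfrac12\coth(r/2)$, one verifies directly that $\mathcal D_2 h_t^{\RR}$ has the form $t^{-1}\,\cosh^{-1}(r/2)\bigl(\tfrac12 r+f_1(r)\bigr)h_t^{\RR}$ with $f_1(r)=\tfrac{r\,\nep^{-r/2}}{2\sinh(r/2)}$ bounded on $\RR^+$, and similarly for $\mathcal D_1 h_t^{\RR}$ with weight $\cosh^{-2}(r/2)$. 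This supplies the base case.

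For the inductive step, assume the formula for $\mathcal D_1^q\mathcal D_2^p$ and apply one further $\mathcal D_2$ (the $\mathcal D_1$ case is analogous). Then
$$
\mathcal D_2\bigl(a_j(r)\,h_t^{\RR}(r)\bigr)
=-\frac{a_j'(r)}{\sinh(r/2)}\,h_t^{\RR}(r)+\frac{r\,a_j(r)}{2t\sinh(r/2)}\,h_t^{\RR}(r).
$$
The first term contributes to the coefficient of $t^{-j}$ and the second to the coefficient of $t^{-j-1}$ of the expansion for $\mathcal D_1^q\mathcal D_2^{p+1}h_t^{\RR}$. Each bring in an extra factor $\cosh^{-1}(r/2)$ (from rewriting $1/\sinh(r/2)$ as above), so the global weight becomes $\cosh^{-((p+1)+2q)}(r/2)$, as required.

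The main obstacle is verifying that the new principal-plus-remainder decomposition $\tilde a_j(r)=\cosh^{-(N+1)}(r/2)(\tilde\alpha_j r^j+\tilde f_j(r))$ satisfies the structural hypothesis: $\tilde f_j,\tilde f_j'$ bounded on $\RR^+$. The delicate point is the behaviour near $r=0$, where $\coth(r/2)\sim 2/r$ is singular. The resolution is to exploit the basic identity $r\,\coth(r/2)=2+r\,h(r)$ with $h$ smooth on $[0,\infty)$, together with the analogous statement for $\bigl(r\,\coth(r/2)\bigr)'$; this ensures that the singular contributions $\coth(r/2)\cdot(\alpha r^{j-1}+\cdots)$ and $\coth(r/2)^2\cdot(\cdots)$ arising in the induction step always combine with an appropriate power of $r$ so that only bounded remainders survive. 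Tracking this bookkeeping term by term through the induction is routine but requires some care; large-$r$ control is automatic since $\coth(r/2)\to 1$ and all the relevant derivatives decay.
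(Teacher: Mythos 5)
The paper does not prove this lemma; it is recalled, with a reference, from \cite[Proposition~5.22]{ADY}, so there is no in-paper proof to compare against. Judging your plan on its own merits: part (i) is correct and complete. For part (ii) the inductive scheme is the natural one, the base identities and the splitting of $\mathcal{D}_2(a_j\,\htR)$ into a $t^{-j}$-piece and a $t^{-(j+1)}$-piece are right, and your identification of the near-$0$ singularity of $\coth(r/2)$, controlled through $r\coth(r/2)=2+r\,h(r)$ (equivalently, through the evenness of $a_j$, which forces $a_j'(0)=0$), is the correct point to worry about there.

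What is not right is the assertion that ``large-$r$ control is automatic''; that is precisely where the induction \emph{as you have stated it} breaks. Two sources of polynomial (hence unbounded) growth enter the new remainder. First, the term $\dfrac{r\,a_{j-1}}{2\sinh(r/2)}$ gives, after extracting $\cosh^{-((p+1)+2q)}(r/2)$, the product $\bigl(\tfrac r2+\psi(r)\bigr)\bigl(\alpha_{j-1}r^{j-1}+f_{j-1}(r)\bigr)$ with $\psi(r)=r/(\nep^r-1)$; the cross term $\tfrac r2\,f_{j-1}(r)$ is unbounded if $f_{j-1}$ is merely bounded. Second, the $a_j'$-piece produces $\coth(r/2)\cdot j\alpha_j r^{j-1}$, and writing $\coth=1+\kappa$ leaves a genuine polynomial $j\alpha_j r^{j-1}$, unbounded for $j\geq2$. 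A concrete failure: for $\mathcal{D}_2^3\,\htR$ one computes
$a_2(r)=\dfrac{3\bigl(r^2\cosh(r/2)-2r\sinh(r/2)\bigr)}{8\sinh^4(r/2)}$, so
$a_2(r)\cosh^3(r/2)=\tfrac{3r^2}{8}\coth^4(r/2)-\tfrac{3r}{4}\coth^3(r/2)\to\tfrac{3r^2}{8}-\tfrac{3r}{4}$ as $r\to\infty$, hence $f_2(r)\sim-\tfrac{3r}{4}$ is \emph{not} bounded. The stable induction hypothesis is that each $f_j$ is a polynomial of degree strictly less than $j$ plus a term that, together with its derivative, decays exponentially; with that modification the step closes (the $\tfrac r2\,f_{j-1}$ contribution, and the $r^{j-1}$ contribution from $\coth$, both remain of degree $<j$, and the $\psi$- and $\kappa$-weighted terms decay). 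That weaker conclusion is in fact all the thesis uses later (Lemmas \ref{X_i}, \ref{I,I'}, Proposition \ref{norma} only need $|a_j(r)|\leq C\,r^j\cosh^{-(p+2q)}(r/2)$ and the analogous bound for $a_j'$ on $r\geq1$), so the downstream argument is unaffected — but your ``routine bookkeeping'' would not, as written, establish the boundedness claimed.
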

%We also prove a technical lemma which concerns the derivatives along the vector fields $X_i$ of some quantities that we shall use in future developments. 
%For future developments we need to estimate the derivatives of some functions along the vector fields $X_i$ and some integrals on $S$. We group all these estimates in the following lemma.
In the two next lemmata we prove various integral estimates, which we need in the proof of Proposition \ref{norma}.
\begin{lem}\label{X_i}
For all $i=0,...,n-1$ the following hold:
\begin{itemize}
\item[(i)] $\left|X_i\big(|\cdot|\big)\right|\leq 1$;
\item[(ii)] $t^{-1/2}\int_Sh_{2t}^{\RR}(|(x)|)\,\big|X_i\big(\du\,\chQ\big)(x)  \big|\dir (x)\leq C\,t^{-1/2}\qquad\forall t\in\RR^+\,;$
\item[(iii)] $t^{-1}\int_{B_1^c}\du(x)\,\chQ(x)\,\htR(|x|)\dir(x)\leq C\,t^{-1/2}\qquad\forall t\in\RR^+\,.$
\end{itemize}
\end{lem}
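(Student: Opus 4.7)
The plan is to dispatch (i) by a pointwise Lipschitz argument, then reduce (ii) and (iii) to one-dimensional Gaussian estimates via the radial integration formula of Lemma \ref{intduf}. For (i), the Riemannian distance $d(\cdot,e)$ is $1$-Lipschitz on $S$, and by construction the left-invariant vector field $X_i$ has unit norm at every point (since $E_0,\ldots,E_{n-1}$ is an orthonormal basis of $\s$ and the metric is left invariant). Therefore, wherever $|\cdot|=d(\cdot,e)$ is differentiable (which is a.e., the cut locus being null), $X_i(|\cdot|)=\langle X_i,\nabla(|\cdot|)\rangle$, and $|X_i(|\cdot|)|\leq|X_i|\,|\nabla(|\cdot|)|\leq 1$. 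This a.e.\ bound is enough for the integral estimates that follow.

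For (ii), I would compute $X_i(\delta^{1/2}\chQ)$ explicitly via the Leibniz rule. Since $\delta^{1/2}(X,Z,a)=a^{-Q/2}$ depends only on $a$ and only $X_0=a\partial_a$ carries an $\partial_a$ component, one finds $X_0(\delta^{1/2})=-(Q/2)\,\delta^{1/2}$ and $X_i(\delta^{1/2})=0$ for $i\geq 1$. The chain rule applied to $\chQ(x)=\cosh^{-Q}(|x|/2)$, together with (i), yields
\[
|X_i(\chQ)(x)|\leq \tfrac{Q}{2}\tanh(|x|/2)\,\chQ(x)\leq \tfrac{Q}{2}\chQ(x),
\]
so that $|X_i(\delta^{1/2}\chQ)(x)|\leq C\,\delta^{1/2}(x)\,\chQ(x)$. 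Now the radial integration formula (Lemma \ref{intduf}), applied to the radial function $x\mapsto h_{2t}^{\RR}(|x|)\chQ(x)$, reduces the estimate to
\[
\int_0^\infty \phi_0(r)\,\chQ(r)\,A(r)\,h_{2t}^{\RR}(r)\,dr.
\]
Combining (\ref{stimafi0}), (\ref{pesoA}) and $\chQ(r)\leq C\,e^{-Qr/2}$ gives $\phi_0(r)\chQ(r)A(r)\leq C(1+r)(r/(1+r))^{n-1}$, so it suffices to control $\int_0^\infty(1+r)h_{2t}^{\RR}(r)\,dr$. Here I would split into $r\in[0,1]$ and $r\geq 1$, using $\|h_{2t}^\RR\|_\infty\leq Ct^{-1/2}$ on the compact piece, and Lemma \ref{pezzi}(i) to trade the factor $r$ for $t^{1/2}$ on the tail, whereupon $\int_\RR h_{2t}^\RR=1$ closes the estimate at the level required by the claim.

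Part (iii) follows the same scheme but is easier: the integration formula of Lemma \ref{intduf} and the same bounds on $\phi_0,\chQ,A$ yield
\[
\int_{B_1^c}\delta^{1/2}\,\chQ\,h_t^{\RR}(|\cdot|)\,d\rho \leq C\int_1^\infty (1+r)\,h_t^{\RR}(r)\,dr.
\]
Since the integration is restricted to $r\geq 1$, Lemma \ref{pezzi}(i) applies uniformly, giving $r\,h_t^{\RR}(r)\leq C\,t^{1/2}\,h_{2t}^{\RR}(r)$, and the total mass bound $\int_\RR h_{2t}^\RR=1$ yields $\int_1^\infty(1+r)\,h_t^{\RR}(r)\,dr\leq C\,t^{1/2}$. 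Multiplication by $t^{-1}$ produces $Ct^{-1/2}$, as required. The main technical obstacle throughout is managing the interplay between the polynomial growth of the density $\phi_0(r)A(r)\chQ(r)$ on the exponentially large group $S$ and the Gaussian decay of $h_t^{\RR}$; the sharp moment inequality of Lemma \ref{pezzi}(i) is the essential device that lets us absorb each extra power of $r$ against a factor $t^{1/2}$ and thereby match the $t^{-1/2}$ scaling of the heat gradient.
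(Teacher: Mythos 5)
Parts (i) and (iii) are fine. For (i) the paper simply cites \cite{HU}, while your Lipschitz/unit-norm argument is a correct and more self-contained proof of the same fact. For (iii) your argument is essentially the paper's: after the radial reduction via Lemma \ref{intduf} and the bounds (\ref{stimafi0}) and (\ref{pesoA}), one is left with $\int_1^\infty r\, \htR(r)\,dr$; whether one invokes Lemma \ref{pezzi}(i) or integrates the Gaussian directly is cosmetic.

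Part (ii) has a genuine gap. Stripped of the common $t^{-1/2}$ factor, the claim is
\begin{equation*}
\int_S h_{2t}^{\RR}(|x|)\,\bigl|X_i(\du\chQ)(x)\bigr|\dir(x)\leq C\qquad\forall t\in\RR^+.
\end{equation*}
Your pointwise bound $|X_i(\du\chQ)|\leq C\,\du\chQ$ is correct, but it is too lossy to deliver a $t$-uniform constant. After the radial reduction your estimate becomes
\begin{equation*}
\int_S h_{2t}^{\RR}(|x|)\,\du(x)\chQ(x)\dir(x)=\int_0^\infty\phi_0(r)\,\chQ(r)\,A(r)\,h_{2t}^{\RR}(r)\di r\leq C\int_0^\infty(1+r)\,h_{2t}^{\RR}(r)\di r,
\end{equation*}
and indeed $\phi_0(r)\chQ(r)A(r)\asymp (1+r)$ for $r\geq 1$ (the bounds (\ref{stimafi0}), (\ref{pesoA}) are sharp, and $\chQ(r)A(r)\asymp\nep^{Qr/2}$). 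But $\int_0^\infty(1+r)\,h_{2t}^{\RR}(r)\di r=\tfrac12+Ct^{1/2}$, which grows like $\sqrt t$. Your plan to close the estimate by ``trading $r$ for $t^{1/2}$ and using $\int_\RR h_{2t}^{\RR}=1$'' does not in fact close it: the factor $t^{1/2}$ survives. For $t\geq 1$ — precisely the regime in which the lemma is applied in the proof of Proposition \ref{norma} — one obtains a constant, not $Ct^{-1/2}$, and the final estimate $\int_{B_1^c}|\nabla h_t|\dir\leq Ct^{-1/2}$ would not follow.

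The loss occurs because the Leibniz rule $X_0(\du\chQ)=(X_0\du)\chQ+\du(X_0\chQ)$ hides a cancelation that your term-by-term bound throws away. The paper instead computes $\du\chQ=2^Q[(a+1+|X|^2/4)^2+|Z|^2]^{-Q/2}$ explicitly in the coordinates $(X,Z,a)$ and differentiates, obtaining
\begin{equation*}
\bigl|X_0(\du\chQ)(X,Z,a)\bigr|\leq C\,\frac{a\,(a+1+|X|^2/4)}{\bigl[(a+1+|X|^2/4)^2+|Z|^2\bigr]^{Q/2+1}}
=\du\chQ(x)\cdot\frac{C\,a\,(a+1+|X|^2/4)}{(a+1+|X|^2/4)^2+|Z|^2}.
\end{equation*}
The last factor is $\leq C$, so it is consistent with your bound, but it also \emph{vanishes} along the ray $a\to0$, which is exactly where $\du\chQ$ itself fails to decay (one has $\du\chQ(0,0,a)\to 2^Q$ as $a\to0$, while $X_0(\du\chQ)(0,0,a)\sim -Q\,2^Q\,a\to0$). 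It is this extra decay in $a$, exploited by the change of variables $W=(a+1+|X|^2/4)^{-1}Z$ and then $s=\log a$, that produces the bounded answer: the coordinate integral yields $Ct^{-1/2}\int_\RR\nep^{-s^2/8t}\di s=C$ uniformly in $t$, with no residual power of $t$. In short, for (ii) the pointwise inequality $|X_i(\du\chQ)|\leq C\,\du\chQ$ must be replaced by the sharper coordinate expression (and the analogous one for $i\geq1$), and the radial reduction, which cannot see the direction-dependent decay, must be abandoned in favour of the direct computation in the $(X,Z,a)$ coordinates.
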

\begin{proof}
For the proof of (i) see \cite{HU}.\\
To prove (ii), recall that $\du\big((X,Z,a)\big)=a^{- Q/2}$ and, by (\ref{distanza}) 
\begin{align*}
\chQ\big((X,Z,a)\big)&=2^Q\,a^{Q/2}\,\big[(a+1+|X|^2/4)^2+|Z|^2\big]^{-Q/2}\,.
\end{align*}
Thus
\begin{align*}
(\du \,\chQ)\big((X,Z,a)\big)&=2^Q\,\big[(a+1+|X|^2/4)^2+|Z|^2\big]^{-Q/2}\,.
\end{align*}
By deriving along the vector field $X_0$ we obtain that
\begin{align*}
\big|X_0\big(\du\,\chQ\big)(X,Z,a)  \big|&\leq C\,\,\frac{a\,(a+1+|X|^2/4)}{\big[(a+1+|X|^2/4)^2+|Z|^2\big]^{Q/2+1}}\\
&\leq C\,\frac{a\,(a+1+|X|^2/4)^{-Q-1}}{\big[1+ (a+1+|X|^2/4)^{-2} \,|Z|^2\big]^{Q/2+1}}\\
\end{align*}
Since $h_{2t}^{\RR}(r)=C\,t^{-1/2}\,\nep^{-r^2/{8t}}$ and $|\log a|<|(X,Z,a)|$, we have that
\begin{align*}
&t^{-1/2}\int_S h_{2t}^{\RR}(|(X,Z,a)|)\,\big|X_0\big(\du\,\chQ\big)(X,Z,a)  \big|\dir (X,Z,a)\\
\leq\, & C\,t^{-1}\int_{\RR^+}e^{-\frac{(\log a)^2}{8t}}\,\int_N \frac{a\,(a+1+|X|^2/4)^{-Q-1}}{\big[1+ (a+1+|X|^2/4)^{-2} \,|Z|^2\big]^{Q/2+1}}  \,a^{-1}{\di X \di Z \di a}\,,
\end{align*}
which, changing variables ($(a+1+|X|^2/4)^{-1}\,Z=W$), transforms into
\begin{align*}
&\,C\,t^{-1}\int_{\RR^+}a\,\nep^{-\frac{(\log a)^2}{8t}}\int_{\vg} (a+1+|X|^2/4)^{-Q-1-m_{\zg}}\di X\,\int_{\zg}\frac{\di W}{(1+|W|^2)^{Q/2+1}}\,a^{-1}{\di a}\\
\leq& \,C\,t^{-1}\int_{\RR^+}\nep^{-\frac{(\log a)^2}{8t}}\int_{\vg} (a+1+|X|^2/4)^{-m_{\vg}/2-1}\di X {\di a}\\
=& \,C\,t^{-1}\int_{\RR^+}\nep^{-\frac{(\log a)^2}{8t}}\,(a+1)^{-m_{\vg}/2-1}\int_{\vg} (1+(a+1)^{-1/2}|X|^2/4)^{-m_{\vg}/2-1}\di X {\di a}\\
=& \,C\,t^{-1}\int_{\RR^+}\nep^{-\frac{(\log a)^2}{8t}}\,(a+1)^{-1}\int_{\vg} (1+|X|^2/4)^{-m_{\vg}/2-1}\di X {\di a}\\
\leq& \,C\,t^{-1}\int_{\RR^+}\nep^{-\frac{(\log a)^2}{8t}}\,a^{-1} \di a\,.
\end{align*}
The last integral, changing variables ($\log a=s$), transforms into
\begin{align*}
&\,C\,t^{-1}\,\int_{\RR}e^{-{s^2}/{8t}}\di s \\
=&\,C\,t^{-1/2}\,,
\end{align*}
as required. The proof of (ii) for $i=1,\ldots,n-1$ is similar and omitted.

To prove (iii), we use Lemma \ref{intduf} and apply (\ref{stimafi0}) and (\ref{pesoA}):
\begin{align*}
t^{-1}\int_{B_1^c}\du(x)\,\chQ(x)\,\htR(|x|)\dir(x)&\leq C\,t^{-3/2}\int_1^{\infty}\phi _0(r)\,\cosh^{-Q}(r/2)\,\nep^{-{r^2}/{4t}}\,A(r)\di r\\
&\leq C\,t^{-3/2}\int_1^{\infty}r\,\nep^{-Qr/2}\,\nep^{-Qr/2}\,\nep^{-{r^2}/{4t}}\,\nep^{Qr}\di r\\
&\leq C\,t^{-3/2}\int_1^{\infty}r\,\nep^{-{r^2}/{4t}}\di r\\
&\leq C\,t^{-1/2}\,,
\end{align*}
as required.  
\end{proof}

\begin{lem}\label{I,I'}
Let $F_1$ be the function defined by $F_1(s)=\big(\alpha_1\,s+f_1(s)\big)\,\htR(s)$ for all $s$ in $\RR^+$, where $\alpha_1$ and $f_1$ are as in Lemma \ref{pezzi}. Set
$$I(r,t)=\int_{1}^{\infty}F_1\big(2\,\arch (\cosh (r/2)v\big)\,\,\frac{\di v}{v^Q\sqrt{2v^2-2}}\,.$$
The following hold:
\begin{itemize}
\item[(i)] $|I(r,t)|\leq C\,t^{1/2}\,h_{2t}^{\RR}(r)\qquad \forall t\in\RR^+\quad\forall r\in [1,\infty)\,;$
\item[(ii)] $|I'(r,t)\leq C\,\htR(r)\qquad \forall t\in\RR^+\quad\forall r\in [1,\infty)\,.$
\end{itemize}
\end{lem}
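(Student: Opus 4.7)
The integrand in $I(r,t)$ is most naturally handled by introducing the variable $u=2\,\arch(\cosh(r/2)\,v)$, which maps $v\in[1,\infty)$ bijectively onto $u\in[r,\infty)$. A direct computation using $2\cosh^2(x/2)=\cosh x+1$ yields $v^2-1=(\cosh u-\cosh r)/(2\cosh^2(r/2))$, and hence
\[
\frac{dv}{v^Q\sqrt{2v^2-2}}=\frac{\sinh(u/2)\,\cosh^Q(r/2)}{2\cosh^Q(u/2)\,\sqrt{\cosh u-\cosh r}}\,du.
\]
So the plan is, in both (i) and (ii), first to rewrite $I(r,t)$ as an integral over $u\in[r,\infty)$ with weight depending only on the hyperbolic geometry, and then to exploit Gaussian bounds on $\htR$ from Lemma~\ref{pezzi}.

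For (i), after the change of variables and using that $|\alpha_1 u+f_1(u)|\leq C\,u$ for $u\geq r\geq 1$ (because $f_1$ is bounded), I would apply Lemma~\ref{pezzi}(i) with $j=1$ to obtain $u\,\htR(u)\leq C\,t^{1/2}h_{2t}^{\RR}(u)$. This reduces the problem to showing
\[
\int_r^{\infty} h_{2t}^{\RR}(u)\,\frac{\sinh(u/2)\,\cosh^Q(r/2)}{\cosh^Q(u/2)\sqrt{\cosh u-\cosh r}}\,du\leq C\,h_{2t}^{\RR}(r).
\]
Writing $h_{2t}^{\RR}(u)/h_{2t}^{\RR}(r)=e^{-(u^2-r^2)/(8t)}$ and changing variables to $v=u-r\geq 0$, the factor $e^{-(u^2-r^2)/(8t)}=e^{-rv/(4t)}e^{-v^2/(8t)}$ combines with the geometric weight (which for $u\geq r\geq 1$ is dominated by $C\,e^{-Q(u-r)/2}\min(1,(u-r)^{-1/2})$ using the behaviour of $\cosh u-\cosh r$ near and away from the diagonal) to produce a finite, $r$- and $t$-uniform integral. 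The main obstacle here is the integrable singularity of $(\cosh u-\cosh r)^{-1/2}$ at $u=r$: it is handled by splitting the integral on $[r,r+1]$ (where $\cosh u-\cosh r\asymp\sinh r\,(u-r)$ and $\sinh r\geq\sinh 1$) and $[r+1,\infty)$ (where the weight decays exponentially in $u-r$).

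For (ii), I would differentiate under the integral sign. Using $(\htR)'(s)=-\tfrac{s}{2t}\htR(s)$ and the boundedness of $f_1,f_1'$ one obtains
\[
|F_1'(s)|\leq C\Bigl(1+\frac{s^2}{t}\Bigr)\htR(s).
\]
The derivative $\partial_r\bigl(2\,\arch(\cosh(r/2)v)\bigr)=\sinh(r/2)\,v/\sqrt{\cosh^2(r/2)v^2-1}$ combines, after the same change of variables, with $\sinh(u/2)\,du$ to produce an integrand proportional to $F_1'(u)\,\sinh u/\sqrt{\cosh u-\cosh r}$ (up to harmless bounded factors). Since the term $s^2 \htR(s)/t$ in $F_1'$ is again controlled by $C\,h_{2t}^{\RR}(s)$ via Lemma~\ref{pezzi}(i) with $j=2$, one is left with essentially the same type of integral as in part (i); an integration by parts using $\sinh u\,du=d(\cosh u-\cosh r)$ removes the $(\cosh u-\cosh r)^{-1/2}$ singularity, producing a boundary term at $u=r$ which vanishes and a derivative term whose Gaussian factor suffices to obtain $|I'(r,t)|\leq C\htR(r)$.

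The chief difficulty in both parts is the interplay between the singular hyperbolic weight $(\cosh u-\cosh r)^{-1/2}$ and the Gaussian $\htR(u)$; once one recognises that Lemma~\ref{pezzi}(i) trades powers of $s$ for a doubled time parameter, the remaining work is bookkeeping of the decay $e^{-r v/(4t)-v^2/(8t)}$ against the exponential geometric weight $e^{-Q(u-r)/2}$, for which the uniform constant $C$ does not depend on $r\geq 1$ nor on $t>0$.
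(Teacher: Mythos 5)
Your proposal takes a genuinely different route from the paper, and the detour costs you: for part (i) it leads to unnecessary technical work, and for part (ii) it introduces a step that you cannot complete with the tools in hand.

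The paper's proof is strikingly short because it never leaves the $v$-variable. Since $s=2\,\arch(\cosh(r/2)v)\geq r$ for every $v\geq 1$, and $h_{2t}^{\RR}$ is decreasing, the inequality $|F_1(s)|\leq C\,t^{1/2}h_{2t}^{\RR}(s)\leq C\,t^{1/2}h_{2t}^{\RR}(r)$ can be pulled out of the integral directly, and what remains, $\int_1^{\infty}v^{-Q}(2v^2-2)^{-1/2}\,dv$, is a single fixed finite number independent of $r$ and $t$. There is no hyperbolic singular weight to analyse at all. By substituting $u=2\arch(\cosh(r/2)v)$ you manufacture the factor $(\cosh u-\cosh r)^{-1/2}$ and then have to work to remove it; the paper simply never creates it. Your diagonal/off-diagonal splitting for (i) is correct but pure overhead.

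In part (ii) this overhead becomes a real gap. Your plan is to integrate by parts against $\sinh u\,du=d(\cosh u-\cosh r)$ to kill the $(\cosh u-\cosh r)^{-1/2}$ singularity, but the derivative term you would then have to estimate involves $F_1''$, for which neither Lemma~\ref{pezzi} nor your proposal supplies a bound; you merely assert that the ``Gaussian factor suffices.'' In fact no integration by parts is needed: after differentiating under the integral sign the new factor $\partial_r\bigl(2\arch(\cosh(r/2)v)\bigr)=\sinh(r/2)\,v/\sqrt{\cosh^2(r/2)v^2-1}$ is uniformly bounded by $1$ for $v\geq 1$ (because $\cosh^2(r/2)v^2-1\geq\sinh^2(r/2)v^2$), and the same convergent $v$-integral as before finishes the proof. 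Your bound $|F_1'(s)|\leq C(1+s^2/t)\htR(s)$ is the correct one (the paper's $(1+s/2t)$ is a typo), and absorbing $s^2\htR(s)/t$ by Lemma~\ref{pezzi}(i) with $j=2$ naturally produces $C\,h_{2t}^{\RR}(s)$, hence $|I'(r,t)|\leq C\,h_{2t}^{\RR}(r)$; note that this is not the same as the $C\,\htR(r)$ asserted in the statement (since $h_{2t}^{\RR}(r)/\htR(r)$ is unbounded), but it is what the argument yields, and it is all that is used in the proof of Proposition~\ref{norma}, where $\htR$ may be replaced by $h_{2t}^{\RR}$ throughout Lemma~\ref{X_i}(iii) without changing the final bound.
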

\begin{proof}
First we observe that by Lemma \ref{pezzi} (i) and (ii),
\begin{align*} 
|F_1(s)|&\leq C\,s\,\htR(s)\\
&\leq C\,t^{1/2}h_{2t}^{\RR}(s)\,,
\end{align*}
and
\begin{align*}
|F_1'(s)|&\leq C\,(1+s/2t)\,\htR(s)\\
&\leq C\,\htR(s)\,.
\end{align*}
We now prove (i):
\begin{align*}
|I(r,t)|&\leq C\,t^{1/2}\,h_{2t}^{\RR}(r) \int_1^{\infty}\frac{\di v}{v^Q\sqrt{v^2-1}}\\
&\leq C\,t^{1/2}\,h_{2t}^{\RR}(r)\,,
\end{align*}
as required.\\
To prove (ii), note that
\begin{align*}
|I'(r,t)|&\leq\int_{1}^{\infty}|F_1'(2\,\arch (\cosh (r/2)v))|\,\,\frac{\sinh (r/2)v}{2\sqrt{\cosh ^2 (r/2)v^2-1}}\,\,\frac{\di v}{v^Q\sqrt{v^2-1}}\\
&\leq C\, \htR(r)\int_{1}^{\infty}\frac{\di v}{v^Q\sqrt{v^2-1}}\\
&\leq C\,\htR(r)\,,
\end{align*}
as required.
\end{proof}
Now we may prove the $L^1$-estimate of the gradient of the heat kernel $h_t$ given in Proposition \ref{norma}.

\begin{proof}
Let $q_t$ denote the heat kernel associated to the operator $\LQ$. By Proposition \ref{relazionenuclei}, $h_t=\du\, q_t$ so that
$$|\nabla h_t|\leq C\, \du \,(|q_t|+|\nabla q_t|)\,\qquad\forall t\in\RR^+\,.$$
It is well known (\cite[Theorem 5.9]{ADY}, \cite[Corollary 5.49]{ADY}) that $q_t$ is  radial and 
\begin{align}\label{pointwise}
|q_t(x)|&\leq  C\,t^{-1}\,(1+|x|)\,\Big(1+\frac{1+|x|}{t}\Big)^{(n-3)/2}\,\nep^{-Q\,|x|/2}\,\htR (|x|)\nonumber\\
|\nabla q_t(x)|&\leq  C\,t^{-1}\,|x|\,\Big(1+\frac{1+|x|}{t}\Big)^{(n-1)/2}\,\nep^{-Q\,|x|/2}\,\htR(|x|)\qquad\forall t\in\RR^+\quad\forall x\in S\,.
\end{align}
Our purpose is to estimate
\begin{align}\label{nabla}
\int_S |\nabla h_t| \dir&\leq C\int_S\du\, (|q_t|+|\nabla q_t|)\dir\nonumber\\
&=C\int_0^{\infty}\phi _0(r)\,(|q_t(r)|+|\nabla q_t(r)|)\,A(r)\di r\,.
\end{align}
We study the cases where $t<1$ and $t\geq 1$ separately.

{\it{Case}}~ $t<1$. In this case it suffices to use pointwise estimates (\ref{pointwise}) of $q_t$ and its gradient in (\ref{nabla}).

{\it{Case}}~$t\geq 1$. In this case, by using (\ref{pointwise}) in (\ref{nabla}),  we estimate the integral of $|\nabla h_t|$ on the unit ball. % indeed:
%\begin{align*}
%\int_{B_1}|\nabla h_t|\dir&\leq  C\int_0^1\phi_0(r)\,(|q_t(r)|+|\nabla q_t(r)|)\,A(r)\di r\\
%&\leq C\,t^{-1}\int_0^1\htR(r)\di r\\
%&\leq C\,t^{-1/2}\,.
%\end{align*}
The estimate on the complementary of the unit ball is more difficult. We already noticed (\ref{hthtR}) that
\begin{align*}
h_t(x)=\du(x)\,(\mathcal A^{-1}\circ\mathcal F^{-1})(\mathcal F \htR)(|x|)=\du(x)\, \mathcal A^{-1}(h_t^{\RR})(|x|)\,.
\end{align*}
By the inverse formula for the Abel transform (\ref{inv1}) and (\ref{inv2}) we obtain that if $m_{\zg}$ is even, then
\begin{equation}\label{centropari}
h_t(x)=C\,\du (x)\,\mathcal D_1^{m_{\zg}/2}\,\mathcal D_2^{m_{\vg}/2}(\htR)(|x|)\,,
\end{equation}
while if $m_{\zg}$ is odd, then 
\begin{align}\label{centrodispari}
h_t(x)&=C\,\du (x)\int_{|x|}^{\infty}\mathcal D_1^{(m_{\zg}+1)/2}\mathcal D_2^{m_{\vg}/2}(\htR)(s)\,\di\nu(s)\,,
\end{align}
for all $x$ in $S$. We now consider the cases where $m_{\zg}$ is even and odd separately.

{\it{Case $m_{\zg}$ even}}. By applying Lemma \ref{pezzi} (with $q=m_{\zg}/2$ and $p=m_{\vg}/2$) and formula (\ref{centropari}) we see that
\begin{align*}
h_t(x)&=C\,\du (x) \sum_{j=1}^{{(n-1)}/{2}} t^{-j}a_j(|x|)\,\htR(|x|)\\
&=C\,\du (x)\,\sum_{j=1}^{{(n-1)}/{2}} H_j(|x|,t)\,.
\end{align*}
Now we estimate the gradient of each summand $\du H_j$. We study the cases $j\geq 2$ and $j=1$ separately.\\
First suppose $j\geq 2$ and $i=0,...,n-1$:
\begin{align*}
X_i(\du H_j)(x)&=C\,\du(x)\,H_j(|x|,t)+\du(x)\,H_j'(|x|,t)\,X_i(|\cdot|)(x)\,.
\end{align*} 
Then, since $\big|X_i(|\cdot|)\big|\leq 1$ by Lemma \ref{X_i} (i), 
\begin{align*}
|X_i(\du H_j)(x)|&\leq C\,\du(x)\,\big(\big|H_j(|x|,t)\big|+\big|H_j'(|x|,t)\big|\big)\,.
\end{align*}
By applying Lemma \ref{pezzi} we obtain that
\begin{align*}
|H_j(r,t)|+|H_j'(r,t)|&\leq t^{-j}\,\Big(|a_j(r)|+|a_j'(r)|+|a_j(r)|\,\frac{r}{2t}  \Big)\htR(r)\\
&\leq C\,t^{-j}\,\Big(r^{j-1}+\frac{r^{j}}{2t}  \Big)\,r\,\nep^{-Qr/2}\,\htR(r)\\
&\leq C\,t^{-j}\,t^{(j-1)/2}\,r\,\nep^{-Qr/2}h_{2t}^{\RR}(r)\,.
\end{align*}
It follows that
\begin{align*}
|X_i(\du H_j)(x)|&\leq C\,t^{-1/2}\,\du(x)\,t^{-1}a_1(|x|)\,h_{2t}^{\RR}(|x|)\\
&\leq C\,t^{-1/2}\,h_{2t}(x)\,.
\end{align*}
Thus, 
\begin{align}\label{Int1}
\int_{B_1^c}|X_i(\du H_j)(x)|\dir (x)&\leq C\,t^{-1/2}\,\int h_{2t}(x)\dir (x)\nonumber\\
&\leq C \,t^{-1/2}\qquad \forall j\geq 2\quad i=0,...,n-1\,.
\end{align}
If $j=1$ and $i=0,...,n-1$ the estimate is more delicate. Indeed, by Lemma \ref{pezzi} (ii)
$$H_1(|x|,t)=t^{-1}\,\chQ(x)\,\big(\alpha_1\,|x|+f_1(|x|)\big)\htR(|x|)\,,$$
where $\alpha_1$ and $f_1$ are as in Lemma \ref{pezzi} (ii). Then we have that
\begin{align*}
X_i(\du H_1)(x)&=t^{-1}\,X_i\big(\du \,\chQ  \big)\,\big(\alpha_1\,|x|+f_1(|x|)\big)\\
&+t^{-1}\,\du (x)\,\chQ(x)\big(\alpha_1+f_1'(|x|)\big)\,X_i(|\cdot|)(x)\,\htR(|x|)\\
&+\big(\alpha_1\, |x|^2/{2t}+f_1(|x|)\,|x|^2/{2t}\big)\,X_i(|\cdot|)(x)\,\htR(|x|)\,.
\end{align*}
By Lemma \ref{pezzi} (i) and Lemma \ref{X_i} (ii),
\begin{align*}
|X_i(\du H_1)(x)|&\leq C\,t^{-1}\,|X_i\big(\du \chQ  \big)(x)|\,|x|\,\htR(|x|)+C\,t^{-1}\,\du (x)\,\chQ(x)\Big(1+\frac{|x|^2}{2t}\Big)\,\htR(|x|)\\
&\leq C\,t^{-1/2}\,|X_i\big(\du \chQ  \big)(x)|\,h_{2t}^{\RR}(|(x)|+C\,t^{-1}\,\du (x)\,\chQ(x)\,\htR(|x|)\,.
\end{align*}
Integrating on the complementary of the unit ball we obtain that
\begin{align}\label{J1i0}
\int_{B_1^c} |X_i(\du H_1)(x)|\dir(x) &\leq C \,t^{-1/2}\int_{B_1^c} h_{2t}^{\RR}\big(|(x)|\big)\,|X_i\big(\du \chQ  \big)(x)|\dir(x)\nonumber\\
&+C\,t^{-1}\int_{B_1^c}\du(x)\,\chQ(x)\,\htR(|x|)\dir(x)\nonumber\\
&\leq C\,t^{-1/2}\qquad i=0,...,n-1\,,
\end{align}
where we have used Lemma \ref{pezzi} (ii) and (iii).
If we sum the estimates (\ref{Int1}) and (\ref{J1i0}) we obtain that
\begin{align*}
\int_{B_1^c}|\nabla h_t(x)|\dir(x)&\leq C\,\sum_{i=0}^{n-1}\,\sum_{j=1}^{{(n-1)}/{2}}\,\int_{B_1^c}|X_i(\du H_j)(x)|\dir(x)\\
&\leq C \,t^{-1/2}\,,
\end{align*}
which concludes the proof in the case $m_{\zg}$ even.

{\it{Case~$m_{\zg}$ ~odd.}} By applying Lemma \ref{pezzi} with $q=(m_{\zg}+1)/2$ and $p={m_{\vg}}/2$ and (\ref{centrodispari}) we obtain that
\begin{align*}
h_t(x)&=C\,\du (x)\,\sum_{j=1}^{{n}/{2}}t^{-j}\int_{|x|}^{\infty}a_j(s)\,\htR(s)\di \nu(s)\\
&=C\,\du (x)\,\sum_{j=1}^{{n}/{2}}\,H_j(|x|,t)\,.
\end{align*}
We estimate the gradient of each summand $\du H_j$. \\
For all $i=0,...,n-1$ and $j\geq 1$ we see that
$$X_i(\du H_j)(x)=C\,\du (x)\,H_j(|x|,t)+C\,\du (x)\,H_j'(|x|,t)\,X_i(|\cdot|)(x)\,.$$
Then, since $\big|X_i(|\cdot|)\big|\leq 1$, 
\begin{align}\label{partenza}
|X_i(\du H_j)(x)|\leq C \,\du (x)\,\big(\big|H_j(|x|,t)\big|+\big|H_j'(|x|,t)\big|\big)\,.  
\end{align}
We study the cases where $j\geq 2$ and $j=1$ separately.\\
First suppose $j\geq 2$. By integrating by parts we obtain that
$$H_j(r,t)=-2t^{-j}\int_r^{\infty}\partial _s\big(a_j\htR\big)(s)\,\sqrt{\cosh s-\cosh r}\,\di s\,,$$
and 
$$H_j'(r,t)=-t^{-j}\int_r^{\infty}\partial _s\big(a_j\htR\big)(s)\,\frac{\sinh r\di s}{\sqrt{\cosh s-\cosh r}}\qquad\forall r\in [1,\infty)\,.$$
By Lemma \ref{pezzi} (i) and (ii)
\begin{align}\label{derivata}
\big|\partial _s\big(a_j\htR\big)(s)\big|&\leq \Big|a_j'(s)-\frac{s}{2t}\,a_j(s)\Big|\,\htR(s)\nonumber\\
&\leq C\,\Big(\frac{s^j}{2t}+s^{j-1}\Big)\,s\, \nep^{-{(Q+1)}s/{2}}\,\htR(s)\nonumber\\
&\leq C\,t^{{(j-1)}/{2}}\,s \,\nep^{-(Q+1)s/2}\,h_{2t}^{\RR}(s) \,.
\end{align}
By using (\ref{derivata}) in (\ref{partenza}) we obtain that
\begin{align*}
|X_i(\du H_j)(x)|&\leq C\,\du (x) \,t^{-(j+1)/2}\int_{|x|}^{\infty}\,s \,\nep^{-(Q+1)s/2}\,h_{2t}^{\RR}(s)\di \nu(s).
\end{align*}
Since $j\geq 2$, $t^{-j/2}\leq t^{-1}$, so that the right hand side is bounded by
\begin{align*}
C\, t^{-1/2}\,\du (x)\,t^{-1}\int_{|x|}^{\infty}a_1(s)\,h_{2t}^{\RR}(s)\di\nu (s)\,&\leq C\,t^{-1/2}\,h_{2t}(x)\,.
\end{align*}
Thus
%It is easy to verify that for all $s>r>1$
%\begin{equation}\label{sincos}
%\sqrt{\cosh s-\cosh r}+\frac{\sinh s}{\sqrt{\cosh s-\cosh r}}\leq C \frac{\sinh s}{\sqrt{\cosh s-\cosh r}}\,.
%\end{equation}
\begin{align}\label{casojmaggiore1}
\int _{B_1^c}|X_i(\du H_j)(x)|\dir (x)&\leq C \,t^{-1/2}\int_{B_1^c}h_{2t}(x)\dir(x)\nonumber\\
&\leq C\,t^{-1/2}\qquad \forall j\geq 2\quad i=0,...,n-1\,.
\end{align}
If $j=1$ the estimate is more delicate. Note that
\begin{align}\label{A11}
H_1(r,t)&=t^{-1}\int_r^{\infty}a_1(s)\,h_{t}^{\RR}(s)\di\nu(s)\,\nonumber\\
&=t^{-1}\int_r^{\infty}\cosh^{-(Q+1)}(s/2)\,\big(\alpha_1 s +f_1(s)\big)\,\htR(s)\di\nu(s)\nonumber\\
&=t^{-1}\int_r^{\infty}\cosh^{-(Q+1)}(s/2)\,F_1(s)\di\nu(s)\,\,,
\end{align}
where $F_1(s)=\big(\alpha_1 s +f_1(s)\big)\,\htR(s)$ and $\alpha_1$, $f_1$ are as in Lemma \ref{pezzi} (ii).\\
By changing variables $\Big(u=\cosh(s/2)\cosh ^{-1}(r/2)\Big)$ the integral (\ref{A11}) transforms into
\begin{align*}
&t^{-1}\cosh^{-Q}(r/2)\int_{1}^{\infty}F_1(2\,\arch (\cosh (r/2)v)\,\,\frac{\di v}{v^Q\sqrt{2v^2-2}}\,.
\end{align*}
Set $I(r,t)=\int_{1}^{\infty}F_1\big(2\,\arch (\cosh (r/2)v\big)\,\,\frac{\di v}{v^Q\sqrt{2v^2-2}}$. Thus 
$$H_1(r,t)=t^{-1}\,\cosh^{-Q}(r/2)\,I(r,t)\,.$$
Then, for all $i=0,...,n-1$, 
\begin{align*}
X_i(\du H_1)(x)&=t^{-1}\,X_i(\du\,\chQ)(x)\,I(|x|,t)+t^{-1}\,\du(x)\,\chQ(x)\,I'(|x|,t)\,X_i(|\cdot|)(x)\,.
\end{align*}
{F}rom Lemma \ref{pezzi} (i) and Lemma \ref{I,I'} we obtain that
\begin{align*}
|X_i(\du H_1)(x)|&\leq t^{-1}\big| X_i(\du\,\chQ)(x)\big| \,\big|I(|x|,t)\big|+t^{-1}\,\du\,\chQ(x)\,\big|I'(|x|,t)\big|\\
&\leq C\,t^{-1/2}\,h_{2t}^{\RR}(|x|)\,\big| X_i(\du\chQ)(x)\big| +C\,t^{-1}\,\du\,\chQ(x)\,\htR(|x|)\,.
\end{align*}
We now integrate the last expression on the complementary of the unit ball and apply Lemma \ref{pezzi} (ii) and (iii):
\begin{align}\label{CISONO}
\int_{B_1^c}|X_i(\du H_1)|\dir\leq \,&\,C \,t^{-1/2}\int_S h_{2t}^{\RR}(|(x)|)\,\big| X_i(\du\chQ)(x)\big|\dir(x)\nonumber\\
&+C\,t^{-1}\int_{B_1^c}\,\du(x)\,\chQ(x) \,\htR(|x|)\dir(x)\nonumber\\
\leq \,&C\,t^{-1/2}\,\qquad \forall i=0,...,n-1\,.
\end{align}
We put (\ref{casojmaggiore1}) and (\ref{CISONO}) together and conclude that
\begin{align*}
\int_{B_1^c}|\nabla h_t(x)|\dir(x)&\leq C\,\sum_{i=0}^{n-1}\,\sum_{j=1}^{{n}/{2}}\int_{B_1^c}|X_i(\du H_j)(x)|\dir(x)\\
&\leq C \,t^{-1/2}\,,
\end{align*}
as required.

This concludes the proof of Proposition \ref{norma}.
\end{proof}
The estimate (\ref{stimaB}) follows immediately from Propositions \ref{stima2} and \ref{norma}.

\bigskip
In the particular case of $ax+b\,$-groups we deduce from (\ref{stimaA}) and (\ref{stimaB}) a boundedness theorem for multipliers of $\D$ from $H^{1}$ to $L^1(\rho)$ and from $BMO$ to $L^{\infty}(\rho)$. 
\begin{teo}\label{moltiplicatoriH1}
Let $S=\RR^d\times \RR^+$ be an $ax+b\,$-group. Suppose that $s_0>\frac{3}{2}$, and $s_{\infty}>\max\left\{\frac{3}{2},\frac{d+1}{2}\right\}$. If $M$ satisfies a mixed \MH condition of order $(s_0,s_{\infty})$, then $M(\D)$ is bounded from $H^{1}$ to $L^1(\rho)$ and from $BMO$ to $L^{\infty}(\rho)$.
\end{teo}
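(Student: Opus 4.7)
The plan is to deduce the result from the general $H^{1,q}$--$L^{1}$ and $L^{\infty}$--$BMO$ boundedness theorems of Section~\ref{CZdecgen}, applied in the specific setting of the $ax+b$-group. First, recall that $(S,\rho,d)$ is a Calder\'on--Zygmund space by Theorem~\ref{CZd} and the family $\mathcal R$ of admissible sets of Hebisch and Steger satisfies the additional condition~(C), as verified in Section~\ref{Hardyax+b}. Hence Theorem~\ref{TeolimH1} and Corollary~\ref{TeolimBMO} are at our disposal, provided that we can realize $M(\D)$ as a singular integral operator satisfying the H\"ormander-type condition~(\ref{stimaH}) and its dual variant~(\ref{stimaHdual}).

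The key observation is that the estimates~(\ref{stimaAD}) and~(\ref{stimaBD}) obtained in the proof of Theorem~\ref{moltiplicatori}, once specialized to $n=d+1$, show that $M(\D)=\sum_{j\in\ZZ}m_{j}(2^{-j}\D)$ satisfies precisely the hypotheses (i)--(iii) of Remark~\ref{condHS} (with $c=2^{1/2}$ and $\varepsilon,a$ small). Consequently, by Remark~\ref{condHS}, the integral kernel of $M(\D)$ satisfies the H\"ormander-type condition~(\ref{stimaH}) on $(S,\rho,d)$. Applying Theorem~\ref{TeolimH1} with $q=2$ yields the boundedness of $M(\D)$ from $H^{1,2}$ to $L^{1}(\rho)$, and by Corollary~\ref{coincidono} we have $H^{1,2}=H^{1}$ with equivalent norms; this gives the first conclusion.

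For the second conclusion, the strategy is to verify the dual condition~(\ref{stimaHdual}) by a symmetry argument. Since $\D$ is essentially selfadjoint on $L^{2}(\rho)$, the Hilbert space adjoint of $M(\D)$ is $\overline{M}(\D)$, and the complex conjugate function $\overline{M}$ visibly satisfies a mixed Mihlin--H\"ormander condition of the same order $(s_{0},s_{\infty})$ as $M$. Therefore, by the first part of the argument applied to $\overline{M}$, the kernel of $\overline{M}(\D)$ also satisfies~(\ref{stimaH}); since the kernel of the adjoint is $K^{*}(x,y)=\overline{K(y,x)}$, this is exactly the statement that $K_{M(\D)}$ satisfies~(\ref{stimaHdual}). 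Corollary~\ref{TeolimBMO} then yields that $M(\D)$ extends to a bounded operator from $L^{\infty}(\rho)$ to $BMO_{p}$ for every $p\in(1,\infty)$, and the equivalence of all $BMO_{p}$ with $BMO$ established after Theorem~\ref{dualitybis} gives the second conclusion.

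There is really no hard step here: the genuine analytic content has already been accomplished in Theorem~\ref{moltiplicatori} (the pointwise kernel estimates and the $L^{1}$-bound for $\nabla h_{t}$), and what remains is purely formal bookkeeping. The only point that requires a moment of care is the identification of the adjoint and the use of the selfadjointness of $\D$ on $L^{2}(\rho)$ to see that the dual condition~(\ref{stimaHdual}) reduces to the same H\"ormander-type estimate applied to $\overline{M}$; once this is noted, the theorem follows by directly invoking Theorem~\ref{TeolimH1} and Corollary~\ref{TeolimBMO}.
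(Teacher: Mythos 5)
Your argument is correct and follows the same route as the paper, which proves this theorem in a single line by citing Theorem~\ref{TeolimH1} and Corollary~\ref{TeolimBMO}. What your writeup usefully adds, and what the paper leaves implicit, is the explicit verification of the dual H\"ormander condition~(\ref{stimaHdual}): since $\D$ is essentially self-adjoint on $L^2(\rho)$, the adjoint of $M(\D)$ is $\overline{M}(\D)$, the conjugate $\overline{M}$ visibly satisfies the same mixed Mihlin--H\"ormander condition of order $(s_0,s_\infty)$, and hence condition~(\ref{stimaHdual}) for $K_{M(\D)}$ is precisely condition~(\ref{stimaH}) for $K_{\overline{M}(\D)}$, which in turn follows from the estimates~(\ref{stimaAD}) and~(\ref{stimaBD}) obtained in Step~1 and Step~2 of Theorem~\ref{moltiplicatori}. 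You also correctly invoke Corollary~\ref{coincidono} to pass between $H^{1,2}$ and $H^1$, a step that is specific to the $ax+b$-group case where the equivalence of the $H^{1,q}$ spaces is known. One small remark for accuracy: as stated, the theorem reads ``from $BMO$ to $L^\infty(\rho)$,'' which is presumably a slip for ``from $L^\infty(\rho)$ to $BMO$''; the latter is the direction given by Corollary~\ref{TeolimBMO} and is what your argument actually establishes.
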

\begin{proof}
It follows from Theorem \ref{TeolimH1} and Corollary \ref{TeolimBMO}.
\end{proof}

\section{The product case}\label{multipliersp}
Let $S=S'\times S''$ be the product of two \DR spaces. Let $E_0',...,E'_{n'-1}$ and $E_0'',...,E''_{n''-1}$ be orthonormal basis of $\s'$ and $\s''$, respectively. We denote by $X_i'$ and $X_j''$ the left invariant vector fields on $S'$ and $S''$, which agree with $E_i'$ and $E_j''$ at the identity. We now consider the left invariant vector fields on $S$ defined by
\begin{align*}
(X_i'f)(x',x'')&=\big(X_i'f(\cdot\,,x'')\big)(x')\,,\\
(X_j''f)(x',x'')&=\big(X_j''f(x',\,\cdot)\big)(x'')\,\qquad \forall f\in C^{\infty}_c(S)\,.
\end{align*}
We denote by $\D$ the Laplacian $\D=-\sum_{i=0}^{n'-1}X_i^{'2} -\sum_{j=0}^{n''-1}X_j^{''2}$. By (\ref{relationship}) 
\begin{align*}
\D f(x',x'')&=\big(\D 'f(\cdot\,,x''\big)(x')+\big(\D ''f(x',\,\cdot\big)(x'')\\&=\delta '^{1/2}(x')\big((\mathcal L'-Q'^2/4)(\delta '^{-1/2}f(\cdot\,,x'')\big)(x')\\
&+\delta ''^{1/2}(x')\big((\mathcal L''-Q''^2/4)(\delta ''^{-1/2}f(x',\,\cdot)\big)(x'')\\
&=\delta '^{1/2}(x')\delta ''^{1/2}(x'')\big((\mathcal L'-Q'^2/4)(\delta '^{-1/2}\delta ''^{-1/2}f(\cdot\,,x'')\big)(x')\\
&+\delta '^{1/2}(x')\delta ''^{1/2}(x'')\big((\mathcal L''-Q''^2/4)(\delta '^{-1/2}\delta ''^{-1/2}f(x',\,\cdot)\big)(x'')\\
&=\delta^{1/2}\,\big(\mathcal L-(Q'^2+Q''^2)/4\big)\,\delta^{-1/2}\,f\,(x',x'')\,.
\end{align*}
Thus, $\D=\delta^{1/2}\,\LQ\,\delta^{-1/2}$, where $\LQ=\mathcal L-(Q'^2+Q''^2)/4$. 

Let 
$$\LB _Q=\int_0^{+\infty}t\, dE_{\LB _Q}(t){\hspace{1cm}}{\rm and}{\hspace{1cm}}\Delta=\int_0^{+\infty}t\, dE_{\Delta}(t)$$
be the spectral resolutions of $\LQ$ and $\Delta$ respectively. For all bounded measurable function $M$ on $[0,+\infty)$ the operators
$$M(\LB _Q)=\int_0^{+\infty}M(t) dE_{\LB _Q}(t){\hspace{1cm}}{\rm and}{\hspace{1cm}}M(\Delta)=\int_0^{+\infty}M(t) dE_{\Delta}(t)$$
are bounded on $L^2(\lambda)$ and $L^2(\rho)$ respectively. By the spectral theorem we have that
$$\delta^{-1/2}M(\Delta)\delta^{1/2}f=M(\mathcal L _Q)\,.$$
We denote by $\kD$ and $\kQ$ the distributional kernels of $M(\D)$ and $M(\LQ)$ respectively; we have that 
$$\MQ f=f\ast\kQ{\hspace{1cm}}{\rm {and}}{\hspace{1cm}}\MD f=f\ast\kD\qquad\forall f\in C^{\infty}_c(S)\,,$$
where $\ast$ denotes the convolution on $S$. It is easy to check that $\kD=\du\kQ$. Moreover the spherical transform of $k_{\LQ}$ is
$$\mathcal H k_{\LQ}(s',s'')=M(s^{'2}+s ^{''2})\qquad\forall s',s'' \in\RR^+\,.$$
Let $M$ be a bounded measurable function on $\RR^+$. Let $K_{M(\D)}$ and $k_{M(\D)}$ denote the integral kernel and the convolution kernel of the operator $M(\D)$ respectively. Our aim is to find sufficient conditions on $M$ that ensure the boundedness of the operator $M(\D)$ from $L^1(\rho)$ to $\lorentz{1}{\infty}{\rho}$ and on $L^p(\rho)$, for $1<p<\infty$. 
\begin{teo}\label{moltiplicatorip}
Let $S=S'\times S''$ be the product of two \DR spaces. Suppose that $s_0>3$ and $s_{\infty}>\max\left\{{3},\frac{n}{2}\right\}$, where $n$ denotes the dimension of $S$. If $M$ satisfies a mixed \MH condition of order $(s_0,s_{\infty})$, then $\MD$ is bounded from $\lu{\rho}$ to $\lorentz{1}{\infty}{\rho}$ and on $L^p(\rho)$, for all $p$ in $ (1,\infty)$.
\end{teo}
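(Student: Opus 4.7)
The plan is to mirror the three-step proof of Theorem \ref{moltiplicatori}. Since the main theorem of Section \ref{CZdecompositionp} shows that $(S,\rho,d_{max})$ is a Calder\'on--Zygmund space, Remark \ref{condHS} and Theorem \ref{Teolim} reduce the statement (together with duality) to establishing, for every $m$ in $H^{s_0}(\RR)\cap H^{s_\infty}(\RR)$ supported in $[1/4,4]$ and every sufficiently small $\varepsilon>0$, the two product analogues of (\ref{stimaA}) and (\ref{stimaB}). One then decomposes $M(\D)=\sum_{j\in\ZZ}m_j(2^{-j}\D)$ with $m_j(\lambda)=M(2^j\lambda)\,\psi(\lambda)$ and applies these estimates to each $m_j$ with $t=2^{-j}$ exactly as in Step~3 of the earlier proof.

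The gradient bound underlying the H\"ormander-type estimate transfers painlessly. The vector fields $X_i'$ and $X_j''$ act on disjoint variables and therefore commute, so $\nep^{-t\D}=\nep^{-t\D'}\otimes\nep^{-t\D''}$ and $h_t=h_t'\otimes h_t''$. Consequently
\begin{equation*}
|\nabla h_t(x',x'')|\leq|\nabla' h_t'(x')|\,h_t''(x'')+h_t'(x')\,|\nabla'' h_t''(x'')|,
\end{equation*}
and combining $\|h_t^{(i)}\|_{L^1(\rho^{(i)})}=1$ with Proposition \ref{norma} applied to each factor yields $\|\nabla h_t\|_{L^1(\rho)}\leq Ct^{-1/2}$. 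The verbatim adaptation of Proposition \ref{stima2} then delivers the product analogue of (\ref{stimaB}).

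For the weighted $L^1$ kernel estimate one adapts Lemmas \ref{peso} and \ref{norma1}. The correct weight in the product setting is the tensor-product weight $w(x',x'')=\delta(x',x'')^{-1/2}\,\nep^{Q'd_{S'}(x',e)/2}\,\nep^{Q''d_{S''}(x'',e)/2}=w'(x')\,w''(x'')$, where $w^{(i)}$ is the single-factor weight of Lemma \ref{peso}; crucially this factors, whereas the more obvious choice based on $d_{max}$ does not, because the max is not additive. Since $B_r$ in $d_{max}$ coincides with $B_{S'}(e,r)\times B_{S''}(e,r)$ and $\rho$ factors as a product, Lemma \ref{peso}(i) applied on each factor gives $\int_{B_r}w^{-1}\dir\leq Cr^n$ for $r<1$ and $\leq Cr^4$ for $r\geq 1$. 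Using Lemma \ref{intdufp} together with the pointwise bounds $\phi_0^{(i)}(r_i)\,\nep^{Q^{(i)}r_i/2}\leq C(1+r_i)$ one derives the product version of Lemma \ref{peso}(ii), namely $\int_{B_r}|k_{f(\sqrt\D)}|^2\,w\dir\leq C(1+r)^2\,\|k_{f(\sqrt\D)}\|_{L^2(\rho)}^2$; together with the Plancherel formula on the product, which after polar coordinates in $(s',s'')$ is controlled by $\int_0^\infty|f(s)|^2(s^5+s^{n-1})\di s$, this yields the product analogue of Lemma \ref{norma1} with exponent $n/2$ on small balls and $3$ on large balls.

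The heart of the matter, and the step that explains the stronger hypotheses $s_0>3$ and $s_\infty>\max\{3,n/2\}$, is the $r^5$ factor at the origin replacing the nonproduct $r^2$: it arises because both $|{\bf c}(s')|^{-2}$ and $|{\bf c}(s'')|^{-2}$ contribute an $s^2$ factor for small argument, which in polar coordinates gives $(r\cos\theta)^2(r\sin\theta)^2\,r=r^5\cos^2\theta\,\sin^2\theta$. Applying Lemma \ref{decomp} with $q=5/2$ and $Q=(n-1)/2$ in place of $q=1$, the dyadic sum in $\ell$ from Step~1 of the nonproduct proof now produces the exponent $\max\{3,n/2\}+\varepsilon-s_\infty$ for the local part (small $t$) and $3+\varepsilon-s_0$ for the part at infinity, so the geometric series converges precisely under the hypotheses of the theorem. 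The remaining summation over $j\in\ZZ$ goes through without change, completing the proof.
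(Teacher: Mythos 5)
Your proposal is correct and mirrors the paper's three-step structure, but it takes a genuinely different technical route in the Plancherel step of Step 1. The paper introduces a two-variable analogue of Lemma \ref{decomp} (Lemma \ref{decompp}), keeping the factored weight $(s'^{2q'}+s'^{2Q'})(s''^{2q''}+s''^{2Q''})$ and obtaining the dilation exponent $t^{-(2q'+2q''+2)}$ with $q'=q''=1$, $Q'=(n'-1)/2$, $Q''=(n''-1)/2$. You instead reduce the whole two-variable Plancherel integral to a one-variable integral against the weight $s^5+s^{n-1}$ via polar coordinates and then reuse the original one-variable Lemma \ref{decomp} with $q=5/2$ and $Q=(n-1)/2$; since $t^{-(2q+1)}=t^{-6}$ matches the paper's exponent, both routes produce the same geometric factors $2^{\ell(3+\varepsilon-s_0)}$ and $2^{\ell(\max\{3,n/2\}+\varepsilon-s_\infty)}$, and hence the same hypotheses. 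The advantage of your variant is that it needs no new decomposition lemma; the cost is that the claimed bound
$$
\int_0^{\infty}\int_0^{\infty}|f(\sqrt{s'^2+s''^2})|^2\,|{\bf c}(s')|^{-2}\,|{\bf c}(s'')|^{-2}\di s'\di s''\leq C\int_0^{\infty}|f(s)|^2\,(s^5+s^{n-1})\di s
$$
requires a bit more care than the heuristic you display: for $r\geq 1$ one must split the angular integral $\int_0^{\pi/2}|{\bf c}(r\cos\theta)|^{-2}|{\bf c}(r\sin\theta)|^{-2}\di\theta$ into the sectors where $r\cos\theta\leq1$, where $r\sin\theta\leq1$, and the remainder, obtaining contributions of order $r^{n''-2}$, $r^{n'-2}$, $r^{n-2}$, all of which are $\leq C\,r^{n-2}$ as needed. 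A blind use of the cruder product bound $(s'^2+s'^{n'-1})(s''^2+s''^{n''-1})$ would instead produce the intermediate exponents $n'+2$ and $n''+2$, which are \emph{not} dominated by $5$ and $n-1$ when one of the factors has small dimension. Your reduction of Step 2 to the one-factor gradient bound, via $h_t=h_t'\otimes h_t''$ and positivity of the heat kernels, is exactly what the paper does in Proposition \ref{normap}.
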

{\bf{Structure of the proof.}} 
The proof of this theorem follows the same line of the proof of Theorem \ref{moltiplicatori}. Let $\varepsilon$ be such that $s_0>3+\varepsilon$ and $s_{\infty}>\max\left\{{3},\frac{n}{2}\right\}+\varepsilon$.

{\bf{Step 1.}} Suppose that $m$ is in $H^{s_0}(\RR)\cap H^{s_{\infty}}(\RR)$ and is supported in $[1/4,4]$. Then the integral kernel $K_{m(t\D)}$ satisfies the following estimate:
\begin{align}\label{stimaAp}
\int_S|K_{m(t\D)}(x,y)|\,\big(1+t^{-1/2}d(x,y)\big)^{\varepsilon}\dir(x)&\leq  \begin{cases}
C\,\|m\|_{H^{s_0}(\RR)}& \forall t\in [1,\infty)\\
C\,\|m\|_{H^{s_{\infty}}(\RR)}& \forall t\in (0,1)\,\qquad \forall y\in S\,.
\end{cases}
\end{align}

{\bf{Step 2.}} Let $m$ and $K_{m(t\D)}$ be as in Step 1. Then
\begin{align}\label{stimaBp}
\int_S|K_{m(t\D)}(x,y)-K_{m(t\D)}(x,z)|\dir(x)\nonumber\\
\leq& \begin{cases}
C\,t^{-1/2}d(y,z)\,\|m\|_{H^{s_0}(\RR)}& \forall t\in [1,\infty)\\
C\,t^{-1/2}d(y,z)\,\|m\|_{H^{s_{\infty}}(\RR)}& \forall t\in (0,1)\,\qquad\forall y,z\in S\,.
\end{cases}
\end{align}

{\bf{Step 3.}} This is verbatim the same as Step 3 in the proof of Theorem \ref{moltiplicatori} and therefore is omitted.

In the following subsections we go into details of Step 1 and Step 2.

\subsection{Step 1}
The weight function $w$ defined by
$$w(x',x'')=(w'\otimes w'')(x',x'')=\delta^{-1/2} (x',x'')\,\nep^{{[Q'\,d(x',e')+Q''\,d(x'',e'')]}/2}\qquad\forall (x',x'')\in S\,,$$
will play an important r\^ole in the sequel. 
\begin{lem}\label{pesop}
There exists a constant $C$ such that the following hold:
\begin{itemize}
\item[(i)]$\int_{B_r}w^{-1}\,{\rm{d}}\rho\leq \begin{cases}
C\,r^4& \forall r\in [1,\infty)\\
C\,r^n& \forall r\in (0,1)\,;
\end{cases}$
\item[(ii)] for every compactly supported function  $f$ on $\RR^+$
\begin{align*}
\int_{B_r}|k_{f(\D)}|^2\,w\,{\rm{d}}\rho&\leq C\, (1+r)^2\, \int_{B_r}|k_{f(\D)}|^2\di\rho\,.
\end{align*}
\end{itemize}
\end{lem}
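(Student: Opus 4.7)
The plan is to reduce everything to the corresponding one-factor estimates in Lemma~\ref{peso} by exploiting the product structure. The key geometric fact is that the ball $B_r$ with respect to the product metric $d_{max}$ is exactly the Cartesian product $B_{S'}(e',r)\times B_{S''}(e'',r)$, and the weight splits as $w=w'\otimes w''$, so Fubini's theorem is available throughout.

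For part (i), since $w^{-1}(x',x'')=w'^{-1}(x')\,w''^{-1}(x'')$ and $\rho=\rho'\otimes\rho''$, we have
\begin{align*}
\int_{B_r}w^{-1}\,\dir
&=\Big(\int_{B_{S'}(e',r)}w'^{-1}\dir'\Big)\Big(\int_{B_{S''}(e'',r)}w''^{-1}\dir''\Big).
\end{align*}
Lemma~\ref{peso}~(i) applied in each factor bounds the first factor by $Cr^{2}$ (resp.\ $Cr^{n'}$) and the second by $Cr^{2}$ (resp.\ $Cr^{n''}$) according to whether $r\geq 1$ or $r<1$; multiplying gives $Cr^{4}$ (resp.\ $Cr^{n'+n''}=Cr^{n}$), as claimed.

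For part (ii), I would follow the same strategy as in the nonproduct case, substituting Lemma~\ref{intdufp} for Lemma~\ref{intduf}. Recall that $k_{f(\D)}=\delta^{1/2}k_{f(\LQ)}$ and that $k_{f(\LQ)}$ is biradial, depending only on $(r',r'')=\bigl(d(x',e'),d(x'',e'')\bigr)$. A short computation (using $\delta\,w=\delta^{1/2}\nep^{Q'd'/2+Q''d''/2}$) gives
\begin{align*}
\int_{B_r}|k_{f(\D)}|^2\,w\dir
=\int_{B_r}\delta^{1/2}\,|k_{f(\LQ)}|^2\,\nep^{Q'd'/2+Q''d''/2}\dir.
\end{align*}
The integrand on the right is $\delta^{1/2}$ times a biradial function, so Lemma~\ref{intdufp} rewrites it as
\begin{align*}
\int_0^r\int_0^r\phi'_0(r')\,\phi''_0(r'')\,|k_{f(\LQ)}(r',r'')|^2\,\nep^{Q'r'/2+Q''r''/2}\,A'(r')\,A''(r'')\dir'\dir''.
\end{align*}
Applying the bound $\phi'_0(r')\leq C(1+r')\,\nep^{-Q'r'/2}$ from (\ref{stimafi0}) and the analogous bound for $\phi''_0$, the exponential factors cancel exactly (this is the whole point of the weight $w$), leaving
\begin{align*}
\leq C(1+r)^{2}\int_0^r\int_0^r |k_{f(\LQ)}(r',r'')|^2\,A'(r')\,A''(r'')\dir'\dir''
=C(1+r)^{2}\int_{B_r}|k_{f(\LQ)}|^{2}\dil.
\end{align*}
Finally $\int_{B_r}|k_{f(\LQ)}|^{2}\dil=\int_{B_r}|k_{f(\D)}|^{2}\dir$ by the identity $k_{f(\D)}=\delta^{1/2}k_{f(\LQ)}$ together with $\dil=\delta\dir$, and the estimate is proved.

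There is no real obstacle here; the exponent $4$ in (i) and the factor $(1+r)^{2}$ in (ii), as opposed to $2$ and $(1+r)$ in Lemma~\ref{peso}, simply reflect the fact that each one-dimensional estimate contributes multiplicatively once for each of the two factors. The only point requiring minor care is verifying that $k_{f(\LQ)}$ is genuinely biradial on $S'\times S''$, which follows from the spherical Fourier characterization $\mathcal{H}k_{f(\LQ)}(s',s'')=f(s'^{2}+s''^{2})$ recalled earlier, since the inverse spherical transform of a biradial multiplier is biradial.
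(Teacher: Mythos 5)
Your proof is correct and takes essentially the same approach as the paper: Fubini plus Lemma~\ref{peso}~(i) for part~(i), and Lemma~\ref{intdufp} together with the spherical-function estimate~(\ref{stimafi0}) to cancel the exponential weight in part~(ii). The paper's version first splits $B_r$ into the products of annuli $A_{R'}\times A_{R''}$, $A_{R'}\times C_r''$, etc.\ before applying these tools, but that is a purely cosmetic difference and your direct application on $(0,r)\times(0,r)$ is cleaner.
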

\begin{proof}
To prove (i) we note that 
\begin{align*}
\int_{B_r}w^{-1}\dir&=\int_{B_r'}w'(x')\dir '(x')\cdot\int_{B_r''}w''(x'')\dir ''(x'')\,,
\end{align*}
and apply lemma \ref{peso}.

To prove (ii) let $f$ be compactly supported on $\RR^+$  and denote by $k_{f(\LQ)}$ the convolution kernel of the operator $f(\LQ)$. We know that $k_{f(\D)}=\du \,k_{f(\LQ)}$. Now define $A_{R'}=\{x'\in S':~R'-1<d(x',e)<R'\,\}$, $A_{R''}=\{x''\in S'':~R''-1<d(x'',e)<R''\,\}$, with $R',\,R''=1,...,[r]$ and $C_r'=\{x'\in S':~[r]<d(x',e)<r\,\}$ and $C_r''=\{x''\in S'':~[r]<d(x'',e)<r\,\}$. We split up the ball $B_r$ in $A_{R'}\times A_{R''}$, $A_{R'}\times C_r''$, $C_r'\times A_{R''}$ and $C_r'\times C_r''$ and estimate the integral of $|k|^2\,w$. By Lemma \ref{intdufp} 
\begin{align*}
&\int_{A_{R'}\times A_{R''}}|k_{f(\D)}|^2\,w\,\dir\\
=&\int_{A_{R'}\times A_{R''}}|\delta (x',x'')\,k_{f(\LQ)}(x',x'')|^2\,\dum (x',x'')\,\nep^{{Q'd(x',e)+Q''d(x'',e)}/2}\dir(x',x'')\\
=&\int_{R'-1}^{R'}\int_{R''-1}^{R''}\phi '_0(r')\,\phi ''_0 (r'')\,|k_{f(\LQ)}(r',r'')|^2\,\nep^{Q'\,r'/2}\,\nep^{Q''\,r''/2}\,A'(r')\, A''(r'')\di r'\di r''\,,
\end{align*}
which, by (\ref{stimafi0}), is bounded above by
\begin{align*}
\leq C\,&\int_{R'-1}^{R'} \int_{R''-1}^{R''}(1+r')\,(1+r'')\,|k_{f(\LQ)}(r',r'')|^2\, A'\otimes A''(r',r'')\di r'\di r''\\
\leq C \,(1+r)^2 &\int_{A_{R'}\times A_{R''}}|k_{f(\LQ)}|^2\dil\\
=C\,(1+r)^2&\int_{A_{R'}\times A_{R''}}|k_{f(\D)}|^2\dir\,.
\end{align*}
The proof of the estimates on $A_{R'}\times C_r''$, $C_r'\times A_{R''}$ and $C_r'\times C_r''$ is similar and is omitted. The proof of (ii) is complete.
\end{proof}

Our purpose is to prove an $L^1$-estimate for the convolution kernel of a multiplier of the Laplacian $\D$. The following result gives an estimate of this type in a particular case.
\begin{lem}\label{normaL1}
Let $f$ be an even function on $\RR$ such that its Fourier transform $\hat{f}$ is supported in $[-r,r]$. Then $k_{f(\sqrt{\D})}$ satisfies the following estimate:
\begin{align*}
&\int_S|k_{f(\sqrt{\D})}|\dir\\
\leq\,& \begin{cases} 
C\,r^{n/2}\big(\int_0^{\infty}\int_0^{\infty}|f(\sqrt{s^{'\,2}+s^{''\,2}})|^2\,(s^{'\,2}+s^{'\,(n'-1)})\,(s^{''\,2}+s^{''\,(n'-1)})\di s'\di s''\big)^{1/2}~~ \forall r<1&{}\\
C\,r^{3}\big(\int_0^{\infty}\int_0^{\infty}|f(\sqrt{s^{'\,2}+s^{''\,2}})|^2\, (s^{'\,2}+s^{'\,(n'-1)})\,(s^{''\,2}+s^{''\,(n'-1)})   \di s'\di s''\big)^{1/2}~ ~ ~ ~\forall r\geq 1.& {}
\end{cases}
\end{align*}
\end{lem}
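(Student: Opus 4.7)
The plan is to transpose the proof of Lemma \ref{norma1} to the product setting, replacing each auxiliary estimate by its product analogue. First I would note that $k_{f(\sqrt{\D})}=\du\,k_{f(\sqrt{\LQ})}$ and that, since $\LQ(\phi'_{s'}\otimes\phi''_{s''})=(s'^2+s''^2)(\phi'_{s'}\otimes\phi''_{s''})$, the spherical transform of $k_{f(\sqrt{\LQ})}$ equals $f\bigl(\sqrt{s'^2+s''^2}\bigr)$. Since $f$ is even with $\hat f$ supported in $[-r,r]$, the spectral formula
$$f(\sqrt{\LQ})=(2\pi)^{-1/2}\int_{-r}^{r}\hat f(\eta)\,\cos\bigl(\eta\sqrt{\LQ}\bigr)\di\eta$$
combined with the finite propagation speed (with speed one) of the wave operator $\cos(\eta\sqrt{\LQ})$ on the Riemannian manifold $(S,d)$ implies that $k_{f(\sqrt{\LQ})}$ is supported in $\{x\in S:d(x,e)\leq r\}\subseteq B_r$, the last inclusion using $d_{\max}\leq d$.

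Next I would apply H\"older's inequality with the weight $w=w'\otimes w''$ to get
$$\int_S|k_{f(\sqrt{\D})}|\dir\leq\Bigl(\int_{B_r}w^{-1}\dir\Bigr)^{1/2}\Bigl(\int_{B_r}|k_{f(\sqrt{\D})}|^2\,w\dir\Bigr)^{1/2},$$
and invoke Lemma \ref{pesop}\,(ii) to bound the second factor by $C(1+r)\bigl(\int_S|k_{f(\sqrt{\LQ})}|^2\dil\bigr)^{1/2}$, where the identity $|k_{f(\sqrt{\D})}|^2\dir=|k_{f(\sqrt{\LQ})}|^2\dil$ (a consequence of $k_{f(\sqrt{\D})}=\du\,k_{f(\sqrt{\LQ})}$ and $\dil=\delta\dir$) has been used. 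Plancherel on the product and the Harish--Chandra estimate (\ref{HC}) applied separately to each factor then yield
$$\int_S|k_{f(\sqrt{\LQ})}|^2\dil\leq C\int_0^{\infty}\!\!\int_0^{\infty}\bigl|f\bigl(\sqrt{s'^2+s''^2}\bigr)\bigr|^2(s'^2+s'^{n'-1})(s''^2+s''^{n''-1})\di s'\di s''.$$
Plugging Lemma \ref{pesop}\,(i) into the first factor, so that $\int_{B_r}w^{-1}\dir\leq Cr^n$ when $r<1$ and $\leq Cr^4$ when $r\geq 1$, and bounding $(1+r)$ by $2$ in the former case and by $2r$ in the latter, produces the stated bounds $Cr^{n/2}$ and $Cr^3$ respectively.

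The main step to verify carefully is the support property of $k_{f(\sqrt{\LQ})}$: it rests on finite propagation speed for the wave operator associated to the Laplace--Beltrami operator $\mathcal L=\LQ+(Q'^2+Q''^2)/4$, and the constant shift does not affect propagation speed, so the standard Riemannian argument applies directly. Apart from this, the proof is a direct transcription of the nonproduct case, the only new ingredients being the exponent $4$ (replacing $2$) in the weighted volume bound and the factor $(1+r)^2$ (replacing $(1+r)$) in the Hardy-type estimate of Lemma \ref{pesop}, both of which account for the two-factor structure of $S$.
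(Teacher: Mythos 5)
Your proof is correct and follows essentially the same route as the paper: the identity $k_{f(\sqrt{\D})}=\du\,k_{f(\sqrt{\LQ})}$ and the spherical transform formula, the support of $k_{f(\sqrt{\LQ})}$ in $B_r$ from compact support of $\hat f$, H\"older with the weight $w=w'\otimes w''$, Lemma \ref{pesop}\,(ii), Plancherel with the Harish--Chandra bound (\ref{HC}), and finally Lemma \ref{pesop}\,(i). The only point you treat more explicitly than the paper is the support step (finite propagation speed for $\cos(\eta\sqrt{\LQ})$, unaffected by the constant shift, plus $d_{\max}\leq d$), which is a valid justification of the claim the paper states without comment.
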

\begin{proof}
Let $k_{f(\sqrt{\LQ})}$ denote the convolution kernel of the operator $f(\sqrt{\LQ})$. We have that $k_{f(\sqrt{\D})}=\du\, k_{f(\sqrt{\LQ})}$ and $\mathcal Hk_{f(\sqrt{\LQ})}(s',s'')=f(\sqrt{s'^2+s''^2})$ for all $(s',s'')\in \RR^2_+\,.$ The hypothesis that the Fourier transform of $f$ is contained in $[-r,r]$ implies that $k_{f\left(\sqrt{\LQ}\right)}$ is supported in the ball $B_r$. 

By H\"older's inequality we obtain that
\begin{align}\label{holderp}
\int_S|k_{f(\sqrt{\D})}|\dir&=\int_{B_r}|k_{f(\sqrt{\D})}|\,w^{1/2}\,w^{-1/2}\dir\nonumber\\
&\leq \Big( \int_{B_r}w^{-1}\dir \Big)^{1/2}\Big( \int_{B_r}|k_{f(\sqrt{\D})}|^2\,w\dir   \Big)^{1/2}\,.
\end{align}
By Lemma \ref{pesop} (ii) we have that
\begin{align*}
&\Big( \int_{B_r}|k_{f(\sqrt{\D})}|^2\,w\dir   \Big)^{1/2}\\
\leq&\,  C\, (1+r)\,\Big( \int_{B_r}|k_{f(\sqrt{\D})}|^2\dir  \Big)^{1/2}\\
=&\, C \,(1+r)\,\Big( \int_{B_r}|k_{f(\sqrt{\LQ})}|^2\dil   \Big)^{1/2}\\
=&\, C \,(1+r)\,\Big(\int_0^{\infty}\int_0^{\infty}|f(\sqrt{s'^2+s''^2})|^2|{\bf{c}}(s'^2)|^{-2}\,{\bf{c}}(s''^2)|^{-2}\di s'\di s''\Big)^{1/2}\\
\leq&\, C\,(1+r)\, \Big( \int_0^{\infty} \int_0^{\infty}|f(\sqrt{s'^2+s''^2})|^2(s'^2+s'^{n'-1})\,(s''^2+s''^{n''-1})\di s'\di s''\Big)^{1/2}                 \,.
\end{align*}
Here we have used Plancherel formula and estimate (\ref{HC}) for the Plancherel
measure. Thus, by (\ref{holderp}) and 
Lemma \ref{pesop} (i) we deduce that
\begin{align*}
&\int_S|k_{f(\sqrt{\D})}|\dir\\
\leq\,& \begin{cases} 
C\,r^{n/2}\big(\int_0^{\infty}\int_0^{\infty}|f(\sqrt{s^{'\,2}+s^{''\,2}})|^2\,(s^{'\,2}+s^{'\,(n'-1)})\,(s^{''\,2}+s^{''\,(n'-1)})\di s'\di s''\big)^{1/2}~~ \forall r<1&{}\\
C\,r^{3}\big(\int_0^{\infty}\int_0^{\infty}|f(\sqrt{s^{'\,2}+s^{''\,2}})|^2\, (s^{'\,2}+s^{'\,(n'-1)})\,(s^{''\,2}+s^{''\,(n'-1)})   \di s'\di s''\big)^{1/2}~ ~ ~ ~\forall r\geq 1\,& {}
\end{cases}
\end{align*}
as required.
\end{proof}
The previous result gives a good estimate for the $L^1$-norm of the kernel of a multiplier whose Fourier transform has compact support. Next lemma shows that every function $f$ supported in $[1/2,2]$ may be written as sum of functions whose Fourier transform has compact support.
\begin{lem}\label{decompp}
Let $q',q'',Q',Q''$ be real numbers such that $0<q'\leq Q'$ and $0<q''\leq Q''$ and $f$ be a function in $H^s(\RR)$ supported in $[1/2,2]$. Let $f_{\ell}$ and $f_{\ell,\,t}$ be as in Lemma \ref{decomp} (with $q=q'+q''$, $Q=Q'+Q''$).
Then
\begin{itemize}
\item[(i)]$\int_0^{\infty}\int_0^{\infty}|f_{\ell}(\sqrt{s'^2+s''^2})|^2\,(s'^{2q'}+s'^{2Q'})\,(s''^{2q''}+s''^{2Q''})\,{\rm{d}} s'\di s''\leq C\, 2^{-2sl}\,\|f\|_{H^s(\RR)}\,.$
\item[(ii)] 
\begin{align*}
&\int_0^{\infty}\int_0^{\infty}|f_{\ell,t}(\sqrt{s'^2+s''^2)})|^2\,(s'^{2q'}+s'^{2Q'})\,(s''^{2q''}+s''^{2Q''})\,{\rm{d}} s'\di s''\\
\leq& \begin{cases}
C\,t^{-(2q'+2q''+2)}\, 2^{-2s\ell}\,\|f\|_{H^s(\RR)}& \forall t\in [1,\infty)\\
C\,t^{-(2Q'+2Q''+2)} \,2^{-2s\ell}\,\|f\|_{H^s(\RR)}& \forall t\in (0,1).
\end{cases}
\end{align*}
\end{itemize}
\end{lem}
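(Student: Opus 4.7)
The plan is to reduce both $(i)$ and $(ii)$ to the one-dimensional statements in Lemma \ref{decomp} by passing to polar coordinates in the first quadrant of $\mathbb{R}^2$. Since the functions $f_{\ell}$ depend only on $f$ (not on the parameters $q,Q$ appearing in estimate (iii) of Lemma \ref{decomp}), we are free to apply (iii) and (iii$'$) with parameters tailored to the two-dimensional integral we obtain.

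First I would set $s'=\sigma\cos\theta$, $s''=\sigma\sin\theta$, so $\di s'\di s''=\sigma\,\di\sigma\,\di\theta$, and rewrite the integrand in (i) as
$$|f_{\ell}(\sigma)|^{2}\,\bigl(\sigma^{2q'}\cos^{2q'}\theta+\sigma^{2Q'}\cos^{2Q'}\theta\bigr)\bigl(\sigma^{2q''}\sin^{2q''}\theta+\sigma^{2Q''}\sin^{2Q''}\theta\bigr)\,\sigma.$$
Expanding this product produces four mixed terms, each of which involves an angular factor of the form $\int_{0}^{\pi/2}\cos^{2a}\theta\,\sin^{2b}\theta\,\di\theta$ with $a,b>0$: these are Beta-function values and hence finite constants. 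The radial integrand is then controlled by
$$C\,|f_{\ell}(\sigma)|^{2}\bigl(\sigma^{2(q'+q'')+1}+\sigma^{2(q'+Q'')+1}+\sigma^{2(Q'+q'')+1}+\sigma^{2(Q'+Q'')+1}\bigr),$$
and, after separating $\sigma\leq 1$ and $\sigma>1$ (where the extreme exponents dominate respectively), by
$$C\,|f_{\ell}(\sigma)|^{2}\bigl(\sigma^{2(q'+q''+1/2)}+\sigma^{2(Q'+Q''+1/2)}\bigr).$$
Applying Lemma \ref{decomp}\,(iii) with parameters $q=q'+q''+1/2$ and $Q=Q'+Q''+1/2$ yields the desired bound $C\,2^{-2s\ell}\,\|f\|_{H^{s}(\mathbb{R})}$, proving (i).

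For (ii), the argument is verbatim the same, with $f_{\ell}$ replaced by $f_{\ell,t}(\cdot)=f_{\ell}(t\,\cdot)$ throughout. After the polar change of variables and the same angular-factor estimates, one applies Lemma \ref{decomp}\,(iii$'$) with $q=q'+q''+1/2$, $Q=Q'+Q''+1/2$, and the two regimes $t\geq 1$, $t<1$ produce the $t$-dependent prefactors $t^{-(2q'+2q''+2)}$ and $t^{-(2Q'+2Q''+2)}$ exactly as claimed.

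There is no substantive obstacle: the only point requiring attention is the bookkeeping of the extra $\sigma$ from the polar Jacobian, which shifts the effective exponents by $1/2$ and is absorbed by choosing the parameters in Lemma \ref{decomp} accordingly; everything else is a routine manipulation of one-variable integrals.
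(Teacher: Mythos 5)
Your proof is correct, but it takes a genuinely different route from the paper's. The paper handles (i) by splitting the first quadrant into the box $\{s',s''<4\}$ and its complement: on the box it bounds the polynomial weight by a constant and uses the $L^2$ bound $\|f_\ell\|_{L^2}\le C\,2^{-2s\ell}\|f\|_{H^s}$, while off the box it bounds the weight by $(1+|(s',s'')|^2)^{Q'+Q''}$ and invokes the pointwise decay estimate $|f_\ell(x)|\le C\,\|f\|_{L^2}\,2^{\ell}[2^{\ell}(|x|-2)]^{-(Q'+Q''+s+1)}$ for $|x|>4$ from \cite[Lemma 1.3]{HEB}; part (ii) is then deduced from (i) by the scaling substitution $u'=ts',\ u''=ts''$. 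You instead pass to polar coordinates globally, integrate out the angular variable via finite Beta integrals $\int_0^{\pi/2}\cos^{2a}\theta\sin^{2b}\theta\,\di\theta$, and fold the Jacobian factor $\sigma$ into the exponents to invoke Lemma \ref{decomp}(iii) and (iii') directly. This is cleaner in two respects: it avoids the pointwise decay estimate entirely, and it handles the ``mixed'' regions (one variable small, the other large) automatically, which the paper's written split $[0,4]^2$ versus $[4,\infty)^2$ actually glosses over. The only thing worth being explicit about is the point you flag yourself: the statement of Lemma \ref{decompp} fixes $q=q'+q''$, $Q=Q'+Q''$ in invoking Lemma \ref{decomp}, whereas you need $q=q'+q''+\smallfrac{1}{2}$, $Q=Q'+Q''+\smallfrac{1}{2}$; this is legitimate because the $f_\ell$ are constructed by a frequency truncation independent of $q,Q$, with (iii) holding for every admissible pair, but it does mean you are using Lemma \ref{decomp} slightly outside the letter of the cross-reference in the statement. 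Your observation that the domination $\sigma^{2c+1}\le\sigma^{2(q'+q'')+1}+\sigma^{2(Q'+Q'')+1}$ for $q'+q''\le c\le Q'+Q''$ (separating $\sigma\le1$ and $\sigma>1$) handles all four cross terms is also correct and keeps the bookkeeping tight.
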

\begin{proof}
By \cite[Lemma 1.3]{HEB}, $\|f_{\ell}\|_{L^2(\RR)}\leq C\,2^{-2s\,\ell}\,\|f\|_{H^s(\RR)}$ and
\begin{equation}\label{stimapuntuale}
|f_{\ell}(x)|\leq C\,\|f\|_{L^2(\RR)}\,\frac{2^{\ell}}{[2^{\ell}(|x|-2)]^{Q'+Q''+s+1}}\,,
\end{equation}
for all $x$ such that $|x|>4$.

We now prove (i) by estimating the integrals on the set $\{(s',s''):~ |s'|<4,\,|s''|<4\}$ and on its complementary separately.
\begin{align}\label{onB}
\int_0^{4}\int_0^{4}|f_{\ell}(\sqrt{s'^2+s''^2})|^2\,(s'^{2q'}+s'^{2Q'})\,(s''^{2q''}+s''^{2Q''})\,{\rm{d}} s'\di s''&\leq C\,\int_0^4|f_{\ell}(r)|^2\,r\di r\nonumber\\
&\leq C\, \|f_{\ell}\|_{L^2(\RR)}\nonumber\\
&\leq C\,2^{-2s\,\ell}\,\|f\|_{H^s(\RR)}\,.
\end{align}
The integral on the complementary of this set is estimated by using (\ref{stimapuntuale}):
\begin{align}\label{offB}
&\int_4^{\infty}\int_4^{\infty}|f_{\ell}(\sqrt{s'^2+s''^2})|^2\,(s'^{2q'}+s'^{2Q'})\,(s''^{2q''}+s''^{2Q''})\,{\rm{d}} s'\di s''\\
\leq&\, \int_4^{\infty}\int_4^{\infty}|f_{\ell}(\sqrt{s'^2+s''^2})|^2\,\big(1+|(s',s'')|^2\big)^{Q'+Q''})\,{\rm{d}} s'\di s'\nonumber\\
\leq &\, C\,\int_4^{\infty}|f_{\ell}(r)|^2\,(1+r^2)^{Q'+Q''}\,r\di r\nonumber\\
\leq &\,C\, \|f_{\ell}\|^2_{L^2(\RR)}\,\int_4^{\infty}\frac{2^{\ell}}{[2^{\ell}(r-2)]^{2Q'+2Q''+2s+2}}\,(1+r^2)^{Q'+Q''}\,r\di r\nonumber\\
\leq&\, C\,2^{-2s\,\ell}\,\|f\|_{H^s(\RR)}\int_4^{\infty}\frac{1}{(r-2)^{2Q'+2Q''+2s+2}}\,(1+r^2)^{Q'+Q''}\,r\di r\nonumber\\
\leq& \, C\,2^{-2s\,\ell}\,\|f\|_{H^s(\RR)}\,.
\end{align}
By combining (\ref{onB}) and (\ref{offB}) we obtain (i).

Since $f_{\ell, \,t}(\cdot)=f_{\ell}(t\,\cdot)$, (ii) follows easily by (i) and a simple change of variables. 
\end{proof}
We now prove estimate (\ref{stimaBp}).

\begin{proof}
First we observe that
\begin{align*}
&\int _S |K_{m(t\D)}(x,y)|\,\big(1+t^{-1/2}d(x,y)\big)^{\varepsilon}\dir(x)\\
=&\int_S|k_{m(t\D)}(x)|\,\big(1+t^{-1/2}d(x,e)\big)^{\varepsilon}\dir(x)\qquad\forall y\in S\,.
%&=\int_S |k_{m(t\D)}(y^{-1}x)|\,\delta(y)\,\big(1+t^{-1/2}d(y^{-1}x,e)\big)^{\varepsilon}\,\delta ^{-1}(x)\dil(x)\\
%=&\int_S|k_{m(t\D)}(x)|(1+t^{-1/2}d(x,e))^{\varepsilon}\delta^{-1}(x)d_lx\nonumber\\
%&=\int_S|k_{m(t\D)}(x)|\,\big(1+t^{-1/2}d(x,e)\big)^{\varepsilon}\dir(x)\qquad\forall y\in S\,.
\end{align*}
Now it suffices to define $f(s)=m(s^2)$ for all $s\in \RR^+$. The function $f$ is supported in $[1/2,2]$ and the operator $m(t\D)$ agrees with the operator $f(t^{1/2}\sqrt{\D})$ by spectral theory. By applying Lemma \ref{decompp} with $q'=q''=1$, $Q'={(n'-1)}/{2}$ and $Q''={(n''-1)}/2$ we find functions $f_{\ell,t^{1/2}}$ such that $f(t^{1/2}\cdot)=\sum_{\ell} f_{\ell,t^{1/2}}(\cdot)$ and ${\rm{supp}}(\hat{f}_{\ell,t^{1/2}})\subset [-2^{\ell}t^{1/2}, 2^{\ell}t^{1/2}]$. So we can apply Lemma \ref{normaL1} to each function $f_{\ell,t^{1/2}}$ and sum these esti\-ma\-tes up. We treat the cases $t\geq 1$ and $t<1$ separately.

{\it{Case}}~$t< 1$. In this case the quantity $2^{\ell}t^{1/2}$ is $\geq 1 $ for $\ell\geq (1/2)\,\log (1/t)$ and $<1$ otherwise. By applying again Lemma \ref{normaL1} we have that
\begin{align*}
&\int_S|k_{f_{\ell,t^{1/2}}(\sqrt{\D})}(x)|\,\big(1+t^{-1/2}d(x,e)\big)^{\varepsilon} \dir(x)\\
=\,& \int_{B(e,2^{\ell}t^{1/2})}|k_{f_{\ell,t^{1/2}}(\sqrt{\D})}(x)|\,\big(1+t^{-1/2}d(x,e)\big)^{\varepsilon} \dir(x)\\
\leq\,& C\,(1+t^{-1/2}\,2^{\ell}\,t^{1/2})^{\varepsilon}\,(2^{\ell}t^{1/2})^{\max \{3,n/2\}}\,\cdot\\
&\cdot\Big(\int_0^{\infty}\int_0^{\infty}|f_{\ell,t^{1/2}}(\sqrt{s'^2+s''^2})|^2\,(s'^2+s'^{n'-1})\,(s''^2+s''^{n'-1})\di s'\di s''\Big)^{1/2}\,,
\end{align*}
which, by Lemma \ref{decomp} (iii'), is bounded above by
\begin{align*}
& C\,2^{\ell\varepsilon}\,(2^{\ell}t^{1/2})^{\max \{3,n/2\}}\,t^{-n/2}\,2^{-s_{\infty}\ell}\,\|f\|_{H^{s_{\infty}}(\RR)}\\
\leq\,  &C\,2^{\ell(\max\{3,n/2\}+\varepsilon-s_{\infty})}\,\|f\|_{H^{s_{\infty}}(\RR)}\,.
\end{align*}
Now we sum over $\ell\geq 0$ and obtain that
\begin{align*}
\int_S  |k_{m(t\D)}(x)|\,\big(1+t^{-1/2}d(x,e)\big)^{\varepsilon}\dir(x)&=\int_S |k_{f(t^{1/2}\sqrt{\D})}(x)|\,\big(1+t^{-1/2}d(x,e)\big)^{\varepsilon}\dir(x)\\&\leq C \,\|f\|_{H^{s_{\infty}}(\RR)}\\
&=C\,\|m(\cdot^2 )\|_{H^{s_{\infty}}(\RR)}\\
&\leq C\,\|m\|_{H^{s_{\infty}}(\RR)} \qquad \forall t\in (0,1)\,,
\end{align*}
since $s_{\infty}>\max \{3,n/2\}+\varepsilon$.

{\it{Case}} ~$t\geq 1$. In the same way as above by applying Lemma \ref{normaL1} we obtain that
\begin{align*}
\int_S  |k_{m(t\D)}(x)|\,\big(1+t^{-1/2}d(x,e)\big)^{\varepsilon}\dir(x)&\leq C\,\|m\|_{H^{s_0}(\RR)}\,,
\end{align*}
as required.
\end{proof}

\subsection{Step 2}
By proceeding exactly as in Subsection \ref{stima2section}, we see that to prove (\ref{stimaBp}) it suffices to prove the following $L^1$-estimate of the gradient of the heat kernel $h_t$. 
\begin{prop}\label{normap}
There exists a constant $C$ such that 
\begin{align*}
\int_S |\nabla h_t| \dir&\leq C \,t^{-1/2}\qquad \forall t\in\RR^+ \,.
\end{align*}
\end{prop}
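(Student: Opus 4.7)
The plan is to reduce the estimate to the non-product case (Proposition \ref{norma}) via the tensor product structure of the heat kernel on $S = S'\times S''$. Since $\Delta = \Delta' + \Delta''$, where the summands act on distinct variables and therefore commute strongly, the heat semigroup factors as $e^{-t\Delta} = e^{-t\Delta'}\otimes e^{-t\Delta''}$. Writing both sides as right convolutions and iterating on $f \in C_c^\infty(S)$ shows that the convolution kernel of $e^{-t\Delta}$ is the tensor product $h_t = h_t'\otimes h_t''$, where $h_t'$ and $h_t''$ are the heat kernels on $S'$ and $S''$ associated with the left invariant Laplacians $\Delta'$ and $\Delta''$.

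Next, I would use the fact that the Riemannian metric on $S$ is the orthogonal direct sum of the metrics on $S'$ and $S''$. At each point the tangent space splits orthogonally, so for any smooth $f$ one has $|\nabla f|^2 = |\nabla' f|^2 + |\nabla'' f|^2$, where $\nabla'$ and $\nabla''$ are the gradients along the two factors. Applied to $h_t = h_t'\otimes h_t''$ this yields
\[
|\nabla h_t(x',x'')|^2 = |\nabla' h_t'(x')|^2 \, h_t''(x'')^2 + h_t'(x')^2 \, |\nabla'' h_t''(x'')|^2\,,
\]
and consequently, using $\sqrt{a^2+b^2}\leq a+b$ for $a,b\geq 0$,
\[
|\nabla h_t(x',x'')| \leq |\nabla' h_t'(x')|\, h_t''(x'') + h_t'(x')\, |\nabla'' h_t''(x'')|\,.
\]

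Then I would integrate over $S$ with respect to $d\rho = d\rho'\otimes d\rho''$ and apply Fubini:
\[
\int_S |\nabla h_t|\, d\rho \leq \|\nabla' h_t'\|_{L^1(\rho')}\, \|h_t''\|_{L^1(\rho'')} + \|h_t'\|_{L^1(\rho')}\, \|\nabla'' h_t''\|_{L^1(\rho'')}\,.
\]
By Proposition \ref{norma} applied on each factor, $\|\nabla' h_t'\|_{L^1(\rho')} \leq C\, t^{-1/2}$ and $\|\nabla'' h_t''\|_{L^1(\rho'')} \leq C\, t^{-1/2}$. Moreover, the heat semigroups on $S'$ and $S''$ are Markovian: since $\Delta'$ and $\Delta''$ annihilate constants and $h_t', h_t''\geq 0$, it follows that $\|h_t'\|_{L^1(\rho')} = \|h_t''\|_{L^1(\rho'')} = 1$ for every $t>0$. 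Substituting these bounds gives $\int_S |\nabla h_t|\, d\rho \leq 2C\, t^{-1/2}$, which is the desired estimate.

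The proof is essentially an exercise in the tensor product machinery once Proposition \ref{norma} is in hand; no serious obstacle remains, because all the hard work (the delicate handling of the inverse Abel transform in odd and even centre dimensions) has already been done in the non-product case. The only point worth verifying carefully is the factorisation $h_t = h_t'\otimes h_t''$, which rests on the strong commutativity of $\Delta'$ and $\Delta''$ viewed as operators on $L^2(\rho)$.
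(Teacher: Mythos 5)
Your proof is correct and follows essentially the same route as the paper: factor $h_t = h_t'\otimes h_t''$, differentiate in each factor separately, integrate by Fubini, and invoke Proposition \ref{norma} on each factor together with $\|h_t'\|_{L^1(\rho')}=\|h_t''\|_{L^1(\rho'')}=1$. The paper phrases the gradient bound in terms of the individual vector fields $X_i'$ and $X_j''$ rather than $|\nabla'|$ and $|\nabla''|$, and leaves the Markovianity of the factor semigroups implicit, but these are cosmetic differences.
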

\begin{proof}
It is easy to check that $h_t=h_t'\otimes h_t''$, where $h_t'$ and $h_t''$ denote the heat kernel on $S'$ and $S''$ respectively. Thus,
\begin{align*}
X_i'h_t&=(X_i'h_t')\otimes h_t''\,,\\
X_i''h_t&=h_t'\otimes (X_i''h_t'')\,.
\end{align*}
Now we integrate on $S$ and apply Proposition \ref{norma} to $h_t'$ and $h_t''$:
\begin{align*}
\int_S|X_i'h_t|\dir&\leq\int_S'|X_i'h_t'|\dir '\,\int_S''|h_t''|\dir ''\\
&\leq C\,t^{-1/2}\,;\\
\int_S|X_i''h_t|\dir&\leq\int_S'|h_t'|\dir '\,\int_S''|X_i''h_t''|\dir ''\\
&\leq C\,t^{-1/2}\,,
\end{align*}
as required.
\end{proof}

%faccio la bibliografia

\clearpage

\addcontentsline{toc}{chapter}{Bibliography}

\end{document}